\documentclass[preprint,3p]{elsarticle}
\usepackage[utf8x]{inputenc}
\usepackage{amsmath}
\usepackage{amsfonts}
\usepackage{latexsym}
\usepackage{amsthm}
\usepackage{xargs}
\usepackage{eucal}
\usepackage{graphicx}
\usepackage{subfigure}

\newtheorem{thm}{Theorem}[section]
\newtheorem{thm*}{Theorem}
\newtheorem{lem}[thm]{Lemma}
\newtheorem{prop}[thm]{Proposition}
\newtheorem{cor}[thm]{Corollary}
\newtheorem*{cor*}{Corollary}
\theoremstyle{definition}
\newtheorem{rmk}[thm]{Remark}
\newtheorem*{rmk*}{Remark}


\subfigcapmargin = -1cm

\newcommand{\vphi}{\varphi}
\newcommand{\vxi}{\xi}
\newcommand{\vxib}{\bar{\xi}}

\newcommand{\vzeta}{\zeta}
\newcommand{\veta}{\eta}
\newcommand{\vetab}{\bar{\eta}}

\newcommand{\Vpsi}{\Psi}
\newcommand{\vpsi}{\psi}
\newcommand{\vpsib}{\overline{\psi}}

\newcommand{\het}{z_0}
\newcommand{\dist}{\kappa}
\newcommand{\coef}{d}
\newcommand{\param}{\sigma}
\newcommand{\kk}{\lambda}

\newcommandx{\f}[1][1=\delta\vs]{l(#1)}
\newcommandx{\g}[1][1=\delta\vs]{m(#1)}
\newcommand{\xz}{X_0}
\newcommand{\xu}{X_1}

\newcommand{\vu}{u}
\newcommand{\vv}{v}
\newcommand{\vw}{w}
\newcommand{\vs}{s}

\newcommand{\uns}{\textup{u}}
\newcommand{\sta}{\textup{s}}

\newcommand{\re}{\textup{Re}\,}
\newcommand{\im}{\textup{Im}\,}

\newcommandx{\dout}[4][1=\dist, 2={,}, 3=\beta]{D_{#1,#3}^{\textup{out}#2#4}}
\newcommandx{\doutb}[4][1=\dist, 2={,}, 3=\beta]{D_{\overline{#1},#3}^{\textup{out}#2#4}}
\newcommandx{\dmatch}[5][1=\dist, 2={,}, 3=\beta_1, 4=\beta_2]{D_{#1,#3,#4}^{\textup{mch}#2#5}}
\newcommandx{\dmatchs}[2][1={,}]{\CMcal{D}_{\dist,\beta_1,\beta_2}^{\textup{mch}#1#2}}

\newcommand{\din}[1]{\CMcal{D}_{\beta_0,\rho}^{\textup{in},#1}}
\newcommand{\dinu}[1]{D_{\beta_0,\rho}^{\textup{in},#1}}
\newcommand{\ein}{E_{\beta_0,\rho}}

\newcommand{\Ain}{\tilde{\CMcal{A}}}
\newcommand{\hin}{\tilde{h}}
\newcommand{\Rin}{\CMcal{R}}
\newcommand{\Bin}{\CMcal{B}}
\newcommand{\Gin}{\CMcal{G}}
\newcommand{\Rinasy}{\tilde{\CMcal{R}}}

\newcommandx{\bsout}[2][1={,}]{\CMcal{X}^{\textup{out}#1#2}}
\newcommandx{\bsin}[2][1=\nu]{\CMcal{X}^{\textup{in},#2}_{#1}}
\newcommandx{\bsmatch}[2][1={,}]{\CMcal{X}^{\textup{mch}#1#2}}
\newcommand{\bssplit}{\CMcal{X}^{\textup{spl}}}

\newcommandx{\doutbinf}[2][1={,}]{D_{\overline{\kappa},\beta,\infty}^{\textup{out}#1#2}}
\newcommandx{\doutbT}[2][1={,}]{D_{\overline{\kappa},\beta,T}^{\textup{out}#1#2}}
\newcommandx{\doutinf}[2][1={,}]{D_{\kappa,\beta,\infty}^{\textup{out}#1#2}}
\newcommandx{\doutT}[2][1={,}]{D_{\kappa,\beta,T}^{\textup{out}#1#2}}

\newcommand{\xb}{\bar{x}}
\newcommand{\yb}{\bar{y}}
\newcommand{\zb}{\bar{z}}
\newcommand{\tb}{\bar{t}}

\begin{document}

\begin{frontmatter}

\title{Exponentially small heteroclinic breakdown in the generic Hopf-zero singularity}
\author{I. Baldomá, O. Castejón and T.M. Seara}

\begin{abstract}
In this paper we prove the breakdown of an heteroclinic connection in the analytic versal unfoldings of the generic Hopf-Zero singularity in an open set of the parameter space. This heteroclinic orbit appears at any order if one performs the normal form around the origin, therefore it is a phenomenon ``beyond all orders''. In this paper we provide a formula for the distance between the corresponding stable and unstable one dimensional manifolds which is given by an exponentially small function in the perturbation parameter. Our result applies both for conservative and dissipative unfoldings.
\end{abstract}
\end{frontmatter}

\section{Introduction and main result}
The so-called Hopf-zero (or central) singularity consists in an analytic vector field $X^*:\mathbb{R}^3\rightarrow\mathbb{R}^3$, having the origin as a critical point, and such that the eigenvalues of the linear part at this point are $0$, $\pm i\alpha^*$, for some $\alpha^*\neq0$. Hence, after a linear change of variables, we can assume that the linear part of this vector field near the origin is:
$$DX^*(0,0,0)=\left(\begin{array}{ccc}0 & \alpha^* & 0\\ -\alpha^* & 0  &0\\ 0 & 0 & 0\end{array}\right).$$
In this paper, assuming generic conditions on $X^*$, we will study some heteroclinic phenomena which appear in versal analytic unfoldings of this singularity in an open region of the parameter space. Note that, in the linear setting, it is clear that this singularity can be met by a generic family of linear vector fields depending on at least two parameters. Thus, it has codimension two. However, since $DX^*(0,0,0)$ has zero trace, it is reasonable to study it in the context of conservative vector fields. In this case, the singularity can be met by a generic linear family depending on one parameter, and so it has codimension one.

Here, we will work in the general setting (that is, with two parameters), since the conservative one is just a particular case of it. Hence, we will study generic analytic families $X_{\mu,\nu}$ of vector fields on $\mathbb{R}^3$ depending on two parameters $(\mu,\nu)\in\mathbb{R}^2$, such that $X_{0,0}=X^*$, the vector field described above. Following \cite{Guc81} and \cite{GH90}, after some changes of variables we can write $X_{\mu,\nu}$ in its normal form of order two, namely:
\begin{eqnarray}\label{normalformcartabc}
 \frac{d\xb}{d\tb}&=&\xb\left(\beta_0\nu-\beta_1\zb\right)+\yb\left(\alpha^*+\alpha_1\nu+\alpha_2\mu+\alpha_3\zb\right)+O_3(\xb,\yb,\zb,\mu,\nu),\medskip\nonumber\\
\frac{d\yb}{d\tb}&=&-\xb\left(\alpha^*+\alpha_1\nu+\alpha_2\mu+\alpha_3\zb\right)+\yb\left(\beta_0\nu-\beta_1\zb\right)+O_3(\xb,\yb,\zb,\mu,\nu),\medskip\\
\frac{d\zb}{dt}&=&-\gamma_0\mu+\gamma_1\zb^2+\gamma_2(\xb^2+\yb^2)+\gamma_3\mu^2+\gamma_4\nu^2+\gamma_5\mu\nu+O_3(\xb,\yb,\zb,\mu,\nu).\nonumber
\end{eqnarray}

Note that the coefficients $\alpha_3$, $\beta_1$, $\gamma_1$ and $\gamma_2$ depend exclusively on the vector field $X^*$. We also observe that the conservative setting corresponds to taking $\nu=0$, $\gamma_1=\beta_1$ and imposing also that the higher order terms are divergence-free.

From now on, we will assume that $X^*$ satisfies the following generic conditions:
\begin{equation}\label{condicionsX0}
 \beta_1\neq0, \qquad \gamma_1\neq 0.
\end{equation}
Moreover, we will consider unfoldings satisfying the generic conditions:
$$\beta_0\neq0, \qquad \gamma_0\neq 0.$$
Depending on the other coefficients $\alpha_i$ and $\gamma_i$, one obtains different qualitative behaviors for the orbits of the vector field $X_{\mu,\nu}$. The different versal unfoldings have been widely studied in the past, see for example \cite{Guc81,GH90,Tak74,BV84,BaSe06}. However, if $(\mu,\nu)$ belongs to a particular open set of the parameter space, these unfoldings are still not completely understood. This set is defined by the following conditions:
\begin{equation}
 \gamma_0\gamma_1\mu>0,\qquad |\beta_0\nu|<|\beta_1|\sqrt{|\mu|}\label{condicionsparams}.
\end{equation}
In this paper we will study the unfoldings $X_{\mu,\nu}$ with the parameters belonging to the open set defined by \eqref{condicionsparams}. In fact, redefining the parameters $\mu$ and $\nu$ and the variable $\zb$, one can achieve:
\begin{equation}\label{condicionscompactes}
 \beta_0=\gamma_0=1,\qquad \beta_1>0,\qquad\gamma_1>0,
\end{equation}
and consequently the open set defined by \eqref{condicionsparams} is now:
\begin{equation}\label{condicionsparamsconcr}
 \mu>0,\qquad |\nu|<\beta_1\sqrt{\mu}.
\end{equation}

Moreover, dividing the variables $\xb,\yb$ and $\zb$ by $\sqrt{\gamma_1}$, multiplying $\tb$ by $\sqrt{\gamma_1}$, redefining the coefficients and denoting $\alpha_0=\alpha^*/\sqrt{\gamma_1}$, we can assume that $\gamma_1=1$, and therefore the system \eqref{normalformcartabc} becomes:
\begin{eqnarray}\label{sistema}
 \frac{d\xb}{d\tb}&=&\xb\left(\nu-\beta_1\zb\right)+\yb\left(\alpha_0+\alpha_1\nu+\alpha_2\mu+\alpha_3\zb\right)+O_3(\xb,\yb,\zb,\mu,\nu),\medskip\nonumber\\
\frac{d\yb}{d\tb}&=&-\xb\left(\alpha_0+\alpha_1\nu+\alpha_2\mu+\alpha_3\zb\right)+\yb\left(\nu-\beta_1\zb\right)+O_3(\xb,\yb,\zb,\mu,\nu),\medskip\\
\frac{d\zb}{dt}&=&-\mu+\zb^2+\gamma_2(\xb^2+\yb^2)+\gamma_3\mu^2+\gamma_4\nu^2+\gamma_5\mu\nu+O_3(\xb,\yb,\zb,\mu,\nu).\nonumber
\end{eqnarray}
We denote by $X_{\mu,\nu}^2$, usually called the normal form of second order, the vector field obtained considering the terms of \eqref{sistema} up to order two. Therefore, one has:
$$
X_{\mu,\nu}=X_{\mu,\nu}^2+F_{\mu,\nu}^2, \quad \mbox{where } \ F_{\mu,\nu}^2(\xb,\yb,\zb)=O_{3}(\xb,\yb,\zb,\mu,\nu).
$$

Similarly, doing the normal form procedure up to any finite order $n$, one has:
$$X_{\mu,\nu}=X_{\mu,\nu}^n+F_{\mu,\nu}^n\qquad\qquad n\geq2,$$
where $X_{\mu,\nu}^n(\xb,\yb,\zb)$ is a polynomial of degree $n$ and:
$$F_{\mu,\nu}^n(\xb,\yb,\zb)=O_{n+1}(\xb,\yb,\zb,\mu,\nu).$$
Moreover, one can show (see \cite{GH90}) that if $\mu$ and $\nu$ are small enough:
\begin{enumerate}
 \item $X_{\mu,\nu}^n$ has to critical points $\bar{S}_\pm^n(\mu,\nu)=(0,0,\zb_\pm^n(\mu,\nu))$, with:
$$\zb_\pm^n(\mu,\nu)=\pm\sqrt{\mu}+O((\mu^2+\nu^2)^{1/2}),$$
with eigenvalues:
\begin{eqnarray*}\lambda_1^\pm&=&\pm2\sqrt{\mu}+O((\mu^2+\nu^2)^{1/2}),\\
\lambda_2^\pm&=&\nu\mp\beta_1\sqrt{\mu}+i(\alpha_0\pm\alpha_3\sqrt{\mu})+O((\mu^2+\nu^2)^{1/2}),\\
\lambda_3^\pm&=&\overline{\lambda_2^\pm}
\end{eqnarray*}
Hence, $\bar S_\pm^n(\mu,\nu)$ are both of saddle-focus type, $\bar S_+^n(\mu,\nu)$ having a one-dimensional unstable manifold and a two-dimensional stable one, and $\bar S_-^n(\mu,\nu)$ having a one-dimensional stable manifold and a two-dimensional unstable one.
 \item The segment of the $\zb$-axis between $\bar{S}_+^n(\mu,\nu)$ and $\bar{S}_-^n(\mu,\nu)$ is a heteroclinic connection.
 \item If $\gamma_2>0$ there exists a curve $\Gamma_n$ in the $(\mu,\nu)$-plane of the form $\nu=m\sqrt{\mu}+O(\mu^{3/2})$, such that for $(\mu,\nu)\in\Gamma_n$ the two-dimensional invariant manifolds of the points $\bar{S}_\pm^n(\mu,\nu)$ are coincident. In the conservative setting (where $\nu=0$), the two-dimensional invariant manifolds of $\bar{S}_\pm^n(\mu)$ coincide for all values of $\mu$.
\end{enumerate}
Then, the whole vector field $X_{\mu,\nu}=X_{\mu,\nu}^n+F_{\mu,\nu}^n$ will have two critical points $\bar{S}_\pm(\mu,\nu)$ close to $\bar{S}_\pm^n(\mu,\nu)$, which will be also of saddle-focus type. However it is reasonable to expect that the heteroclinic connections will no longer persist. Moreover, for $(\mu,\nu)$ close to $\Gamma_n$, what one might expect is that this breakdown of the heteroclinic connections causes the birth of homoclinic orbits to the point $S_+(\mu,\nu)$ (or $S_-(\mu,\nu)$), giving rise to what is known as a \v{S}il'nikov bifurcation (see \cite{Shil65}, and also \cite{Shil67} for the analogous phenomenon for vector fields in $\mathbb{R}^4$).

The existence of such \v{S}il'nikov bifurcations for $\CMcal{C}^\infty$ unfoldings of the Hopf-zero singularity is studied in \cite{BV84}. In the first place, in that paper the authors show that, doing the normal form procedure up to order infinity and using Borel-Ritt theorem, one can write $X_{\mu,\nu}=X_{\mu,\nu}^\infty+F_{\mu,\nu}^\infty$, where $X_{\mu,\nu}^\infty$ has the same properties 1, 2 and 3 as the vector fields $X_{\mu,\nu}^n$ described above, and $F_{\mu,\nu}=F_{\mu,\nu}(x,y,z)$ is a flat function at $(x,y,z,\mu,\nu)=(0,0,0,0,0)$. Their main result is that, given a family $X_{\mu,\nu}^\infty$ there exist flat perturbations $p_{\mu,\nu}^\infty$ such that the family:
\begin{equation}
 X_{\mu,\nu}=X_{\mu,\nu}^\infty+p_{\mu,\nu}^\infty \label{familiaBV}
\end{equation}
possesses a sequence of \v{S}il'nikov bifurcations, occurring at parameter points $(\mu_l,\nu_l)$, $l\in\mathbb{N}$, which accumulate at $(\mu,\nu)=(0,0)$. Moreover, they prove that there is a dense subset of the unfoldings which do not have a \v{S}il'nikov bifurcation, but in the complement of this set this \v{S}il'nikov phenomenon occurs densely. Even if the authors give an existence theorem, they do not provide conditions to check if a concrete family $X_{\mu,\nu}$ possesses a \v{S}il'nikov bifurcation. Moreover, the fields of the family \eqref{familiaBV}, for which they prove the existence of such bifurcations, are  $\CMcal{C}^\infty$ but not analytic vector fields.

Our final goal will be to study real analytic unfoldings of the singularity $X^*$ and to provide specific and explicit conditions over the family $X_{\mu,\nu}$ that, under assumptions \eqref{condicionscompactes}, and when the parameters belong to the set defined by \eqref{condicionsparamsconcr}, ensure the existence of a \v{S}il'nikov bifurcation. We conjecture that a similar phenomenon as the one described in \cite{BV84} will happen for a generic analytic singularity $X^*$ and all unfoldings satisfying these assumptions.

However, before proving the existence of an homoclinic connection, one has to check that the heteroclinic connections are broken indeed. In this paper we give a generic and numerically checkable condition on $X^*$ which guarantees the breakdown of the one-dimensional heteroclinic connection for any universal analytic unfolding satisfying \eqref{condicionscompactes} and \eqref{condicionsparamsconcr}. This is just a first step towards proving the existence of \v{S}il'nikov bifurcations for universal analytic unfoldings of the Hopf-zero singularity.

The breakdown of this heteroclinic orbit has been proved, in the conservative setting, for the so-called regular case in \cite{BaSe06}. In this problem, the regular case consists in considering that the terms of order three in system \eqref{sistema} are all divisible by $\mu$. Under this assumption, the authors give an asymptotic formula of the splitting distance of the one-dimensional invariant manifolds when they meet the plane $z=0$, which is a suitable version of the Melnikov integral (see \cite{GH90,Mel63}). Moreover, this distance turns out to be exponentially small with respect to the perturbation parameter $\mu$. Note that, as we pointed out above, the breakdown of the heteroclinic orbit cannot be detected in the truncation of the normal form at any finite order and therefore, as it is usually called, it is a phenomenon \textit{beyond all orders}. Hence, the exponential smallness of the splitting distance is in fact what one expected.

Here we deal with a generic $X^*$ and universal unfoldings, and therefore with the singular case. We observe that this case is very relevant since the vector field $X^*$ and the unfoldings considered in the regular case are not generic. Indeed, on one hand, the fact that the terms of order three in system \eqref{sistema} must be all divisible by $\mu$ implies that the singularity $X^*$ cannot have any term of order three, which obviously is not a generic condition. On the other hand, it also implies that some coefficients of the Taylor expansion of the unfoldings $X_{\mu,\nu}$ must be equal to zero, and hence the result is not valid for generic unfoldings.

In this paper we give an asymptotic formula of the distance between the two one-dimensional invariant manifolds when they meet the plane $z=0$, for generic unfoldings and both in the dissipative and conservative settings. This distance is again exponentially small with respect to the parameter $\mu$. However, Melnikov theory is no longer valid, and one has to introduce some techniques that were not needed in \cite{BaSe06}, such as the study of the so-called \textit{inner equation} (for other examples of exponentially small phenomena where the prediction given by Melnikov theory is not true, see \cite{Gel97, Tre97, Sau01}). Moreover, from the asymptotic formula we obtain an explicit and checkable condition over the vector field $X^*$ (namely, that a given constant $C^*$ is not zero) which ensures that, for every member of the family $X_{\mu,\nu}$ satisfying \eqref{condicionscompactes} and \eqref{condicionsparamsconcr}, the one-dimensional invariant manifolds of $S_\pm(\mu,\nu)$ are not coincident. This constant $C^*$, which is usually called the Stokes constant (see \cite{St64,St02}), depends on the full jet of $X^*$ and therefore, up to now, it can only be computed numerically.

The main result of the paper is the following:

\begin{thm*}\label{maintheorem}
 Consider system \eqref{sistema}, with $\mu,\,\beta_1>0$ and $|\nu|<\beta_1\sqrt{\mu}$, which has two critical points $S_\pm(\mu,\nu)$ of saddle focus type. Then there exists a constant $C^*$, depending on the full jet of $X^*$, such that the distance $\bar d^{\uns,\sta}$ between the one-dimensional stable manifold of $S_-(\mu,\nu)$ and the one-dimensional unstable manifold of $S_+(\mu,\nu)$ when they meet the plane $\zb=0$ is given asymptotically by:
$$\bar d^{\uns,\sta}=\mu^{-\frac{\beta_1}{2}}e^{-\frac{\alpha_0\pi}{2\sqrt{\mu}}}e^{\frac{\pi}{2}(\alpha_0h_0-\frac{\alpha_1\nu}{\sqrt{\mu}}+\alpha_3)}\left(C^*+O\left(\frac{1}{\log(1/\sqrt{\mu})}\right)\right),$$
where $-h_0$ is the coefficient of $\zb^3$ in the third equation of system \eqref{sistema}.
\end{thm*}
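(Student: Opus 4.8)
The plan is to regard \eqref{sistema} as a singular perturbation problem with small parameter $\sqrt{\mu}$. After a further normalization and the rescaling $\zb=\sqrt{\mu}\,z$, $\tb=t/\sqrt{\mu}$, together with a rotating change of coordinates in $(\xb,\yb)$ that turns the fast rotation into the explicit factor $e^{i\alpha_0 t/\sqrt{\mu}}$, the system becomes a slow--fast system whose singular character is carried by the $1/\sqrt{\mu}$ in front of the equations and in which the perturbation is quasi-periodic in the fast phase. In the limit $\mu\to0$ the $z$-equation reduces to $\dot z=z^2-1$, whose heteroclinic solution $z_0(t)=-\tanh t$ joins the rescaled equilibria $z=\pm1$, vanishes at $t=0$ (that is, at $\zb=0$), and is meromorphic with nearest singularities at $t=\pm i\pi/2$. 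The combination of this distance $\pi/2$ with the frequency $\alpha_0/\sqrt{\mu}$ is exactly what produces the factor $e^{-\alpha_0\pi/(2\sqrt{\mu})}$. I would then locate the two critical points $S_\pm(\mu,\nu)$, straighten their one-dimensional invariant manifolds locally, and seek the unstable manifold of $S_+$ and the stable manifold of $S_-$ as perturbations of this heteroclinic, parametrized by the rescaled time and extended to complex $t$.

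The technical core is to control these two parametrizations on complex ``outer'' domains of the form $|\im t|\le\pi/2-\kappa$, with $\kappa=\kappa(\mu)\to0$ as small as the method permits, through a fixed-point scheme in weighted spaces of analytic functions: the linear part of the invariance equation is inverted along the unperturbed flow, and the weights must capture both the exponential decay as $\re t\to\pm\infty$ and the blow-up near $t=\pm i\pi/2$ governed by the amplitude factor $\exp\bigl(\int(-\beta_1\zb)\,d\tb\bigr)\sim(t\mp i\pi/2)^{\beta_1}$ and by the $\alpha_3\zb$ and $-h_0\zb^3$ contributions. Tracking these amplitude and phase factors from $t=0$ to the singularity is what eventually yields the algebraic prefactor $\mu^{-\beta_1/2}$ and the lower-order exponential correction $e^{\frac{\pi}{2}(\alpha_0 h_0-\alpha_1\nu/\sqrt{\mu}+\alpha_3)}$. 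Unlike in the regular case of \cite{BaSe06}, the first-order Melnikov prediction is not sharp here, so the outer estimates alone do not pin down the dominant coefficient; the normal-form remainder (taken to a sufficiently high finite order $N$) must be retained throughout, and it is its contribution, in the limit $N\to\infty$, that makes $C^*$ depend on the full jet of $X^*$.

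Near $t=i\pi/2$ I would rescale by the inner variable $\zeta=(t-i\pi/2)/\sqrt{\mu}$, so that $\zb\sim-1/\zeta$ and the fast oscillation factors as $e^{i\alpha_0\zeta}\,e^{-\alpha_0\pi/(2\sqrt{\mu})}$, extracting the exponentially small constant, while the leading equation becomes a $\mu$-independent \emph{inner equation}. The plan is to prove existence and uniqueness, in suitable unbounded sectors of the $\zeta$-plane, of the two distinguished solutions that are the analytic continuations of the rescaled unstable and stable manifolds, with prescribed algebraic behaviour as $|\zeta|\to\infty$ in the matching directions, by a fixed-point argument in a Banach space of analytic functions with appropriate decay. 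Their difference is exponentially small along the matching direction, of the form $C^*e^{i\alpha_0\zeta}$ to leading order, and this defines the Stokes constant $C^*$. One then matches these inner solutions with the outer parametrizations on an overlapping region; because this overlap is only logarithmically wide in the natural scale, the matching transfers the value of $C^*$ to the outer manifolds with relative error $O\bigl(1/\log(1/\sqrt{\mu})\bigr)$.

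Finally, I would compare the unstable and stable outer parametrizations. Writing their difference as a series in $e^{i\alpha_0 t/\sqrt{\mu}}$, the dominant term is the first harmonic, whose coefficient equals $C^*+O\bigl(1/\log(1/\sqrt{\mu})\bigr)$ times the exponential and algebraic prefactors accumulated above, once the contributions of both singularities $t=\pm i\pi/2$ are combined to produce a real quantity. Evaluating at $t=0$, i.e. at $\zb=0$, and undoing all the rescalings yields the asserted formula for $\bar d^{\uns,\sta}$; the conservative statement is recovered by putting $\nu=0$. The main obstacle is the study of the inner equation: proving existence, the sectorial analytic continuation and the sharp asymptotics of its unstable and stable solutions, and then carrying out the matching with a quantitatively controlled error. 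This is precisely the ingredient that does not appear in the regular case, and the slow logarithmic rate of the error term in $C^*$ reflects the near-resonant matching between inner and outer scales forced by the non-integer exponent $\beta_1$.
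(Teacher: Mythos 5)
Your proposal follows essentially the same route as the paper: rescaling by $\delta=\sqrt{\mu}$, complex outer parameterizations of the two one-dimensional manifolds on domains reaching within $O(\delta\log(1/\delta))$ of the singularities $\pm i\pi/2$ via a fixed-point scheme in weighted analytic norms, a $\delta$-independent inner equation near each singularity whose two distinguished solutions define the Stokes constant, matching with relative error $O\bigl(1/\log(1/\sqrt{\mu})\bigr)$, and extraction of the dominant (first-harmonic) term of the difference at $\zb=0$ combining both singularities. The only inessential deviation is your invocation of a normal form to order $N$ with $N\to\infty$: the paper works directly with the order-two normal form plus the full analytic remainder, and the dependence of $C^*$ on the full jet enters solely through the inner equation, exactly as you otherwise describe.
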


\begin{rmk*}
In the conservative setting we have $\nu=0$ and $\beta_1=1$, and hence this distance is given by:
$$\bar d^{\uns,\sta}=\mu^{-1/2}e^{-\frac{\alpha_0\pi}{2\sqrt{\mu}}}e^{\frac{\pi}{2}(\alpha_0h_0+\alpha_3)}\left(C^*+O\left(\frac{1}{\log(1/\sqrt{\mu})}\right)\right).$$
\end{rmk*}

\begin{cor*}
 If $C^*\neq0$, the one-dimensional invariant manifolds of $S_+(\mu,\nu)$ and $S_-(\mu,\nu)$ do not intersect.
\end{cor*}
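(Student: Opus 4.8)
The plan is to obtain the Corollary as an immediate consequence of the Theorem: the asymptotic formula forces the distance $\bar d^{\uns,\sta}$ to be nonzero whenever $C^*\neq 0$ and $\mu$ is small enough, while a vanishing distance is equivalent to the persistence of a heteroclinic connection between $S_+(\mu,\nu)$ and $S_-(\mu,\nu)$. First I would record that the prefactor appearing in the Theorem,
$$P(\mu,\nu)=\mu^{-\frac{\beta_1}{2}}\,e^{-\frac{\alpha_0\pi}{2\sqrt{\mu}}}\,e^{\frac{\pi}{2}\left(\alpha_0h_0-\frac{\alpha_1\nu}{\sqrt{\mu}}+\alpha_3\right)},$$
is strictly positive on the whole admissible region $\mu>0$, $|\nu|<\beta_1\sqrt{\mu}$, being the product of a positive power of $\mu$ with exponentials of finite real numbers (the last exponent is finite because $|\nu/\sqrt{\mu}|<\beta_1$ there); in particular $P(\mu,\nu)$ never vanishes. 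Next, since $C^*\neq 0$ and the error $O(1/\log(1/\sqrt{\mu}))$ tends to $0$ as $\mu\to 0^+$, uniformly in $\nu$ over the wedge $|\nu|<\beta_1\sqrt{\mu}$, there is $\mu_0>0$ such that $|C^*+O(1/\log(1/\sqrt{\mu}))|\geq\frac{1}{2}|C^*|>0$ for all $0<\mu<\mu_0$ and $|\nu|<\beta_1\sqrt{\mu}$. Combining the two facts, the Theorem gives $\bar d^{\uns,\sta}\neq 0$ for all such $(\mu,\nu)$.

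It then remains to translate this into a statement about the manifolds. By definition $\bar d^{\uns,\sta}$ is the distance, measured on the section $\{\bar z=0\}$, between the one-dimensional unstable manifold of $S_+(\mu,\nu)$ and the one-dimensional stable manifold of $S_-(\mu,\nu)$, both of which meet $\{\bar z=0\}$ (this is exactly what makes the definition meaningful, and holds because $S_+$ lies at $\bar z\approx\sqrt{\mu}>0$, $S_-$ at $\bar z\approx-\sqrt{\mu}<0$, and $\bar z$ is monotone along the relevant branches near the section). If these two one-dimensional manifolds had a common point, then by uniqueness of solutions of the ODE the orbit through it would be contained in both, hence would be a heteroclinic orbit from $S_+(\mu,\nu)$ to $S_-(\mu,\nu)$; such an orbit necessarily crosses $\{\bar z=0\}$, and at the crossing point the two manifolds coincide, forcing $\bar d^{\uns,\sta}=0$. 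This contradicts the previous paragraph, so for $0<\mu<\mu_0$ and $|\nu|<\beta_1\sqrt{\mu}$ the one-dimensional invariant manifolds of $S_+(\mu,\nu)$ and $S_-(\mu,\nu)$ do not intersect.

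I do not expect any genuine obstacle in this argument, since all the analytic difficulty (the construction and matching of the outer solutions and of the inner equation, and the emergence of the Stokes constant $C^*$) is already packaged in the Theorem. The only points deserving a line of care are the elementary ODE remark that for one-dimensional invariant manifolds ``intersecting'' and ``coinciding along a heteroclinic orbit'' are equivalent, and the uniformity in $\nu$ of the error term over the wedge $|\nu|<\beta_1\sqrt{\mu}$, which is what allows the threshold $\mu_0$ to be chosen independently of $\nu$.
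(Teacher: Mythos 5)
Your argument is correct and is exactly the derivation the paper intends: the Corollary is stated as an immediate consequence of the Theorem, with the nonvanishing prefactor and the smallness of the $O(1/\log(1/\sqrt{\mu}))$ term giving $\bar d^{\uns,\sta}\neq0$ for $\mu$ small, and the standard one-dimensional-manifold/uniqueness remark converting an intersection into a coincidence at $\{\bar z=0\}$. Your two points of care (uniformity in $\nu$, handled in the paper by treating $\param=\nu/\sqrt{\mu}$ as a bounded parameter with constants independent of it, and the monotone crossing of $\{\bar z=0\}$) are consistent with the paper's setup, so nothing further is needed.
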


\section{Sketch of the proof}
The aim of this section is to give the main ideas of how Theorem \ref{maintheorem} is proved.
\subsection{Set-up and notation}
First of all we will rescale the variables and parameters so that the critical points are $O(1)$, and not $O(\sqrt{\mu})$ as we had in system \eqref{sistema}. We define the new parameters $\delta=\sqrt{\mu}$, $\param=\delta^{-1}\nu$, and the new variables $x=\delta^{-1}\xb$, $y=\delta^{-1}\yb$, $z=\delta^{-1}\zb$ and $t=\delta\tb$. Then, renaming the coefficients $b=\gamma_2$, $c=\alpha_3$ and $d=\beta_1$, system \eqref{sistema} becomes:
\begin{equation}\label{initsys}
 \begin{array}{rcl}
  \displaystyle\frac{dx}{dt}&=&\displaystyle x\left(\param-dz\right)+\left(\frac{\alpha(\delta\param)}{\delta}+cz\right)y+\delta^{-2}f(\delta x,\delta y, \delta z, \delta,\delta\param),\medskip\\
  \displaystyle\frac{dy}{dt}&=&\displaystyle-\left(\frac{\alpha(\delta\param)}{\delta}+cz\right)x+y\left(\param-dz\right)+\delta^{-2}g(\delta x,\delta y, \delta z, \delta,\delta\param),\medskip\\
  \displaystyle\frac{dz}{dt}&=&-1+b(x^2+y^2)+z^2+\delta^{-2}h(\delta x, \delta y, \delta z, \delta,\delta\param),
 \end{array}
\end{equation}
where $\coef>0$, $f$, $g$ and $h$ are real analytic functions of order three in all their variables, $\delta>0$ is a small parameter and $|\param|<\coef$. Moreover, $\alpha(\delta\param)$ is an analytic function such that $\alpha(0)=\alpha_0\neq0$ and $\alpha'(0)=\alpha_1$.

\begin{rmk}
Without loss of generality, we can assume that $\alpha_0$ and $c$ are both positive constants. In particular, for $\delta$ small enough, $\alpha(\delta\param)$ will be also positive.
\end{rmk}

\begin{rmk}
 From now on, in order to shorten the notation, we will not write explicitly the dependence of $\alpha$ with respect to $\delta\param$. That is, we will write $\alpha$ instead of $\alpha(\delta\param)$. In fact, $\alpha$ will be treated as a parameter independent of $\delta$ and $\param$, since for $\delta$ small enough:
$$0<K_1\leq\alpha(\delta\param)\leq K_2,$$
and both constants are independent of these two parameters.
\end{rmk}

Below we summarize some properties of the rescaled system \eqref{initsys}, which can be deduced similarly as in \cite{BaSe08}.
\begin{lem}
  For any value of $\delta>0$, the unperturbed system (system \eqref{initsys} with $f=g=h=0$) verifies:
\begin{enumerate}
 \item It possesses two hyperbolic fixed points $S_{\pm}^0=(0,0,\pm1)$ which are of saddle-focus type with eigenvalues $\param\mp\coef+|\frac{\alpha}{\delta}\pm c|i$,
$\param\pm\coef-|\frac{\alpha}{\delta}\pm c|i$, and $\pm2$.
\item The one-dimensional unstable manifold of $S_{+}^0$ and the one-dimensional stable manifold of $S_{-}^0$ coincide along the heteroclinic connection
$\{(0,0,z):\, -1<z<1\}.$ This heteroclinic orbit can be parameterized by
$$\Upsilon_0(t)=(0,0,\het(t))=(0,0,-\tanh t),$$
if we require $\Upsilon_0(0)=(0,0,0).$
\end{enumerate}
\end{lem}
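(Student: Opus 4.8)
The plan is an elementary direct verification, resting on two structural facts: the $z$-axis $\{x=y=0\}$ is invariant under the unperturbed flow, and the linearization at any point of this axis is block-diagonal.

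First I would check invariance of the $z$-axis: with $f=g=h=0$ and $x=y=0$ the right-hand sides of the first two equations of \eqref{initsys} vanish identically, so the $z$-axis is an invariant line on which the dynamics reduces to the scalar equation $\dot z=z^2-1$. Its equilibria are $z=\pm1$, which gives the fixed points $S_\pm^0=(0,0,\pm1)$; moreover, for a fixed $z$ the equations for $(x,y)$ form a linear system whose matrix has determinant $(\param-\coef z)^2+(\tfrac{\alpha}{\delta}+cz)^2$, which is strictly positive for $\delta$ small on any bounded $z$-range since $\tfrac{\alpha}{\delta}+cz>0$ there, so no further fixed points occur near the heteroclinic region. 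To obtain the spectra I would compute the Jacobian at a generic point $(0,0,z_0)$: the entries $\partial_z F_1$, $\partial_z F_2$, $\partial_x F_3$, $\partial_y F_3$ all vanish on the axis, so the matrix splits into a $2\times2$ block acting on $(x,y)$, with both diagonal entries equal to $\param-\coef z_0$ and off-diagonal entries $\tfrac{\alpha}{\delta}+cz_0$ and $-(\tfrac{\alpha}{\delta}+cz_0)$, together with the scalar $2z_0$ acting on $z$. Hence the eigenvalues are $2z_0$ and $(\param-\coef z_0)\pm i(\tfrac{\alpha}{\delta}+cz_0)$; setting $z_0=\pm1$ reproduces exactly the eigenvalues listed in the lemma. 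Since $|\param|<\coef$, the real parts $\pm2$ and $\param\mp\coef$ are all nonzero, so both points are hyperbolic, and the signs classify them: at $S_+^0$ the real eigenvalue is $+2$ while the complex pair has real part $\param-\coef<0$, giving a one-dimensional unstable and two-dimensional stable manifold, and the reverse holds at $S_-^0$.

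For part 2 I would argue on the invariant $z$-axis, where $\dot z=z^2-1<0$ for $z\in(-1,1)$: an orbit through any point of the open segment is strictly decreasing in $z$ and converges to $z=1$ as $t\to-\infty$ and to $z=-1$ as $t\to+\infty$ (either by comparison or directly from the explicit solution below). So $\{(0,0,z):-1<z<1\}$ is a single orbit, backward-asymptotic to $S_+^0$ and forward-asymptotic to $S_-^0$; being a one-dimensional orbit, it is contained in $W^{\uns}(S_+^0)$ and in $W^{\sta}(S_-^0)$, both one-dimensional by part 1, so these two manifolds coincide along the segment. Finally, integrating $\dot z=z^2-1$ with the normalization $z(0)=0$ gives $\het(t)=-\tanh t$, as one checks from $\tfrac{d}{dt}(-\tanh t)=\tanh^2 t-1=\het^2-1$; thus $\Upsilon_0(t)=(0,0,-\tanh t)$ is the asserted parameterization.

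There is essentially no obstacle: the statement is standard (the cited analogy with \cite{BaSe08} already signals this), and the only items warranting a sentence of care are the uniformity in $\delta$ and $\param$ of the positivity $\tfrac{\alpha}{\delta}+cz>0$ and of the hyperbolicity gap $\coef-|\param|>0$ — both immediate from $|\param|<\coef$, $\delta$ small, and the remarks on $\alpha$ preceding the lemma — together with the identification of the one-dimensional invariant manifolds with the heteroclinic segment, which follows at once from invariance of the $z$-axis and uniqueness of one-dimensional (un)stable manifolds.
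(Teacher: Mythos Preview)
Your proof is correct and is precisely the direct verification the paper has in mind; the paper does not actually write out a proof of this lemma but simply states it as a summary of properties ``which can be deduced similarly as in \cite{BaSe08}''. Your computation of the block-diagonal Jacobian, the resulting eigenvalues, and the explicit integration of $\dot z=z^2-1$ is exactly the routine check being deferred (incidentally, the second complex eigenvalue in the statement should of course read $\param\mp\coef-|\tfrac{\alpha}{\delta}\pm c|i$, conjugate to the first, as your computation confirms).
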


\begin{lem}\label{lemaptscritics}
 If $\delta>0$ is small enough, system \eqref{initsys} has two fixed points $S_{\pm}(\delta,\param)$ of saddle-focus type $S_{\pm}(\delta,\param)=(x_\pm(\delta,\param), y_\pm(\delta,\param),z_\pm(\delta,\param)),$ with:
$$x_\pm(\delta,\param)=O(\delta^2,\delta^2\param^3)=O(\delta^2),\quad y_\pm(\delta,\param)=O(\delta^2,\delta^2\param^3)=O(\delta^2),\quad z_\pm(\delta,\param)=\pm1+O(\delta,\delta\param^3)=\pm1+O(\delta).$$
$S_+(\delta,\param)$ has a one-dimensional unstable manifold and a two-dimensional stable one. Conversely, $S_-(\delta,\param)$ has a one-dimensional stable manifold and a two-dimensional unstable one.

Moreover, there are no other fixed points of \eqref{initsys} in the closed ball $B(\delta^{-1/3}).$
\end{lem}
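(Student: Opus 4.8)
The plan is to prove this by a quantitative implicit function theorem, exactly in the spirit of the analogous statements in \cite{BaSe08}. The only genuine subtlety is that the nonlinear terms $\delta^{-2}f,\delta^{-2}g,\delta^{-2}h$ are \emph{not} uniformly small on the large ball $B(\delta^{-1/3})$: on that set the arguments $\delta x,\delta y,\delta z$ can be as large as $\delta^{2/3}$, so, since $f,g,h$ vanish to order three, $\delta^{-2}f,\delta^{-2}g,\delta^{-2}h=\delta^{-2}\,O\big((\delta^{2/3})^3\big)=O(1)$ there; at the same time the linear part of the first two equations of \eqref{initsys} has a block of eigenvalues of large modulus $\sim\alpha/\delta$. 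The argument has to use both facts — the largeness of that block (which makes its inverse small) and the order-three vanishing together with $\delta|z|\le\delta^{2/3}\ll1$ (which keeps the dominant balance $z^2\approx1$ in force).

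\emph{Step 1 — eliminate $x$ and $y$.} For $|z|\le\delta^{-1/3}$ the coefficient matrix of the linear part of the first two equations,
\[M(z)=\begin{pmatrix}\param-\coef z & \tfrac{\alpha}{\delta}+cz\\ -\big(\tfrac{\alpha}{\delta}+cz\big) & \param-\coef z\end{pmatrix},\]
has $\det M(z)=(\param-\coef z)^2+\big(\tfrac{\alpha}{\delta}+cz\big)^2\ge\tfrac{\alpha^2}{4\delta^2}$ for $\delta$ small, hence $\|M(z)^{-1}\|\le C\delta$ uniformly in $\delta$ and $\param$. I would then rewrite $\dot x=\dot y=0$ as the fixed point equation $(x,y)=-M(z)^{-1}\big(\delta^{-2}f,\delta^{-2}g\big)$ and apply the contraction principle on $\{|(x,y)|\le\delta^{-1/3}\}$ for each fixed $z$ with $|z|\le\delta^{-1/3}$: the right-hand side maps this ball into one of radius $C\delta$, and its $(x,y)$-derivative is $\le\|M(z)^{-1}\|\cdot\delta^{-1}\cdot O(\delta^{4/3})=O(\delta^{4/3})$, giving a unique analytic solution $(x,y)=\Phi(z;\delta,\param)$ with $\Phi=O(\delta)$ on $|z|\le\delta^{-1/3}$. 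In a fixed neighbourhood of $z=\pm1$ the arguments $\delta\Phi,\delta z,\delta,\delta\param$ are all $O(\delta)$, so there $\delta^{-2}f,\delta^{-2}g=O(\delta)$ and one bootstraps to $\Phi=O(\delta^2,\delta^2\param^3)$, the $\param^3$ coming from the monomial $(\delta\param)^3$ of $f,g$.

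\emph{Step 2 — the scalar equation.} Substituting into $\dot z=0$ yields $G(z;\delta,\param)=0$ with
\[G(z;\delta,\param)=z^2-1+b\,|\Phi(z)|^2+\delta^{-2}h\big(\delta\Phi_1(z),\delta\Phi_2(z),\delta z,\delta,\delta\param\big).\]
Near $z=\pm1$ one has $G(\pm1)=O(\delta,\delta\param^3)$ and $\partial_zG=2z+O(\delta)=\pm2+o(1)\neq0$, so the implicit function theorem produces a unique (smooth) root $z_\pm(\delta,\param)=\pm1+O(\delta,\delta\param^3)$. For $z\in[-\delta^{-1/3},\delta^{-1/3}]$ bounded away from $\pm1$ I would check $G\neq0$ by hand: on $[-2,2]$, $|z^2-1|$ is bounded below while the perturbation is $O(\delta)$; for $2\le|z|\le\delta^{-1/3}$, $b|\Phi|^2=O(\delta^2)$ and, since $h$ vanishes to order three, $|\delta^{-2}h|\le C\delta|z|^3=o(z^2)$ because $\delta|z|\le\delta^{2/3}$, so $G(z)\ge z^2/2>0$. (The other formal balance, $z^2\sim\delta|z|^3$, would sit at $|z|\sim\delta^{-1}$, well outside the ball, which is why nothing spurious appears.) Since the elimination in Step 1 is unique, it follows that \eqref{initsys} has exactly the two fixed points $S_\pm(\delta,\param)=(\Phi(z_\pm),z_\pm)$ in $B(\delta^{-1/3})$, with the asserted asymptotics.

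\emph{Step 3 — saddle-focus character, and the main obstacle.} The Jacobian of \eqref{initsys} at $S_\pm(\delta,\param)$ is the block-lower-triangular matrix $\mathrm{diag}\big(M(\pm1),\pm2\big)$ of the unperturbed field, perturbed by $O(\delta)$ in every entry (using $x_\pm,y_\pm=O(\delta^2)$, $z_\pm=\pm1+O(\delta)$, and $\delta^{-1}\partial f,\delta^{-1}\partial g,\delta^{-1}\partial h=O(\delta)$). The delicate point here is that the complex block has eigenvalues of modulus $\sim\alpha/\delta$, so one cannot merely invoke continuity of eigenvalues; instead, since the eigenvalue $\pm2$ is simple and lies at distance $\sim\alpha/\delta$ from the spectrum of $M(\pm1)$, it persists as a real eigenvalue $\pm2+O(\delta)$, and the remaining two eigenvalues, whose sum and product are $2(\param\mp\coef)+O(\delta)$ and $(\alpha/\delta)^2(1+O(\delta))>0$ respectively, must form a complex conjugate pair with real part $\param\mp\coef+O(\delta)\neq0$ because $|\param|<\coef$. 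Reading off the signs gives that $S_+(\delta,\param)$ has a one-dimensional unstable and a two-dimensional stable manifold, and $S_-(\delta,\param)$ the reverse. I expect the only parts requiring real care to be the control of the nonlinearity on the large ball $B(\delta^{-1/3})$ in Step 2 and this non-uniform eigenvalue-perturbation argument in Step 3; the rest is a routine quantitative implicit function computation.
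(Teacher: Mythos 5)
The paper never proves this lemma in the text: it only remarks that these properties of the rescaled system ``can be deduced similarly as in \cite{BaSe08}'', so there is no in-paper argument to compare yours against line by line. Your proposal is correct and is essentially the expected route: eliminate $(x,y)$ by inverting the rotation-type block $M(z)$, whose inverse is $O(\delta)$ uniformly on the large ball, solve the resulting scalar equation $G(z)=0$ near $z=\pm1$ by the implicit function theorem, rule out further zeros on $2\le|z|\le\delta^{-1/3}$ via the balance $|\delta^{-2}h|\le C\delta|z|^3=o(z^2)$, and then classify the spectrum of the Jacobian; you also correctly identify the two places needing real care (the nonlinearity is only $O(1)$, not small, on $B(\delta^{-1/3})$, and the eigenvalue perturbation is non-uniform because the elliptic block has size $\alpha/\delta$). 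Two minor points of presentation: the refined estimate $O(\delta^2,\delta^2\param^3)$ is anyway subsumed by $O(\delta^2)$ since $|\param|<\coef$, so your remark about the $(\delta\param)^3$ monomial suffices; and in Step 3 the persistence of the simple real eigenvalue as $\pm2+O(\delta)$ should be justified uniformly in $\delta$ --- e.g.\ by observing that the unperturbed Jacobian $\mathrm{diag}\bigl(M(\pm1),\pm2\bigr)$ is a normal matrix, so each perturbed eigenvalue lies within $O(\delta)$ of the unperturbed spectrum, or by a sign-change (Rouch\'e-type) argument for the characteristic polynomial on a real interval around $\pm2$, where the cofactor $(\lambda-a_{11})(\lambda-a_{22})-a_{12}a_{21}\sim(\alpha/\delta)^2$ dominates the $O(\delta)$ cross terms --- after which your trace/determinant argument forces the remaining two eigenvalues to be a complex pair with real part $\param\mp\coef+O(\delta)$, giving exactly the stability statement of the lemma. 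With that step pinned down, the proof stands as a sound self-contained substitute for the reference to \cite{BaSe08}.
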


Next theorem, which we will prove in the following, is the version of Theorem \ref{maintheorem} in the new variables.
\begin{thm}\label{maintheoremrescaled}
Consider system \eqref{initsys}, with $\delta,\,\coef>0$ and $|\param|<\coef$. Then there exists a constant $C^*$, such that the distance $d^{\uns,\sta}$ between the one-dimensional stable manifold of $S_-(\delta,\param)$ and the one-dimensional unstable manifold of $S_+(\delta,\param)$, when they meet the plane $z=0$, is given asymptotically by:
$$d^{\uns,\sta}=\delta^{-(1+\beta_1)}e^{-\frac{\alpha_0\pi}{2\delta}}e^{\frac{\pi}{2}(\alpha_0h_0-\alpha_1\param+c)}\left(C^*+O\left(\frac{1}{\log(1/\delta)}\right)\right),$$
being $h_0=-\lim_{z\to 0} z^{-3} h(0,0,z,0,0)$.
\end{thm}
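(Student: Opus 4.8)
The plan is to prove Theorem~\ref{maintheoremrescaled} by the now-standard scheme for exponentially small splittings in which Melnikov theory fails, adapted to the three-dimensional Hopf--zero setting. The unperturbed heteroclinic $\Upsilon_0(t)=(0,0,\het(t))=(0,0,-\tanh t)$ has complex-time singularities at $t=\pm i\pi/2$ (and their translates by $i\pi\mathbb{Z}$), and the whole phenomenon is generated near the two closest ones. The argument will have four parts: (i) an \emph{outer} analysis, continuing the two one-dimensional invariant manifolds to complex-time domains that reach to distance of order $\delta$ from $t=\pm i\pi/2$; (ii) an \emph{inner} analysis, blowing up the singularity into a $\delta$-independent limit equation whose stable and unstable manifolds differ by the Stokes constant $C^*$; (iii) a matching of the two; and (iv) the computation of the difference of the two manifolds and its evaluation on $\{z=0\}$.

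\textbf{Step 1: local manifolds and the outer problem.} First, using Lemma~\ref{lemaptscritics} together with the standard invariant-manifold theorem, one gets that $S_+(\delta,\param)$ has a one-dimensional unstable manifold tangent to the eigendirection of eigenvalue $2+O(\delta)$ and $S_-(\delta,\param)$ a one-dimensional stable manifold tangent to that of eigenvalue $-2+O(\delta)$; integrating the flow yields real-analytic parameterizations $\gamma^{\uns}(t)$, with $\gamma^{\uns}(t)\to S_+$ as $t\to-\infty$, and $\gamma^{\sta}(t)$, with $\gamma^{\sta}(t)\to S_-$ as $t\to+\infty$, both $O(\delta)$-close to $\Upsilon_0$ on compact real sets. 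Writing each manifold as $\Upsilon_0$ plus a correction and applying the variation-of-constants formula for the linearization along $\Upsilon_0$ --- whose $z$-component is governed by the explicit fundamental solution built from $\het'(t)=-\operatorname{sech}^2 t$, and whose $(x,y)$-part carries the fast rotation $\exp\bigl(\mp i\int(\alpha/\delta+c\het)\bigr)$ --- a contraction argument in suitable weighted norms extends $\gamma^{\uns}$ and $\gamma^{\sta}$ to \emph{outer} complex domains: neighbourhoods of a real $t$-segment with disks of radius $\sim\delta$ (up to powers of $\log(1/\delta)$) removed around $t=\pm i\pi/2$. Here one would also record the precise rate at which $\gamma^{\uns}$ and $\gamma^{\sta}$ blow up as $t\to i\pi/2$ (like $\delta^{-1}$ times inverse powers of $t-i\pi/2$), which has to be consistent with the inner equation below.

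\textbf{Step 2: the inner equation --- the main obstacle.} Introducing the inner variable $\tau$ through $t=i\pi/2+\delta\tau$ and letting $\delta\to0$, system~\eqref{initsys} converges to an autonomous limit equation in $\tau$, independent of $\delta$ and $\param$; the perturbation $h$ enters it only through the number $h_0=-\lim_{z\to0}z^{-3}h(0,0,z,0,0)$ at the order that matters, but the limit equation depends on the full jet of $X^*$. One must prove that this inner equation has a one-dimensional unstable and a one-dimensional stable manifold, each with prescribed algebraic asymptotics as $\re\tau\to\mp\infty$, and that the difference of their relevant transverse components behaves like $C^*e^{i\alpha_0\tau}(1+o(1))$ as $\im\tau\to-\infty$; this is precisely the definition of $C^*$. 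Establishing the existence and these sharp asymptotics --- for which no closed form is available --- together with the inner--outer matching of the next step, is where the main difficulty of the proof lies. Moreover, the inner description is valid only on a region of the $\tau$-plane of size comparable to $\log(1/\delta)$, and this is exactly what produces the relative error $O(1/\log(1/\delta))$ in the final formula.

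\textbf{Steps 3 and 4: matching, difference of the manifolds, and conclusion.} On an intermediate annulus around $t=i\pi/2$ one would prove that $\gamma^{\uns}$ and $\gamma^{\sta}$, rewritten in the variable $\tau$, converge as $\delta\to0$ to the inner unstable and stable manifolds respectively, thus transporting the $C^*$-behaviour from the inner to the outer scale. One then studies the difference of the two manifolds on the intersection of their outer domains: measuring it in the transverse $(x,y)$-directions and encoding it in $w=x+iy$ --- which is exactly the quantity whose modulus gives the distance on $\{z=0\}$ --- one obtains a scalar function $\Delta w$ that, to leading order, satisfies the scalar variational equation along $\Upsilon_0$, with homogeneous solution, schematically, $e^{(1-i\alpha_1)\param t}\,e^{-i\alpha_0 t/\delta}\,(\cosh t)^{\coef+ic}$. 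Factoring this out shows that the remaining scalar factor is analytic and bounded on the whole strip, hence --- by a maximum-modulus / Phragm\'en--Lindel\"of argument --- essentially constant; its value is pinned down by letting $t\to i\pi/2$ and invoking the matching, which identifies it with $C^*$ up to the $O(1/\log(1/\delta))$ error. Collecting the prefactors that emerge --- the algebraic weight $\delta^{-(1+\beta_1)}$, coming from the $(\cosh t)^{\coef}=(\cosh t)^{\beta_1}$ factor together with the $O(\delta^{-1})$ size of the manifolds in the inner region; the exponentially small factor $e^{-\alpha_0\pi/(2\delta)}$, from evaluating the rotation at the dominant singularity; and the bounded factor $e^{\frac{\pi}{2}(\alpha_0 h_0-\alpha_1\param+c)}$, from the subleading parts of the homogeneous solution near the singularity (the $c\het$ term through $(\cosh t)^{ic}$, the $\alpha_1\param$ frequency correction, and the distortion of the $z$-component caused by the $-h_0 z^3$ term in the third equation) --- one arrives at the stated asymptotics after evaluating $\Delta w$ at the value of $t$ near $0$ where the manifolds meet $\{z=0\}$ and taking the modulus. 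The corollary ($C^*\neq0$ forces the one-dimensional manifolds to be disjoint) then follows immediately.
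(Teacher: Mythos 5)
Your Steps 1--3 follow the paper's own route (complex outer continuation up to distance $O(\delta\log(1/\delta))$ from $\pm i\pi/2$, inner equation with a Stokes constant, matching on an intermediate scale: Theorems \ref{theoremouter}, \ref{theoreminner}, \ref{theoremmatching}), but Step 4 contains a genuine gap. The claim that, after factoring out the homogeneous solution, the remaining factor is ``analytic and bounded on the strip, hence by maximum modulus / Phragm\'en--Lindel\"of essentially constant'' is not a valid inference: a bounded analytic function on a strip need not be close to a constant, and there is no periodicity here to exploit. Worse, with the schematic homogeneous solution you propose, $e^{(1-i\alpha_1)\param t}e^{-i\alpha_0 t/\delta}(\cosh t)^{\coef+ic}$, the quotient is provably \emph{not} essentially constant whenever $h_0\neq0$: the omitted factor $e^{\alpha h_0 i[-\frac12\sinh^2 u+\log\cosh u]}$ has an argument that drifts by an amount of order $\alpha_0h_0\log(1/\delta)$ between $u_+=i(\pi/2-\dist\delta\log(1/\delta))$ and $u=0$. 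This is precisely why the paper builds the $-h_0z^3$ contribution into the dominant linear part $\mathcal{A}(\vu)$ in \eqref{defAsplit} and computes the fundamental matrix with that correction (Lemma \ref{lemamatriufonsplit}). Moreover, $\Delta w$ does not satisfy a scalar equation: on the complex domains $\vxi$ and $\vxib$ are independent variables, and the exact equation for the difference is the two-dimensional linear system \eqref{eqsplit} obtained via the mean value theorem. The paper's rigorous substitute for your ``essentially constant'' step is the fixed-point formulation \eqref{fixedpointsplit}: the first component is anchored at $u_+$ (near $i\pi/2$) and the second at $u_-$ (near $-i\pi/2$), so the exponentially large/small factors $m_1,m_2$ point the right way along each path, and the operator $\mathcal{G}$ is contractive in the exponentially weighted norm $\|\cdot\|_{\textup{spl},\times}$ with $\|\mathcal{G}\|\le K/\log(1/\delta)$ (Lemma \ref{lemainvertirGsplit}); the stated relative error then comes from this bound together with the matching estimate $|c_i-c_i^0|$ of Lemma \ref{lemaprincipalsplit}, not from any maximum-modulus principle. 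Without an operator estimate of this kind (or an equivalent), your argument cannot pin the constant down to accuracy $O(1/\log(1/\delta))$.

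A secondary but necessary point: the distance on $\{z=0\}$ must be measured between the manifolds written as graphs over $z$. The paper arranges this by first fixing the critical points with $C_1^{\uns,\sta}$, passing to $(\veta,\vetab,\vv)$ with $z=\het(\vv)$, and using $\vu$ as the independent variable, so that both manifolds solve the same system \eqref{sys2d} and the evaluation at $\vu=0$ \emph{is} the section $z=0$. Comparing your time-parameterized curves $\gamma^{\uns}(t)$, $\gamma^{\sta}(t)$ ``at the value of $t$ near $0$ where the manifolds meet $\{z=0\}$'' is not well defined, since the two orbits reach $z=0$ at different times; a reparameterization step of this sort has to be inserted before your Step 4, and also before the matching, so that the outer and inner objects being compared satisfy one and the same equation.
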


\begin{rmk}
The asymptotic formula provided in Theorem \ref{maintheoremrescaled} for the distance $d^{\uns,\sta}$ has the same qualitative behavior as the
one proved in \cite{BaSe06} in the conservative setting for the regular case. The main difference between both formulae is about the
constant $C^{*}$. While in Theorem \ref{maintheoremrescaled} this constant depends on the full jet of $f,g,h$ and (at the moment) can only be computed
numerically, in the
regular case $C^*$ is completely determined by means of the Borel transform of some adequate analytic functions depending on
$f(0,0,u,0), g(0,0,u,0)$.
\end{rmk}
 Before we proceed, we introduce some notation that we will use for the rest of the paper. On one hand, in $\mathbb{C}^n$ we will consider the norm $|.|$ as:
$$|(z_1,\dots,z_n)|=|z_1|+\dots+|z_n|,$$
where $|z|$ stands for the ordinary modulus of a complex number. On the other hand, $B(r_0)$ will stand for the open ball of any vector space centered at zero and of radius $r_0$. Moreover, we will write $B^n(r_0)$ to denote $B(r_0)\times\stackrel{n)}{\dots}\times B(r_0)$.

\subsection{Existence of complex parameterizations in the outer domains}
As it is usual in works where exponentially small phenomena must be detected, the first thing we have to do in order to prove Theorem \ref{maintheoremrescaled} is to provide parameterizations of the one-dimensional invariant manifolds of the critical points $S_\pm(\delta,\param)$. Moreover, these have to be defined in some complex domains that are close to the singularities of the heteroclinic connection of the unperturbed system.

However, first we will introduce some changes of variables that will simplify the proof. The first one consists in performing a change that keeps the corresponding critical point constant with respect to the parameters. For instance, to prove the existence of a complex parameterization of the unstable manifold of $S_+(\delta,\param)$ we perform the $O(\delta)$-close to the identity change $C_1^u$ defined by:
\begin{equation}\label{canviptcritic}
 (\tilde x, \tilde y, \tilde z)=C_1^\uns(x,y,z,\delta,\delta\param)=(x-x_+(\delta,\param),y-y_+(\delta,\param),z-z_+(\delta,\param)+1),
\end{equation}
obtaining a system of the form:
\begin{equation}\label{syseta3dinter}
 \begin{array}{rcl}
  \displaystyle\frac{d\tilde x}{dt}&=&\displaystyle \tilde x\left(\param-d\tilde z\right)+\left(\frac{\alpha(\delta\param)}{\delta}+c\tilde z\right)\tilde y+\delta^{-2}f^\uns(\delta \tilde x,\delta \tilde y, \delta \tilde z, \delta,\delta\param),\medskip\\
  \displaystyle\frac{d\tilde y}{dt}&=&\displaystyle-\left(\frac{\alpha(\delta\param)}{\delta}+c\tilde z\right)\tilde x+\tilde y\left(\param-d\tilde z\right)+\delta^{-2}g^\uns(\delta \tilde x,\delta \tilde y, \delta \tilde z, \delta,\delta\param),\medskip\\
  \displaystyle\frac{d\tilde z}{dt}&=&-1+b(\tilde x^2+\tilde y^2)+\tilde z^2+\delta^{-2}h^\uns(\delta \tilde x, \delta \tilde y, \delta \tilde z, \delta,\delta\param),
 \end{array}
\end{equation}
where $f^\uns(0,0,\delta,\delta,\delta\param)= g^\uns(0,0,\delta,\delta,\delta\param)= h^\uns(0,0,\delta,\delta,\delta\param)=0$ for all $\delta$, and hence has the critical point $S_+(\delta,\param)$ fixed at $(0,0,1)$. Moreover $f^\uns$, $g^\uns$ and $h^\uns$ are analytic and of order three in all their variables.

After that we do the change:
\begin{equation}\label{canviC2}(\veta,\vetab,\vv)=C_2(\tilde x, \tilde y, \tilde z)=(\tilde x+i\tilde y,\tilde x-i\tilde y,\het^{-1}(\tilde z)),\end{equation}
where $\het(t)=-\tanh t$ is the third component of the heteroclinic connection $\Upsilon_0(t)$ of the unperturbed system. Then we obtain a system of the form:
\begin{equation}\label{syseta3d}
 \begin{array}{rcl}
  \displaystyle\frac{d\veta}{dt}&=&\displaystyle-\left(\frac{\alpha }{\delta}+c\het(\vv)\right)i\veta+\veta\left(\param-\coef\het(\vv)\right)+\delta^{-2} F_1^\uns(\delta \veta,\delta \vetab, \delta \het(\vv), \delta,\delta\param),\medskip\\
  \displaystyle\frac{d\vetab}{dt}&=&\displaystyle\left(\frac{\alpha }{\delta}+c\het(\vv)\right)i\vetab+\vetab\left(\param-\coef\het(\vv)\right)+\delta^{-2} F_2^\uns(\delta \veta,\delta \vetab, \delta \het(\vv), \delta,\delta\param),\medskip\\
  \displaystyle\frac{d\vv}{dt}&=&\displaystyle 1+\frac{b\veta\vetab+\delta^{-2} H^\uns(\delta \veta,\delta \vetab, \delta \het(\vv), \delta,\delta\param)}{-1+\het^2(\vv)},
 \end{array}
\end{equation}
where, again, $F_1^\uns(0,0,\delta,\delta,\delta\param)= F_2^\uns(0,0,\delta,\delta,\delta\param)= H^\uns(0,0,\delta,\delta,\delta\param)=0$ for all $\delta$ and are of order three, since:
\begin{eqnarray}\label{defF1uF2uHu}
 F_1^\uns(\delta\veta,\delta\vetab,\het(\vv),\delta,\delta\param)&=&f^\uns\left(\frac{\delta(\veta+\vetab)}{2},\frac{\delta(\veta-\vetab)}{2},\het(\vv),\delta,\delta\param\right)+ig^\uns\left(\frac{\delta(\veta+\vetab)}{2},\frac{\delta(\veta-\vetab)}{2},\het(\vv),\delta,\delta\param\right),\nonumber\\
 F_2^\uns(\delta\veta,\delta\vetab,\het(\vv),\delta,\delta\param)&=&f^\uns\left(\frac{\delta(\veta+\vetab)}{2},\frac{\delta(\veta-\vetab)}{2},\het(\vv),\delta,\delta\param\right)-ig^\uns\left(\frac{\delta(\veta+\vetab)}{2},\frac{\delta(\veta-\vetab)}{2},\het(\vv),\delta,\delta\param\right),\nonumber\\
 H^\uns(\delta\veta,\delta\vetab,\het(\vv),\delta,\delta\param)&=&h^\uns\left(\frac{\delta(\veta+\vetab)}{2},\frac{\delta(\veta-\vetab)}{2},\het(\vv),\delta,\delta\param\right).
\end{eqnarray}


To prove the existence of the stable manifold of $S_-(\delta,\param)$, instead of the change $C_1^\uns$ defined in \eqref{canviptcritic}, we do the change:
 $$(\tilde x, \tilde y, \tilde z)=C_1^\sta(x,y,z,\delta,\delta\param)=(x-x_-(\delta,\param),y-y_-(\delta,\param),z-z_-(\delta,\param)+1),$$
and after that we do the change $C_2$. Then we obtain a system analogous to \eqref{syseta2d}, where instead of $F_i^\uns$ and $H^\uns$ we have functions $F_i^\sta$, $H^\sta$ such that $F_1^\sta(0,0,-\delta,\delta,\delta\param)= F_2^\sta(0,0,-\delta,\delta,\delta\param)= H^\sta(0,0,-\delta,\delta,\delta\param)=0$ for all $\delta$.

We will denote:
\begin{equation}\label{defvetaz+-}
 \veta_\pm=\veta_\pm(\delta,\param)=x_\pm(\delta,\param)+iy_\pm(\delta,\param),\qquad \vetab_\pm=\vetab_\pm(\delta,\param)=\overline{\veta_\pm(\delta,\param)},\qquad z_\pm=z_\pm(\delta,\param).
\end{equation}

\begin{rmk}\label{ordreF12H}
 Note that, as $f$, $g$ and $h$ are analytic functions, and since:
$$\delta\veta_\pm,\delta\vetab_\pm=O(\delta^3),\quad\delta(z_\pm\mp1)=O(\delta^2),$$
there exist some $r_0^{\uns,\sta}$, independent of $\delta$, such that for $\delta$ small enough $F_1^{\uns,\sta}$, $F_2^{\uns,\sta}$ and $H^{\uns,\sta}$ are analytic in $(\delta\veta,\delta\vetab, \delta z,\delta,\delta\param)\in B^3(r_0^{\uns,\sta})\times B(\delta_0)\times B(\param_0)$ respectively. Moreover, using that they are of order three, it is easy to see that if $\phi\in B^3(r_0^{\uns,\sta})\times B(\delta_0)\times B(\param_0)$, then:
\begin{equation*}
|F_1^{\uns,\sta}(\phi)|,|F_2^{\uns,\sta}(\phi)|,|H^{\uns,\sta}(\phi)|\leq K|(\phi_1,\phi_2, \phi_3\mp\delta,\phi_4,\phi_5)|^3,
\end{equation*}
respectively.
\end{rmk}

Finally, thinking of $\veta$ and $\vetab$ as functions of $\vv$ we get the following systems, respectively in the unstable and stable case:
\begin{equation}\label{syseta2d}
 \begin{array}{rcl}
  \displaystyle\frac{d\veta}{d\vv}&=&\displaystyle\frac{\displaystyle-\left(\frac{\alpha }{\delta}+c\het(\vv)\right)i\veta+\veta\left(\param-\coef\het(\vv)\right)+\delta^{-2} F_1^{\uns,\sta}(\delta \veta,\delta \vetab, \delta \het(\vv), \delta,\delta\param)}{\displaystyle 1+\frac{b\veta\vetab+\delta^{-2} H^{\uns,\sta}(\delta \veta,\delta \vetab, \delta \het(\vv), \delta,\delta\param)}{-1+\het^2(\vv)}},\bigskip\\
  \displaystyle\frac{d\vetab}{d\vv}&=&\displaystyle\frac{\displaystyle\left(\frac{\alpha }{\delta}+c\het(\vv)\right)i\vetab+\vetab\left(\param-\coef\het(\vv)\right)+\delta^{-2} F_2^{\uns,\sta}(\delta \veta,\delta \vetab, \delta \het(\vv), \delta,\delta\param)}{\displaystyle 1+\frac{b\veta\vetab+\delta^{-2} H^{\uns,\sta}(\delta \veta,\delta \vetab, \delta \het(\vv), \delta,\delta\param)}{-1+\het^2(\vv)}}.
 \end{array}
\end{equation}
We will look for solutions of system \eqref{syseta2d} $\vzeta^{\uns,\sta}(\vv)=(\veta^{\uns,\sta}(\vv),\vetab^{\uns,\sta}(\vv))$ such that:
\begin{equation}\label{cond}\lim_{\vv\to-\infty}\vzeta^\uns(\vv)=(0,0),\qquad\lim_{\vv\to+\infty}\vzeta^\sta(\vv)=(0,0).\end{equation}
After Theorem \ref{thmouterunbounded} we will justify that, indeed, $(\veta^{\uns,\sta}(\vv),\vetab^{\uns,\sta}(\vv),\het(\vv))$ lead to parameterizations of the unstable and stable manifolds of the critical points $(0,0,\pm1)$ of system \eqref{syseta3dinter}, respectively.

Once we have obtained a suitable system \eqref{syseta2d}, the next step is to prove the existence of solutions verifying \eqref{cond}. The main idea is that system \eqref{syseta2d} has a linear part which is dominant. More concretely, we denote $\vzeta=(\veta,\vetab)^T$, $F^{\uns,\sta}=(F_1^{\uns,\sta},F_2^{\uns,\sta})^T$, and we define:
$$A(\vv)=\left(\begin{array}{cc}
                  \displaystyle-\left(\frac{\alpha }{\delta}+c\het(\vv)\right)i+\param-\coef\het(\vv) & 0\medskip\\
		  0 & \displaystyle\left(\frac{\alpha }{\delta}+c\het(\vv)\right)i+\param-\coef\het(\vv)
                 \end{array}
\right),$$
and:
\begin{equation}\label{defR}
 R^{\uns,\sta}(\vzeta)(\vv)=\left(\frac{\displaystyle 1}{\displaystyle1+\frac{\displaystyle b\veta\vetab+\delta^{-2}H^{\uns,\sta}(\delta \vzeta, \delta \het(\vu), \delta,\delta\param)}{\displaystyle-1+\het^2(\vu)}}-1\right)A(\vv)\vzeta+\frac{\displaystyle\delta^{-2}F^{\uns,\sta}(\delta \vzeta,\delta\het(\vv),\delta,\delta\param)}{\displaystyle1+\frac{\displaystyle b\veta\vetab+\delta^{-2}H^{\uns,\sta}(\delta \vzeta, \delta \het(\vu), \delta,\delta\param)}{\displaystyle-1+\het^2(\vu)}}.
\end{equation}
Then, in the stable case, system \eqref{syseta2d} joint with \eqref{cond} can be rewritten as:
\begin{equation}\label{syscompu}
\frac{d\vzeta}{d\vv}=A(\vv)\vzeta+R^\uns(\vzeta)(\vv),\qquad\lim_{\vv\to-\infty}\vzeta^\uns(\vv)=(0,0)
\end{equation}
and the corresponding for the stable one as:
\begin{equation}\label{syscomps}
\frac{d\vzeta}{d\vv}=A(\vv)\vzeta+R^\sta(\vzeta)(\vv),\qquad\lim_{\vv\to+\infty}\vzeta^\sta(\vv)=(0,0).
\end{equation}

As we mentioned above, we will need to find parameterizations of the invariant manifolds defined not just for $\vv\in\mathbb{R}$, but in some complex domains that are close to the first singularities of the heteroclinic connection $\Upsilon_0$ of the unperturbed system, which in this case are $\vv=\pm i\pi/2$. We will now proceed to introduce these complex domains. We define (see Figure \ref{figuradoutu}):
\begin{equation*}
\doutb{\uns}=\left\{\vv\in\mathbb{C} \,:\, |\im\vv|\leq \pi/2-\overline{\dist}\delta\log(1/\delta)-\tan\beta\re\vv\right\},
\end{equation*}
where $0<\beta<\pi/2$, $T>0$ and $\overline{\dist}>0$ are constants independent of $\delta$.
\begin{figure}
 \center
 \includegraphics[width=7.5cm]{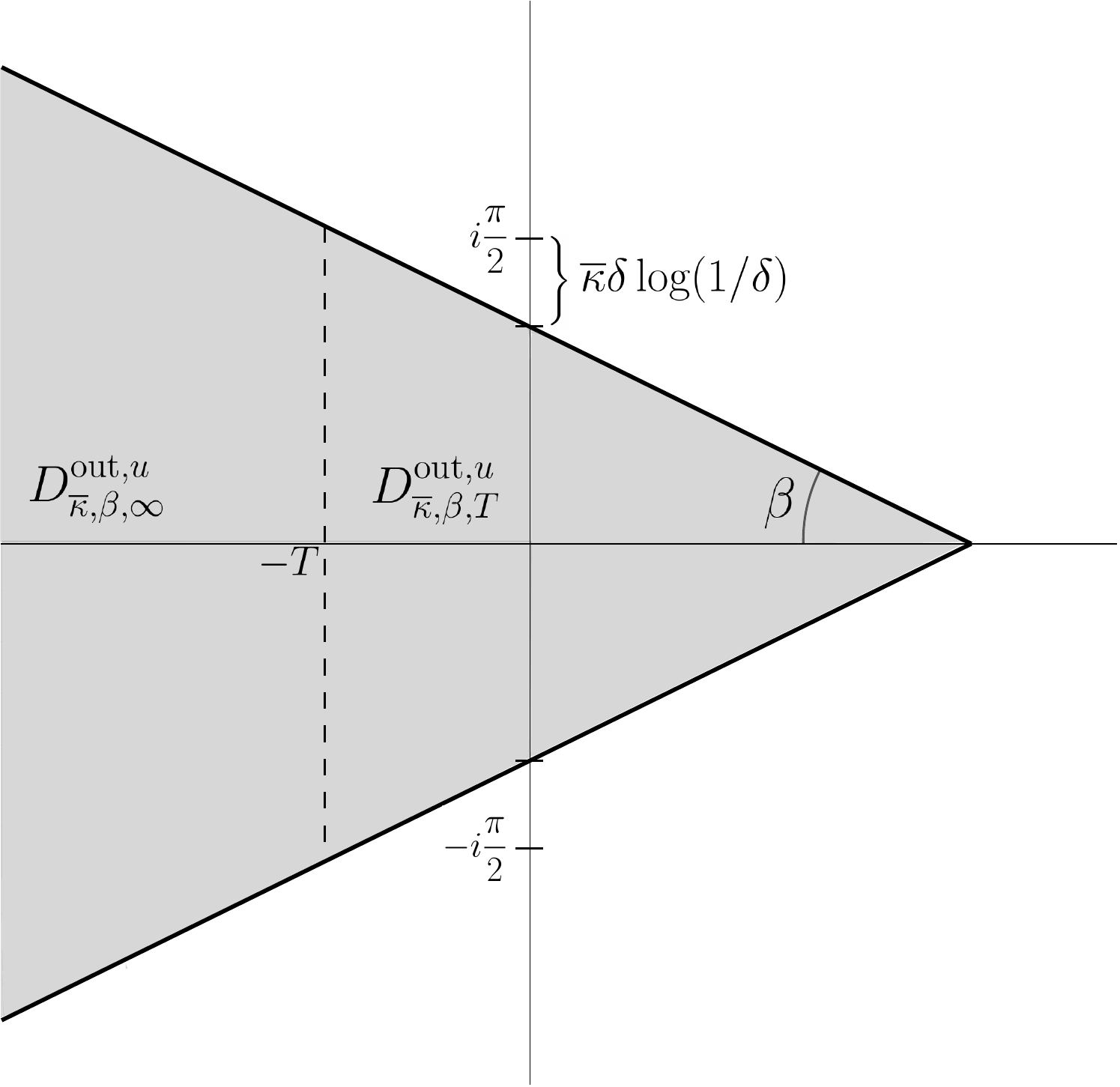}
 \caption{The outer domain $\doutb{\uns}$ for the unstable case with its subdomains $\doutbT{\uns}$ and $\doutbinf{\uns}$.}
 \label{figuradoutu}
\end{figure}
For technical reasons we will split the domain $\doutb{\uns}$ in two subsets, namely:
\begin{equation}\label{defDTDinf}
  \doutbinf{\uns}=\left\{\vv\in \doutb{\uns}\, : \, \re\vv\leq-T\right\},\qquad  \doutbT{\uns}=\left\{\vv\in \doutb{\uns}\, : \, \re\vv\geq-T\right\}.
\end{equation}
Analogously, we define:
$$\doutb{\sta}=-\doutb{\uns},\qquad \doutbinf{\sta}=-\doutbinf{\uns},\qquad \doutbT{\sta}=-\doutbT{\uns}.$$

\begin{thm}\label{thmouterunbounded}
Let $\overline{\dist}>0$ and $0<\beta<\pi/2$ be any fixed constants independent of $\delta$. Then, if $\delta>0$ is small enough, problem \eqref{syscompu} has a solution $\vzeta^\uns(\vv)=(\veta^\uns(\vv),\vetab^\uns(\vv))$ defined for $\vv\in\doutb{\uns}$, and \eqref{syscomps} has a solution $\vzeta^\sta(\vv)=(\veta^\sta(\vv),\vetab^\sta(\vv))$ defined for $\vv\in\doutb{\sta}$. Moreover there exists a constant $K$ independent of $\delta$ such that:
$$|\vzeta^{\uns,\sta}(\vv)|\leq\left\{\begin{array}{cl} K\delta^2|\het(\vv)-1| & \qquad\text{if } \vv\in \doutbinf{*},\medskip\\ K\delta^2|\het(\vv)-1|^3 & \qquad\text{if } \vv\in \doutbT{*},\end{array}\right.$$
where $*=\uns,\sta$ respectively.
\end{thm}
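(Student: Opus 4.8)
The proof is a contraction argument in weighted spaces of analytic functions; I treat only the unstable case, the stable one following by the reflection $\vv\mapsto-\vv$. Since $A(\vv)$ is diagonal its fundamental solution is explicit: writing $a_\pm(\vv)=\mp i(\tfrac{\alpha}{\delta}+c\het(\vv))+\param-\coef\het(\vv)$ and using $\int\het=-\log\cosh$, one has $\Phi_\pm(\vv)=e^{\mp i\alpha\vv/\delta+\param\vv}(\cosh\vv)^{\coef\pm ic}$. Because $|\param|<\coef$, along a horizontal ray $\{\im\sigma=\im\vv\}$ (which stays in $\doutb{\uns}$ for $\re\sigma\le\re\vv$ and avoids the singularities of $\het$) the factor $\Phi_\pm(\vv)\Phi_\pm(\sigma)^{-1}$ decays exponentially as $\re\sigma\to-\infty$ while oscillating in $\re\sigma$ with frequency $\alpha/\delta$; hence the only bounded solution of \eqref{syscompu} is the fixed point of
$$\mathcal{F}^\uns(\vzeta)(\vv)=\int_{-\infty}^{\vv}\mathrm{diag}\!\big(\Phi_+(\vv)\Phi_+(\sigma)^{-1},\,\Phi_-(\vv)\Phi_-(\sigma)^{-1}\big)\,R^\uns(\vzeta)(\sigma)\,d\sigma .$$
I would look for this fixed point in the ball $\{\|\vzeta\|\le M\}$ of analytic $\vzeta$ on $\doutb{\uns}$ with $\|\vzeta\|=\max\big(\sup_{\doutbinf{\uns}}|\vzeta(\vv)|/(\delta^2|\het(\vv)-1|),\ \sup_{\doutbT{\uns}}|\vzeta(\vv)|/(\delta^2|\het(\vv)-1|^3)\big)$; the prescribed splitting of $\doutb{\uns}$ into $\doutbinf{\uns}$ and $\doutbT{\uns}$ is exactly what permits these two different weights, and for $\vv\in\doutbT{\uns}$ the path is cut at $\re\sigma=-T$, reusing the bound already obtained on $\doutbinf{\uns}$ for the tail.

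\textbf{The estimates.}
Everything reduces to $\|\mathcal{F}^\uns(0)\|\le M/2$ and $\|\mathcal{F}^\uns(\vzeta_1)-\mathcal{F}^\uns(\vzeta_2)\|\le\tfrac12\|\vzeta_1-\vzeta_2\|$. For the first, $\mathcal{F}^\uns(0)$ involves only $\delta^{-2}F^\uns$, $\delta^{-2}H^\uns$ evaluated at $\vzeta=0$. On $\doutbinf{\uns}$ one uses that $F_i^\uns,H^\uns$ \emph{vanish} at the critical point $(0,0,\pm\delta,\delta,\delta\param)$ and are of order three, so a first-order Taylor expansion in the first and third arguments (whose derivatives there are $O(\delta^2)$) bounds the forcing by $K\delta(|\vzeta(\vv)|+|\het(\vv)-1|)$; near $\vv=\pm i\pi/2$ the arguments $\delta\het(\vv)$ are no longer small and one must fall back on the crude bound of Remark~\ref{ordreF12H}, producing a forcing $\le K\delta|\het(\vv)-1|^3$ — which is precisely why the weight is cubic on $\doutbT{\uns}$. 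In \emph{both} regions one then gains an extra power of $\delta$ from the rapidly oscillating kernel factor $e^{\mp i\alpha(\vv-\sigma)/\delta}$: a single integration by parts in $\sigma$ (the non-oscillatory amplitude and its $\sigma$-derivative being controlled through the weights, the latter by a Cauchy estimate on a slightly enlarged domain) converts a forcing of size $\delta(\cdots)$ into one of size $(\delta/\alpha)\cdot\delta(\cdots)=O(\delta^2(\cdots))$. It is cleanest to isolate this as a lemma: an operator with kernel $e^{\mp i\alpha(\vv-\sigma)/\delta}B(\vv,\sigma)$, with $B$ appropriately bounded along the integration rays, gains a factor $O(\delta/\alpha)$ in these spaces. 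The Lipschitz estimate is handled the same way, now using that $R^\uns(\vzeta_1)-R^\uns(\vzeta_2)$ is \emph{quadratically} small on the ball (the genuinely nonlinear part of $R^\uns$ is $O(|\vzeta|^2)$), so that it stays below $1$ even before the oscillatory gain.

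\textbf{Conclusion and the hard part.}
The contraction principle yields $\vzeta^\uns$, analytic on $\doutb{\uns}$ and obeying the two stated bounds; $\vzeta^\sta$ follows by $\vv\mapsto-\vv$. Since $\vzeta^\uns$ solves \eqref{syscompu} with $\vzeta^\uns(\vv)\to 0$ as $\vv\to-\infty$, the curve $(\veta^\uns(\vv),\vetab^\uns(\vv),\het(\vv))$ is an orbit of \eqref{syseta3d} converging to $(0,0,1)$, hence lies in — and, by uniqueness of the one-dimensional unstable manifold, parameterizes — the unstable manifold of $S_+(\delta,\param)$, and analogously for $\vzeta^\sta$. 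I expect the main obstacle to be the joint handling of two things: extracting the oscillatory cancellation responsible for the sharp power $\delta^2$ (a bare modulus estimate would only give $\delta$, too weak for the rest of the argument), and doing so while keeping every constant genuinely independent of $\delta$ and controlling the amplitude and its derivative uniformly up to the boundary of $\doutb{\uns}$, which approaches the singularity $\vv=i\pi/2$ of $\het$ at distance only $\overline{\dist}\delta\log(1/\delta)$ — so that $|\het(\vv)-1|$ is there as large as $(\overline{\dist}\delta\log(1/\delta))^{-1}$ and all the $\het(\vv)-1$ weights become large negative powers of $\delta$.
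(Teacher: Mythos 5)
Your overall architecture (variation of constants, weighted sup norms with weight $|\het(\vv)-1|$ on $\doutbinf{\uns}$ and $|\het(\vv)-1|^3$ on $\doutbT{\uns}$, fixed point in a ball of size $O(\delta^2)$) is the same as the paper's, but two of your key steps do not hold as stated. First, the claim that $R^\uns(\vzeta_1)-R^\uns(\vzeta_2)$ is ``quadratically small on the ball, so that it stays below $1$ even before the oscillatory gain'' is false. From \eqref{defR}, $R^\uns(\vzeta)$ contains the term $\bigl(\tfrac{1}{1+h(\vzeta)}-1\bigr)A(\vv)\vzeta$ with $h(\vzeta)=\bigl(b\veta\vetab+\delta^{-2}H^\uns(\delta\vzeta,\delta\het(\vv),\delta,\delta\param)\bigr)/(-1+\het^2(\vv))$; since $h(0)\neq 0$ (it is of size $\delta|\het(\vv)-1|$, i.e. of order $1/\log(1/\delta)$ near the upper boundary of the domain) and $|A(\vv)|\sim\alpha/\delta$, this term is linear in $\vzeta$ with a coefficient of order $1/(\delta\log(1/\delta))$. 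Hence $R^\uns$ is Lipschitz in the weighted norm with constant of order $1/(\delta\log(1/\delta))\gg 1$, not $<1$: the factor $\delta$ produced by the integral operator is indispensable in the contraction step too, and even with it the contraction factor is only $O(1/\log(1/\delta))$ (this is exactly what the paper obtains in Lemma \ref{lemafixedpt}, and why it closes the argument with Angenent's lemma, Lemma \ref{lemaang}).

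Second, your mechanism for extracting that factor $\delta$ is problematic inside a fixed-point scheme. Integrating by parts in $\sigma$ differentiates the whole amplitude, in particular $R^\uns(\vzeta)(\sigma)$ and hence $\vzeta'(\sigma)$; but $\vzeta$ is the unknown, your Banach space carries no derivative information, and a Cauchy estimate ``on a slightly enlarged domain'' is not available for iterates or differences that are only defined on $\doutb{\uns}$ itself — making this rigorous forces nested domains or norms with derivative control, i.e. a genuinely different (and heavier) functional setting than the one you set up. The paper avoids the issue entirely: it deforms the integration path to the ray $re^{i(\pi-\beta/2)}$ (Lemma \ref{lemacanvicami}), so that the fast phase $e^{i\alpha r/\delta}$ becomes the decaying factor $e^{-\alpha r\sin(\beta/2)/\delta}$ and a plain modulus bound already yields $\|L(\phi)\|_{\textup{out},\times}\leq K\delta\|\phi\|_{\textup{out},\times}$ (Lemma \ref{lemaL}), with no derivatives of the unknown ever taken. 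A further small point: your fundamental matrix written as $(\cosh\vv)^{\coef\pm ic}$ needs a branch choice; in the unbounded part of $\doutb{\uns}$ one has $|\im\vv|>\pi/2$ and $e^{\pm ic\log\cosh\vv}$ is not well defined there, which is why the paper replaces it by the functions $g_c^{\uns,\sta}$ built from $\log\bigl((1+e^{2\vv})/2\bigr)$ in \eqref{operadorLalphac}.
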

The proof of this result is postponed to Section \ref{prooftheoremouterunbounded}. Now we enunciate the following corollary:

\begin{cor}
Let $\overline{\kappa}$ and $0<\beta<\pi/2$ be two fixed constants independent of $\delta$. Consider $\veta^\uns$ and $\vetab^\uns$ the functions given by Theorem \ref{thmouterunbounded}, and let $\vv(t)$ be the solution of:
\begin{equation}\label{equaciov} \frac{d\vv}{dt}=\displaystyle 1+\frac{b\veta^\uns(\vv)\vetab^\uns(\vv)+\delta^{-2} H^\uns(\delta \veta^\uns(\vv),\delta \vetab^\uns(\vv), \delta \het(\vv), \delta,\delta\param)}{-1+\het^2(\vv)}=:1+\mathcal{F}(\vv),\end{equation}
such that $\vv(0)=0$. Then, $(\tilde x^{\uns}(t),\tilde y^{\uns}(t),\tilde z^{\uns}(t))$ defined by:
$$\tilde x^{\uns}(t)=\frac{\veta^{\uns}(\vv(t))+\vetab^\uns(\vv(t))}{2},\qquad \tilde y^{\uns}(t)=\frac{\veta^{\uns}(\vv(t))-\vetab^\uns(\vv(t))}{2}, \qquad \tilde z^\uns=\het(\vv(t)),$$
is a parameterization of the unstable manifold of the critical point $(0,0,1)$ of  system \eqref{syseta3dinter}. For the stable manifold of $(0,0,-1)$, one has an analogous result.
\end{cor}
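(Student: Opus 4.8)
The plan is to reconstruct a genuine orbit of the three–dimensional system \eqref{syseta3dinter} out of the two ingredients already at hand: the graph $\vv\mapsto(\veta^\uns(\vv),\vetab^\uns(\vv))$ furnished by Theorem \ref{thmouterunbounded}, which solves \eqref{syseta2d}, and the time parametrization $\vv(t)$ solving \eqref{equaciov} with $\vv(0)=0$. First I would check by the chain rule that
$$t\longmapsto\bigl(\veta^\uns(\vv(t)),\,\vetab^\uns(\vv(t)),\,\vv(t)\bigr)$$
solves \eqref{syseta3d}: since $\tfrac{d}{dt}\veta^\uns(\vv(t))=\tfrac{d\veta^\uns}{d\vv}(\vv(t))\,(1+\mathcal{F}(\vv(t)))$ and system \eqref{syseta2d} is exactly \eqref{syseta3d} with the $\veta$– and $\vetab$–equations divided by the $\vv$–equation, the factor $1+\mathcal{F}(\vv(t))$ cancels and one recovers the right–hand sides of the first two equations of \eqref{syseta3d}; the third equation of \eqref{syseta3d} is \eqref{equaciov} itself. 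Since the change $C_2$ of \eqref{canviC2} conjugates \eqref{syseta3dinter} to \eqref{syseta3d}, applying $C_2^{-1}$ then shows that the curve $(\tilde x^\uns(t),\tilde y^\uns(t),\tilde z^\uns(t))$ of the statement is an orbit of \eqref{syseta3dinter}.

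Next I would prove that this orbit converges to the fixed point $(0,0,1)$ as $t\to-\infty$. The heart of the matter is that the solution $\vv(t)$ of \eqref{equaciov} stays real, is defined on all of $(-\infty,0]$, and runs to $-\infty$; equivalently, $\mathcal{F}$ must be bounded near $\vv=-\infty$ in spite of the factor $-1+\het^2(\vv)=(\het(\vv)-1)(\het(\vv)+1)$ in its denominator vanishing there, since $\het(\vv)\to1$ as $\re\vv\to-\infty$. For $\vv\in\doutbinf{\uns}$ — which contains every sufficiently negative real $\vv$ — Theorem \ref{thmouterunbounded} gives $|\veta^\uns(\vv)|,|\vetab^\uns(\vv)|\le K\delta^2|\het(\vv)-1|$, so $b\veta^\uns\vetab^\uns=O(\delta^4|\het(\vv)-1|^2)$. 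On the other hand $H^\uns$ is of order three and $H^\uns(0,0,\delta,\delta,\delta\param)=0$, so expanding $H^\uns$ around $(0,0,\delta,\delta,\delta\param)$ and using that its first–order partials are of order two (hence $O(\delta^2)$ along that curve), together with Remark \ref{ordreF12H}, yields $\delta^{-2}H^\uns(\delta\veta^\uns,\delta\vetab^\uns,\delta\het(\vv),\delta,\delta\param)=O(\delta\,|\het(\vv)-1|)$ as $\re\vv\to-\infty$. Dividing by $(\het(\vv)-1)(\het(\vv)+1)$ and using $\het(\vv)+1\to2$ gives $\mathcal{F}(\vv)=O(\delta)$; hence $\tfrac{d\vv}{dt}=1+\mathcal{F}(\vv)$ is bounded and bounded away from $0$ for $\delta$ small, so a routine continuation argument (the orbit stays real and in $\doutb{\uns}$, with $\mathcal{F}$ small there) produces $\vv(t)$ on $(-\infty,0]$ with $\vv(t)\le\vv(0)-(1-C\delta)|t|\to-\infty$.

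With $\vv(t)\to-\infty$ the conclusion follows: $\tilde z^\uns(t)=\het(\vv(t))\to1$, and the same estimate of Theorem \ref{thmouterunbounded} in $\doutbinf{\uns}$ gives $|\veta^\uns(\vv(t))|+|\vetab^\uns(\vv(t))|\le K\delta^2|\het(\vv(t))-1|\to0$, so $(\tilde x^\uns,\tilde y^\uns,\tilde z^\uns)(t)\to(0,0,1)$ as $t\to-\infty$. By Lemma \ref{lemaptscritics} the point $(0,0,1)$ is a hyperbolic saddle–focus of \eqref{syseta3dinter} (the image of $S_+(\delta,\param)$ under $C_1^\uns$) with a one–dimensional unstable manifold, so by the (global) unstable manifold theorem every orbit converging to it in backward time lies on $W^{\uns}(0,0,1)$; since our orbit is non–constant — its third component $\het(\vv(t))$ is strictly monotone because $\tfrac{d\vv}{dt}\neq0$ — it parametrizes a branch of this manifold. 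The stable case is obtained verbatim with $t\to+\infty$ in place of $t\to-\infty$, $C_1^\sta$ in place of $C_1^\uns$, and using $\doutb{\sta}=-\doutb{\uns}$ together with the estimate of Theorem \ref{thmouterunbounded} in $\doutbinf{\sta}$.

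\emph{The main obstacle} is the boundedness of $\mathcal{F}$ near $\vv=-\infty$ in the second step, i.e.\ verifying that the simple zero of the numerator $b\veta^\uns\vetab^\uns+\delta^{-2}H^\uns$ at $\vv=-\infty$ cancels the simple zero of $-1+\het^2(\vv)$ there; this is precisely where the $\doutbinf{\uns}$–estimate of Theorem \ref{thmouterunbounded} and the order–three character of $H^\uns$ are essential. The remaining steps — the chain–rule identity, the $C_2$–conjugacy, and the appeal to the unstable manifold theorem — are routine.
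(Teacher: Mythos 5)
Your proposal is correct and follows essentially the same route as the paper: verify via the chain rule (i.e.\ undoing $C_2$) that the curve solves \eqref{syseta3dinter}, bound $\mathcal{F}$ on the real trace of $\doutb{\uns}$ using the estimates of Theorem \ref{thmouterunbounded} together with the order-three property of $H^\uns$, and conclude $\vv(t)\to-\infty$, hence convergence to $(0,0,1)$ in backward time. The only (cosmetic) difference is that you deduce $\vv(t)\to-\infty$ from the differential inequality $1/2\le 1+\mathcal{F}\le 3/2$ directly, while the paper inverts $t=\CMcal{G}(\vv)=\int_0^{\vv}(1+\mathcal{F}(\vw))^{-1}d\vw$; both rest on the same bound for $\mathcal{F}$.
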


\begin{proof}
Indeed, it is clear that $(\tilde x^\uns(t),\tilde y^\uns(t),\tilde z^\uns(t))$ is a solution of system \eqref{syseta3dinter}, since it consists in performing the inverse change of $C_2$, defined in \eqref{canviC2}, for a particular solution of system \eqref{syseta3d}. Hence, we just have to check that:
$$\lim_{t\to-\infty}(\tilde x^\uns(t),\tilde y^\uns(t),\tilde z^\uns(t))=(0,0,1).$$
Note that it is sufficient to prove that:
\begin{equation}\label{limitv}\lim_{t\to-\infty}\vv(t)=-\infty,\end{equation}
since, on the one hand $\het(\vv)=-\tanh(\vv)$ goes to 1 as $\vv$ goes to $-\infty$ and, on the other hand, from Theorem \ref{thmouterunbounded} we know that:
$$\lim_{\vv\to-\infty}(\veta^\uns(\vv),\vetab^\uns(\vv))=(0,0).$$

We will prove that \eqref{limitv} holds if $\vv(0)=0$ as follows. Indeed, from \eqref{equaciov} it is clear that:
$$t=\int_0^v\frac{1}{1+\mathcal{F}(\vw)}d\vw:=\CMcal{G}(\vv).$$
Now, from Theorem \ref{thmouterunbounded} and the fact that $|\het(\vv)-1|$ is bounded for $\vv\in\doutbT{u}\cap\mathbb{R}$, it is clear that for $\vv\in \doutb{u}\cap\mathbb{R}$:
$$|\veta^\uns(\vv)|,|\vetab^\uns(\vv)|\leq K\delta^2|\het(\vv)-1|,$$
for some constant $K$. Using these bounds, that $e^\vv$ is bounded for $\vv\in\doutb{u}\cap\mathbb{R}$ and Remark \ref{ordreF12H}, it can be easily seen that:
\begin{eqnarray*}
 |\mathcal{F}(\vv)|&=&\left|\frac{b\veta^\uns(\vv)\vetab^\uns(\vv)+\delta^{-2} H^\uns(\delta \veta^\uns(\vv),\delta \vetab^\uns(\vv), \delta \het(\vv), \delta,\delta\param)}{-1+\het^2(\vv)}\right|\leq\tilde K\left(\frac{\delta^4|\het(\vv)-1|^2+\delta|\het(\vv)-1|^3}{|-1+\het^2(\vv)|}\right) \\
&\leq&\tilde K\left(\delta^4e^{2\vv}+\delta e^{4\vv}\right)<\frac{1}{2},
\end{eqnarray*}
if $\delta$ is small enough. Then it is clear that $\CMcal{G}'(\vv)=(1+\mathcal{F}(\vv))^{-1}$ satisfies:
\begin{equation}\label{fitesGcal}\CMcal{G}'(\vv)\geq\frac{1}{1+1/2}=\frac{2}{3}>0.\end{equation}
On one hand, the fact that $\CMcal{G}'(\vv)$ is strictly positive implies that $\CMcal{G}(\vv)$ is strictly increasing. Then $\CMcal{G}$ is invertible in $\doutb{u}\cap\mathbb{R}$, and for $\vv\in\doutb{u}\cap\mathbb{R}$ we can write:
\begin{equation}\label{Ginversa}\vv=\CMcal{G}^{-1}(t).\end{equation}
Note that, as $\CMcal{G}$ is strictly increasing, so is $\CMcal{G}^{-1}$, and then we have that $v(t)\leq v(0)=0$ for $t\leq0$ . Hence it is clear that $v(t)\in\doutb{u}\cap\mathbb{R}$ for all $t\leq0$, and hence \eqref{Ginversa} has sense for all $t\leq0$.
On the other hand, we also have that:
$$\left(\CMcal{G}^{-1}\right)'=\frac{1}{\CMcal{G}'}\leq \frac{3}{2},$$
which implies that:
$$\vv=\int_0^t\left(\CMcal{G}^{-1}(\vs)\right)'d\vs\leq\frac{3}{2}t,$$
and hence we immediately obtain \eqref{limitv}.
\end{proof}

\subsubsection{Local parameterizations of the invariant manifolds}\label{localparams}
Theorem \ref{thmouterunbounded} provides us with complex parameterizations of the invariant manifolds, $\vzeta^{\uns,\sta}=(\veta^{\uns,\sta},\vetab^{\uns,\sta})$, which are solutions of problems \eqref{syscompu} and \eqref{syscomps} respectively. However, in order to study their difference, it is very useful that both manifolds are given by functions that satisfy the same system in a common domain. We proceed to undo the changes $C_1^\uns$ for $\vzeta^\uns$ and $C_1^\sta$ for $\vzeta^\sta$.

Consider:
$$V_\pm(\vu,\delta,\param)=\het^{-1}(\het(\vu)-z_\pm(\delta,\param)\pm1)-\vu.$$
Let $(\veta^{\uns,\sta}(\vv),\vetab^{\uns,\sta}(\vv))$ be solutions of system \eqref{syseta2d} and:
\begin{equation}\label{eqrecover}
 \vxi^{\uns,\sta}(\vu)=\veta^{\uns,\sta}(\vu+V_\pm(\vu,\delta,\param))+\veta_\pm(\delta,\param),\qquad\vxib^{\uns,\sta}(\vu)=\vetab^{\uns,\sta}(\vu+V_\pm(\vu,\delta,\param))+\vetab_\pm(\delta,\param).
\end{equation}
Then wherever $V_\pm(\vu,\delta,\param)$ is defined we have that $(\vxi^{\uns,\sta},\vxib^{\uns,\sta})$ are solutions of the following system:
\begin{equation}\label{sys2d}
 \begin{array}{rcl}
  \displaystyle\frac{d\vxi}{d\vu}&=&\displaystyle\frac{\displaystyle-\left(\frac{\alpha(\delta\param)}{\delta}+c\het(\vu)\right)i\vxi+\vxi\left(\param-\coef\het(\vu)\right)+\delta^{-2}F_1(\delta \vxi,\delta \vxib, \delta \het(\vu), \delta,\delta\param)}{\displaystyle 1+\frac{b\vxi\vxib+\delta^{-2}H(\delta \vxi,\delta \vxib, \delta \het(\vu), \delta,\delta\param)}{-1+\het^2(\vu)}},\bigskip\\
  \displaystyle\frac{d\vxib}{d\vu}&=&\displaystyle\frac{\displaystyle\left(\frac{\alpha(\delta\param)}{\delta}+c\het(\vu)\right)i\vxib+\vxib\left(\param-\coef\het(\vu)\right)+\delta^{-2}F_2(\delta \vxi,\delta \vxib, \delta \het(\vu), \delta,\delta\param)}{\displaystyle 1+\frac{b\vxi\vxib+\delta^{-2}H(\delta \vxi,\delta \vxib, \delta \het(\vu), \delta,\delta\param)}{-1+\het^2(\vu)}},
 \end{array}
\end{equation}
where:
\begin{eqnarray}
 F_1(\delta\vxi,\delta\vxib,\het(\vv),\delta,\delta\param)&=&f\left(\frac{\delta(\vxi+\vxib)}{2},\frac{\delta(\vxi-\vxib)}{2},\het(\vv),\delta,\delta\param\right)+ig\left(\frac{\delta(\vxi+\vxib)}{2},\frac{\delta(\vxi-\vxib)}{2},\het(\vv),\delta,\delta\param\right),\nonumber\\
 F_2(\delta\vxi,\delta\vxib,\het(\vv),\delta,\delta\param)&=&f\left(\frac{\delta(\vxi+\vxib)}{2},\frac{\delta(\vxi-\vxib)}{2},\het(\vv),\delta,\delta\param\right)-ig\left(\frac{\delta(\vxi+\vxib)}{2},\frac{\delta(\vxi-\vxib)}{2},\het(\vv),\delta,\delta\param\right),\nonumber\\
 H(\delta\vxi,\delta\vxib,\het(\vv),\delta,\delta\param)&=&h\left(\frac{\delta(\vxi+\vxib)}{2},\frac{\delta(\vxi-\vxib)}{2},\het(\vv),\delta,\delta\param\right).\label{defF1F2H}
\end{eqnarray}
\begin{rmk}\label{ordreF12Horiginals}
 From \eqref{defF1F2H}, it is clear that $F_1(\phi)$, $F_2(\phi)$ and $H(\phi)$ are of order three and analytic if $\phi=(\phi_1,\phi_2,\phi_3,\phi_4,\phi_5)\in B^3(r_0)\times B(\delta_0)\times B(\param_0)$. Then we have that there exists some constant $K$, independent of $\delta$, such that:
\begin{equation}
|F_1(\phi)|,|F_2(\phi)|,|H(\phi)|\leq K|(\phi_1,\phi_2,\phi_3,\phi_4,\phi_5)|^3.
\end{equation}
\end{rmk}

\begin{thm}\label{theoremouter}
Let $\dist>0$ and $0<\beta<\pi/2$ be any constants independent of $\delta$. Then the one-dimensional invariant manifolds of $S_\pm(\delta,\param)$ can be parameterized respectively by:
$$\vxi=\vxi^{\uns,\sta}(\vu),\qquad\vxib=\vxib^{\uns,\sta}(\vu),\qquad z=\het(\vu),\qquad  \vu\in\doutT{*},$$
where $*=\uns,\sta$ respectively, and $\vphi^{\uns,\sta}(\vu)=(\vxi^{\uns,\sta}(\vu),\vxib^{\uns,\sta}(\vu))$ are solutions of system \eqref{sys2d}. Moreover, there exists a constant $K$, independent of $\delta$, such that:
$$|\vphi^{\uns,\sta}(\vu)|\leq K\delta^2|\het(\vu)-1|^3,\quad \vu\in\doutT{*},\qquad \text{for }*=\uns,\sta.$$
\end{thm}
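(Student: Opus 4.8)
The plan is to transfer the conclusions of Theorem \ref{thmouterunbounded} from the $\vzeta^{\uns,\sta}$-coordinates back to the $\vphi^{\uns,\sta}=(\vxi^{\uns,\sta},\vxib^{\uns,\sta})$-coordinates via the substitution \eqref{eqrecover}, and then to verify that the resulting functions are genuine parameterizations of the one-dimensional invariant manifolds of $S_\pm(\delta,\param)$. First I would analyse the shift function $V_\pm(\vu,\delta,\param)=\het^{-1}(\het(\vu)-z_\pm(\delta,\param)\pm1)-\vu$. Using Lemma \ref{lemaptscritics}, $z_\pm(\delta,\param)\mp1=O(\delta)$, and since $\het(\vu)=-\tanh\vu$ is bounded away from $\pm1$ on $\doutT{*}$ (recall $\doutT{*}$ excludes a $O(\delta\log(1/\delta))$-neighbourhood of the singularities $\vu=\pm i\pi/2$, and is truncated at $\re\vu=\mp T$), one shows that $\het^{-1}$ is Lipschitz on the relevant region and hence $V_\pm(\vu,\delta,\param)=O(\delta)$ uniformly on $\doutT{*}$, with $\vu+V_\pm(\vu,\delta,\param)\in\doutb{*}$ for $\delta$ small enough. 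This guarantees that the right-hand side of \eqref{eqrecover} is well defined and that $\vphi^{\uns,\sta}$ inherits analyticity on $\doutT{*}$.

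Next I would establish the bound $|\vphi^{\uns,\sta}(\vu)|\leq K\delta^2|\het(\vu)-1|^3$. From Theorem \ref{thmouterunbounded}, on $\doutbT{*}$ we have $|\vzeta^{\uns,\sta}(\vw)|\leq K\delta^2|\het(\vw)-1|^3$; evaluating at $\vw=\vu+V_\pm(\vu,\delta,\param)$ and using that $|\het(\vw)-1|$ and $|\het(\vu)-1|$ are comparable up to a constant (again because $V_\pm=O(\delta)$ and $\het'$ is controlled on the truncated domain), the composed term is $O(\delta^2|\het(\vu)-1|^3)$. It remains to absorb the constants $\veta_\pm,\vetab_\pm=O(\delta^2)$ added in \eqref{eqrecover}; since on $\doutT{*}$ the quantity $|\het(\vu)-1|$ is bounded below by a positive constant independent of $\delta$ (as $\re\vu$ is bounded, $\het(\vu)$ stays away from $1$), one has $\delta^2\le K\delta^2|\het(\vu)-1|^3$, so the additive constants are of the claimed order. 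This yields the estimate with a possibly enlarged $K$.

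Then I would check that $(\vxi^{\uns,\sta},\vxib^{\uns,\sta})$ solve system \eqref{sys2d}: this is a direct computation, noting that \eqref{eqrecover} is precisely the change of variables that undoes $C_1^\uns$ (resp.\ $C_1^\sta$) at the level of solutions, so that $\vphi^{\uns,\sta}$ satisfies the system \eqref{sys2d} associated with the original $f,g,h$ rather than the shifted $f^{\uns,\sta},g^{\uns,\sta},h^{\uns,\sta}$. Finally, to conclude that these are parameterizations of the invariant manifolds of $S_\pm(\delta,\param)$, I would argue as in the Corollary following Theorem \ref{thmouterunbounded}: reintroduce time through $d\vu/dt=1+\mathcal{F}(\vu)$ with the new $\vxi,\vxib,H$, show $1+\mathcal{F}(\vu)$ stays close to $1$ on $\doutT{*}\cap\mathbb{R}$ using the just-proved bounds and Remark \ref{ordreF12Horiginals}, deduce that the time reparameterization is a monotone bijection, and use that $\vxi^{\uns,\sta}(\vu)\to\veta_\pm$, $\vxib^{\uns,\sta}(\vu)\to\vetab_\pm$ and $\het(\vu)\to\pm1$ as $\vu\to\mp\infty$ to identify the limit with $S_\pm(\delta,\param)$; together with the fact that the orbit is the image under the inverse of $C_1^{\uns,\sta}\circ C_2$ of a trajectory asymptotic to the fixed point, this pins it down as the one-dimensional unstable (resp.\ stable) manifold.

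The main obstacle I expect is the bookkeeping around the shift $V_\pm$: one must control $V_\pm$ and the discrepancy between $|\het(\vu+V_\pm)-1|$ and $|\het(\vu)-1|$ uniformly on a domain that itself depends on $\delta$ through the $\overline{\dist}\delta\log(1/\delta)$ cut near $\vu=\pm i\pi/2$, so that the comparison constants do not degenerate as $\delta\to0$. On $\doutT{*}$ this is manageable since $\re\vu$ is bounded and one stays a fixed distance from $\het=1$, but it requires care to verify that $\vu+V_\pm(\vu,\delta,\param)$ remains inside $\doutb{*}$ (in particular that the $\delta$-size horizontal shift does not push a point past the boundary slope $\tan\beta$), which is where the explicit geometry of the domains enters.
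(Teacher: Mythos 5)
Your plan follows the paper's own proof essentially verbatim: bound $V_\pm(\vu,\delta,\param)=O(\delta)$ by the mean value theorem (using $z_\pm(\delta,\param)\mp1=O(\delta)$ from Lemma \ref{lemaptscritics} and $|\cosh^2\vu|$ bounded on $\doutT{*}$), push the estimate of Theorem \ref{thmouterunbounded} through \eqref{eqrecover} via the comparability of $|\het(\vu)-1|$ and $|\het(\vu+V_\pm)-1|$, and absorb the $O(\delta^2)$ constants $\veta_\pm,\vetab_\pm$ using that $|\het(\vu)-1|$ is bounded below on $\doutT{*}$, with the invariant-manifold identification inherited from the corollary to Theorem \ref{thmouterunbounded}. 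The domain-inclusion point you flag as the main obstacle is settled exactly as you anticipate: the paper applies Theorem \ref{thmouterunbounded} with $\overline{\dist}=\dist/2$, so the $O(\delta)$ shift keeps $\vu+V_\pm$ inside $D_{\dist/2,\beta}^{\textup{out}}$ because $\delta\ll(\dist/2)\,\delta\log(1/\delta)$ for $\delta$ small.
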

The proof of this result can be found in Section \ref{prooftheoremouterbounded}.

\subsection{The inner system}
As we mentioned before, our study requires the knowledge of the asymptotics of the parameterizations $\vphi^{\uns,\sta}(\vu)$, given in Theorem \ref{theoremouter}, for $\vu$ near the singularities $\pm i\pi/2$. However, for $\vu\sim i\pi/2$ one has that $\vphi^{\uns,\sta}(\vu)\sim \delta^{-1}$, so that they are no longer perturbative (recall that, in the variables $(\vxi,\vxib)$, the heteroclinic orbit of the unperturbed system is $(\vxi,\vxib)=(0,0)$). Hence, it is natural to look for good approximations of system \eqref{sys2d} near $\pm i\pi/2$ in a different way. Here we will focus on the singularity $i\pi/2$, but similar results (which we will also state explicitly) can be proved near the singularity $-i\pi/2$.

To study the solutions of system \eqref{sys2d} near $i\pi/2$, we define the new variables
$(\vpsi,\vpsib,\vs)=C_3(\vxi,\vxib,\vu,\delta)$ by:
\begin{equation}\label{canviinner}\vpsi=\delta\vxi,\quad \vpsib=\delta\vxib,\quad\vs=\frac{\vu-i\pi/2}{\delta}.\end{equation}
Recalling that $\het(\vu)=-\tanh\vu$, we can write:
\begin{equation}\nonumber
 \begin{array}{rclr}
  \het(i\pi/2+\delta\vs)&=&\displaystyle\frac{-1}{\delta\vs}+\f,\qquad &\text{with }\f[0]=0,\medskip\\
(-1+\het^2(i\pi/2+\delta\vs))^{-1}&=&\delta^2\vs^2+\delta^3\vs^3\g,\qquad &\text{with }\g[0]=0.
 \end{array}
\end{equation}
We note that both $l$ and $m$ are analytic if $|\delta\vs|<1$. Then system \eqref{sys2d} after performing the change $C_3$ becomes:
\begin{equation}\label{sysdelta}
 \begin{array}{rcl}
  \displaystyle\frac{d\vpsi}{d\vs}&=&\displaystyle\frac{-\left[\alpha+c(-\vs^{-1}+\delta\f)\right]i\vpsi-\vpsi(\delta\param-\coef\vs^{-1}+\delta\coef\f)+F_1(\vpsi,\vpsib,-\vs^{-1}+\delta\f, \delta,\delta\param)}{1+\left[b\vpsi\vpsib+H(\vpsi,\vpsib, -\vs^{-1}+\delta\f, \delta,\delta\param)\right](\vs^2+\delta\vs^3\g)},\medskip\\
  \displaystyle\frac{d\vpsib}{d\vs}&=&\displaystyle\frac{\left[\alpha+c(-\vs^{-1}+\delta\f)\right]i\vpsib-\vpsib(\delta\param-\coef\vs^{-1}+\delta\coef\f)+F_2(\vpsi,\vpsib,-\vs^{-1}+\delta\f, \delta,\delta\param)}{1+\left[b\vpsi\vpsib+H(\vpsi,\vpsib, -\vs^{-1}+\delta\f, \delta,\delta\param)\right](\vs^2+\delta\vs^3\g)}.
 \end{array}
\end{equation}
If we set $\delta=0$ in this system, we obtain the \textit{inner system}:
\begin{equation}\label{sysinner}
 \begin{array}{rcl}
\displaystyle\frac{d\vpsi}{d\vs}&=&\frac{\displaystyle-\left(\alpha-cs^{-1}\right)i\vpsi+\coef\vpsi s^{-1}+F_1(\vpsi,\vpsib,-\vs^{-1},0,0)}{\displaystyle1+\vs^2\left[b\vpsi\vpsib+H(\vpsi,\vpsib, -\vs^{-1},0,0)\right]},\medskip\\
\displaystyle\frac{d\vpsib}{d\vs}&=&\frac{\displaystyle\left(\alpha-cs^{-1}\right)i\vpsib+\coef\vpsib s^{-1}+F_2(\vpsi,\vpsib,-\vs^{-1},0,0)}{\displaystyle1+\vs^2\left[b\vpsi\vpsib+H(\vpsi,\vpsib, -\vs^{-1},0,0)\right]}.
 \end{array}
\end{equation}

Below, we will expose the results concerning the existence of two solutions $\Vpsi_0^{\uns,\sta}=(\vpsi_0^{\uns,\sta},\vpsib_0^{\uns,\sta})$ of system \eqref{sysinner} which, as we will see in Theorem \ref{theoremmatching}, will give good approximations for the invariant manifolds for $\vu$ near the singularity $i\pi/2$. Moreover, we will provide an asymptotic expression for the difference $\Vpsi_0^\uns-\Vpsi_0^\sta$, which will turn out to be very useful in Section \ref{prooftheoremsplit}.

Given $\beta_0,\rho>0$, we define the following inner domains (see Figure \ref{figuradinu}):
\begin{equation}\label{defdinus}
 \din{\uns}=\{\vs\in\mathbb{C}\,:\, |\im \vs|\geq\tan\beta_0\re \vs+\rho\},\qquad \din{\sta}=-\din{\uns}.
\end{equation}
and:
\begin{equation}\label{defein}
\ein=\din{\uns}\cap\din{\sta}\cap\{\vs\in\mathbb{C}\,:\,\im\vs<0\}.
\end{equation}

\begin{figure}
 \center
 \includegraphics[width=8cm]{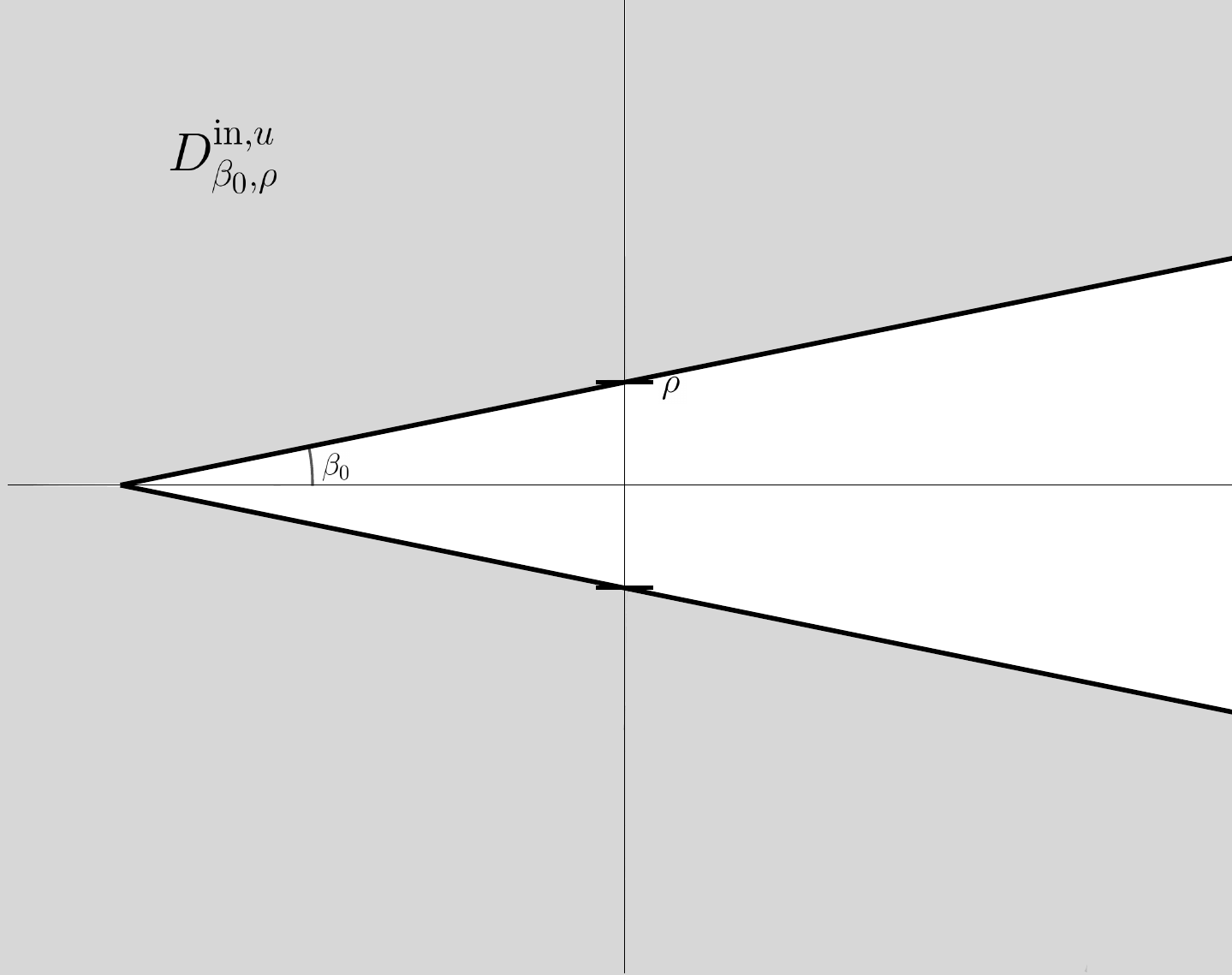}
 \caption{The inner domain, $\din{\uns}$.}
 \label{figuradinu}
\end{figure}
\begin{rmk}\label{remarkdoutsubsetdin}
 The inner domain $\din{\uns}$ expressed in the outer variables is:
$$\dinu{\uns}=\{\vu\in\mathbb{C}\,:\, |\im (\vu-i\pi/2)|\geq\tan\beta_0\re \vu+\rho\delta\}.$$
It is easy to check that for all $0<\beta_0,\beta<\pi/2$, if $\delta$ is small enough one has that $\dout{\uns}\subset\dinu{\uns}$. Analogously, we also have that $\dout{\sta}\subset\dinu{\sta}$, where $\dinu{\sta}=-\dinu{\uns}$.
\end{rmk}

\begin{thm}\label{theoreminner}
 Let $\beta_0>0$ and $\rho$ big enough. Then:
\begin{enumerate}
 \item \label{varietatsinner} System \eqref{sysinner} has two solutions $\Vpsi_0^{\uns,\sta}(\vs)=(\vpsi_0^{\uns,\sta}(\vs),\vpsib_0^{\uns,\sta}(\vs))$ defined for $\vs\in\din{*}$, with $*=\uns,\sta$ respectively. Moreover there exists a constant $K$, such that:
$$|\Vpsi_0^{\uns,\sta}(\vs)|\leq K|\vs|^{-3}.$$

\item \label{diferenciainner} Consider the difference:
$$\Delta\Vpsi_0(\vs)=\Vpsi_0^\uns(\vs)-\Vpsi_0^\sta(\vs),\qquad \vs\in\ein.$$
There exists $C_{\textup{in}}\in\mathbb{C}$ and a function $\chi:\ein\rightarrow\mathbb{C}^2$ such that:
\begin{equation}\label{asyVpsi0}
 \Delta\Vpsi_0(\vs)=\vs^\coef e^{-i(\alpha\vs-(c+\alpha h_0)\log\vs)}\left(\left(\begin{array}{c}C_{\textup{in}}\\0\end{array}\right)+\chi(\vs)\right),
\end{equation}
where $h_0=\lim_{\re s\to\infty}{s^{3}H(0,0,-s^{-1},0,0)}$ and $\chi=(\chi_1,\chi_2)$ satisfies:
$$|\chi_1(\vs)|\leq K|\vs|^{-1},\qquad |\chi_2(\vs)|\leq K|\vs|^{-2}.$$
Moreover, $C_{\textup{in}}\neq0$ if and only if $\Delta\Vpsi_0\neq0$.
\end{enumerate}
\end{thm}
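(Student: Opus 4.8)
\emph{Overall plan.} I would prove the two statements separately; in both, the guiding principle is that the linear part of \eqref{sysinner} dominates, so a variation-of-constants reformulation turns each into a contraction problem. For statement \ref{varietatsinner}, expanding the quotients of \eqref{sysinner} around $\Vpsi=0$, I would write the inner system as
\begin{equation*}
\frac{d\Vpsi}{d\vs}=A(\vs)\Vpsi+\Rin(\Vpsi)(\vs),\qquad A(\vs)=\textup{diag}\left(-i\alpha+\frac{\coef+ic}{\vs},\ i\alpha+\frac{\coef-ic}{\vs}\right),
\end{equation*}
where $\Rin(\Vpsi)$ gathers the inhomogeneous term $F_j(0,0,-\vs^{-1},0,0)\bigl(1+\vs^2H(0,0,-\vs^{-1},0,0)\bigr)^{-1}$, which is $O(|\vs|^{-3})$ because $F_j$ is of order three, the terms nonlinear in $\Vpsi$, and the remainder $\bigl((1+\vs^2[b\vpsi\vpsib+H])^{-1}-1\bigr)A(\vs)\Vpsi$ of relative size $O(|\vs|^{-1})$. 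With the fundamental matrix $\Phi(\vs)=\textup{diag}\bigl(\vs^{\coef+ic}e^{-i\alpha\vs},\,\vs^{\coef-ic}e^{i\alpha\vs}\bigr)$ of $A(\vs)$, I would seek $\Vpsi_0^{\uns}$ as a fixed point of
\begin{equation*}
\mathcal{S}(\Vpsi)(\vs)=\Phi(\vs)\int_{\ast}^{\vs}\Phi(\tau)^{-1}\Rin(\Vpsi)(\tau)\,d\tau
\end{equation*}
in the ball $\bigl\{\Vpsi:\ \sup_{\vs\in\din{\uns}}|\vs|^{3}|\Vpsi(\vs)|\le M\bigr\}$, choosing the base point $\ast$ and the integration path componentwise, according to the location of $\vs$ in $\din{\uns}$, so that along the path the exponential factor in the relevant entry of $\Phi(\tau)^{-1}$ does not grow; combined with the exponential smallness of the corresponding entry of $\Phi(\vs)$ in the opposite region, this makes the integrals converge and $\mathcal{S}(\Vpsi)$ of size $|\vs|^{-3}$. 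Taking $\rho$ large keeps $|\vs|\ge\rho$ on $\din{\uns}$, hence the denominator close to $1$ and $\Rin$ Lipschitz with arbitrarily small constant, so $\mathcal{S}$ is a contraction; its fixed point is $\Vpsi_0^{\uns}$, and the bound $|\Vpsi_0^{\uns}(\vs)|\le K|\vs|^{-3}$ is exactly the statement that it lies in the ball. The solution $\Vpsi_0^{\sta}$ on $\din{\sta}$ would be obtained by the same argument.

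\emph{Asymptotics of the difference (statement \ref{diferenciainner}).} Since $\Vpsi_0^{\uns}$ and $\Vpsi_0^{\sta}$ both solve \eqref{sysinner}, on $\ein$ their difference $\Delta\Vpsi_0=\Vpsi_0^{\uns}-\Vpsi_0^{\sta}$ solves the \emph{linear} variational equation $\frac{d}{d\vs}\Delta\Vpsi_0=M(\vs)\Delta\Vpsi_0$, $M(\vs)=\int_0^1 D_{\Vpsi}\mathcal{N}\bigl(\Vpsi_0^{\sta}+t\Delta\Vpsi_0,\vs\bigr)\,dt$, where $\mathcal{N}$ denotes the right-hand side of \eqref{sysinner}. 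Using $\Vpsi_0^{\uns,\sta}=O(|\vs|^{-3})$ and the expansion $1+\vs^2H(0,0,-\vs^{-1},0,0)=1+h_0\vs^{-1}+O(|\vs|^{-2})$ of the denominator, I would expand $M(\vs)=\widehat A(\vs)+B(\vs)$ with
\begin{equation*}
\widehat A(\vs)=\textup{diag}\left(-i\alpha+\frac{\coef+i(c+\alpha h_0)}{\vs},\ i\alpha+\frac{\coef-i(c+\alpha h_0)}{\vs}\right),\qquad B(\vs)=O(|\vs|^{-2}),
\end{equation*}
the extra $i\alpha h_0\vs^{-1}$ on the diagonal coming exactly from the $h_0\vs^{-1}$ term in the denominator, while $B$ carries the off-diagonal coupling and the residual corrections. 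An asymptotic-integration argument — once more a contraction, in a space weighted by a suitable exponential — would produce a fundamental system $\{\Psi_{+},\Psi_{-}\}$ of the variational equation with $\Psi_{\pm}(\vs)=\vs^{\coef\pm i(c+\alpha h_0)}e^{\mp i\alpha\vs}\bigl(v_{\pm}+o(1)\bigr)$, $v_{+}=(1,0)^{T}$, $v_{-}=(0,1)^{T}$, the $o(1)$ being $O(|\vs|^{-1})$ in the dominant component and $O(|\vs|^{-2})$ in the other. Then I would write $\Delta\Vpsi_0=a\Psi_{+}+b\Psi_{-}$ and note that on $\ein\subset\{\im\vs<0\}$ one has $|\Psi_{-}(\vs)|\sim|\vs|^{\coef}e^{\alpha|\im\vs|}\to\infty$ while $|\Psi_{+}(\vs)|\sim|\vs|^{\coef}e^{-\alpha|\im\vs|}\to0$, whereas $\Delta\Vpsi_0=O(|\vs|^{-3})$ stays bounded on $\ein$; hence $b=0$. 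Setting $C_{\textup{in}}:=a$ and $\chi(\vs):=\vs^{-\coef-i(c+\alpha h_0)}e^{i\alpha\vs}\Delta\Vpsi_0(\vs)-(C_{\textup{in}},0)^{T}$ would give \eqref{asyVpsi0} with the stated bounds on $\chi_1$ and $\chi_2$. Finally, $\Psi_{+}$ is a nontrivial solution of a homogeneous linear ODE and hence never vanishes, so $\Delta\Vpsi_0=C_{\textup{in}}\Psi_{+}$ is identically zero if and only if $C_{\textup{in}}=0$, which is the last assertion.

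\emph{Where the work is.} The real difficulty is the analysis on the unbounded, non-convex domains $\din{\uns}$ and $\din{\sta}$ — which, for $\re\vs$ not too negative, split into an upper and a lower wedge joined only far to the left: the base points and paths in $\mathcal{S}$ must be chosen so that they stay inside the domain while the relevant exponential in $\Phi(\tau)^{-1}$ does not grow, and every estimate must be uniform up to the boundary half-lines and as $\re\vs\to-\infty$, which forces one to play the polynomially large length of the detour paths against the exponential smallness of $\Phi(\vs)$. In statement \ref{diferenciainner}, the corresponding subtle point is to compute $M(\vs)$ precisely enough to pin down the exponent $\coef+i(c+\alpha h_0)$ exactly — in particular to track the $h_0\vs^{-1}$ contribution of the denominator — and to run the asymptotic integration with the coupling only of size $O(|\vs|^{-2})$, which is what confines $\Delta\Vpsi_0$ to the one-dimensional decaying subspace and thereby isolates the single Stokes-type constant $C_{\textup{in}}$.
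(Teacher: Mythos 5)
Your proposal is sound, and for the existence part it follows essentially the route the paper itself takes (the paper only sketches it, deferring to \cite{BaSe08}): split \eqref{sysinner} into a dominant diagonal linear part plus a remainder, invert the linear part by variation of constants along paths inside $\din{\uns}$, $\din{\sta}$, and contract in the weighted space $\sup_{\vs}|\vs|^{3}|\Vpsi(\vs)|$, with $\rho$ large supplying the small Lipschitz constant. One caution: to reach the sharp rate $|\vs|^{-3}$ (rather than $|\vs|^{-2}$) the integration rays must be chosen so that the exponential in the kernel \emph{decays at a linear rate} along them (e.g.\ directions $e^{i(\pi\mp\beta_0/2)}$ for the first/second component in the unstable case), or else one must integrate by parts to exploit the oscillation; a path along which the exponential merely ``does not grow'' loses one power of $|\vs|$, and the same mechanism is what produces the asymmetric bounds $|\chi_1|\le K|\vs|^{-1}$, $|\chi_2|\le K|\vs|^{-2}$. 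For the difference your organization is genuinely, if mildly, different from the paper's. Both start from the mean-value linear equation for $\Delta\Vpsi_0$, and both must extract the phase shift $c+\alpha h_0$ from the $h_0\vs^{-1}$ term of the denominator, which you do correctly via $M(\vs)=\hat A(\vs)+B(\vs)$ with $B=O(|\vs|^{-2})$. The paper keeps $(1+h_0\vs^{-1})^{-1}$ times the diagonal as the unperturbed part, represents any solution bounded on $\ein$ by the integral equations \eqref{primeracompdeltavpsi} and \eqref{segonacompdeltavpsi} (the absence of a free constant in the second component is where the growing mode is discarded) and reads off the asymptotics from the Neumann series $\sum_{n\ge0}\Gin^n(\Delta\Vpsi_{0,0})$; you instead build a Levinson-type fundamental system $\Psi_\pm$ on $\ein$ and eliminate the growing mode by comparing its exponential growth with $|\Delta\Vpsi_0|\le K|\vs|^{-3}$. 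The two are equivalent in content — constructing $\Psi_+$ is the same fixed-point estimate, with basepoint $-i\rho$ for the dominant component and $-i\infty$ for the other — but your version makes the final statement that $C_{\textup{in}}\neq0$ if and only if $\Delta\Vpsi_0\neq0$ immediate, since $\Delta\Vpsi_0=C_{\textup{in}}\Psi_+$ and a nontrivial solution of a first-order linear system never vanishes, a point the paper's sketch leaves implicit; what the paper's formulation buys is an expression for the leading coefficient via $\Gin$ acting on the explicit $\Delta\Vpsi_{0,0}$, which is how $C_{\textup{in}}$ is later handled as the Stokes constant.
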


The inner system corresponding to system \eqref{sys2d} with $\coef=1$ was exhaustively studied in \cite{BaSe08}. Moreover, the authors used an extra parameter $\varepsilon$ (not necessarily small) which we take $\varepsilon=1$. Since the proof for the case where $\coef$ is a free parameter and $\varepsilon=1$ is completely analogous, in Section \ref{sketchtheoreminner} we will give just the main ideas of how Theorem \ref{theoreminner} can be proved for this case without going into details.

\begin{rmk}\label{remarkinnerasota}
 The change \eqref{canviinner} allows us to study some approximations of the invariant manifolds and their difference near the singularity $i\pi/2$. However, if we want to approximate these manifolds and their difference near the singularity $-i\pi/2$, instead of change \eqref{canviinner} one has to introduce the following change:
\begin{equation}\label{canviinner2}\vpsi=\delta\vxi,\quad \vpsib=\delta\vxib,\quad\vs=\frac{\vu+i\pi/2}{\delta}.\end{equation}
In this case, one can prove the existence of two solutions $\tilde{\Vpsi}_0^{\uns,\sta}(\vs)$ of the inner system obtained after doing change \eqref{canviinner2}, which are defined for $\vs\in\overline{\din{*}}$, with $*=\uns,\sta$, where:
$$\overline{\din{*}}=\{\vs\in\mathbb{C}\,:\, \overline{\vs}\in\din{*}\}.$$
Moreover, for:
$$\vs\in\overline{\ein}:=\overline{\din{\uns}}\cap\overline{\din{\sta}}\cap\{\vs\in\mathbb{C}\,:\,\im\vs>0\},$$
the difference between these two solutions, $\Delta\tilde{\Vpsi}_0(\vs)$, is given asymptotically by:
$$\Delta\tilde{\Vpsi}_0(\vs)=\vs^\coef e^{i(\alpha\vs-(c+\alpha h_0)\log\vs)}\left(\left(\begin{array}{c}0\\\overline{C_{\textup{in}}}\end{array}\right)+\tilde{\chi}(\vs)\right),$$
where $\overline{C_{\textup{in}}}$ is the conjugate of the constant $C_{\textup{in}}$ in Theorem \ref{theoreminner} and $\tilde{\chi}=(\tilde{\chi}_1,\tilde{\chi}_2)$ satisfies that $|\tilde{\chi}_1(\vs)|\leq|\vs|^{-2}$ and $|\tilde{\chi}_2(\vs)|\leq|\vs|^{-1}$.
\end{rmk}

\subsection{Study of the matching error}

Let us recall the domains $\doutT{\uns}$ and $\doutT{\sta}$, defined in \eqref{defDTDinf}, where the parameterizations $\vphi^{\uns,\sta}$ of the invariant manifolds given by Theorem \ref{theoremouter} are defined, for some fixed $\dist>0$ and $0<\beta<\pi/2$. We also recall the domains $\din{\uns}$ and $\din{\sta}$, defined in \eqref{defdinus}, with $\rho>0$ and $0<\beta_0<\pi/2$ fixed, where the solutions $\Vpsi_0^{\uns,\sta}$ given in Theorem \ref{theoreminner} are defined. Now we take $\beta_1,\beta_2$ two constants independent of $\delta$, such that:
\begin{equation}\label{condicionsbeta12}
0<\beta_1<\beta<\beta_2<\pi/2.
\end{equation}
We define $\vu_j\in\mathbb{C}$, $j=1,2$ as the two points that satisfy (see Figure \ref{figuradmatchmes}):
\begin{itemize}
 \item $\im \vu_j=-\tan\beta_j\re \vu_j+\pi/2-\dist\delta\log(1/\delta)$,
\item $|\vu_j-i(\pi/2-\dist\delta\log(1/\delta))|=\delta^\gamma$, where $\gamma\in(0,1)$ is a constant independent of $\delta$,
\item $\re \vu_1<0$, $\re\vu_2>0$.
\end{itemize}
We also consider the following domains (see Figure \ref{figuradmatchmes}):
\begin{align}
\dmatch{\uns}=&\left\{\vu\in\mathbb{C}\,:\, \im u\leq -\tan\beta_1\re \vu+\pi/2-\dist \delta\log(1/\delta),\,\im \vu\leq -\tan\beta_2\re \vu+\pi/2-\dist \delta\log(1/\delta),\right.\nonumber\\
&\,\,\textstyle\left.\im \vu\geq \im u_1-\tan\left(\frac{\beta_1+\beta_2}{2}\right)(\re \vu-\re \vu_1)\right\}, \nonumber
\end{align}
and:
 $$\dmatch{\sta}=\{\vu\in\mathbb{C}\,:\, -\overline{\vu}\in\dmatch{\uns}\}.$$
\begin{figure}
 \center
 \subfigure[The matching domain $\dmatch{\uns}$.]{\includegraphics[height=4.5cm]{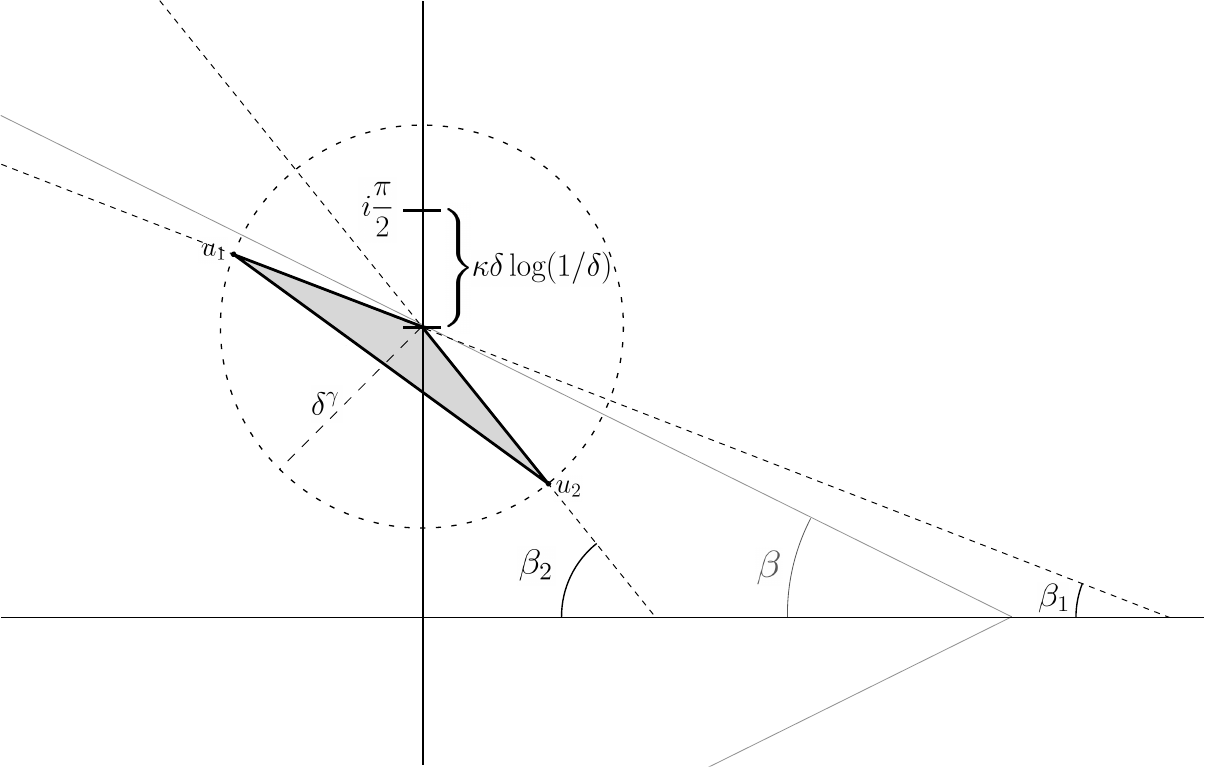}}
\hspace{1cm}
 \subfigure[The matching domain $\dmatch{\sta}$.]{\includegraphics[height=4.5cm]{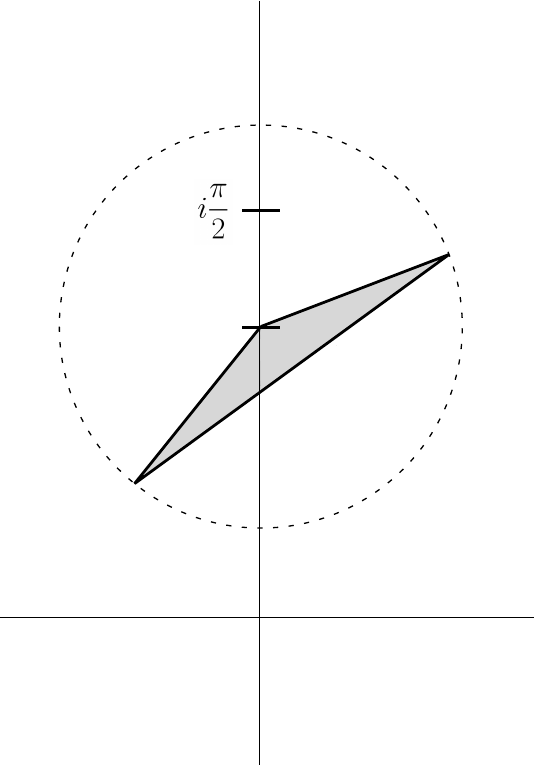}}
 \caption{The matching domains in the outer variables.}
 \label{figuradmatchmes}
\end{figure}
We note that there exist two constants $K_1$ and $K_2$, independent of $\delta$, such that:
$$K_1\delta^\gamma\leq|\vu_j-i\pi/2|\leq K_2\delta^\gamma,\qquad j=1,2.$$
Moreover, for all $\vu\in\dmatch{*}$, $*=\uns,\sta$, we have:
\begin{equation}\label{fitaudmatch}
\dist\cos\beta_1\delta\log(1/\delta)\leq|\vu-i\pi/2|\leq K_2\delta^\gamma.
\end{equation}
Note that from \eqref{condicionsbeta12} and \eqref{fitaudmatch} we have $\dmatch{\uns}\subset \doutT{\uns}$ and $\dmatch{\sta}\subset \doutT{\sta}$, if $\delta$ is small enough .

We also define the matching domains in the inner variables:
$$\dmatchs{*}=\{\vs\in\mathbb{C}\,:\,i\pi/2+\vs\delta\in\dmatch{*}\}, \qquad *=\uns,\sta$$
and:
\begin{equation}\label{etsj}
 \vs_j=\frac{\vu_j-i\pi/2}{\delta},\quad j=1,2.
\end{equation}
It is clear that:
\begin{equation}\label{fitasj}
 K_1\delta^{\gamma-1}\leq|\vs_j|\leq K_2\delta^{\gamma-1},\qquad j=1,2,
\end{equation}
and that for all $\vs\in\dmatchs{*}$, where $*=\uns,\sta$, we have:
$$\dist\cos\beta_1\log(1/\delta)\leq|\vs|\leq K_2\delta^{\gamma-1}.$$
Using that $\dmatch{*}\subset\doutT{*}$ and Remark \ref{remarkdoutsubsetdin} it is clear that $\dmatchs{*}\subset\din{*}$ if $\delta$ is small enough, $*=\uns,\sta$.

The main result of this section is the following.
\begin{thm}\label{theoremmatching}
 Let $\Vpsi^{\uns,\sta}(\vs)=\delta\vphi^{\uns,\sta}(\delta\vs+i\pi/2)$, where $\vphi^{\uns,\sta}$ are the parameterizations given by Theorem \ref{theoremouter}. Then, if $\vs\in\dmatchs{*}$, for $*=\uns,\sta$, one has $\Vpsi^{\uns,\sta}(\vs)=\Vpsi_0^{\uns,\sta}(\vs)+\Vpsi_1^{\uns,\sta}(\vs)$, where $\Vpsi_0^{\uns,\sta}(\vs)$ are the two solutions of the inner system \eqref{sysinner} given by Theorem \ref{theoreminner}, $\Vpsi_1^{\uns,\sta}$ and there exists a constant $K$, independent of $\delta$, such that:
$$|\Vpsi_1^{\uns,\sta}(\vs)|\leq  K\delta^{1-\gamma}|\vs|^{-2}.$$
\end{thm}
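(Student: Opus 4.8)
The strategy is a standard matching argument: both $\Vpsi^{\uns,\sta}$ and $\Vpsi_0^{\uns,\sta}$ solve systems that are close to each other on the common domain $\dmatchs{*}$, so their difference $\Vpsi_1^{\uns,\sta}=\Vpsi^{\uns,\sta}-\Vpsi_0^{\uns,\sta}$ satisfies a linear-type equation with small inhomogeneity, and we estimate it via a fixed point argument. First I would rewrite the outer parameterization $\vphi^{\uns,\sta}$ in the inner variables by setting $\Vpsi^{\uns,\sta}(\vs)=\delta\vphi^{\uns,\sta}(\delta\vs+i\pi/2)$; by the chain rule and the definition of $C_3$ in \eqref{canviinner}, this function satisfies exactly system \eqref{sysdelta} (the $\delta$-dependent inner system), on the domain $\dmatchs{*}$ where it is defined. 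The inner solution $\Vpsi_0^{\uns,\sta}$ from Theorem \ref{theoreminner} satisfies \eqref{sysinner}, which is \eqref{sysdelta} with $\delta=0$. Subtracting, $\Vpsi_1^{\uns,\sta}$ solves a differential equation of the schematic form $\frac{d}{d\vs}\Vpsi_1=\mathcal{A}(\vs)\Vpsi_1+\mathcal{B}(\vs)$, where $\mathcal{A}$ is the (diagonal-dominant, size $O(1)$ in $\vs$, with $O(\vs^{-1})$ corrections) linearization coming from the shared linear part, and $\mathcal{B}$ collects two kinds of terms: the genuinely $\delta$-dependent discrepancies between \eqref{sysdelta} and \eqref{sysinner} (the $\delta\f$, $\delta\coef\f$, $\delta\vs^3\g$ terms, the $\delta,\delta\param$ slots in $F_1,F_2,H$), and the nonlinear remainder in $\Vpsi_1$.

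The key quantitative inputs are the a priori bounds already available: from Theorem \ref{theoremouter}, $|\vphi^{\uns,\sta}(\vu)|\le K\delta^2|\het(\vu)-1|^3$, which in inner variables (using $\het(i\pi/2+\delta\vs)\sim -1/(\delta\vs)$, so $|\het(\vu)-1|\sim |\delta\vs|^{-1}$) gives $|\Vpsi^{\uns,\sta}(\vs)|\le K\delta^{2}\cdot\delta^{-1}\cdot(\delta|\vs|)^{-3}\cdot\delta = K|\vs|^{-3}$ — matching the $|\Vpsi_0^{\uns,\sta}(\vs)|\le K|\vs|^{-3}$ bound from Theorem \ref{theoreminner}. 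So $|\Vpsi_1^{\uns,\sta}|\le K|\vs|^{-3}$ is the trivial bound, and the task is to gain the factor $\delta^{1-\gamma}|\vs|$. The source of this gain is that on $\dmatchs{*}$ one has $|\vs|\le K_2\delta^{\gamma-1}$, i.e. $\delta|\vs|\le K_2\delta^{\gamma}$ is small; every $\delta$-dependent term in $\mathcal{B}$ carries an explicit $\delta$ multiplying a function of $\delta\vs$ that is $O(1)$ (analyticity of $\f,\g,f,g,h$ for $|\delta\vs|<1$), which one rewrites as $\delta = (\delta|\vs|)\cdot|\vs|^{-1}\le K_2\delta^{\gamma}|\vs|^{-1}$, producing the $\delta^{1-\gamma}$ after one more bookkeeping step, or more directly: each such term is bounded by $\delta\cdot(\text{something}\le K|\vs|^{-3})$ and one of the $|\vs|^{-1}$ factors is traded using $\delta\le\delta^\gamma|\vs|^{-1}\cdot\delta^{1-\gamma}|\vs|$... the cleanest route is to set up the fixed point in a weighted norm $\|\Vpsi_1\|=\sup_{\vs\in\dmatchs{*}}|\vs|^{2}|\Vpsi_1(\vs)|/\delta^{1-\gamma}$ and show the operator is a contraction there.

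Concretely, I would: (i) choose an appropriate integral representation — integrate the linear ODE along suitable paths in $\dmatchs{*}$ starting from a boundary point (the outer boundary near $\vu_1,\vu_2$, i.e. $|\vs|\sim\delta^{\gamma-1}$), using that $\dmatchs{*}$ is shaped (via the $(\beta_1+\beta_2)/2$ bounding line) so that the exponential $e^{-i(\alpha\vs - \dots)}$-type factors coming from integrating $\mathcal{A}$ are controlled (bounded, or decaying) along these paths; (ii) supply the boundary data: at the "outer" end of $\dmatchs{*}$ one needs $|\Vpsi_1^{\uns,\sta}|$ already of the target size, which follows because there $|\vs|\sim\delta^{\gamma-1}$ makes the target bound $K\delta^{1-\gamma}|\vs|^{-2}\sim K\delta^{1-\gamma}\delta^{2-2\gamma}=K\delta^{3-3\gamma}\sim K|\vs|^{-3}$, i.e. no better than the trivial bound, so the boundary contributes nothing worse than what we want; (iii) verify the contraction: the linear term $\mathcal{A}\Vpsi_1$ integrates to something controlled by the exponential estimates, the nonlinear term is quadratically small hence harmless for $\delta$ small, and the inhomogeneous $\delta$-dependent term $\mathcal{B}$ is $O(\delta|\vs|^{-3})=O(\delta^{1-\gamma}\cdot\delta^\gamma|\vs|^{-3})$ and, after the path integration which gains a power of $|\vs|$, lands at $O(\delta^{1-\gamma}|\vs|^{-2})$. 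I would prove the analogue near $-i\pi/2$ (Remark \ref{remarkinnerasota}) identically.

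\textbf{Main obstacle.} The delicate point is step (i)–(ii): controlling the fundamental solution of $\frac{d}{d\vs}\Vpsi_1=\mathcal{A}(\vs)\Vpsi_1$ along integration paths inside the thin domain $\dmatchs{*}$. The two components have conjugate exponential factors $e^{\mp i\alpha\vs}$ (times $\vs^{\pm\text{powers}}$ and $\vs^{\mp i(c+\alpha h_0)}$-type logarithmic corrections from the $\vs^{-1}$ terms), so one of them grows where the other decays; one must choose the integration contour and the starting endpoint (among $\vu_1$ vs. $\vu_2$, i.e. $\vs_1$ vs. $\vs_2$, and which of the two sides of $\dmatchs{*}$) component-by-component so that in each case one integrates "in the stable direction", exactly as the geometry of $\dmatch{*}$ with its bounding slope $(\beta_1+\beta_2)/2$ between $\beta_1$ and $\beta_2$ was designed to allow. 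Getting these path choices and the resulting exponential bounds right — and checking they are uniform in $\delta$ — is where the real work lies; the rest is routine Gronwall/fixed-point estimation using Remarks \ref{ordreF12Horiginals} and the analyticity bounds on $\f,\g$.
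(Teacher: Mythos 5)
Your plan follows essentially the same route as the paper's proof (Proposition \ref{propmatching}): write $\Vpsi_1^{\uns,\sta}=\Vpsi^{\uns,\sta}-\Vpsi_0^{\uns,\sta}$, observe it solves a linear equation whose forcing is the $O(\delta)$ discrepancy between \eqref{sysdelta} and \eqref{sysinner}, invert it by a fixed-point/Neumann argument in the $\sup_{\vs}|\vs|^2|\cdot|$ norm, anchor the two components at $\vs_1$ and $\vs_2$ respectively so the conjugate exponentials of the fundamental matrix stay bounded along the integration paths, and use that at $|\vs_j|\sim\delta^{\gamma-1}$ the trivial $K|\vs|^{-3}$ bound already coincides with the target $K\delta^{1-\gamma}|\vs|^{-2}$. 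The only deviation is bookkeeping: in the paper the vector-field discrepancy is bounded by $K\delta|\vs|^{-2}$ (not $\delta|\vs|^{-3}$) and contributes only $O(\delta)$ to the matching norm, so the full $\delta^{1-\gamma}$ comes solely from the boundary term, exactly as your step (ii) anticipates.
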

This Theorem is proved in Section \ref{prooftheoremmatching}. From this result, the following corollary is clear:
\begin{cor}\label{corollarymatching}
 For $\vu\in\dmatch{*}$, where $*=\uns,\sta$, we have that:
$$\vphi^{\uns,\sta}(\vu)=\frac{1}{\delta}\left(\Vpsi_0^{\uns,\sta}\left(\frac{\vu-i\pi/2}{\delta}\right)+\Vpsi_1^{\uns,\sta}\left(\frac{\vu-i\pi/2}{\delta}\right)\right),$$
where $\Vpsi_0^{\uns,\sta}$ are the two solutions of the inner system \eqref{sysinner} given by Theorem \ref{theoreminner} and:
$$\left|\Vpsi_1^{\uns,\sta}\left(\frac{\vu-i\pi/2}{\delta}\right)\right|\leq K\delta^{1-\gamma}\left|\frac{\vu-i\pi/2}{\delta}\right|^{-2},$$
for some constant $K$. Note that, as for $u\in\dmatch{*}$, $|u-i\pi/2|\geq K\delta\log(1/\delta)$, from this last inequality we obtain:
$$\left|\Vpsi_1^{\uns,\sta}\left(\frac{\vu-i\pi/2}{\delta}\right)\right|\leq \frac{K\delta^{1-\gamma}}{\log^2(1/\delta)},$$
and since $\gamma\in(0,1)$ we obtain that $\Vpsi_0^{\uns,\sta}$ are good approximations of $\vphi^{\uns,\sta}$ in $\dmatch{\uns}$ and $\dmatch{\sta}$ respectively.
\end{cor}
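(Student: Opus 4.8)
The plan is to compare, on the matching domains $\dmatchs{*}$ (where $*=\uns$ or $\sta$), the rescaled outer parameterizations $\Vpsi^{\uns,\sta}$ with the inner solutions $\Vpsi_0^{\uns,\sta}$ by a fixed point argument for their difference. First I would note that, by the very definition of the change $C_3$ in \eqref{canviinner}, $\Vpsi^{\uns,\sta}(\vs)=\delta\vphi^{\uns,\sta}(\delta\vs+i\pi/2)$ solves the $\delta$-dependent system \eqref{sysdelta} on $\dmatchs{*}$ (recall $\dmatch{*}\subset\doutT{*}$, so $\vphi^{\uns,\sta}$, and hence $\Vpsi^{\uns,\sta}$, is analytic there). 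Next I would transfer the outer estimate of Theorem \ref{theoremouter} to the inner scale: since $\het(\vu)=-\tanh\vu$ has a simple pole at $i\pi/2$ with $\het(\vu)-1=-(\vu-i\pi/2)^{-1}+O(1)$, the bound $|\vphi^{\uns,\sta}(\vu)|\le K\delta^2|\het(\vu)-1|^3$ becomes, after the rescaling, $|\Vpsi^{\uns,\sta}(\vs)|\le K|\vs|^{-3}$ on $\dmatchs{*}$. Combined with $|\Vpsi_0^{\uns,\sta}(\vs)|\le K|\vs|^{-3}$ from Theorem \ref{theoreminner}, this already shows that $\Vpsi_1^{\uns,\sta}:=\Vpsi^{\uns,\sta}-\Vpsi_0^{\uns,\sta}$ satisfies $|\Vpsi_1^{\uns,\sta}(\vs)|\le K|\vs|^{-3}$; in particular, at the corner points $\vs_j$ of \eqref{etsj}, where $|\vs_j|\sim\delta^{\gamma-1}$, one has $|\Vpsi_1^{\uns,\sta}(\vs_j)|\le K\delta^{3(1-\gamma)}$, which is already of the size $\delta^{1-\gamma}|\vs_j|^{-2}$ claimed by the theorem. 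It is this a priori control at the corners, ultimately inherited from the outer bound, that pins down the exponent $1-\gamma$.

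Next I would write the equation satisfied by $\Vpsi_1^{\uns,\sta}$: subtracting \eqref{sysinner} evaluated at $\Vpsi_0^{\uns,\sta}$ from \eqref{sysdelta} evaluated at $\Vpsi_0^{\uns,\sta}+\Vpsi_1^{\uns,\sta}$ gives
\begin{equation*}
\frac{d\Vpsi_1^{\uns,\sta}}{d\vs}=A_0(\vs)\,\Vpsi_1^{\uns,\sta}+\mathcal{R}(\vs,\Vpsi_1^{\uns,\sta})+G(\vs),
\end{equation*}
where $A_0(\vs)$ is the linearization of the inner system \eqref{sysinner} at the origin (its entries being $\mp i\alpha+O(\vs^{-1})$, so that the fundamental matrix $\Phi(\vs)$ of $\Vpsi'=A_0(\vs)\Vpsi$ satisfies $|\Phi(\vs)\Phi(\vs')^{-1}|\le K(|\vs|/|\vs'|)^{\coef}\,e^{\pm\alpha\,\im(\vs-\vs')}$, with the sign depending on the component, the $\mp i\alpha$ producing the oscillation and the term $\coef\vs^{-1}$ the power $|\vs|^{\coef}$); $\mathcal{R}(\vs,\Vpsi_1^{\uns,\sta})$ gathers all remaining terms, which are either at least quadratic in $\Vpsi_1^{\uns,\sta}$ or linear in it with a coefficient of size $O(|\Vpsi_0^{\uns,\sta}|+\delta)=O(|\vs|^{-3}+\delta)$ on $\dmatchs{*}$; and $G(\vs)$ is the mismatch source, consisting precisely of the terms in which \eqref{sysdelta} and \eqref{sysinner} differ, namely those carrying the factors $\delta\f$, $\delta\vs^3\g$, $\delta\param$, or the arguments $\delta,\delta\param$ of $F_1,F_2,H$. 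Using $\f[0]=\g[0]=0$, the analyticity and order-three property of $F_1,F_2,H$ (Remark \ref{ordreF12Horiginals}), and $|\Vpsi_0^{\uns,\sta}(\vs)|\le K|\vs|^{-3}$, one checks that $|G(\vs)|\le K\delta|\vs|^{-2}$ on $\dmatchs{*}$ (the dominant part coming from the $\partial_{\phi_4},\partial_{\phi_5}$ directions of $F_1,F_2$ evaluated on the third argument $\approx-\vs^{-1}$).

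Then I would solve this equation by variation of constants and a contraction mapping argument on the Banach space of analytic functions on $\dmatchs{*}$ with weighted norm $\|\Vpsi\|=\sup_{\vs\in\dmatchs{*}}|\vs|^2|\Vpsi(\vs)|$. Since the oscillatory factor $e^{\mp i\alpha\vs}$ is genuinely $O(1)$, the two components of $\Vpsi_1^{\uns,\sta}$ must be propagated from different corners: componentwise,
\begin{equation*}
\Vpsi_1^{\uns,\sta}(\vs)=\Phi(\vs)\Phi(\vs_j)^{-1}\Vpsi_1^{\uns,\sta}(\vs_j)+\Phi(\vs)\int_{\vs_j}^{\vs}\Phi(\vs')^{-1}\bigl(\mathcal{R}(\vs',\Vpsi_1^{\uns,\sta})+G(\vs')\bigr)\,d\vs',
\end{equation*}
with $j=1$ for the first component and $j=2$ for the second in the unstable case (and the conjugate choice in the stable one), the integration being performed along a path inside $\dmatchs{*}$ whose imaginary part is monotone in the appropriate direction; this is exactly what the three-sided shape of $\dmatchs{*}$, built on the slopes $\beta_1<\frac{\beta_1+\beta_2}{2}<\beta_2$, is designed to make possible, and it keeps the homogeneous propagator bounded. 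On this space: the boundary term is $\le K(|\vs|/|\vs_j|)^{\coef}|\vs_j|^{-3}\le K\delta^{1-\gamma}|\vs|^{-2}$ (using $|\vs|\le K|\vs_j|$ and $|\vs_j|\sim\delta^{\gamma-1}$); the $G$-term is controlled, using that $G$ is non-oscillatory against $e^{\pm i\alpha\vs'}$ (one integration by parts), by $K\delta|\vs|^{-2}$; and the $\mathcal{R}$-term maps the ball of radius $K\delta^{1-\gamma}$ into itself with a Lipschitz constant that tends to $0$ as $\delta\to0$, thanks to $|\vs|\ge\dist\cos\beta_1\log(1/\delta)$ on $\dmatchs{*}$. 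The unique fixed point thus satisfies $|\Vpsi_1^{\uns,\sta}(\vs)|\le K\delta^{1-\gamma}|\vs|^{-2}$, and by uniqueness it equals $\Vpsi^{\uns,\sta}-\Vpsi_0^{\uns,\sta}$; the analogous statement near $-i\pi/2$ (Remark \ref{remarkinnerasota}) follows symmetrically.

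The main obstacle is precisely this last step. The factor $e^{\mp i\alpha\vs}$ cannot be absorbed perturbatively, and $\dmatchs{*}$ is long, its two characteristic scales being $\log(1/\delta)$ and $\delta^{\gamma-1}$, so the whole argument hinges on choosing, separately for each component and anchored at the corresponding corner $\vs_j$, integration contours that remain inside $\dmatchs{*}$ and along which the homogeneous propagator stays uniformly bounded; once that geometry is arranged, the weighted estimates above are routine.
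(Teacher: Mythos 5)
Your proposal is correct and essentially reproduces the paper's own argument: the corollary is Theorem \ref{theoremmatching} (Proposition \ref{propmatching}) rewritten in the outer variable via $\Vpsi^{\uns,\sta}(\vs)=\delta\vphi^{\uns,\sta}(\delta\vs+i\pi/2)$ together with $|\vu-i\pi/2|\geq \dist\cos\beta_1\,\delta\log(1/\delta)$ on $\dmatch{*}$, and your scheme --- difference with the inner solution, weighted norm $\sup|\vs|^{2}|\cdot|$, variation of constants anchored at the corners $\vs_1,\vs_2$, corner data of size $\delta^{3(1-\gamma)}$ inherited from the outer and inner bounds, mismatch source $O(\delta|\vs|^{-2})$, and smallness of the remaining linear part coming from $|\vs|\geq\dist\cos\beta_1\log(1/\delta)$ --- is precisely the proof given in Section \ref{prooftheoremmatching}. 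The only cosmetic differences are that the paper linearizes the remainder exactly by the mean value theorem and inverts $\textup{Id}-\CMcal{G}$ by a Neumann series rather than contracting the weakly nonlinear equation (if you keep the contraction, make sure its Lipschitz estimate holds on a ball large enough to contain the a priori-bounded true difference, whose weighted norm is only $O(1/\log(1/\delta))$, before identifying it with the fixed point), and that no integration by parts is needed for the source term, since the exponential bound on the propagator along segments with monotone imaginary part already yields its $O(\delta)$ contribution.
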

\begin{rmk}
 Theorem \ref{theoremmatching} and, more precisely, Corollary \ref{corollarymatching} provide us with a bound of the difference between the invariant manifolds $\vphi^{\uns,\sta}(\vu)$ of Theorem \ref{theoremouter} and the functions  $\Vpsi_0^{\uns,\sta}((\vu-i\pi/2)/\delta)$ given by Theorem \ref{theoreminner},  when $\vu$ is near the singularity $i\pi/2$. One can proceed similarly to study this difference near the singularity $-i\pi/2$ as we pointed out in Remark \ref{remarkinnerasota}. In this case, defining:
$$\overline{\dmatch{*}}=\{\vs\in\mathbb{C}\,:\,\overline{\vs}\in\dmatch{*}\},\qquad\qquad \textup{for }*=\uns,\sta$$
we would obtain that for $\vu\in\overline{\dmatch{*}}$, one has:
$$\vphi^{\uns,\sta}(\vu)=\frac{1}{\delta}\left(\tilde\Vpsi_0^{\uns,\sta}\left(\frac{\vu+i\pi/2}{\delta}\right)+\tilde\Vpsi_1^{\uns,\sta}\left(\frac{\vu+i\pi/2}{\delta}\right)\right),$$
where $\tilde\Vpsi_0^{\uns,\sta}$ are the two solutions of the inner system derived from the change \eqref{canviinner2} in Remark \ref{remarkinnerasota}, and:
$$\left|\tilde\Vpsi_1^{\uns,\sta}\left(\frac{\vu+i\pi/2}{\delta}\right)\right|\leq K\delta^{1-\gamma}\left|\frac{\vu+i\pi/2}{\delta}\right|^{-2},$$
for some constant $K$.
\end{rmk}

\subsection{Asymptotic formula for the splitting distance}\label{subsectionsplit}

\begin{thm}\label{theoremsplit}
 Let $\vphi^\uns$, $\vphi^\sta$ be the parameterizations given by Theorem \ref{theoremouter}. For $\vu\in\doutT{\uns}\cap\doutT{\sta}$, we define its difference:
\begin{equation}\label{defDeltavphi}\Delta\vphi(\vu)=\vphi^\uns(\vu)-\vphi^\sta(\vu).\end{equation}
Let $C_{\textup{in}}\in\mathbb{C}$ be the constant in Theorem \ref{theoreminner}. If $C_{\textup{in}}\neq0$, then:
$$\Delta\vphi(0)=\delta^{-(1+\coef)}e^{-\frac{\alpha_0\pi}{2\delta}}e^{\frac{\pi}{2}(c+\alpha_0 h_0-\alpha_1\param)}\left(\left(\begin{array}{c}C_{\textup{in}}e^{-i\left(\frac{\param\pi}{2}+\frac{\alpha_0h_0}{2}+(c+\alpha_0h_0)\log\delta\right)}\medskip\\\overline{C_{\textup{in}}}e^{i\left(\frac{\param\pi}{2}+\frac{\alpha_0h_0}{2}+(c+\alpha_0h_0)\log\delta\right)}\end{array}\right)+O\left(\frac{1}{\log(1/\delta)}\right)\right),$$
where $h_0=-\lim_{z\to0}z^{-3}H(0,0,z,0,0)$, $\alpha_0=\alpha(0)$ and $\alpha_1=\alpha'(0)$.
\end{thm}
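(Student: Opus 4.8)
The plan is to glue together the three pieces of information available: the behaviour of the outer parameterizations $\vphi^{\uns,\sta}$ along the real line (from Theorem~\ref{theoremouter}), the asymptotics of the inner difference $\Delta\Vpsi_0$ near each singularity (from Theorem~\ref{theoreminner} and Remark~\ref{remarkinnerasota}), and the matching estimates (Corollary~\ref{corollarymatching}) which control the error between the true outer difference and its inner approximation in the two matching regions near $\pm i\pi/2$. The key structural fact is that $\Delta\vphi(\vu)$ satisfies a linear homogeneous ODE: since both $\vphi^\uns$ and $\vphi^\sta$ solve system \eqref{sys2d}, their difference solves the variational equation along (a convex combination of) these solutions. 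Because the nonlinear terms $F_1,F_2,H$ are of order three and the parameterizations are $O(\delta^2|\het(\vu)-1|^3)$, the variational equation is, to leading order, the diagonal linear system $\frac{d\,\Delta\vphi}{d\vu}=A(\vu)\Delta\vphi + (\text{small})$; hence $\Delta\vphi(\vu)$ is, up to a multiplicative error $1+O(\delta)$ (or more precisely $1+o(1)$ of the right type), a constant vector times the fundamental solution of $\dot w = A(\vu)w$. That fundamental solution is explicit: integrating the diagonal entries of $A(\vu)$, and using $\het(\vu)=-\tanh\vu$ so that $\int \het = -\log\cosh\vu$ and $\int 1 = \vu$, one gets entries behaving like $e^{\mp i(\frac{\alpha}{\delta}\vu + c\log\cosh\vu)}e^{\param\vu}(\cosh\vu)^{\coef}$, i.e. a known function of $\vu$ with a branch-type singularity $\sim(\vu\mp i\pi/2)^{\coef}e^{\mp i\alpha\vu/\delta}$ at each singularity.

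Concretely I would proceed as follows. First, write $\Delta\vphi(\vu) = \Phi(\vu)\,\mathbf{k}(\vu)$ where $\Phi(\vu)$ is the explicit fundamental matrix of $\dot w = A(\vu)w$ and $\mathbf{k}$ is the "constant of integration" vector; the variational equation then shows $\mathbf{k}(\vu)$ has a derivative that is integrable and exponentially small, so $\mathbf{k}(\vu) \to \mathbf{k}_{\pm\infty}$ as $\re\vu$ crosses the strip, with $\mathbf{k}$ nearly constant on $\doutT{\uns}\cap\doutT{\sta}$. Second, evaluate $\mathbf{k}$ by matching: in the upper matching region $\overline{\dmatch{\uns}}\cap\overline{\dmatch{\sta}}$ near $-i\pi/2$, Corollary~\ref{corollarymatching} and Remark~\ref{remarkinnerasota} give $\Delta\vphi(\vu)=\delta^{-1}(\Delta\tilde\Vpsi_0((\vu+i\pi/2)/\delta) + O(\delta^{1-\gamma}|\vu+i\pi/2|^{-2}\delta^{-2}))$, and $\Delta\tilde\Vpsi_0(\vs) = \vs^\coef e^{i(\alpha\vs - (c+\alpha h_0)\log\vs)}((0,\overline{C_{\textup{in}}})^T + \tilde\chi(\vs))$. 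Substituting $\vs=(\vu+i\pi/2)/\delta$, pulling out the $\delta$-powers ($\vs^\coef = \delta^{-\coef}(\vu+i\pi/2)^\coef$, and $\log\vs = \log(\vu+i\pi/2) - \log\delta$), and comparing with the explicit local form of $\Phi(\vu)$ near $-i\pi/2$ — whose relevant entry behaves like $(\vu+i\pi/2)^\coef e^{i\alpha(\vu+i\pi/2)/\delta}\cdot(\text{explicit prefactor from }e^{\param\vu}, \text{the }c\log, \text{and the constant }e^{\pm\alpha_0\pi/(2\delta)}\text{ from }\tanh)$ — lets me read off the second component $\overline{k}_{\infty}$ of $\mathbf{k}$, and an identical argument near $+i\pi/2$ using $\Delta\Vpsi_0$ gives the first component. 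The exponential $e^{-\alpha_0\pi/(2\delta)}$ arises precisely from evaluating $e^{i\alpha(\vu\pm i\pi/2)/\delta}$ — equivalently from the $\cosh\vu$ factor in $\Phi$ — at $\vu$ near the singularities, while the relation $\alpha=\alpha(\delta\param)=\alpha_0+\alpha_1\delta\param+O(\delta^2)$ must be expanded to produce the $e^{-\pi\alpha_1\param/2}$ factor and to absorb the $O(\delta^2)$ remainder into the error. Third, set $\vu=0$: here $\Phi(0)$ is $O(1)$ and one reads off $\Delta\vphi(0)$ in terms of $\mathbf{k}_0 \approx \mathbf{k}_{\pm\infty}$, with the phases $e^{\mp i(\param\pi/2 + \alpha_0 h_0/2 + (c+\alpha_0h_0)\log\delta)}$ emerging from carefully carrying the $\log\delta$, the $\log(\vu\pm i\pi/2)$ evaluated along the way, and the $i\pi/2$ shifts through the prefactors. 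The $O(1/\log(1/\delta))$ error is the dominant one: it comes from $\tilde\chi,\chi$ evaluated at $|\vs|\gtrsim\log(1/\delta)$ (size $1/\log(1/\delta)$) and from the matching error $\delta^{1-\gamma}/\log^2(1/\delta)$, the former being the worst.

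The main obstacle is \emph{bookkeeping of the constants and phases} rather than any single hard estimate: one must track, through three successive changes of variables and two matching limits, exactly which powers of $\delta$, which $\log\delta$ terms, which constant factors (the $e^{-\alpha_0\pi/(2\delta)}$ from $\tanh$'s poles, the $e^{\pi(c+\alpha_0h_0-\alpha_1\param)/2}$ collecting the subdominant exponentials, and the $\pm\pi/2$ shifts) survive, and verify that the contributions from the two singularities $\pm i\pi/2$ assemble into the stated vector with $C_{\textup{in}}$ in the first slot and $\overline{C_{\textup{in}}}$ in the second (this conjugation being forced by the reality of the original vector field, which makes $\vetab = \overline{\veta}$ on the real axis). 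A secondary technical point is justifying that $\mathbf{k}(\vu)$ really is constant up to the claimed error across the whole overlap domain $\doutT{\uns}\cap\doutT{\sta}$ — i.e.\ that the correction $R^{\uns,\sta}$ to the linear system contributes only an error of relative size $O(1/\log(1/\delta))$ and not larger — which rests on the cubic order of $F_1,F_2,H$ together with the sharp bounds $|\vphi^{\uns,\sta}(\vu)|\leq K\delta^2|\het(\vu)-1|^3$ from Theorem~\ref{theoremouter}, integrated against the growth of $\Phi^{-1}$; this is routine but must be done with the correct weights so the $\delta$-powers close.
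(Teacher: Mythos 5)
Your overall architecture is the paper's: write $\Delta\vphi$ via variation of constants for an explicit diagonal fundamental matrix, fix the two integration constants by matching with the inner differences near $\pm i\pi/2$ through Corollary \ref{corollarymatching}, evaluate at $\vu=0$, expand $\alpha=\alpha_0+\alpha_1\delta\param+O(\delta^2)$, and blame the $O(1/\log(1/\delta))$ error on $\chi$ evaluated at $|\vs|\sim\log(1/\delta)$. However, there is a genuine gap in your choice of the dominant linear part. You take the fundamental matrix of $\dot w=A(\vu)w$, with entries $\cosh^{\coef}\vu\,e^{\mp i\alpha\vu/\delta}e^{\param\vu}e^{\mp ic\log\cosh\vu}$, and claim the rest perturbs this only by a multiplicative $1+o(1)$. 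This fails unless $h_0=0$: already at $\vphi=0$ the denominator in \eqref{sys2d} contains $\delta^{-2}H(0,0,\delta\het(\vu),\delta,\delta\param)/(-1+\het^2(\vu))\approx -\delta h_0\het^3(\vu)/(-1+\het^2(\vu))$, and against the $\alpha/\delta$ entry of $A(\vu)$ this produces a coefficient of order $\alpha h_0\het(\vu)$ in the equation for $\Delta\vphi$, whose integral from the matching point $\vu_\pm$ to $0$ is of order $\alpha h_0\log\cosh\vu_\pm\sim \alpha h_0\log(1/\delta)$. Hence your vector $\mathbf{k}(\vu)$ is not constant up to $O(1/\log(1/\delta))$: it drifts by a phase of size $\alpha h_0\log(1/\delta)$, and the value you read off by comparing with $\Delta\Vpsi_0$ (whose phase contains $(c+\alpha h_0)\log\vs$, see \eqref{asyVpsi0}) would depend on where in the matching region, i.e. on $\dist$, you evaluate it. Your $\Phi$ reproduces the $c\log\vs$ part of that phase through $\log\cosh\vu$, but has no counterpart for the $\alpha h_0\log\vs$ part.

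This is precisely why the paper builds the $h_0$-term into the dominant system: $\mathcal{A}(\vu)$ in \eqref{defAsplit} carries the factor $\bigl(1-\delta h_0\het^3(\vu)/(-1+\het^2(\vu))\bigr)^{-1}$, and its fundamental matrix (Lemma \ref{lemamatriufonsplit}) acquires the extra factor $e^{\pm i\alpha h_0[-\frac{1}{2}\sinh^2\vu+\log\cosh\vu]}$. That factor (i) matches the inner phase $(c+\alpha h_0)\log\vs$, so the constant $c_1^0$ of \eqref{defc10c20} is independent of $\dist$, and (ii) makes the remainder genuinely small, giving the contraction $\|\mathcal{G}\|\le K/\log(1/\delta)$ (Lemmas \ref{lemabij} and \ref{lemainvertirGsplit}) in the weighted space $\bssplit$ on the imaginary segment $E$ (your "exponentially small, integrable derivative of $\mathbf{k}$" is also too optimistic near the singularities; the weight $e^{\alpha(\pi/2-|t|)/\delta}\cos^{-\coef}t$ is what makes this precise). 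Without incorporating $h_0$ into the fundamental matrix you would lose exactly the $e^{\frac{\pi}{2}\alpha_0h_0}$ factor and the $\frac{\alpha_0h_0}{2}+(c+\alpha_0h_0)\log\delta$ phases in the statement, and no amount of final bookkeeping can restore them, since they are absent from your $\Phi$. The fix is to replace $A(\vu)$ by $\mathcal{A}(\vu)$ from the start; the rest of your plan then coincides with the paper's proof.
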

\begin{rmk}
 Note that from Theorem \ref{theoremsplit}, doing the inverse of change $C_2$ (defined in \eqref{canviC2}) and taking norms, we obtain Theorem \ref{maintheoremrescaled}, with $C^*=|C_{\textup{in}}|$.
\end{rmk}

In this section we will give the main ideas of how Theorem \ref{theoremsplit} can be proved. The full proof can be found in Section \ref{prooftheoremsplit}.

First of all recall that both $\vphi^\uns$ and $\vphi^\sta$ satisfy equations \eqref{sys2d}. We will decompose system \eqref{sys2d} in a more convenient form. For that we define:
\begin{eqnarray}\mathcal{A}(\vu)&=&\frac{1}{1-\frac{\delta h_0\het^3(\vu)}{-1+\het^2(\vu)}}\left(\begin{array}{cc}
                  \displaystyle-\left(\frac{\alpha }{\delta}+c\het(\vu)\right)i+\param-\coef\het(\vu) & 0\medskip\\
		  0 & \displaystyle\left(\frac{\alpha }{\delta}+c\het(\vu)\right)i+\param-\coef\het(\vu)
                 \end{array}\right),\label{defAsplit}\medskip\\
\nonumber&&\medskip\\
 \mathcal{R}(\vphi)(\vu)&=&\frac{\delta^{-2}F(\delta\vphi,\delta\het(\vu),\delta,\delta\param)}{1+\frac{b\vxi\vxib+\delta^{-2}H(\delta \vphi, \delta \het(\vu), \delta, \delta\param)}{-1+\het^2(\vu)}}+\left(\frac{1}{1+\frac{b\vxi\vxib+\delta^{-2}H(\delta \vphi, \delta \het(\vu), \delta, \delta\param)}{-1+\het^2(\vu)}}-\frac{1}{1-\frac{\delta h_0\het^3(\vu)}{-1+\het^2(\vu)}}\right)\mathcal{A}(\vu)\vphi.\label{defRsplit}
\end{eqnarray}
Then, system \eqref{sys2d} can be written as:
\begin{equation}\label{sys2dmodified}\frac{d\vphi}{d\vu}=\mathcal{A}(\vu)\vphi+\mathcal{R}(\vphi)(\vu).\end{equation}
Since $\vphi^\uns$ and $\vphi^\sta$ satisfy system \eqref{sys2dmodified}, it is clear that its difference $\Delta\vphi=\vphi^\uns-\vphi^\sta$ satisfies:
$$\frac{d\Delta\vphi}{d\vu}=\mathcal{A}(\vu)\Delta\vphi+\mathcal{R}(\vphi^\uns)(\vu)-\mathcal{R}(\vphi^\sta)(\vu).$$
Following \cite{Sau01}, using the mean value theorem we can still rewrite this equation as the following linear equation:
\begin{equation}\label{eqsplit}
 \frac{d\Delta\vphi}{d\vu}=\mathcal{A}(\vu)\Delta\vphi+\mathcal{B}(\vu)\Delta\vphi,
\end{equation}
with:
\begin{equation}\label{defBsplit}
\mathcal{B}(\vu)=\int_0^1D\mathcal{R}((1-\lambda)\vphi^\sta-\lambda\vphi^\uns)(\vu)d\lambda.
\end{equation}
We observe that we can think of the matrix $\mathcal{B}$ as just depending on $\vu$, because the existence of $\vphi^\uns$ and $\vphi^\sta$ has been already proved in Theorem \ref{theoremouter}.

The point of writing the system for $\Delta\vphi$ as \eqref{eqsplit} is that, as we shall see, we split it into a dominant part, the one corresponding to the matrix $\mathcal{A}(\vu)$, and a \textit{small perturbation}, which corresponds to the the matrix $\mathcal{B}(\vu)$. This will allow us to find an asymptotic expression for $\Delta\vphi(\vu)$, with its dominant term given by the solution of the system:
\begin{equation}\label{hsys}
 \frac{d\Delta\vphi}{d\vu}=\mathcal{A}(\vu)\Delta\vphi.
\end{equation}

\begin{lem}\label{lemamatriufonsplit}
 For $\vu\in \doutT{\uns}\cap\doutT{\sta}$, a fundamental matrix of the homogeneous system \eqref{hsys} is:
\begin{equation}\label{defMsplit}
 \mathcal{M}(\vu)=\left(\begin{array}{cc} m_1(\vu) & 0 \\ 0 & m_2(\vu)\end{array}\right),
\end{equation}
with:
\begin{equation}\label{asyformm1}\begin{array}{rcl}
m_1(\vu)&=&\cosh^\coef\vu e^{-\alpha i\vu/\delta}e^{\param\vu}e^{\alpha  h_0i\left[-\frac{1}{2}\sinh^2\vu+\log\cosh\vu\right]}e^{ic\log\cosh\vu}\left(1+O\left(\frac{1}{\log(1/\delta)}\right)\right),\medskip\\
m_2(\vu)&=&\cosh^\coef\vu e^{\alpha i\vu/\delta}e^{\param\vu}e^{-\alpha  h_0i\left[-\frac{1}{2}\sinh^2\vu+\log\cosh\vu\right]}e^{-ic\log\cosh\vu}\left(1+O\left(\frac{1}{\log(1/\delta)}\right)\right).\end{array}
\end{equation}
\end{lem}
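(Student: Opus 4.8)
Since $\mathcal{A}(\vu)$ in \eqref{defAsplit} is diagonal, the two columns of the proposed matrix decouple, and it suffices to show that $m_1$ and $m_2$ solve the scalar equations $m_j'=a_j(\vu)m_j$, where $a_1(\vu),a_2(\vu)$ denote the diagonal entries of $\mathcal{A}(\vu)$; since $\det\mathcal{M}(\vu)=m_1(\vu)m_2(\vu)$ never vanishes, $\mathcal{M}$ is then automatically a fundamental matrix of \eqref{hsys}. The set $\doutT{\uns}\cap\doutT{\sta}$ is bounded and simply connected (in fact convex), contains $0$, and stays at distance at least $\dist\delta\log(1/\delta)$ from $\pm i\pi/2$; in particular $\cosh\vu$ does not vanish on it. So, fixing the branch of $\log\cosh\vu$ with $\log\cosh 0=0$, each $m_j$ is determined up to a multiplicative constant by $m_j(\vu)=\exp\bigl(\int_0^\vu a_j(\vw)\,d\vw\bigr)$, the integral taken along any path in the domain, and the lemma reduces to evaluating $\int_0^\vu a_1$ and $\int_0^\vu a_2$ modulo an error $O(1/\log(1/\delta))$. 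I treat $m_1$; the computation for $m_2$ is identical after changing $i\mapsto-i$ in the coefficients.

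Using $\het(\vu)=-\tanh\vu$ and $-1+\het^2(\vu)=-\cosh^{-2}\vu$, together with $(1-x)^{-1}=1+x/(1-x)$, I would write
\begin{equation*}
a_1(\vw)=\Bigl[-\Bigl(\tfrac{\alpha}{\delta}+c\het(\vw)\Bigr)i+\param-\coef\het(\vw)\Bigr]\Bigl(1+\frac{x(\vw)}{1-x(\vw)}\Bigr),\qquad x(\vw):=\delta h_0\,\frac{\sinh^3\vw}{\cosh\vw},
\end{equation*}
and use the identity $\sinh^3\vw/\cosh\vw=\sinh\vw\cosh\vw-\tanh\vw$. Integrating the bracket times $1$ term by term, with $\int\tanh\vw\,d\vw=\log\cosh\vw$, gives exactly $-\tfrac{\alpha i}{\delta}\vu+\param\vu+(\coef+ci)\log\cosh\vu$, which accounts for the explicit factors $\cosh^\coef\vu\,e^{-\alpha i\vu/\delta}e^{\param\vu}e^{ic\log\cosh\vu}$. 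In the term involving $x/(1-x)=x+x^2/(1-x)$ the only non-negligible piece is $-\tfrac{\alpha i}{\delta}\,x(\vw)=-\alpha h_0 i\,(\sinh\vw\cosh\vw-\tanh\vw)$, whose primitive $\alpha h_0 i\bigl[-\tfrac12\sinh^2\vu+\log\cosh\vu\bigr]$ accounts for the last explicit factor. Exponentiating the sum of these primitives recovers precisely the product displayed in the statement; the analogous bookkeeping for $a_2$ yields $m_2$.

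It then remains to bound all the other contributions to $\int_0^\vu a_1$ by $O(1/\log(1/\delta))$, uniformly on $\doutT{\uns}\cap\doutT{\sta}$; then $m_1(\vu)$ equals that explicit product times $e^{O(1/\log(1/\delta))}=1+O(1/\log(1/\delta))$, as claimed. On the bounded domain $\re\vw$ is bounded, so $|\sinh\vw|,|e^{\param\vw}|\le C$, while $|\cosh\vw|\ge c_0\,\mathrm{dist}(\vw,\{\pm i\pi/2\})\ge c_0\dist\delta\log(1/\delta)$; hence $|x(\vw)|\le C\delta/|\cosh\vw|\le C'/\log(1/\delta)$, so $|1-x|\ge\tfrac12$. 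Therefore the leftover integrands, namely $-ci\het(\vw)\,\tfrac{x}{1-x}$, $\param\,\tfrac{x}{1-x}$ and $-\coef\het(\vw)\,\tfrac{x}{1-x}$ on the one hand, and $-\tfrac{\alpha i}{\delta}\,\tfrac{x^2}{1-x}$ on the other, are bounded by $C\delta/|\cosh\vw|$ and by $C\delta/|\cosh\vw|^2$ respectively. Integrating along a path from $0$ to $\vu$, the part near $\pm i\pi/2$ dominates, and using $|\vu-i\pi/2|\ge\dist\delta\log(1/\delta)$ one gets $\int_0^\vu C\delta|\cosh\vw|^{-1}\,|d\vw|=O(\delta\log(1/\delta))$ and $\int_0^\vu C\delta|\cosh\vw|^{-2}\,|d\vw|=O\bigl(1/\log(1/\delta)\bigr)$, together with $\delta\log(1/\delta)=o(1/\log(1/\delta))$. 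This gives the bound, and $m_2$ is handled the same way.

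The only genuinely delicate point is this last step: one has to fix the branch of $\log\cosh\vu$ consistently so that the explicit primitives are single-valued on $\doutT{\uns}\cap\doutT{\sta}$, and then track the simple- and double-pole remainders carefully near $\pm i\pi/2$, where they saturate precisely at the admissible distance $\dist\delta\log(1/\delta)$ — this is exactly what forces the error to be $O(1/\log(1/\delta))$ and no worse. Everything else reduces to elementary antiderivatives.
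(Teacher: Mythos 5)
Your argument is essentially the paper's own proof: since $\mathcal{A}(\vu)$ in \eqref{defAsplit} is diagonal one writes $\mathcal{M}(\vu)=e^{\int_0^\vu\mathcal{A}(\vw)\,d\vw}$, the elementary primitives of the leading part of the integrand produce exactly the explicit factors $\cosh^\coef\vu\,e^{-\alpha i\vu/\delta}e^{\param\vu}e^{\alpha h_0 i[-\frac12\sinh^2\vu+\log\cosh\vu]}e^{ic\log\cosh\vu}$, and the leftover integrands are controlled near $\pm i\pi/2$ using $|\cosh\vw|\geq c\,|\vw\mp i\pi/2|\geq c\,\dist\delta\log(1/\delta)$, which is precisely how the paper gets the $O(1/\log(1/\delta))$ error (it packages your $\int|\cosh\vw|^{-2}$ estimate as Lemma \ref{lematecnicintegral}). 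One minor slip: in the terms $-ci\,\het(\vw)\frac{x}{1-x}$ and $-\coef\,\het(\vw)\frac{x}{1-x}$ the factor $\het(\vw)=-\tanh\vw$ is \emph{not} bounded near $\pm i\pi/2$ (it grows like $1/|\cosh\vw|$), so these remainders are of double-pole type, bounded by $C\delta/|\cosh\vw|^2$ rather than $C\delta/|\cosh\vw|$; since you already show $\int_0^\vu C\delta\,|\cosh\vw|^{-2}\,|d\vw|=O(1/\log(1/\delta))$, reclassifying them this way leaves your conclusion, and agreement with the paper, intact.
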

The proof of Lemma \ref{asyformm1} will be given in Section \ref{prooftheoremsplit}.

In the following we shall give an heuristic idea of how the asymptotic formula given in Theorem \ref{theoremsplit} can be found. For simplicity, we will focus just on the first component of $\Delta\vphi$, that is $\Delta\vxi$.

Let us omit the influence of $\CMcal{B}$, that is assume that $\CMcal{B}(\vu)\equiv 0$. Then, any solution $\Phi$ of \eqref{eqsplit} can be written as:
$$\Phi(\vu)=\mathcal{M}(\vu)\left(\begin{array}{c}c_1\\c_2\end{array}\right),$$
for certain $c_1$, $c_2$, and its first component is $m_1(\vu)c_1$. Hence, $\Delta\vxi(\vu)=m_1(\vu)c_1$ for a certain $c_1$. The main idea is that $\Delta\vxi(\vu)$ is bounded when $\vu\in\doutT{\uns}\cap\doutT{\sta}$. The first thing we observe is that from the asymptotic expression of $m_1(\vu)$ in Lemma \ref{lemamatriufonsplit} we can already see that $\Delta\vxi(\vu)$ has an exponentially small bound if $\vu\in\mathbb{R}$. Indeed, it is clear that when $\vu\sim i\pi/2$ we have that $m_1(\vu)\sim e^{\frac{\alpha\pi}{2\delta}}$, that is exponentially big. Then, $c_1$ has to be $\sim e^{-\frac{\alpha\pi}{2\delta}}$ for $\Delta\vxi(\vu)$ to be bounded, i.e. it must be exponentially small. As a consequence, when $\vu\in\mathbb{R}$ we have that $\Delta\vxi(\vu)=m_1(\vu)c_1$ is exponentially small.

However, we do not want a bound of $\Delta\vxi$ but an asymptotic formula. Thus we have to find the constant $c_1$ that corresponds to $\Delta\vxi$, or more concretely a good approximation $c_1^0$ of it. We recall that near the singularity $i\pi/2$ we have a good approximation $\Delta\vpsi_0$ of $\Delta\vxi$ given by the study of the inner equation in Theorem \ref{theoreminner}. Then, if we consider the point:
$$\vu_+=i\left(\frac{\pi}{2}-\kappa\delta\log(1/\delta)\right),$$
it is clear that the initial condition $c_1$ satisfies:
$$c_1m_1(\vu_+)=\Delta\vxi(\vu_+)\approx\delta^{-1}\Delta\vpsi_0\left( \frac{\vu_+ - i\pi/2}{\delta}\right).$$
From Theorem \ref{theoreminner} we know that:
$$\delta^{-1}\Delta\vpsi_0\left ( \frac{\vu_+ -i\pi/2}{\delta}\right )=\frac{(-i\kk)^\coef}{\delta}e^{-\kk\alpha-(c+\alpha h_0)\log(-i\kk)}(C_{\textup{in}}+\chi_1(-i\kk)),$$
where $\kk=\dist\log(1/\delta)$, and therefore:
$$c_1=m_1(\vu_+)^{-1}\Delta\vxi(\vu_+)\approx m_1^{-1}(\vu_+)\delta^{-1}\Delta\vpsi_0\left (\frac{\vu_+ - i\pi/2}{\delta} \right )
\approx m_1^{-1}(\vu_+)\frac{(-i\kk)^\coef}{\delta} e^{-\alpha \kk+i(c+\alpha  h_0)\log(-i\kk)}C_{\textup{in}},$$
so that taking:
$$c_1^0= m_1^{-1}(\vu_+)\frac{(-i\kk)^\coef}{\delta} e^{-\alpha \kk+i(c+\alpha  h_0)\log(-i\kk)}C_{\textup{in}},$$
we obtain a good approximation $\Delta\vxi_0(\vu)$ of $\Delta\vxi(\vu)$ defined by:
$$\Delta\vxi_0(\vu):=m_1(\vu)c_1^0.$$
Using the bound of the matching error given in Theorem \ref{theoremmatching}, it can be proved that $\Delta\vxi_0(\vu)$ is the dominant part of $\Delta\vxi(\vu)$. Then, computing explicitly the asymptotic formula of $\Delta\vxi_0(0)$ one obtains the first component of the dominant term of the formula given in Theorem \ref{theoremsplit}. As we will see in Section \ref{prooftheoremsplit}, $c_1^0$ does not depend on  $\dist$.

For the second component, $\Delta\vxib$, we can repeat the same arguments, but using the singularity $-i\pi/2$. Finally, this procedure can be adapted to the whole system \eqref{eqsplit}, using the fact that, indeed, $\mathcal{B}(\vu)$ is small.

\section{Proof of Theorem \ref{thmouterunbounded}}\label{prooftheoremouterunbounded}
In this section we will prove Theorems \ref{thmouterunbounded} and  \ref{theoremouter}. However, in order to do that, first we need to define suitable Banach spaces in which we will work, which are the following:
\begin{equation*}
 \bsout{*}=\left\{\phi:\doutb{*}\rightarrow\mathbb{C}\, :\, \phi \textup{ analytic, } \|\phi\|_{\textup{out}}^*<\infty\right\},
\end{equation*}
where $*=\uns,\sta$, and the norm $\|.\|_{\textup{out}}^{\uns,\sta}$ defined as:
\begin{equation}\label{defnorm}
\begin{array}{lll}
 \|\phi\|_{\textup{out}}^\sta&=&\displaystyle\sup_{\vv\in \doutbinf{\sta} }|(\het(\vv)+1)^{-1}\phi(\vv)|+\sup_{\vv\in \doutbT{\sta}}|(\het(\vv)+1)^{-3}\phi(\vv)|,\medskip\\
 \|\phi\|_{\textup{out}}^\uns&=&\displaystyle\sup_{\vv\in \doutbinf{\uns} }|(\het(\vv)-1)^{-1}\phi(\vv)|+\sup_{\vv\in \doutbT{\uns}}|(\het(\vv)-1)^{-3}\phi(\vv)|.
\end{array}
\end{equation}

In the product space $\bsout{*}\times\bsout{*}$, with $*=\uns,\sta$, we take the norm:
$$\|(\phi_1,\phi_2)\|_{\textup{out},\times} ^{\uns,\sta}=\|\phi_1\|_{\textup{out}}^{\uns,\sta}+\|\phi_2\|_{\textup{out}}^{\uns,\sta},\qquad (\phi_1,\phi_2)\in\bsout{*}\times\bsout{*}.$$

Below we will introduce some notation that will allow us to see $\vzeta^{\uns,\sta}$ as fixed points of a certain operator. Given $\alpha $ and $c$, we define the linear operators acting on functions $\phi_1\in\bsout{*}$:
\begin{equation}\label{operadorLalphac}
 L_{\alpha ,c}^{\uns,\sta}(\phi_1)(\vv)=\cosh^\coef\vv\int_{\mp\infty}^0{\frac{1}{\cosh^\coef(\vv+r)}e^{i\alpha  r/\delta}e^{\param r}g_c^{\uns,\sta}(\vv,r)\phi_1(\vv+r)dr},
\end{equation}
where $*=\uns,\sta$, $-$ corresponds to $\uns$ and $+$ to $\sta$, and:
$$g_c^\uns(\vv,r)=e^{ic(r+\log((1+e^{2\vv})/2)-\log((1+e^{2(\vv+r)})/2)},\qquad g_c^\sta(\vv,r)=e^{ic(-r+\log((1+e^{-2\vv})/2)-\log((1+e^{-2(\vv+r)})/2)},$$
\begin{rmk}
 One might think that instead of taking $g_c$, it would be more natural to take:
 $$\hat g_c(\vv,r)=e^{ic(\log\cosh\vv-\log\cosh(\vv+r))}.$$
Although $g_c^\uns(\vv,r)=g_c^\sta(\vv,r)=\hat g_c(\vv,r)$ if $\vv,r\in\mathbb{R}$, this is not the case when $\vv,r\in\mathbb{C}$. In particular, one can see that if $\vv,r\in\doutb{*}$, $*=\uns,\sta$, the function $\hat g_c$ is not well defined. On the contrary, the function $g_c^\uns$ is always well defined for $\vv,r\in\doutb{\uns}$ and $g_c^\sta$ is well defined for $\vv,r\in\doutb{\sta}$.
\end{rmk}

Now, given a function $\phi=(\phi_1,\phi_2)\in\bsout{*}\times\bsout{*}$ we define the linear operator:
\begin{equation}\label{operadorL}
L^{\uns,\sta}(\phi)=(L_{\alpha ,c}^{\uns,\sta}(\phi_1),L_{-\alpha ,-c}^{\uns,\sta}(\phi_2)).
\end{equation}
\begin{lem}
With the above notation, if a bounded and continuous function $\vzeta^{\uns,\sta}:\doutb{*}\rightarrow\mathbb{C}^3$, with $*=\uns,\sta$ respectively, satisfies the fixed point equation
\begin{equation}\label{fixedpoint}
\vzeta^{\uns,\sta}=L^{\uns,\sta}\circ R^{\uns,\sta}(\vzeta^{\uns,\sta}),
\end{equation}
then it is a solution of \eqref{syscompu}, \eqref{syscomps} respectively.
\end{lem}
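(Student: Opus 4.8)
The statement is essentially a verification that the integral operator $L^{\uns,\sta}$ is a right inverse of the linear differential operator $\tfrac{d}{d\vv}-A(\vv)$ on the appropriate function space, together with the boundary condition \eqref{cond}. So the plan is: first show that if $\vzeta=(\zeta_1,\zeta_2)$ solves the fixed point equation \eqref{fixedpoint}, then each component $\zeta_k$ solves the corresponding scalar linear ODE $\zeta_k' = a_k(\vv)\zeta_k + R_k^{\uns,\sta}(\vzeta)(\vv)$, where $a_1(\vv)=-(\tfrac{\alpha}{\delta}+c\het(\vv))i+\param-\coef\het(\vv)$ and $a_2(\vv)=\overline{\text{(analogous)}}$ are the diagonal entries of $A(\vv)$; second, check the limit condition at $\mp\infty$.

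\textbf{Step 1: differentiating the integral operator.} I would start from the scalar operator \eqref{operadorLalphac}. The key observation is that $\cosh^{\coef}\vv$ and the factor $g_c^{\uns,\sta}(\vv,r)$ are built precisely so that
$$\frac{d}{d\vv}\left[\cosh^{-\coef}(\vv+r)\,e^{i\alpha r/\delta}e^{\param r}g_c^{\uns,\sta}(\vv,r)\right] = \frac{d}{dr}\left[\cosh^{-\coef}(\vv+r)\,e^{i\alpha r/\delta}e^{\param r}g_c^{\uns,\sta}(\vv,r)\right] - (\text{correction}),$$
more precisely that the integrand $K(\vv,r):=\cosh^{\coef}\vv\,\cosh^{-\coef}(\vv+r)e^{i\alpha r/\delta}e^{\param r}g_c^{\uns,\sta}(\vv,r)$ satisfies $\partial_\vv K(\vv,r) = a_1(\vv)K(\vv,r) - \partial_r K(\vv,r)$ and $K(\vv,0)=1$. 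Granting this identity (which is a direct computation using $\het(\vv)=-\tanh\vv$, $\het'(\vv)=-1+\het^2(\vv)$, and $\tfrac{d}{d\vv}\log((1+e^{2\vv})/2) = 1+\het(\vv)$), differentiating $L_{\alpha,c}^{\uns,\sta}(\phi_1)(\vv)=\int_{\mp\infty}^0 K(\vv,r)\phi_1(\vv+r)\,dr$ under the integral sign gives
$$\frac{d}{d\vv}L_{\alpha,c}^{\uns,\sta}(\phi_1)(\vv) = a_1(\vv)L_{\alpha,c}^{\uns,\sta}(\phi_1)(\vv) - \int_{\mp\infty}^0 \partial_r K(\vv,r)\phi_1(\vv+r)\,dr + \int_{\mp\infty}^0 K(\vv,r)\phi_1'(\vv+r)\,dr,$$
and the last two integrals combine, via integration by parts (using $\tfrac{d}{dr}[\phi_1(\vv+r)]=\phi_1'(\vv+r)$), into the boundary terms $K(\vv,0)\phi_1(\vv)-\lim_{r\to\mp\infty}K(\vv,r)\phi_1(\vv+r)$. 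The boundary term at $0$ gives $\phi_1(\vv)$, and the term at $\mp\infty$ vanishes because of the decay built into the norm $\|\cdot\|_{\textup{out}}^{\uns,\sta}$ together with the sign of $\re(i\alpha r/\delta + \param r)\mp\coef\,(\ldots)$ — this is where one uses that $\phi_1\in\bsout{*}$ and that we chose the lower limit $\mp\infty$ to match the stable/unstable direction. Hence $\tfrac{d}{d\vv}L_{\alpha,c}^{\uns,\sta}(\phi_1) = a_1(\vv)L_{\alpha,c}^{\uns,\sta}(\phi_1) + \phi_1$. Applying this with $\phi_1$ replaced by the first component of $R^{\uns,\sta}(\vzeta^{\uns,\sta})$, and the analogous computation for $L_{-\alpha,-c}^{\uns,\sta}$ and the second component, shows that $\vzeta^{\uns,\sta}=L^{\uns,\sta}\circ R^{\uns,\sta}(\vzeta^{\uns,\sta})$ implies $\tfrac{d\vzeta^{\uns,\sta}}{d\vv}=A(\vv)\vzeta^{\uns,\sta}+R^{\uns,\sta}(\vzeta^{\uns,\sta})$, which is exactly the ODE in \eqref{syscompu}--\eqref{syscomps}.

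\textbf{Step 2: the limit condition.} It remains to check $\lim_{\vv\to-\infty}\vzeta^\uns(\vv)=(0,0)$ (and the mirror statement for $\sta$). Since $\vzeta^\uns$ solves the fixed point equation, $\vzeta^\uns(\vv)=L^\uns\circ R^\uns(\vzeta^\uns)(\vv)$; I would estimate $L_{\alpha,c}^\uns(\phi_1)(\vv)$ directly for $\vv\to-\infty$ using that $\phi_1\in\bsout{\uns}$ (so $|\phi_1(\vv)|\lesssim|\het(\vv)-1|\to 0$ on $\doutbinf{\uns}$) and that the exponential kernel $|K(\vv,r)|$ is integrable in $r$ uniformly — in fact $\cosh^{\coef}\vv\,\cosh^{-\coef}(\vv+r)\to 0$ as $\re\vv\to-\infty$ for fixed $r<0$, and dominated convergence finishes it. Together these give $\vzeta^\uns(\vv)\to 0$.

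\textbf{Main obstacle.} The genuinely delicate point is the kernel identity $\partial_\vv K = a_1 K - \partial_r K$ with $K(\vv,0)=1$, i.e. confirming that the somewhat unusual choice of $g_c^{\uns,\sta}$ (rather than the ``naive'' $\hat g_c$ flagged in the preceding remark) is exactly what makes the integration-by-parts telescope correctly while keeping everything single-valued on the complex domains $\doutb{*}$; this forces one to be careful with branches of $\log\cosh$ versus $\log((1+e^{\pm2\vv})/2)$. The vanishing of the boundary term at $\mp\infty$ is the second thing to watch, but it follows cleanly from the definition of the norm once the kernel identity is in hand.
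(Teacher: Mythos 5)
The paper states this lemma without proof (it is treated as the standard variation--of--constants verification), so there is nothing to compare against; judged on its own, your overall plan is the right one, but two steps fail as written. First, the kernel identity has the wrong sign. For a kernel of the form $K(\vv,r)=m_1(\vv)/m_1(\vv+r)$ with $m_1$ a nonvanishing solution of $m_1'=a_1m_1$, one has $\partial_\vv K-\partial_r K=a_1(\vv)K$, i.e. $\partial_\vv K=a_1(\vv)K+\partial_r K$, not $a_1(\vv)K-\partial_r K$. With your sign the two remaining integrals do \emph{not} telescope into boundary terms (integration by parts leaves an extra $-2\int\partial_rK\,\phi_1\,dr$); with the correct sign no integration by parts is needed at all, since $(\partial_rK)\phi_1+K\phi_1'=\partial_r\bigl[K(\vv,r)\phi_1(\vv+r)\bigr]$ and integrating in $r$ gives exactly $\phi_1(\vv)$ minus the term at $r=\mp\infty$. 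Second, your manipulation uses $\phi_1'$ with $\phi_1=R_1^{\uns}(\vzeta^{\uns})$, but the lemma only assumes $\vzeta^{\uns}$ bounded and continuous, so this derivative is not available a priori. Both problems disappear if you undo the shift $w=\vv+r$ and write
$$L_{\alpha,c}^{\uns}(\phi_1)(\vv)=m_1(\vv)\int_{-\infty}^{\vv}m_1^{-1}(w)\phi_1(w)\,dw,\qquad m_1(\vv)=\cosh^{\coef}\vv\; e^{\param\vv}e^{-i\alpha\vv/\delta}e^{ic\log\cosh\vv},$$
with the branches $\log\cosh\vv=-\vv+\log((1+e^{2\vv})/2)$, $\cosh^{\coef}\vv=e^{\coef(-\vv+\log((1+e^{2\vv})/2))}$ (this is exactly what $g_c^{\uns}$ encodes); then the $\vv$--dependence sits only in $m_1(\vv)$ and the upper limit, and the fundamental theorem of calculus gives $\frac{d}{d\vv}L_{\alpha,c}^{\uns}(\phi_1)=a_1(\vv)L_{\alpha,c}^{\uns}(\phi_1)+\phi_1$ using only continuity of $\phi_1$, while also showing that the fixed point is automatically differentiable.

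When you actually carry out the ``direct computation'' you defer, you will also find that with the kernel exactly as printed in \eqref{operadorLalphac} (factor $e^{\param r}$) one gets $\partial_\vv K-\partial_r K=(a_1(\vv)-2\param)K$, so a fixed point would solve the wrong equation; the identity, and hence the lemma, holds with $e^{-\param r}$, which is what the ratio $m_1(\vv)/m_1(\vv+r)$ produces. (This sign is immaterial for all the later estimates, which only use $|\param|<\coef$, so it should be read as a typo rather than an obstruction, but your proof must use the corrected kernel.) Two smaller points: the ratio $\cosh^{\coef}\vv/\cosh^{\coef}(\vv+r)$ does \emph{not} tend to $0$ as $\re\vv\to-\infty$ for fixed $r<0$ (it tends to $e^{\coef r}$), so in Step 2 both the vanishing of the boundary term and the limit in \eqref{cond} must come from the decay of the kernel in $r$ (rate $e^{(\coef-|\param|)r}$) together with the decay of $\phi_1=R_1^{\uns}(\vzeta^{\uns})$ and dominated convergence; and that decay of $R_1^{\uns}(\vzeta^{\uns})$ (a bound of the type in Corollary \ref{corollaryfitaDFiDH}--\ref{corollaryfitaFiH}) is not a consequence of mere boundedness of $\vzeta^{\uns}$, since the denominator in \eqref{defR} involves division by $-1+\het^2(\vv)\to0$; the hypotheses should be understood, as in the application in Proposition \ref{propouterunbounded}, as $\vzeta^{\uns}$ lying in the relevant ball of $\bsout{\uns}\times\bsout{\uns}$.
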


In the rest of this section we will prove the following result, which is equivalent to Theorem \ref{thmouterunbounded}:
\begin{prop}\label{propouterunbounded}
Let $\overline{\dist}>0$ and $0<\beta<\pi/2$ be any fixed constants independent of $\delta$. Then, if $\delta>0$ is small enough, problem \eqref{syscompu} has a solution $\vzeta^\uns$ defined in $\doutb{\uns}$, and \eqref{syscomps} has a solution $\vzeta^\sta$ defined in  $\doutb{\sta}$, both satisfying that $\vzeta^{\uns,\sta}=\vzeta_0^{\uns,\sta}+\vzeta_1^{\uns,\sta}$
with the following properties:
\begin{enumerate}
 \item $\vzeta_0^{\uns,\sta}=L^{\uns,\sta}\circ R^{\uns,\sta}(0)\in\bsout{*}\times\bsout{*}$ and there exists a constant $K$ independent of
$\delta$ such that:
$$\|\vzeta_0^{\uns,\sta}\|_{\textup{out},\times} ^{\uns,\sta}\leq  K\delta^2. $$
\item $\vzeta_1^{\uns,\sta}\in\bsout{*}\times\bsout{*}$, and there exists a constant $K$ independent of
$\delta$ such that:
$$\|\vzeta_1^{\uns,\sta}\|_{\textup{out},\times} ^{\uns,\sta}\leq \frac{K}{\log(1/\delta)}\|\vzeta_0^{\uns,\sta}\|_{\textup{out},\times} ^{\uns,\sta}$$
\end{enumerate}
where $*=\uns,\sta$ respectively.
\end{prop}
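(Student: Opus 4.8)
The strategy is a fixed-point argument for the operator $\mathcal{N}^{\uns,\sta}(\vzeta)=L^{\uns,\sta}\circ R^{\uns,\sta}(\vzeta)$ acting on the Banach space $\bsout{*}\times\bsout{*}$, built around the weighted norms \eqref{defnorm}. First I would establish the mapping properties of the linear operators $L_{\alpha,c}^{\uns,\sta}$ defined in \eqref{operadorLalphac}: one integrates along the horizontal ray $\vv+r$, $r\in(\mp\infty,0]$, which stays inside $\doutb{*}$, and uses the elementary facts that $|e^{i\alpha r/\delta}|=1$ on the real direction, that $e^{\param r}$ and the phase factor $g_c^{\uns,\sta}(\vv,r)$ are bounded on the domain, and that $\cosh^{\coef}\vv/\cosh^{\coef}(\vv+r)$ behaves like $e^{\coef r}$ (up to bounded factors) as $r\to\mp\infty$ while $|\het(\vv)\mp1|\sim e^{\pm 2\vv}$. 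Tracking these weights shows $L^{\uns,\sta}:\bsout{*}\times\bsout{*}\to\bsout{*}\times\bsout{*}$ is bounded with norm $O(1)$ on $\doutbinf{*}$ and, crucially, \emph{gains a factor} $\delta\log(1/\delta)$ (coming from the length $\sim\delta\log(1/\delta)$ of the integration near the tip of the domain, where the weight jumps from exponent $1$ to exponent $3$) when measured against the finer weight on $\doutbT{*}$. This is the mechanism behind the $1/\log(1/\delta)$ gain in item (2).

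Next I would estimate the independent term $\vzeta_0^{\uns,\sta}=\mathcal{N}^{\uns,\sta}(0)=L^{\uns,\sta}\circ R^{\uns,\sta}(0)$. By \eqref{defR}, $R^{\uns,\sta}(0)(\vv)=\dfrac{\delta^{-2}F^{\uns,\sta}(0,0,\delta\het(\vv),\delta,\delta\param)}{1+\frac{\delta^{-2}H^{\uns,\sta}(0,0,\delta\het(\vv),\delta,\delta\param)}{-1+\het^2(\vv)}}$, and Remark \ref{ordreF12H} gives $|F^{\uns,\sta}(0,0,\delta\het(\vv),\delta,\delta\param)|\le K|(\delta\het(\vv)\mp\delta,\delta,\delta\param)|^3\le K\delta^3$ (using $|\het(\vv)\mp1|\lesssim 1$ on the relevant part, and more precisely $|\het(\vv)\mp1|^3$ with the right power of $\delta^{-1}\cdot\delta=1$ bookkeeping near the singularities), so that $R^{\uns,\sta}(0)$ lies in $\bsout{*}\times\bsout{*}$ with norm $O(\delta)$; actually tracking the cubic vanishing in the third slot one gets the sharp $O(\delta)$ against the weight on $\doutbinf{*}$ and $O(\delta)$ against the cubic weight on $\doutbT{*}$, and applying the bounded operator $L^{\uns,\sta}$ yields $\|\vzeta_0^{\uns,\sta}\|_{\textup{out},\times}^{\uns,\sta}\le K\delta^2$, which is item (1). (The extra power of $\delta$ over the naive count comes from the $\delta^{-2}$ prefactor against the $\delta^3$ size, i.e.\ $\delta^{-2}\cdot\delta^3=\delta$, and then a further $\delta$ is saved because $F^{\uns,\sta}$ vanishes to third order precisely at the critical point, giving the extra $|\het(\vv)\mp1|$ weight that is already built into the definition of $\|\cdot\|_{\textup{out}}^{\uns,\sta}$.)

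Then I would set $\vzeta^{\uns,\sta}=\vzeta_0^{\uns,\sta}+\vzeta_1^{\uns,\sta}$ and rewrite the fixed-point equation \eqref{fixedpoint} as $\vzeta_1^{\uns,\sta}=L^{\uns,\sta}\bigl(R^{\uns,\sta}(\vzeta_0^{\uns,\sta}+\vzeta_1^{\uns,\sta})-R^{\uns,\sta}(0)\bigr)=:\mathcal{T}^{\uns,\sta}(\vzeta_1^{\uns,\sta})$, and solve it by contraction on the ball $\{\|\vzeta_1\|_{\textup{out},\times}^{\uns,\sta}\le K_0\delta^2/\log(1/\delta)\}$. The key Lipschitz estimate is that $R^{\uns,\sta}(\vzeta)-R^{\uns,\sta}(\vzeta')$ is small: because $F^{\uns,\sta},H^{\uns,\sta}$ are $O_3$ and analytic (Remark \ref{ordreF12H}), the nonlinear part of $R^{\uns,\sta}$ contributes a factor that is quadratically small in the (already $O(\delta^2)$) arguments, while the \emph{linear-in-$\vzeta$} part of $R^{\uns,\sta}$—the term $\bigl((1+\cdots)^{-1}-1\bigr)A(\vv)\vzeta$—has a coefficient of size $O(\delta)$ near the tip and is controlled by the weights; composing with $L^{\uns,\sta}$, which as noted gains the $\delta\log(1/\delta)$ factor on $\doutbT{*}$, one obtains $\|\mathcal{T}^{\uns,\sta}(\vzeta_1)-\mathcal{T}^{\uns,\sta}(\vzeta_1')\|\le \frac{K}{\log(1/\delta)}\|\vzeta_1-\vzeta_1'\|$ and $\|\mathcal{T}^{\uns,\sta}(0)\|=\|L^{\uns,\sta}(R^{\uns,\sta}(\vzeta_0^{\uns,\sta})-R^{\uns,\sta}(0))\|\le\frac{K}{\log(1/\delta)}\|\vzeta_0^{\uns,\sta}\|_{\textup{out},\times}^{\uns,\sta}$. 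Banach's fixed-point theorem then gives the unique $\vzeta_1^{\uns,\sta}$ with the bound in item (2), and a standard argument (differentiating under the integral / using that $L^{\uns,\sta}$ maps into analytic functions vanishing at the ends of the ray) shows $\vzeta^{\uns,\sta}$ solves \eqref{syscompu}, \eqref{syscomps}.

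\textbf{Main obstacle.} The delicate point is proving that $L^{\uns,\sta}$ truly gains the factor $1/\log(1/\delta)$ when passing from $\doutbinf{*}$ (weight exponent $1$) to $\doutbT{*}$ (weight exponent $3$): this requires splitting the integration path at $r$ of order $\delta\log(1/\delta)$, carefully estimating $\bigl|\cosh^{\coef}\vv\,/\cosh^{\coef}(\vv+r)\bigr|$ and the phase $g_c^{\uns,\sta}$ uniformly for $\vv$ ranging over the whole domain (including near the corner points where $|\im\vv|$ is close to $\pi/2-\overline{\dist}\delta\log(1/\delta)$), and checking that the oscillatory factor $e^{i\alpha r/\delta}$ does not spoil—and in fact, via a non-stationary-phase / integration-by-parts argument near $\vv\sim\pm i\pi/2$, may even improve—the bound. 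This uniform control near the tip of $\doutb{*}$, where the two weight regimes meet, is the technical heart of the proposition; everything else is bookkeeping with the $O_3$ hypothesis on $F^{\uns,\sta},H^{\uns,\sta}$.
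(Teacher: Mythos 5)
Your overall architecture (bound the linear operator $L^{\uns,\sta}$, estimate the independent term $\vzeta_0^{\uns,\sta}=L^{\uns,\sta}\circ R^{\uns,\sta}(0)$, then a contraction for $\vzeta_1^{\uns,\sta}$) matches the paper, but there is a genuine gap in the key operator estimate, and it propagates to both items. You integrate along the real ray $\vv+r$, where $|e^{i\alpha r/\delta}|=1$, and claim $L^{\uns,\sta}$ is bounded with norm $O(1)$, gaining only a factor $\delta\log(1/\delta)$ near the tip from the length of the path. The paper instead deforms the contour (Lemma \ref{lemacanvicami}) to the direction $e^{i(\pi-\beta/2)}$, which is legitimate by analyticity and the decay of the integrand, so that the oscillatory factor becomes $e^{-\alpha r\sin(\beta/2)/\delta}$ and $\int_0^\infty e^{-\alpha r\sin(\beta/2)/\delta}\,dr=O(\delta)$; this yields the \emph{uniform} gain $\|L(\phi)\|_{\textup{out},\times}\leq K\delta\|\phi\|_{\textup{out},\times}$ of Lemma \ref{lemaL}, valid on all of $\doutb[\overline{\dist}][]{}$, not just near the tip. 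This factor $\delta$ is exactly what is needed twice: for item (1), since $\|R(0)\|_{\textup{out},\times}\leq K\delta$ (Lemma \ref{lemaprimaprox}), one gets $\|\vzeta_0\|\leq K\delta\cdot K\delta=K\delta^2$ — with your $O(1)$ bound you only reach $K\delta$, and the extra power cannot be recovered by the ``cubic vanishing'' bookkeeping you sketch, because that vanishing is already spent in making $\|R(0)\|=O(\delta)$ in the weighted norm.

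For item (2) the gap is worse. The Lipschitz constant of $R$ in the weighted norm is of order $\frac{1}{\delta\log(1/\delta)}$: the linear-in-$\vzeta$ term carries $A(\vv)=O(1/\delta)$ times a prefactor which near the tip is only $O(1/\log(1/\delta))$ (see \eqref{fitados}), not $O(\delta)$ as you assert; it is $O(\delta)$ only away from the singularities (\eqref{fitau}). Composing this with your claimed gain $\delta\log(1/\delta)$ gives a constant of order $1$, which is not a contraction and does not produce the $\frac{K}{\log(1/\delta)}$ of item (2); composing with the paper's gain $K\delta$ gives precisely $\frac{K}{\log(1/\delta)}$ (Lemma \ref{lemafixedpt}, after which the paper closes the argument with Angenent's lemma rather than a plain Banach iteration — a cosmetic difference). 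You correctly identify the uniform control of the oscillatory factor near the tip as the ``main obstacle'' and allude to a non-stationary-phase argument, but you leave it unproved; carrying it out is exactly the content of Lemmas \ref{lemacanvicami} and \ref{lemaL}, and without it the proposition does not follow from your estimates.
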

From Proposition \ref{propouterunbounded} we obtain solutions $\vzeta^\uns$ and $\vzeta^\sta$ of systems given in \eqref{syscompu} and \eqref{syscomps} respectively. Note that by the definitions of $\bsout{\uns}$ and $\bsout{\sta}$, and since $\vzeta^\uns\in\bsout{\uns}$ and $\vzeta^\sta\in\bsout{\sta}$, we know that:
$$\lim_{\re\vv\to-\infty}\vzeta^\uns(\vv)=(0,0),\qquad\lim_{\re\vv\to+\infty}\vzeta^\sta(\vv)=(0,0).$$

From now on, we will focus just on the parameterization of the unstable manifold, $\vzeta^\uns$, being the proof for the stable one completely analogous. For this reason, if there is not danger of confusion, we will omit the superindices $-\uns-$ of $\vzeta$, $\bsout[]{}$, $\doutb[\dist][]{}$, etc. Moreover, we will not write explicitly the dependence on $\vv$ of $\vzeta$ (or any function belonging to $\bsout[]{}$). Finally, in the rest of the paper, if no confusion is possible, we will denote by $K$ any constant independent of $\delta$. Obviously, these constants $K$ will depend on $\overline{\dist}$ and $\beta$, which we will consider fixed.

Before proving Proposition \ref{propouterunbounded} we will present some technical results.

\begin{lem}\label{lemadifdiv}
 Let $\phi_1,\phi_2\in\mathbb{C}^n$, such that $|\phi_1|,|\phi_2|<1/2$. Then:
$$\left|\frac{1}{1+\phi_1}-\frac{1}{1+\phi_2}\right|\leq 4|\phi_1-\phi_2|.$$
\end{lem}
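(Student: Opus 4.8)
The statement is an elementary estimate, so the plan is short. First I would combine the two fractions over a common denominator, writing
$$\frac{1}{1+\phi_1}-\frac{1}{1+\phi_2}=\frac{\phi_2-\phi_1}{(1+\phi_1)(1+\phi_2)},$$
which is valid provided $1+\phi_1$ and $1+\phi_2$ are nonzero. This is guaranteed by the hypothesis: since $|\phi_j|<1/2$, the reverse triangle inequality gives $|1+\phi_j|\geq 1-|\phi_j|>1/2$ for $j=1,2$, so in particular the denominators never vanish and $|(1+\phi_1)(1+\phi_2)|>1/4$.

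Taking moduli in the identity above and using this lower bound on the denominator then yields
$$\left|\frac{1}{1+\phi_1}-\frac{1}{1+\phi_2}\right|=\frac{|\phi_1-\phi_2|}{|(1+\phi_1)(1+\phi_2)|}<4|\phi_1-\phi_2|,$$
which is the claimed inequality. (Here $|\cdot|$ is the modulus on $\mathbb{C}$; the statement is applied componentwise, i.e.\ with $n=1$, in all its later uses.)

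There is essentially no obstacle in this lemma: it is a one-line consequence of the triangle inequality and is included only as a convenient reference for the fixed-point estimates in the following sections, where quantities of the form $1/(1+\mathcal{F}(\vv))$ are repeatedly compared.
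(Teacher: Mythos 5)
Your proof is correct: the identity $\frac{1}{1+\phi_1}-\frac{1}{1+\phi_2}=\frac{\phi_2-\phi_1}{(1+\phi_1)(1+\phi_2)}$ together with the lower bound $|1+\phi_j|\geq 1-|\phi_j|>1/2$ gives exactly the stated estimate, and your observation that the lemma is only ever invoked for scalar quantities (so the expression $1/(1+\phi)$ makes sense) matches how it is used in the paper. The paper states this lemma without proof precisely because it is this one-line triangle-inequality argument, so there is nothing to compare beyond noting that yours is the intended one.
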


\begin{lem}\label{lemafitaDFiDH}
Let $f:\mathbb{C}^n\rightarrow\mathbb{C}$ be any function that is analytic in some open ball $B(r_0^1)\times\cdots\times B(r_0^n)\subset\mathbb{C}^n$, and assume that there exists some $\phi^*\in\mathbb{C}^n$ such that for all $\phi\in B(r_0^1)\times\cdots\times B(r_0^n)$:
\begin{equation}
\label{fitaF}|f(\phi)|\leq K|\phi-\phi^*|^k,
\end{equation}
for some constants $K>0$ and $k\in\mathbb{N}$. Take $\phi\in B(r_0^1/2)\times\cdots\times B(r_0^n/2)$ and assume that $\phi-\phi^*\in B(r_0^1)\times\cdots\times B(r_0^n)$. Then there exists a constant $\tilde K$ such that:
\begin{equation}\label{ineqDF}
|D_jf(\phi)|\leq \tilde K|\phi-\phi^*|^{k-1},
\end{equation}
where $D_j$ denotes the derivative with respect to the $j-th$ component $\phi_j$.
\end{lem}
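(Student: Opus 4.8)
\emph{Proof strategy.} The plan is to deduce \eqref{ineqDF} from a one–variable Cauchy estimate, applied to $f$ in the $j$-th coordinate with all other coordinates frozen, and with the radius of the integration circle chosen adaptively depending on the size of $|\phi-\phi^*|$. Fix $\phi\in B(r_0^1/2)\times\cdots\times B(r_0^n/2)$ with $\phi-\phi^*\in B(r_0^1)\times\cdots\times B(r_0^n)$, and fix an index $j$. For any radius $\rho$ with $0<\rho<r_0^j/2$, the $j$-th coordinate $\phi_j$ lies in $B(r_0^j/2)$, so the closed disc $\{|w-\phi_j|\le\rho\}$ is contained in $B(r_0^j)$; hence $w\mapsto f(\phi_1,\dots,\phi_{j-1},w,\phi_{j+1},\dots,\phi_n)$ is analytic on a neighbourhood of this disc, and Cauchy's formula for the derivative gives
\[
D_jf(\phi)=\frac{1}{2\pi i}\oint_{|w-\phi_j|=\rho}\frac{f(\phi_1,\dots,\phi_{j-1},w,\phi_{j+1},\dots,\phi_n)}{(w-\phi_j)^2}\,dw,
\]
so that $|D_jf(\phi)|\le \rho^{-1}\sup_{|w-\phi_j|=\rho}|f(\phi_1,\dots,w,\dots,\phi_n)|$. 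I also record that, on this circle, the $\ell^1$-norm used in the paper satisfies $|(\phi_1,\dots,w,\dots,\phi_n)-\phi^*|\le |\phi-\phi^*|+\rho$, and that $(\phi_1,\dots,w,\dots,\phi_n)$ again lies in the polydisc where \eqref{fitaF} holds.

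First I would treat the regime $|\phi-\phi^*|\le r_0^j/4$. Here I take $\rho=|\phi-\phi^*|/2<r_0^j/2$. Then on the circle one has $|(\phi_1,\dots,w,\dots,\phi_n)-\phi^*|\le \tfrac32|\phi-\phi^*|$, so \eqref{fitaF} gives $|f(\phi_1,\dots,w,\dots,\phi_n)|\le K\big(\tfrac32\big)^k|\phi-\phi^*|^k$, and the Cauchy estimate yields $|D_jf(\phi)|\le 2K\big(\tfrac32\big)^k|\phi-\phi^*|^{k-1}$, which is already of the desired form. The degenerate case $\phi=\phi^*$ is handled by running the same estimate with an arbitrary small $\rho>0$ and letting $\rho\to0^+$: for $k\ge2$ this forces $D_jf(\phi^*)=0$, consistent with the bound, and for $k=1$ the bound is a fixed constant.

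Next I would treat the complementary regime $|\phi-\phi^*|> r_0^j/4$, where I use the fixed radius $\rho=r_0^j/8<r_0^j/2$. Here the hypothesis $\phi-\phi^*\in B(r_0^1)\times\cdots\times B(r_0^n)$ gives $|\phi-\phi^*|<r_0:=r_0^1+\cdots+r_0^n$, so on the circle $|(\phi_1,\dots,w,\dots,\phi_n)-\phi^*|< r_0+r_0^j/8$, and \eqref{fitaF} bounds $|f|$ there by the constant $K(r_0+r_0^j/8)^k$; the Cauchy estimate then gives $|D_jf(\phi)|\le 8K(r_0+r_0^j/8)^k/r_0^j=:C_j$. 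Since $|\phi-\phi^*|>r_0^j/4$ and $k-1\ge0$, one has $1\le (4/r_0^j)^{k-1}|\phi-\phi^*|^{k-1}$, hence $|D_jf(\phi)|\le C_j(4/r_0^j)^{k-1}|\phi-\phi^*|^{k-1}$. Taking $\tilde K$ to be the maximum over $j$ of $2K\big(\tfrac32\big)^k$ and $C_j(4/r_0^j)^{k-1}$ yields \eqref{ineqDF} in all cases. There is no real obstacle here: the only points that need care are keeping every integration circle inside the polydisc of analyticity of $f$ (which is why the hypothesis $\phi\in B(r_0^1/2)\times\cdots\times B(r_0^n/2)$ is used) and exploiting the bound $|\phi-\phi^*|<r_0$ from the second hypothesis to control $f$ on the circle when $\phi$ is far from $\phi^*$; everything else is the standard adaptive Cauchy estimate.
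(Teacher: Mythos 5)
Your proof is correct: the adaptive one-variable Cauchy estimate, with radius $\rho=|\phi-\phi^*|/2$ when $\phi$ is close to $\phi^*$ and a fixed radius $\rho=r_0^j/8$ otherwise (using $|\phi-\phi^*|<r_0^1+\cdots+r_0^n$ from the second hypothesis), keeps every integration circle inside the polydisc of analyticity and yields $|D_jf(\phi)|\le\tilde K|\phi-\phi^*|^{k-1}$ with $\tilde K$ depending only on $K$, $k$ and the radii $r_0^i$ — in particular not on $\phi^*$, which is what the later applications (where $\phi^*$ depends on $\delta$) require. The paper states this lemma without an explicit proof, and your argument is exactly the standard Cauchy-estimate route the authors evidently have in mind, so there is nothing to add.
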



\begin{cor}\label{corollaryfitaDFiDH}
Let $F_1^\uns, F_2^\uns$ and $H^\uns$ the functions defined in \eqref{defF1uF2uHu}. If $\vzeta\in\bsout[]{}\times\bsout[]{}$ is such that $\|\vzeta\|_{\textup{out},\times} \leq \delta^2C$ for some constant $C$, we have that for $\delta$ small enough:
\begin{equation}\label{fitaDFiDH}
 |D_jF_i^\uns(\delta\vzeta,\delta\het(\vv),\delta,\delta\param)|,|D_jH^\uns(\delta\vzeta,\delta\het(\vv),\delta,\delta\param)|\leq\left\{\begin{array}{ll}  K\delta^2 & \quad\textrm {if } \vv\in \doutbinf[]{},\medskip\\  K\delta^2|\het(\vv)-1|^2 & \quad\textrm {if } \vv\in \doutbT[]{}.\end{array}\right.
\end{equation}
\end{cor}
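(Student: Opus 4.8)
The plan is to read the bound off from Lemma \ref{lemafitaDFiDH}, feeding in the size of $\vzeta$ dictated by its norm. By Remark \ref{ordreF12H} the functions $F_1^\uns,F_2^\uns,H^\uns$ are analytic on $B^3(r_0^\uns)\times B(\delta_0)\times B(\param_0)$ and satisfy there the order-three estimate $|F_i^\uns(\phi)|,|H^\uns(\phi)|\le K|(\phi_1,\phi_2,\phi_3-\delta,\phi_4,\phi_5)|^3$, which is exactly hypothesis \eqref{fitaF} of Lemma \ref{lemafitaDFiDH} with $k=3$ and $\phi^*=(0,0,\delta,0,0)$. So, once one checks that the point $\phi:=(\delta\veta(\vv),\delta\vetab(\vv),\delta\het(\vv),\delta,\delta\param)$ lies in the half-ball $B^3(r_0^\uns/2)\times B(\delta_0/2)\times B(\param_0/2)$ and that $\phi-\phi^*=(\delta\veta(\vv),\delta\vetab(\vv),\delta(\het(\vv)-1),\delta,\delta\param)$ lies in $B^3(r_0^\uns)\times B(\delta_0)\times B(\param_0)$, Lemma \ref{lemafitaDFiDH} gives
\begin{equation*}
|D_jF_i^\uns(\phi)|,\ |D_jH^\uns(\phi)|\ \le\ \tilde K\,|\phi-\phi^*|^2\ =\ \tilde K\,\bigl(\delta|\veta(\vv)|+\delta|\vetab(\vv)|+\delta|\het(\vv)-1|+\delta+\delta|\param|\bigr)^2 ,
\end{equation*}
and the corollary reduces to estimating $|\phi-\phi^*|$ on each of $\doutbinf[]{}$ and $\doutbT[]{}$.

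Next I would translate $\|\vzeta\|_{\textup{out},\times}\le\delta^2C$ into pointwise bounds: by definition of $\|\cdot\|_{\textup{out}}$ one has $|\veta(\vv)|,|\vetab(\vv)|\le\delta^2C|\het(\vv)-1|$ on $\doutbinf[]{}$ and $|\veta(\vv)|,|\vetab(\vv)|\le\delta^2C|\het(\vv)-1|^3$ on $\doutbT[]{}$. I also need two elementary facts about $\het(\vv)=-\tanh\vv$ on $\doutb[\dist][]{}$. First, on $\doutbinf[]{}$ (where $\re\vv\le-T$, hence far from every pole of $\tanh$) the factor $|\het(\vv)-1|=|\tanh\vv+1|$ is bounded by a constant independent of $\delta$. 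Second, on $\doutbT[]{}$ the value $1$ is never attained by $\het$, so $|\het(\vv)-1|\ge c_0>0$ there; on the other hand $\doutb[\dist][]{}$ stays at distance of order $\delta\log(1/\delta)$ from the singularity $i\pi/2$, which forces $|\het(\vv)-1|\le K\delta^{-1}(\log(1/\delta))^{-1}$ there. Combining these with the norm bounds makes every coordinate of $\phi$ and of $\phi-\phi^*$ of size of order $\delta$ or of order $(\log(1/\delta))^{-1}$, so for $\delta$ small they lie in the required balls and the displayed estimate is valid.

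On $\doutbinf[]{}$ each term of $|\phi-\phi^*|$ is of order $\delta$ by the two facts above and the norm bound, so $|\phi-\phi^*|\le K\delta$ and hence $|D_jF_i^\uns|,|D_jH^\uns|\le K\delta^2$, which is the first case. On $\doutbT[]{}$ I would factor $|\het(\vv)-1|$ out of $|\phi-\phi^*|$: the constant terms are handled by $\delta(1+|\param|)\le K\delta\le Kc_0^{-1}\delta|\het(\vv)-1|$ (using $|\param|<\coef$ and $|\het(\vv)-1|\ge c_0$), and the cubic terms by
\begin{equation*}
\delta|\veta(\vv)|\ \le\ C\delta^3|\het(\vv)-1|^3\ =\ C\bigl(\delta^2|\het(\vv)-1|^2\bigr)\,\delta|\het(\vv)-1|\ \le\ C\,(\log(1/\delta))^{-2}\,\delta|\het(\vv)-1|\ \le\ C\delta|\het(\vv)-1|
\end{equation*}
for $\delta$ small (and similarly for $\delta|\vetab(\vv)|$); altogether $|\phi-\phi^*|\le K\delta|\het(\vv)-1|$, whence $|D_jF_i^\uns|,|D_jH^\uns|\le K\delta^2|\het(\vv)-1|^2$, the second case. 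The argument for $H^\uns$ is word-for-word the same, it being also analytic and of order three.

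The only genuinely delicate point is that on the part of $\doutbT[]{}$ closest to $i\pi/2$ the factor $|\het(\vv)-1|$ is as large as $O(\delta^{-1}(\log(1/\delta))^{-1})$, so the cubic term $C\delta^3|\het(\vv)-1|^3$ is not \emph{a priori} of the same order as the linear term $\delta|\het(\vv)-1|$; the resolution, as in the display above, is that the surplus factor $|\het(\vv)-1|^2$ is compensated by the $\delta^2$ it carries, since $\delta^2|\het(\vv)-1|^2=O((\log(1/\delta))^{-2})$. This is exactly where the precise $\delta\log(1/\delta)$ shape of the outer domain near the singularities enters; everything else is routine bookkeeping with $\|\cdot\|_{\textup{out}}$ and the analyticity already supplied by Remark \ref{ordreF12H}.
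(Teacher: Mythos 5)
Your proposal is correct and follows essentially the same route as the paper: apply Lemma \ref{lemafitaDFiDH} with $\phi=(\delta\vzeta,\delta\het(\vv),\delta,\delta\param)$ and $\phi^*=(0,0,\delta,0,0)$ after checking membership in the half-balls, convert $\|\vzeta\|_{\textup{out},\times}\le C\delta^2$ into the pointwise bounds $|\delta\vzeta|\le C\delta^3|\het(\vv)-1|$ on $\doutbinf[]{}$ and $C\delta^3|\het(\vv)-1|^3$ on $\doutbT[]{}$, and conclude $|\phi-\phi^*|\le K\delta$ resp.\ $K\delta|\het(\vv)-1|$, which squares to the claimed estimates. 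Your handling of the $\doutbT[]{}$ case (using $|\het(\vv)-1|\ge c_0$ and $\delta^2|\het(\vv)-1|^2=O(\log^{-2}(1/\delta))$) just makes explicit what the paper leaves as ``it is clear that''.
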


\begin{proof}
We will prove this result just for $D_1F_1^\uns$, being the other cases analogous. Note that for $\delta$ small enough $(\delta\vzeta,\delta\het(\vv))\in B^3(r_0^\uns/2)$ since by the fact that $\|\vzeta\|_{\textup{out},\times} \leq \delta^2C$ and the definition \eqref{defnorm} of the norm $\|.\|_{\textup{out},\times}$ we have:
\begin{equation}\label{fitadeltazeta}
|\delta\vzeta(\vv)|\leq \left\{\begin{array}{ll} C\delta^3|\het(v)-1|\leq\delta^3 C<r_0^\uns/2& \quad\textrm {if } \vv\in \doutbinf[]{},\medskip\\\displaystyle C\delta^3|\het(v)-1|^3\leq \frac{CK}{\log^3(1/\delta)}<r_0^\uns/2, & \quad\textrm {if }\vv\in \doutbT[]{},\end{array}\right.
\end{equation}
and:
$$
|\delta\het(\vv)|\leq\left\{\begin{array}{ll}  K\delta<r_0^\uns/2& \quad\textrm {if } \vv\in \doutbinf[]{},\medskip\\\displaystyle \frac{K}{\log(1/\delta)}<r_0^\uns/2, & \quad\textrm {if }\vv\in \doutbT[]{},\end{array}\right. \nonumber
$$
For the same reason it is clear that $\delta\het(\vv)-\delta\in B(r_0^\uns)$ for $\delta$ small enough, and then $\phi-\phi^*\in B^3(r_0^\uns)\times B(\delta_0)\times B(\param_0)$. Then we just have to take $\phi=(\delta\vzeta,\delta\het(\vv),\delta,\delta\param)$ and $\phi^*=(0,0,0,\delta,0,0)$ in Lemma \ref{lemafitaDFiDH}, since by Remark \ref{ordreF12H}, we have:
\begin{equation*}
|F_1^\uns(\delta\vzeta,\delta\het(\vv),\delta,\delta\param)|\leq K|(\delta\vzeta,\het(\vv)-\delta,\delta,\delta\param)|^3=K|\phi-\phi^*|^3,
\end{equation*}
and then by Lemma \ref{lemafitaDFiDH} we have:
\begin{equation}\label{pasintermig}
 |D_1F_1^\uns(\delta\vzeta,\delta\het(\vv),\delta,\delta\param)|=|D_1F_1^\uns(\phi)|\leq K|\phi-\phi^*|^2= K|(\delta\vzeta,\delta(\het(\vv)-1),\delta,\delta\param)|^2.
\end{equation}
Moreover, since, by \eqref{fitadeltazeta}, $|\delta\vzeta|\leq\delta^3C|\het(\vv)-1|$ if $\vv\in \doutbinf[]{}$ and $|\delta\vzeta|\leq\delta^3C|\het(\vv)-1|^3$ if $\vv\in \doutbT[]{}$, it is clear that:
$$|(\delta\vzeta,\delta(\het(\vv)-1),\delta,\delta\param)|\leq\left\{\begin{array}{ll}  K\delta & \quad\textrm {if } \vv\in \doutbinf[]{},\medskip\\  K\delta|\het(\vv)-1| & \quad\textrm {if } \vv\in \doutbT[]{},\end{array}\right.
$$
for some constant $K$.
With this bound and \eqref{pasintermig} we obtain immediately bound \eqref{fitaDFiDH}.
\end{proof}

\begin{cor}\label{corollaryfitaFiH}
Let $F_1^\uns, F_2^\uns$ and $H^\uns$ the functions defined in \eqref{defF1uF2uHu}. If $\delta$ is small enough and $\|\vzeta\|_{\textup{out},\times}\leq \delta^2C$ for some constant $C$, there exists a constant $K$ independent of $\delta$ such that, for $i=1,2$:
\begin{eqnarray*}
 |F_i^\uns(\delta\vzeta,\delta\het(\vv),\delta,\delta\param)|, |H^\uns(\delta\vzeta,\delta\het(\vv),\delta,\delta\param)|&\leq& \left\{\begin{array}{ll} \delta^3 K|\het(\vv)-1|& \quad\textrm {if } \vv\in \doutbinf[]{},\medskip\\\displaystyle \delta^3 K|\het(\vv)-1|^3, & \quad\textrm {if }\vv\in \doutbT[]{}.\end{array}\right.
\end{eqnarray*}
\end{cor}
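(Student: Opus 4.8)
Since Remark~\ref{ordreF12H} gives the same order-three estimate for $F_1^\uns$, $F_2^\uns$ and $H^\uns$, it suffices to bound $F_1^\uns$; the plan is to repeat, almost line by line, the argument of Corollary~\ref{corollaryfitaDFiDH}, only replacing the use of Lemma~\ref{lemafitaDFiDH} for a derivative by the use of Remark~\ref{ordreF12H} for the function itself. First I would record, exactly as in the proof of Corollary~\ref{corollaryfitaDFiDH}, that $\|\vzeta\|_{\textup{out},\times}\le\delta^2C$ and \eqref{defnorm} force $|\delta\vzeta(\vv)|\le C\delta^3|\het(\vv)-1|$ on $\doutbinf[]{}$ and $|\delta\vzeta(\vv)|\le C\delta^3|\het(\vv)-1|^3$ on $\doutbT[]{}$, while $|\delta\het(\vv)|$ is uniformly small on the whole outer domain; hence for $\delta$ small enough the point $\phi=(\delta\vzeta,\delta\het(\vv),\delta,\delta\param)$ and the shifted point $\phi-\phi^*=(\delta\vzeta,\delta(\het(\vv)-1),\delta,\delta\param)$ (with $\phi^*=(0,0,\delta,0,0)$) both lie in $B^3(r_0^\uns)\times B(\delta_0)\times B(\param_0)$, so that Remark~\ref{ordreF12H} yields $|F_1^\uns(\delta\vzeta,\delta\het(\vv),\delta,\delta\param)|\le K\,|(\delta\vzeta_1,\delta\vzeta_2,\delta(\het(\vv)-1),\delta,\delta\param)|^{3}$.

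On $\doutbT[]{}$ I would estimate this cube directly. There $\re\vv$ ranges in a fixed compact interval and $\vv$ stays at distance of order $\delta\log(1/\delta)$ from the singularities $\pm i\pi/2$, so $\delta|\het(\vv)-1|\le K/\log(1/\delta)$ is bounded and, more importantly, $|\het(\vv)-1|$ is bounded \emph{below} by a positive constant $c$ independent of $\delta$. Combining this with $|\delta\vzeta_i|\le C\delta^3|\het(\vv)-1|^3$ gives $|(\delta\vzeta_1,\delta\vzeta_2,\delta(\het(\vv)-1),\delta,\delta\param)|\le K\delta(|\het(\vv)-1|+1)\le K\delta|\het(\vv)-1|$, and cubing produces the desired bound $\delta^3 K|\het(\vv)-1|^3$. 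The lower bound $|\het(\vv)-1|\ge c>0$ on $\doutbT[]{}$ is the one step I expect to need real care: it is exactly what lets the $+1$ and the lower-order terms be absorbed, and it should follow from the explicit identity $\het(\vv)-1=-2e^{2\vv}/(1+e^{2\vv})$ and the geometry of $\doutbT[]{}$, where $e^{2\vv}$ stays in a fixed compact set bounded away from $-1$.

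On $\doutbinf[]{}$ the plain cube bound is too lossy because $|\het(\vv)-1|\to0$ there, so I would argue as in Corollary~\ref{corollaryfitaDFiDH}: write $F_1^\uns(\delta\vzeta,\delta\het(\vv),\delta,\delta\param)=F_1^\uns(\delta\vzeta,\delta\het(\vv),\delta,\delta\param)-F_1^\uns(0,0,\delta,\delta,\delta\param)$ (the subtracted term vanishes), apply the mean value theorem along the segment from $(0,0,\delta,\delta,\delta\param)$ to $(\delta\vzeta,\delta\het(\vv),\delta,\delta\param)$, and bound each intermediate $D_jF_1^\uns$ by $K\delta^2$ via Lemma~\ref{lemafitaDFiDH}, using that on $\doutbinf[]{}$ the increments $|\delta\vzeta_i|$ and $\delta|\het(\vv)-1|$ are uniformly $O(\delta)$ and $|\het(\vv)-1|$ is bounded, so all intermediate points stay in the relevant ball. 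Since the increment in the first two slots is $\delta\vzeta_i$ and in the third is $\delta(\het(\vv)-1)$, this gives $|F_1^\uns(\delta\vzeta,\delta\het(\vv),\delta,\delta\param)|\le K\delta^2(|\delta\vzeta_1|+|\delta\vzeta_2|+\delta|\het(\vv)-1|)\le K\delta^2\cdot\delta|\het(\vv)-1|=\delta^3 K|\het(\vv)-1|$, which is the claimed bound on $\doutbinf[]{}$. The estimates for $F_2^\uns$ and $H^\uns$ are verbatim the same, using their vanishing at $(0,0,\delta,\delta,\delta\param)$ and the corresponding case of Remark~\ref{ordreF12H}.
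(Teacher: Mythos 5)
Your argument is correct, but it is only half the same as the paper's. On $\doutbinf[]{}$ you reproduce exactly the paper's proof: write $F_1^\uns(\delta\vzeta,\delta\het(\vv),\delta,\delta\param)-F_1^\uns(0,0,\delta,\delta,\delta\param)$, apply the mean value theorem along the segment, and bound the intermediate derivatives by $K\delta^2$ via Lemma \ref{lemafitaDFiDH} (this is precisely Corollary \ref{corollaryfitaDFiDH} evaluated at the intermediate points). On $\doutbT[]{}$, however, the paper does not switch strategies: it runs the same mean value argument there too, using the bound $|DF_1^\uns|\leq K\delta^2|\het(\vv)-1|^2$ from Corollary \ref{corollaryfitaDFiDH} together with $|\delta\vzeta(\vv)|\leq K|\delta(\het(\vv)-1)|$ (which on $\doutbT[]{}$ uses $|\het(\vv)-1|\leq K\delta^{-1}$), so the weight $|\het(\vv)-1|^3$ comes out as $|\het(\vv)-1|^2\cdot|\het(\vv)-1|$. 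Your alternative on $\doutbT[]{}$ — cubing the order-three bound of Remark \ref{ordreF12H} directly and absorbing the terms $\delta$, $\delta\param$ thanks to the lower bound $|\het(\vv)-1|\geq c>0$ — is also valid and avoids the derivative estimate on that region; the needed facts ($\delta|\het(\vv)-1|\leq K/\log(1/\delta)$ and the lower bound) hold, and indeed the paper itself uses the equivalent inequality $|\het(\vv)-1|\leq K|\het(\vv)-1|^3$ on $\doutbT[]{}$ elsewhere (proof of Lemma \ref{lemaL}). The trade-off is symmetric: the paper's route is uniform across both subdomains but needs the derivative bounds everywhere; yours is more elementary on $\doutbT[]{}$ but needs the two geometric facts about $|\het(\vv)-1|$ there.

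One small inaccuracy in your justification of the lower bound: on $\doutbT[]{}$ the quantity $e^{2\vv}$ is \emph{not} bounded away from $-1$ — near the tips of the domain it comes within $O(\delta\log(1/\delta))$ of $-1$, which is exactly why $|\het(\vv)-1|$ blows up there. Fortunately that is harmless for your purpose: the lower bound $|\het(\vv)-1|=2|e^{2\vv}|/|1+e^{2\vv}|\geq c$ only requires $|e^{2\vv}|\geq e^{-2T}$ (from $\re\vv\geq -T$) and $|1+e^{2\vv}|\leq 1+|e^{2\vv}|$ bounded above (since $\re\vv$ is bounded above on the outer domain), so the conclusion you use is true and the proof stands after deleting that parenthetical.
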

\begin{proof}
Again, we will do the proof for $F_1^\uns$. Reasoning as in the proof of Corollary \ref{corollaryfitaDFiDH}, we know that $(\delta\vzeta,\delta\het(\vv))\in B^3(r_0^\uns)$ if $\delta$ is sufficiently small. Then, by the mean value theorem we have:
\begin{eqnarray}\label{pasintermiglemafitaFiH}
|F_1^\uns(\delta\vzeta,\delta\het(\vv),\delta,\delta\param)|&=&|F_1^\uns(\delta\vzeta,\delta\het(\vv),\delta,\delta\param)-F_1^\uns(0,0,\delta,\delta,\delta\param)|\nonumber\\
&\leq&\int_0^1|DF_1^\uns(\lambda\delta\vzeta,\delta+\lambda\delta(\het(\vv)-1),\delta,\delta\param)|d\lambda\cdot|\delta\vzeta(\vv),\delta(\het(\vv)-1)|,
\end{eqnarray}
provided that $F_1^\uns(0,0,\delta,\delta,\delta\param)=0$.
By inequality \eqref{fitadeltazeta} and the fact that, for $\vv\in \doutbT[]{}$, $|\het(\vv)-1|\leq K\delta^{-1}$, one can easily deduce that $|\delta\vzeta(\vv)|\leq K|\delta(\het(\vv)-1)|$. Using this fact and reasoning as in the proof of Corollary \ref{corollaryfitaDFiDH} to bound $|DF_1^\uns(\lambda\delta\vzeta,\delta+\lambda\delta(\het(\vv)-1),\delta,\delta\param)|$, inequality \eqref{pasintermiglemafitaFiH} yields:
\begin{eqnarray*}
 |F_1^\uns(\delta\vzeta,\delta\het(\vv),\delta,\delta\param)|&\leq&\left\{\begin{array}{ll} \delta^3K|\het(\vv)-1| & \quad\textrm {if } \vv\in \doutbinf[]{},\medskip\\ \delta^3 K|\het(\vv)-1|^3 & \quad\textrm {if } \vv\in \doutbT[]{},\end{array}\right.
\end{eqnarray*}
and the claim is proved.
\end{proof}

\begin{lem}\label{cosexp3}
Let $\vw\in\doutb[\kappa][]{}$. Then:
 \begin{enumerate}
 \item\label{lemacosh} If $w\in \doutbinf[]{}$, one has:
$$|\cosh \vw|\geq\frac{e^{|\re \vw|}}{4}.$$
  \item\label{lemaexp} If $\vw\in \doutb[\dist][]{}$, then:
    $$|e^{\pm ic\log((1+e^{2\vw})/2)}|<e^{c\pi}.$$
 \end{enumerate}
\end{lem}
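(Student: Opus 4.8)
The two items are short independent estimates on $\re\vw$ and $\im\vw$; the only point that needs care is the complex logarithm in item \ref{lemaexp}, so I would spend most of the effort there.

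For item \ref{lemacosh} I would write $\vw=a+ib$ with $a=\re\vw\le-T$ and use the identity $|\cosh\vw|^{2}=\sinh^{2}a+\cos^{2}b\ge\sinh^{2}a$. This gives $|\cosh\vw|\ge\sinh|a|=\tfrac12 e^{|a|}\bigl(1-e^{-2|a|}\bigr)$, and since $|a|\ge T$ and the constant $T$ may be fixed large enough that $e^{-2T}\le\tfrac12$ (say $T\ge1$), the right-hand side is $\ge\tfrac14 e^{|a|}=\tfrac14 e^{|\re\vw|}$, which is the claim. The same bound also follows directly from $\cosh\vw=\tfrac12(e^{\vw}+e^{-\vw})$ and the triangle inequality, since $|e^{-\vw}|=e^{|a|}\ge e^{-|a|}=|e^{\vw}|$ when $a<0$.

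For item \ref{lemaexp} the plan is as follows. Since $c>0$ one has $\bigl|e^{\pm ic\log((1+e^{2\vw})/2)}\bigr|=e^{\mp c\,\im\log((1+e^{2\vw})/2)}$, so the inequality is equivalent to $\bigl|\im\log((1+e^{2\vw})/2)\bigr|<\pi$ for $\vw\in\doutb{\uns}$. The branch of $\log$ in play is the one continued from the real axis, where $(1+e^{2\vw})/2$ is a positive real; hence it suffices to show that $1+e^{2\vw}$ never meets the cut $(-\infty,0]$ as $\vw$ ranges over $\doutb{\uns}$. Indeed $\doutb{\uns}$ is simply connected and the principal argument is single-valued on $\mathbb{C}\setminus(-\infty,0]$ with values in $(-\pi,\pi)$, so avoiding the cut forces $\im\log((1+e^{2\vw})/2)=\arg((1+e^{2\vw})/2)\in(-\pi,\pi)$ throughout. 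Finally, $1+e^{2\vw}\in(-\infty,0]$ is equivalent to $e^{2\vw}\in(-\infty,-1]$, i.e. to $\im\vw\equiv\pi/2\pmod\pi$ together with $\re\vw\ge0$; but the defining inequality of $\doutb{\uns}$ and $\tan\beta>0$ give $|\im\vw|\le\pi/2-\overline{\dist}\delta\log(1/\delta)-\tan\beta\,\re\vw<\pi/2$ whenever $\re\vw\ge0$, which is incompatible with $\im\vw\equiv\pi/2\pmod\pi$. Hence $1+e^{2\vw}\notin(-\infty,0]$ on $\doutb{\uns}$, and item \ref{lemaexp} follows.

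I expect this last reduction to be the main obstacle. A naive attempt — writing $(1+e^{2\vw})/2=e^{\vw}\cosh\vw$ and bounding $\im\log((1+e^{2\vw})/2)=\im\vw+\arg\cosh\vw$ by estimating the two summands separately — does not work, because on the part of the outer domain where $\re\vw\to-\infty$ (and the wedge opens up) both $|\im\vw|$ and $|\arg\cosh\vw|$ are unbounded and only their sum stays controlled. Passing to the global argument of $1+e^{2\vw}$ and reducing matters to the purely algebraic clash between ``$\im\vw\equiv\pi/2\pmod\pi$'' and ``$\re\vw\ge0$'' inside $\doutb{\uns}$ is what keeps the estimate clean.
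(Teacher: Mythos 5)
Your proof is correct: the identity $|\cosh\vw|^{2}=\sinh^{2}(\re\vw)+\cos^{2}(\im\vw)$ settles item 1 (with the harmless proviso, which you flag, that the constant $T$ be taken at least of order $\tfrac12\log 2$ so that $1-e^{-2|\re\vw|}\ge\tfrac12$), and for item 2 the reduction to showing $1+e^{2\vw}\notin(-\infty,0]$ on $\doutb{\uns}$ — which holds because $\re\vw\ge0$ forces $|\im\vw|<\pi/2$ there, so $\im\vw\not\equiv\pi/2\pmod\pi$ — is exactly the right way to control the branch, and your observation that bounding $\im\vw$ and $\arg\cosh\vw$ separately would fail as $\re\vw\to-\infty$ correctly identifies why one must work with the global argument of $(1+e^{2\vw})/2$. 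The paper omits the proof of this technical lemma, and your argument is the standard one it evidently intends.
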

%
%

\begin{lem}\label{lemamaxmin}
 There exist constants $K_1$, $K_2$, $K_3$ and $K_4$, independent of $\delta$, such that
 \begin{enumerate}
  \item If $\vw\in \doutbT[]{}$ and $\im\vw\geq0$, then:
    \begin{enumerate}
     \item\label{apa} $K_1|\vw-i\pi/2|\leq|\cosh\vw|\leq K_2|\vw-i\pi/2|,$
     \item $K_3|\vw-i\pi/2|\leq|\het(\vw)-1|^{-1}\leq K_4|\vw-i\pi/2|,$
    \end{enumerate}
  \item If $\vw\in \doutbT[]{}$ and $\im\vw\leq0$, then:
    \begin{enumerate}
     \item $K_1|\vw+i\pi/2|\leq|\cosh\vw|\leq K_2|\vw+i\pi/2|,$
     \item $K_3|\vw+i\pi/2|\leq|\het(\vw)-1|^{-1}\leq K_4|\vw+i\pi/2|,$
    \end{enumerate}
 \end{enumerate}
\end{lem}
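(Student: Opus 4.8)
The plan is to reduce the whole statement to the single elementary fact that $|\sinh\zeta|$ is comparable to $|\zeta|$ on a fixed ($\delta$-independent) compact region. The first reduction is algebraic. Using the identity $\cosh(\zeta+i\pi/2)=i\sinh\zeta$ I would rewrite $|\cosh\vw|=|\sinh(\vw-i\pi/2)|$, which turns item 1(a) into the assertion $|\sinh(\vw-i\pi/2)|\asymp|\vw-i\pi/2|$. For 1(b), I would note that $\het(\vw)=-\tanh\vw$ gives $\het(\vw)-1=-e^{\vw}/\cosh\vw$, hence $|\het(\vw)-1|^{-1}=e^{-\re\vw}\,|\cosh\vw|$; since the defining inequalities of $\doutbT[]{}$ force $-T\le\re\vw\le\pi/(2\tan\beta)$, the factor $e^{-\re\vw}$ is bounded above and below by constants independent of $\delta$, so 1(b) follows at once from 1(a) (with new constants). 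Finally, part 2 (the case $\im\vw\le0$) comes for free from part 1 by complex conjugation: $\doutbT[]{}$ is invariant under $\vw\mapsto\overline\vw$, one has $|\cosh\vw|=|\cosh\overline\vw|$ and $|\het(\vw)-1|=|\het(\overline\vw)-1|$, while $|\overline\vw-i\pi/2|=|\vw+i\pi/2|$, so the estimates for $\im\overline\vw\ge 0$ translate directly.

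The core step is then the comparison $|\sinh\zeta|\asymp|\zeta|$. First I would introduce the fixed compact set $Q=\{\zeta\in\mathbb{C}:\re\zeta\ge-T,\ \im\zeta\le-\tan\beta\,\re\zeta,\ \im\zeta\ge-\pi/2\}$ and check that, for $\delta$ small, the shifted domain $\{\vw-i\pi/2:\vw\in\doutbT[]{},\ \im\vw\ge0\}$ is contained in $Q$: the inclusion $\im\zeta\le-\tan\beta\,\re\zeta$ comes from the wedge bound $\im\vw\le\pi/2-\overline{\dist}\delta\log(1/\delta)-\tan\beta\,\re\vw$ (the $\delta$-term being positive), and $\im\zeta\ge-\pi/2$ from $\im\vw\ge0$. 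The only zero of $\sinh$ in $Q$ is $\zeta=0$: the other zeros sit at $ik\pi$ with $\re=0$ and $\im=k\pi\notin[-\pi/2,0]$ for $k\ne0$, so they are excluded precisely by the wedge inequality defining $Q$. Fixing $r_0\in(0,\pi)$, on $\overline{B(0,r_0)}$ the entire function $\sinh\zeta/\zeta$ (value $1$ at $0$) is continuous and non-vanishing, which gives $|\sinh\zeta|\asymp|\zeta|$ there; on the compact set $Q\setminus B(0,r_0)$ the function $\sinh$ is continuous and nowhere zero while $|\zeta|$ ranges in $[r_0,\operatorname{diam}Q]$, so again $|\sinh\zeta|\asymp|\zeta|$ with constants depending only on $T$ and $\beta$. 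Combining the two pieces and translating back by $\zeta=\vw-i\pi/2$ yields 1(a) with $K_1,K_2$ independent of $\delta$.

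The only genuine subtlety — hence the step I would handle most carefully — is the uniformity in $\delta$: the domain $\doutbT[]{}$ itself moves with $\delta$ and approaches the singularity $i\pi/2$ of $\cosh$ as $\delta\to0$, so one cannot simply invoke compactness of one fixed domain. This is exactly what the device of trapping all the $\delta$-dependent domains inside the single fixed set $Q$ resolves: near $i\pi/2$ one uses the local factorization $\cosh\vw=i(\vw-i\pi/2)\bigl(\sinh(\vw-i\pi/2)/(\vw-i\pi/2)\bigr)$ with the second factor close to $1$, and away from $i\pi/2$ one is on a fixed compact set on which $\cosh$ has no zero. Everything else is a direct computation with $\cosh$, $\tanh$ and the inequalities defining $\doutbT[]{}$, so I would not expect any further difficulty.
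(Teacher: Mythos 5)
Your proof is correct: the identity $\cosh\vw=i\sinh(\vw-i\pi/2)$, the trapping of all the $\delta$-dependent sets $\{\vw-i\pi/2:\vw\in\doutbT[]{},\ \im\vw\geq0\}$ inside one fixed compact wedge $Q$ containing no zero of $\sinh$ other than $0$, the two-piece compactness argument for $|\sinh\zeta|\asymp|\zeta|$, the reduction of 1(b) via $\het(\vw)-1=-e^{\vw}/\cosh\vw$ with $\re\vw$ bounded on $\doutbT[]{}$, and the conjugation symmetry for part 2 all check out, and the constants visibly depend only on $T$ and $\beta$. The paper states this lemma without giving a proof, and your argument is exactly the standard one such a statement is meant to be proved by, so there is nothing to object to; the only cosmetic remark is that the zeros $ik\pi$ with $k\le-1$ are excluded by the constraint $\im\zeta\ge-\pi/2$ rather than by the wedge inequality, as your wording slightly suggests.
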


%

\begin{lem}\label{lematriangles}
 If $\vv\in \doutb[\dist][]{}$ and $\vw=\vv+re^{i(\pi-s)}$, with $r\in\mathbb{R}$, $r\geq0$ and $s\in\left(0,\beta/2\right]$, then there exists a constant $K\neq0$ independent of $\delta$ such that:
  $$|\vw\pm i\pi/2|\geq K|\vv\pm i\pi/2|.$$
\end{lem}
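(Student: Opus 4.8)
The plan is to reduce the two inequalities to a single elementary estimate in the plane, and then to use that, seen from a point $\vv\in\doutb[\dist][]{}$, the singular points $\pm i\pi/2$ lie in directions that stay uniformly transverse to $e^{i(\pi-s)}$ for every $s\in(0,\beta/2]$.

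I would treat $|\vw-i\pi/2|\ge K|\vv-i\pi/2|$ first; the inequality with $+i\pi/2$ is obtained identically. Write $z=\vv-i\pi/2$ and $\omega=e^{i(\pi-s)}$, so that $\omega$ is a unit vector, $-\omega=e^{-is}$, and $\vw-i\pi/2=z+r\omega$ with $r\ge0$; we may assume $z\neq0$, as otherwise the bound is trivial. Thus the claim is $|z+r\omega|\ge K|z|$ for all $r\ge0$. The elementary fact I would record is: if $\theta\in[0,\pi]$ denotes the angle between $z$ and $\omega$, then $|z+r\omega|\ge|z|\sin\theta$ for every $r\ge0$. Indeed $|z+r\omega|^{2}=|z|^{2}+2r\,\re(z\bar\omega)+r^{2}$ and $\re(z\bar\omega)=|z|\cos\theta$; if $\cos\theta\ge0$ the right-hand side is $\ge|z|^{2}$ for all $r\ge0$, while if $\cos\theta<0$ its minimum over $r\ge0$ is attained at $r=-\re(z\bar\omega)>0$ and equals $|z|^{2}-\re(z\bar\omega)^{2}=|z|^{2}\sin^{2}\theta$. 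In particular $|z+r\omega|\ge|z|$ except when $\theta\in(\pi/2,\pi]$, so it only remains to bound $\sin\theta$ away from zero in that case.

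This is where the geometry of $\doutb[\dist][]{}$ enters. Since $\overline{\dist}\delta\log(1/\delta)>0$, the defining inequality of $\doutb[\dist][]{}$ forces $\im(\vv-i\pi/2)<-\tan\beta\,\re\vv$; hence $\arg z\in(-\pi/2,-\beta)$ when $\re\vv>0$, and $\re z\le0$ when $\re\vv\le0$. On the other hand, $\theta>\pi/2$ means precisely $\re(z\bar\omega)<0$, i.e.\ $\arg z\in(-\pi/2-s,\pi/2-s)$. Intersecting these, $\theta>\pi/2$ forces $\arg z\in(-\pi/2-s,-\beta)$ in both cases (using $\beta<\pi/2$ to absorb the subinterval $(-\pi/2-s,-\pi/2]$ coming from $\re\vv\le0$); therefore $\arg z+s\in(-\pi/2,-\beta/2)$ because $s\le\beta/2$, and since $\sin\theta=|\sin(\arg z+s)|$ we obtain $\sin\theta\ge\sin(\beta/2)$. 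For $|\vw+i\pi/2|\ge K|\vv+i\pi/2|$ one repeats the argument with $z=\vv+i\pi/2$: now $\im(\vv+i\pi/2)>\tan\beta\,\re\vv$, so $\arg z>\beta$ when $\re\vv>0$ and $\arg z\in(-\pi/2-s,-\pi/2)$ when $\re\vv<0$ (the value $\re\vv=0$ cannot occur together with $\theta>\pi/2$), whence $\sin\theta\ge\min\{\sin\beta,\cos(\beta/2)\}\ge\sin(\beta/2)$.

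Combining these, the lemma follows with, for instance, $K=\sin(\beta/2)>0$, a constant depending only on $\beta$ and therefore independent of $\delta$ and of $s$. The only real work is the angular bookkeeping of the previous paragraph, and the step needing care is uniformity as $s\to0^{+}$: this is exactly where the hypothesis $s\le\beta/2<\beta$ is used, since it guarantees that the constraint on $\arg(\vv\mp i\pi/2)$ coming from $\doutb[\dist][]{}$ keeps $\vv\mp i\pi/2$ at angular distance at least $\beta/2$ from the direction $\arg(-\omega)=-s$, rather than at a distance comparable to $s$ itself.
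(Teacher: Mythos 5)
Your proof is correct: reducing both inequalities to the elementary distance-to-a-ray estimate $|z+r\omega|\ge |z|\sin\theta$ and then checking, from the defining inequality $|\im\vv|\le \pi/2-\overline{\dist}\delta\log(1/\delta)-\tan\beta\re\vv$ of $\doutb[\dist][]{}$, that whenever $\theta>\pi/2$ the argument of $z=\vv\mp i\pi/2$ stays at angular distance at least $\beta/2$ from the direction $-\omega=e^{-is}$ (uniformly in $s\in(0,\beta/2]$, precisely because $s\le\beta/2<\beta$) gives the bound with $K=\sin(\beta/2)$, which depends only on $\beta$. This is exactly the standard geometric argument such a lemma admits (the paper states it without proof, and proves the analogous Lemma~\ref{lematriangles2} "in the same way"), so there is nothing to flag.
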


\begin{lem}\label{lemaz0}
 If $\vv\in \doutb[\dist][]{}$ and $\vw=\vv+re^{i(\pi-s)}$, with $r\in\mathbb{R}$, $r\geq0$ and $s\in\left(0,\beta/2\right]$, then there exists a constant $K$
independent of $\delta$ such that:
  \begin{enumerate}
   \item\label{lemaz0part1}
    \begin{enumerate}
\item\label{lemaz0p1pa} $|\cosh\vv|\leq K|\cosh\vw|.$
\item\label{lemaz0p1pb} Moreover, if $\vw\in \doutbinf[]{}$ then:
  $$\frac{|\cosh\vv|}{|\cosh\vw|}\leq Ke^{-|r\cos(\pi-\beta/2)|}.$$
\end{enumerate}
   \item\label{lemaz0part2} $|\het(\vw)-1|\leq K|\het(\vv)-1|.$
  \end{enumerate}
\end{lem}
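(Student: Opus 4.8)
The plan is to reduce the statement to the earlier technical lemmas, the one genuinely new ingredient being that the translated ray never leaves the outer domain. First I would check that if $\vv\in\doutb[\dist][]{}$ then $\vw=\vv+re^{i(\pi-s)}\in\doutb[\dist][]{}$ for every $r\ge0$: the displacement $e^{i(\pi-s)}=(-\cos s,\sin s)$ moves ``up and to the left'' with $|\Delta\im|/|\Delta\re|=\tan s$, and since $s\le\beta/2<\beta$ this is strictly smaller than $\tan\beta$; a one-line computation with the defining inequality $|\im\vu|\le\pi/2-\overline\dist\delta\log(1/\delta)-\tan\beta\,\re\vu$ then confirms that $\vw$ lies below the upper boundary (because $\tan s<\tan\beta$) and above the lower one (because $\vw$ moves upward). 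Along the way I record $\re\vw=\re\vv-r\cos s\le\re\vv$, $\im\vw=\im\vv+r\sin s\ge\im\vv$, and the fact that on $\doutb[\dist][]{}$ one has $\re\vu\le M:=\tfrac\pi2\cot\beta$, a constant independent of $\delta$ (since $|\im\vu|\ge0$ forces $\tan\beta\,\re\vu<\pi/2$).

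Next I would prove part \ref{lemaz0p1pb}, the case $\vw\in\doutbinf[]{}$. Then $\re\vw\le-T<0$, so $|\re\vw|=r\cos s-\re\vv$, and combining $|\cosh\vv|\le\cosh(\re\vv)\le e^{|\re\vv|}$ with the lower bound $|\cosh\vw|\ge e^{|\re\vw|}/4$ of part \ref{lemacosh} in Lemma~\ref{cosexp3} gives
$$\frac{|\cosh\vv|}{|\cosh\vw|}\le 4\,e^{|\re\vv|-|\re\vw|}.$$
The exponent equals $2\re\vv-r\cos s\le 2M-r\cos s$ if $\re\vv\ge0$ and equals $-r\cos s$ if $\re\vv<0$; since $\cos s\ge\cos(\beta/2)=|\cos(\pi-\beta/2)|$ and $r\ge0$, in both cases $\tfrac{|\cosh\vv|}{|\cosh\vw|}\le 4e^{2M}e^{-|r\cos(\pi-\beta/2)|}$, which is part \ref{lemaz0p1pb}; and part \ref{lemaz0p1pa} in this subcase follows because $e^{-|r\cos(\pi-\beta/2)|}\le1$.

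It remains to prove part \ref{lemaz0p1pa} when $\vw\in\doutbT[]{}$. Then $\re\vw\ge-T$, hence $\re\vv\ge\re\vw\ge-T$ and $\vv\in\doutbT[]{}$ as well. Put $d(\vu)=\min\{|\vu-i\pi/2|,|\vu+i\pi/2|\}$. Applying Lemma~\ref{lemamaxmin} to $\vv$ and to $\vw$ according to the sign of the imaginary part (so that $d(\vu)$ is precisely the distance to the nearer of the two singularities) gives $|\cosh\vv|\le K_2\,d(\vv)$ and $|\cosh\vw|\ge K_1\,d(\vw)$, while Lemma~\ref{lematriangles} gives $|\vw\pm i\pi/2|\ge K\,|\vv\pm i\pi/2|\ge K\,d(\vv)$ for \emph{both} signs, hence $d(\vw)\ge K\,d(\vv)$; combining, $|\cosh\vv|\le K_2\,d(\vv)\le \tfrac{K_2}{K}\,d(\vw)\le \tfrac{K_2}{KK_1}\,|\cosh\vw|$. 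Finally part \ref{lemaz0part2} is immediate from part \ref{lemaz0p1pa}: from $\het(\vu)-1=-\tanh\vu-1=-e^{\vu}/\cosh\vu$ we get $|\het(\vu)-1|=e^{\re\vu}/|\cosh\vu|$, so
$$\frac{|\het(\vw)-1|}{|\het(\vv)-1|}=e^{\re\vw-\re\vv}\,\frac{|\cosh\vv|}{|\cosh\vw|}\le K,$$
using $\re\vw\le\re\vv$.

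I expect the main obstacle to be the first step: the invariance of $\doutb[\dist][]{}$ under the translation is what makes the case split $\vw\in\doutbinf[]{}$ versus $\vw\in\doutbT[]{}$ exhaustive and what lets us feed $\vw$ into Lemmas~\ref{cosexp3}, \ref{lemamaxmin} and \ref{lematriangles}. A secondary point to be handled carefully in part \ref{lemaz0p1pa} is that the displacement may carry $\vw$ across the real axis, so $\im\vv$ and $\im\vw$ need not have the same sign; this is precisely why the argument should be phrased through the symmetric quantity $d(\vu)$ together with the fact that Lemma~\ref{lematriangles} controls $|\vu\pm i\pi/2|$ for both signs at once, which dispenses with any case analysis on the imaginary parts.
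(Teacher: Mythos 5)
Your proposal is correct: the domain-invariance of $\doutb[\dist][]{}$ under the shift $\vw=\vv+re^{i(\pi-s)}$ (which rests on $\tan s\le\tan(\beta/2)<\tan\beta$) justifies the case split $\vw\in\doutbinf[]{}$ versus $\vw\in\doutbT[]{}$, and your use of Lemma \ref{cosexp3} for the infinite part, of Lemmas \ref{lemamaxmin} and \ref{lematriangles} (via the distance to the nearer singularity) for the bounded part, and of the identity $|\het(\vu)-1|=e^{\re\vu}/|\cosh\vu|$ together with $\re\vw\le\re\vv$ for part \ref{lemaz0part2}, is exactly the argument these preceding technical lemmas are set up for. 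The paper does not print a proof of this lemma, but your route coincides with the intended one and all estimates (including the $e^{2M}$ loss with $M=\frac{\pi}{2}\cot\beta$, independent of $\delta$) check out.
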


\begin{lem}\label{lemacanvicami}
 Let $R>0$ be a constant big enough, and $\vv\in \doutb[\dist][]{}$. We define the complex path:
\begin{equation}\label{path1}
\Gamma^R_1=\{\vw\in\mathbb{C}:\,\vw=re^{i(\pi-\beta/2)},\,r\in[0,R]\},
\end{equation}
Then, if $\alpha $, $c$, $\delta>0$, the linear operator $L_{\alpha,c}$ defined in \eqref{operadorLalphac} can be rewritten as:
\begin{equation*}
 L_{\alpha ,c}(\phi)=-\lim_{R\to+\infty}\int_{\Gamma^R_1}{f_c(\vv, \vw)\phi(\vv+\vw)d\vw},
\end{equation*}
where $\phi\in\bsout[]{}$ and:
\begin{equation}\label{fc}
 f_c(\vv, \vw)=\frac{\cosh^\coef\vv}{\cosh^\coef(\vv+\vw)}e^{i\alpha \vw/\delta}e^{\param\vw}e^{ic\left[\vw+\log((1+e^{2\vv})/2)-\log((1+e^{2(\vv+\vw)})/2)\right]}
\end{equation}

\end{lem}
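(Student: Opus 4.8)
The plan is to realize the integrand of $L_{\alpha,c}$ in \eqref{operadorLalphac} — which is exactly $f_c(\vv,r)\,\phi(\vv+r)$, with $f_c$ as in \eqref{fc}, since $\frac{\cosh^\coef\vv}{\cosh^\coef(\vv+r)}e^{i\alpha r/\delta}e^{\param r}g_c^\uns(\vv,r)=f_c(\vv,r)$ — as the restriction to the negative real axis of a function analytic in the integration variable, and then to deform the contour from $(-\infty,0]$ onto the ray $\Gamma_1^R$ by Cauchy's theorem, the error being carried by the circular arc that closes the contour. Concretely, fix $\vv\in\doutb{\uns}$ and, for $R>0$, set $g(\vw)=f_c(\vv,\vw)\,\phi(\vv+\vw)$, and let $\Omega_R$ be the circular sector with vertex at the origin, radius $R$, and angular aperture from $\pi-\beta/2$ to $\pi$. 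The first thing I would check is that $\vv+\Omega_R\subset\doutb{\uns}$: a point of this sector has the form $\vv+re^{i(\pi-s)}$ with $r\in[0,R]$ and $s\in[0,\beta/2]$, and since $0\le s\le\beta/2<\beta$, moving from $\vv$ in the direction $e^{i(\pi-s)}$ decreases $\re\vw$ while raising $\im\vw$ with slope $-\tan s$, which stays below the slope $-\tan\beta$ of the upper boundary $\im\vw=\pi/2-\overline{\kappa}\delta\log(1/\delta)-\tan\beta\,\re\vw$ and above the lower one; this is the geometric fact behind Lemmas \ref{lematriangles} and \ref{lemaz0}.

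Next I would verify that $g$ is analytic on $\Omega_R$. The function $\phi$ is analytic on $\doutb{\uns}$ by definition of $\bsout{\uns}$; $\cosh^\coef(\vv+\vw)$ has no zeros in $\doutb{\uns}$ because the zeros of $\cosh$ sit at $i\pi/2+ik\pi$, which the margin $\overline{\kappa}\delta\log(1/\delta)$ keeps outside $\doutb{\uns}$; the factors $e^{i\alpha\vw/\delta}$, $e^{\param\vw}$, $e^{ic\vw}$ are entire; and $1+e^{2(\vv+\vw)}$ never vanishes on $\doutb{\uns}$ for the same reason, so on the simply connected set $\vv+\Omega_R$ there is a well-defined analytic branch of $\log((1+e^{2(\vv+\vw)})/2)$ coinciding with the real one on $\mathbb{R}$ — this is precisely why $g_c^\uns$, and not $\hat g_c$, was used in \eqref{operadorLalphac}. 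Granting this, Cauchy's theorem for $g$ along the closed contour $\partial\Omega_R=\Gamma_1^R\cup A_R\cup[-R,0]$, where $\Gamma_1^R$ is run outward from $0$ to $Re^{i(\pi-\beta/2)}$, $A_R=\{Re^{i\theta}:\theta\in[\pi-\beta/2,\pi]\}$, and $[-R,0]$ is the real segment run from $-R$ to $0$, yields
$$\int_{\Gamma_1^R}f_c(\vv,\vw)\phi(\vv+\vw)\,d\vw+\int_{A_R}g(\vw)\,d\vw+\int_{-R}^0 f_c(\vv,r)\phi(\vv+r)\,dr=0.$$

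It then remains to kill the arc term. On $A_R$ one has $\re\vw=R\cos\theta$ with $|\cos\theta|\ge\cos(\beta/2)$ and $\im\vw=R\sin\theta\ge0$, so $|e^{i\alpha\vw/\delta}|=e^{-\alpha\,\im\vw/\delta}\le1$, $|e^{ic\vw}|=e^{-c\,\im\vw}\le1$, and the logarithmic factor in $f_c$ is bounded by Lemma \ref{cosexp3}(2); since for $R$ large $\vv+\vw\in\doutbinf{\uns}$, Lemma \ref{cosexp3}(1) gives $|\cosh(\vv+\vw)|\ge e^{|\re(\vv+\vw)|}/4\ge e^{R\cos(\beta/2)-|\re\vv|}/4$, whence, using $|e^{\param\vw}|\le e^{|\param|\,|\re\vw|}$ and $|\param|<\coef$,
$$\left|\frac{\cosh^\coef\vv}{\cosh^\coef(\vv+\vw)}\right|\,|e^{\param\vw}|\le K\,e^{-(\coef-|\param|)R\cos(\beta/2)}.$$
Together with $|\phi(\vv+\vw)|\le\|\phi\|_{\textup{out}}^\uns|\het(\vv+\vw)-1|\le K\|\phi\|_{\textup{out}}^\uns$ on $A_R$ (bounded, e.g.\ by Lemma \ref{lemaz0}(2)) and $|A_R|\le\beta R/2$, this gives $\bigl|\int_{A_R}g\bigr|\le K\|\phi\|_{\textup{out}}^\uns R\,e^{-(\coef-|\param|)R\cos(\beta/2)}\to0$. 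Letting $R\to\infty$ in the displayed identity — the integral over $\Gamma_1^R$ converging absolutely because on the ray the extra factor $|e^{i\alpha\vw/\delta}|=e^{-\alpha r\sin(\beta/2)/\delta}$ overwhelms, for $\delta$ small, any growth of $e^{\param\vw}$, while $|\cosh\vv/\cosh(\vv+\vw)|\le Ke^{-r\cos(\beta/2)}$ and $|\het(\vv+\vw)-1|\le K|\het(\vv)-1|$ by Lemma \ref{lemaz0} — and recalling $L_{\alpha,c}(\phi)(\vv)=\int_{-\infty}^0 f_c(\vv,r)\phi(\vv+r)\,dr$, we obtain the claimed formula. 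The main obstacle is the analyticity step: one has to be sure the whole sector $\vv+\Omega_R$ stays inside $\doutb{\uns}$ and that the branch of the logarithm hidden in $g_c^\uns$ extends analytically there; once that is in place the arc estimate is routine, the only delicate point being the cancellation $\coef>|\param|$ that keeps $e^{\param\vw}/\cosh^\coef(\vv+\vw)$ decaying on $A_R$.
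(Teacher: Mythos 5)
Your proof is correct and is essentially the expected argument: apply Cauchy's theorem to $f_c(\vv,\cdot)\,\phi(\vv+\cdot)$ on the sector between the negative real ray and $\Gamma_1^R$ (which stays inside $\doutb{\uns}$ precisely because the aperture $\beta/2$ is smaller than $\beta$), and kill the closing arc using $\im\vw\geq 0$ together with the decay $e^{-(\coef-|\param|)R\cos(\beta/2)}$ furnished by $|\param|<\coef$ and Lemmas \ref{cosexp3} and \ref{lemaz0}. The only point to phrase more carefully is the branch of the logarithm: it should be normalized to agree on the segment $\vv+[-R,0]$ with the branch used in $g_c^\uns$ (e.g.\ the principal branch, which is analytic on all of $\doutb{\uns}$ since $(1+e^{2z})/2$ avoids $(-\infty,0]$ there), rather than ``with the real one on $\mathbb{R}$'', since the shifted sector need not meet the real axis.
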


\begin{rmk}
 For  $L_{-\alpha ,-c}(\phi)$ we get the same result but in curves of the form $\overline{\Gamma}^R_1:=\left\{\vw\in\mathbb{C}\,:\,\overline{\vw}\in\Gamma^R_1\right\}$.
\end{rmk}

With these previous lemmas we can prove the following proposition, which characterizes how the operator $L=(L_{\alpha,c},L_{-\alpha,-c})$, defined in \eqref{operadorL}, acts on $\bsout[]{}\times\bsout[]{}$.
\begin{lem}\label{lemaL}
 The operator $L:\bsout[]{}\times\bsout[]{}\rightarrow\bsout[]{}\times\bsout[]{}$ is well defined and there exists a constant $K$ independent of $\delta$ such that for all $\phi\in\bsout[]{}\times\bsout[]{}$:
$$\|L(\phi)\|_{\textup{out},\times} \leq K\delta\|\phi\|_{\textup{out},\times} .$$
\end{lem}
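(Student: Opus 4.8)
The plan is to estimate the two components of $L(\phi)=(L_{\alpha,c}^\uns(\phi_1),L_{-\alpha,-c}^\uns(\phi_2))$ separately, and for each of them to split the estimate of the $\|\cdot\|_{\textup{out}}$-norm into the contribution coming from $\doutbinf{\uns}$ (where the weight is $|\het(\vv)-1|^{-1}$) and the contribution from $\doutbT{\uns}$ (where the weight is $|\het(\vv)-1|^{-3}$). By symmetry it suffices to treat $L_{\alpha,c}^\uns$; the bound for $L_{-\alpha,-c}^\uns$ follows verbatim using the conjugate paths $\overline\Gamma_1^R$ as noted in the remark after Lemma~\ref{lemacanvicami}, since the sign of $\alpha$ and $c$ only affects the oscillatory factors, which are controlled in modulus. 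The starting point is the path representation
$$
L_{\alpha,c}^\uns(\phi_1)(\vv)=-\lim_{R\to+\infty}\int_{\Gamma_1^R} f_c(\vv,\vw)\,\phi_1(\vv+\vw)\,d\vw,
$$
with $f_c$ as in \eqref{fc} and $\Gamma_1^R$ the segment $\vw=re^{i(\pi-\beta/2)}$, $r\in[0,R]$. Parameterizing by $r$, we get $|d\vw|=dr$ and we must bound $\int_0^\infty |f_c(\vv,re^{i(\pi-\beta/2)})|\,|\phi_1(\vv+\vw)|\,dr$.

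The key pointwise estimate on $|f_c|$ combines three ingredients already at hand. First, $|e^{i\alpha\vw/\delta}|=e^{-\frac{\alpha}{\delta}\,\im\vw}=e^{-\frac{\alpha}{\delta}\,r\sin(\pi-\beta/2)}=e^{-\frac{\alpha}{\delta}\,r\sin(\beta/2)}$, which decays exponentially in $r/\delta$; this is the source of the gain of a factor $\delta$. Second, $|e^{\param\vw}|=e^{\param\,\re\vw}=e^{-\param r\cos(\beta/2)}$ is bounded (for $|\param|<\coef$ fixed, after possibly absorbing a harmless constant, or simply bounded on the relevant range since $\re\vw\le 0$). Third, the factor $e^{ic[\vw+\log((1+e^{2\vv})/2)-\log((1+e^{2(\vv+\vw)})/2)]}$ is bounded in modulus by $e^{c\pi}\cdot e^{c|\re\vw|}\le e^{c\pi}$ times a constant, using Lemma~\ref{cosexp3}(\ref{lemaexp}) applied at $\vv$ and at $\vv+\vw$ (both lie in $\doutb{\uns}$ by Lemma~\ref{lematriangles}-type geometry, i.e.\ the segment from $\vv$ in direction $e^{i(\pi-\beta/2)}$ stays in the domain). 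Thus $|f_c(\vv,\vw)|\le K\,\dfrac{|\cosh\vv|^\coef}{|\cosh(\vv+\vw)|^\coef}\,e^{-\frac{\alpha}{\delta}r\sin(\beta/2)}$. It remains to handle the ratio of hyperbolic cosines together with the weight coming from $\|\phi_1\|_{\textup{out}}$: writing $|\phi_1(\vv+\vw)|\le \|\phi_1\|_{\textup{out}}\cdot w(\vv+\vw)$ where $w$ is $|\het-1|$ on the far part and $|\het-1|^3$ on the bounded part, we use Lemma~\ref{lemaz0}(\ref{lemaz0p1pa}) to get $|\cosh\vv|\le K|\cosh(\vv+\vw)|$ (so the ratio of cosh's is $\le K$), Lemma~\ref{lemaz0}(\ref{lemaz0part2}) to get $|\het(\vv+\vw)-1|\le K|\het(\vv)-1|$ (so the weight at the shifted point is controlled by the weight at $\vv$), and Lemma~\ref{lemaz0}(\ref{lemaz0p1pb}) on the unbounded part for the extra decay $|\cosh\vv|/|\cosh(\vv+\vw)|\le Ke^{-|r\cos(\pi-\beta/2)|}$ when $\vv+\vw\in\doutbinf{\uns}$.

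Putting these together, after bounding the integrand by $K\,\|\phi_1\|_{\textup{out}}\, w(\vv)\,e^{-\frac{\alpha}{\delta}r\sin(\beta/2)}$ (with $w(\vv)$ the appropriate weight at the base point $\vv$, possibly with one extra bounded factor from where the shifted point sits in $\doutbT{\uns}$ versus $\doutbinf{\uns}$), we integrate in $r$:
$$
\int_0^\infty e^{-\frac{\alpha}{\delta}r\sin(\beta/2)}\,dr=\frac{\delta}{\alpha\sin(\beta/2)}\le K\delta,
$$
which yields $|(\het(\vv)-1)^{-j}L_{\alpha,c}^\uns(\phi_1)(\vv)|\le K\delta\|\phi_1\|_{\textup{out}}$ on the corresponding subdomain ($j=1$ on $\doutbinf{\uns}$, $j=3$ on $\doutbT{\uns}$), hence $\|L_{\alpha,c}^\uns(\phi_1)\|_{\textup{out}}\le K\delta\|\phi_1\|_{\textup{out}}$. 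Summing the two components gives $\|L(\phi)\|_{\textup{out},\times}\le K\delta\|\phi\|_{\textup{out},\times}$, and well-definedness (analyticity of $L(\phi)$ and finiteness of the norm) follows since the integral converges uniformly on compact subsets thanks to the exponential factor, so the limit is analytic by Morera's theorem. The main obstacle is the bookkeeping on the weights near the corner $\vv\sim i\pi/2$: one must be careful that when $\vv$ is in the bounded part $\doutbT{\uns}$ but the integration path $\vv+\vw$ drifts into the unbounded part $\doutbinf{\uns}$ (or vice versa), the mismatch between the $|\het-1|^{-1}$ and $|\het-1|^{-3}$ weights is absorbed either by the fact that $|\het-1|$ is bounded away from $0$ deep in $\doutbinf{\uns}$ (Lemma~\ref{cosexp3}(\ref{lemacosh})) or by the extra exponential decay in $r$ of Lemma~\ref{lemaz0}(\ref{lemaz0p1pb}); verifying that every case closes with only a constant loss, uniformly in $\delta$, is the delicate part, but it is exactly what Lemmas~\ref{lemamaxmin}--\ref{lemaz0} were set up to provide.
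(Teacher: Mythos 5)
Your plan follows the same route as the paper's proof: the path representation of Lemma \ref{lemacanvicami}, modulus bounds on the oscillatory factors via Lemma \ref{cosexp3}, transfer of the weights through Lemma \ref{lemaz0}, and the explicit integral $\int_0^\infty e^{-\alpha r\sin(\beta/2)/\delta}dr=\delta/(\alpha\sin(\beta/2))$ as the sole source of the factor $\delta$. Two of your justifications, however, are wrong as written and need the paper's actual mechanism.

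First, the claim that $|e^{\param\vw}|=e^{-\param r\cos(\beta/2)}$ is ``bounded since $\re\vw\le0$'' fails when $\param<0$: then this factor grows like $e^{|\param|r\cos(\beta/2)}$ as $r\to\infty$, so your displayed pointwise bound on $|f_c|$, which has already discarded it, is false. The correct argument pairs this factor with the extra decay of Lemma \ref{lemaz0}(\ref{lemaz0p1pb}), $|\cosh\vv|/|\cosh(\vv+\vw)|\le Ke^{-|r\cos(\pi-\beta/2)|}$, on the part of the path lying in $\doutbinf{\uns}$, and uses the hypothesis $|\param|<\coef$ to get $e^{(|\param|-\coef)|r\cos(\pi-\beta/2)|}\le1$; on the part lying in $\doutbT{\uns}$ one has $r\le r^*$ with $r^*$ independent of $\delta$, so the factor is bounded there. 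This is precisely what Lemma \ref{lemaz0}(\ref{lemaz0p1pb}) is for; it is not needed for the weights. Second, your resolution of the weight mismatch is attributed to the wrong subdomain: $|\het(\vv)-1|=e^{\re\vv}/|\cosh\vv|$ tends to $0$ deep in $\doutbinf{\uns}$, so it is \emph{not} bounded away from zero there, and Lemma \ref{cosexp3}(\ref{lemacosh}) does not give what you want. The only mismatch case is $\vv\in\doutbT{\uns}$ with $\vv+\vw\in\doutbinf{\uns}$ (the path moves leftwards), where $\phi$ only supplies one power of $|\het(\vv+\vw)-1|\le K|\het(\vv)-1|$; this is closed because $|\het(\vv)-1|$ is bounded \emph{below} on $\doutbT{\uns}$, whence $|\het(\vv)-1|\le K|\het(\vv)-1|^3$ there, exactly the inequality the paper invokes after splitting the integral at $r^*$. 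With these two corrections your argument coincides with the paper's.
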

\begin{proof}
 We just need to bound $\|L_{\alpha ,c}(\phi)\|_{\textup{out}}$, since the case for $\|L_{-\alpha ,-c}(\phi)\|_{\textup{out}}$ is completely analogous. Note that by Lemma \ref{lemacanvicami} we have that:
$$|L_{\alpha ,c}(\phi)(\vv)|=\left|\lim_{R\to+\infty}\int_{\Gamma^R_1}f_c(\vv, \vw)\phi(\vv+\vw)d\vw\right|,$$
where $\Gamma_1^R$ was defined in \eqref{path1} and $f_c$ was defined in \eqref{fc}. Now, parameterizing the curve $\Gamma^R_1$ by $\gamma(r)=re^{i(\pi-\beta/2)}$, with $r\in[0,R]$, we get:
\begin{eqnarray}
 |L_{\alpha ,c}(\phi)(\vv)|&=&\left|\lim_{R\to+\infty}\int_0^R e^{i(\pi-\beta/2)}f_c(\vv,re^{i(\pi-\beta/2)})\phi(\vv+re^{i(\pi-\beta/2)})dr\right|\nonumber\\
&=&\left|\cosh^\coef\vv\int_0^{+\infty}\frac{e^{i(\pi-\beta/2)}e^{\frac{i\alpha  re^{i(\pi-\beta/2)}}{\delta}}e^{\param re^{(\pi-\beta/2)}}}{\cosh^\coef(\vv+re^{i(\pi-\beta/2)})}\tilde g_c(\vv, r)\phi(\vv+re^{i(\pi-\beta/2)})dr\right|,\nonumber
\end{eqnarray}
where:
$$\tilde g_c(\vv, r)=g_c(v, re^{i(\pi-\beta/2)})=e^{ic(re^{i(\pi-\beta/2)}+\log((1+e^{2\vv})/2)-\log((1+e^{2(\vv+re^{i(\pi-\beta/2)})})/2))}.$$
First we will see that there exists a constant $K$ such that:
\begin{equation}\label{fitacoshe}
 \left|\frac{\cosh^\coef\vv \,e^{\param re^{(\pi-\beta/2)}}}{\cosh^\coef(\vv+re^{i(\pi-\beta/2)})}\right|\leq K.
\end{equation}
On one hand, if $re^{i(\pi-\beta/2)}\in \doutbinf[]{}$ then by part \ref{lemaz0p1pb} of Lemma \ref{lemaz0} we have that:
\begin{equation*}
 \left|\frac{\cosh^\coef\vv\,e^{\param re^{(\pi-\beta/2)}}}{\cosh^\coef(\vv+re^{i(\pi-\beta/2)})}\right|\leq Ke^{-\coef|r\cos(\pi-\beta/2)|}|e^{\param re^{(\pi-\beta/2)}}| \leq Ke^{(|\param|-\coef)|r\cos(\pi-\beta/2)|}\leq K,
\end{equation*}
because $|\param|-\coef<0$. On the other hand, if $re^{i(\pi-\beta/2)}\in \doutbT[]{}$ it implies that $r\leq r^*$ for some $r^*<+\infty$ independent of $\delta$. Then, by part \ref{lemaz0p1pa} of Lemma \ref{lemaz0}, we have that:
\begin{equation*}
\left|\frac{\cosh^\coef\vv\,e^{\param re^{(\pi-\beta/2)}}}{\cosh^\coef(\vv+re^{i(\pi-\beta/2)})}\right|\leq K|e^{\param re^{(\pi-\beta/2)}}|\leq Ke^{|\param||r\cos(\pi-\beta/2)|}\leq Ke^{|\param||r^*\cos(\pi-\beta/2)|}\leq Ke^{d|r^*\cos(\pi-\beta/2)|}.
\end{equation*}
This finishes the proof of \eqref{fitacoshe}.

Now, to bound $\tilde g_c(\vv, r)$ we just use item \ref{lemaexp} of Lemma \ref{cosexp3}:
\begin{equation}\label{fitaexpic}
|\tilde g_c(\vv, r)|=|e^{ic(re^{i(\pi-\beta/2)}+\log(1+e^{2\vv})-\log(1+e^{2(\vv+re^{i(\pi-\beta/2)})}))}|\leq e^{-c\im re^{i(\pi-\beta/2)}}e^{2c\pi}=e^{-cr\sin(\beta/2)}\leq e^{2c\pi}.
\end{equation}

Hence, using bounds \eqref{fitacoshe} and \eqref{fitaexpic} we have, for $\vv\in\doutb[\kappa]{}$
\begin{eqnarray}
&&|L_{\alpha ,c}(\phi)(\vv)|\leq Ke^{2c\pi}\int_0^{+\infty}\left|e^{i\alpha  re^{i(\pi-\beta/2)}/\delta}\right||\phi(\vv+re^{i(\pi-\beta/2)})|dr.
\end{eqnarray}
Now we distinguish between the cases $\vv\in\doutbinf[]{}$ and $\vv\in\doutbT[]{}$. On one hand, if $\vv\in \doutbinf[]{}$ we have $\vv+re^{i(\pi-\beta/2)}\in \doutbinf[]{}$ and then by part \ref{lemaz0part2} of Lemma \ref{lemaz0}:
\begin{eqnarray}\label{integralD1}
|L_{\alpha ,c}(\phi)(\vv)|&\leq& Ke^{2c\pi}\|\phi\|_{\textup{out},\times} \int_0^{+\infty}e^{-\alpha  r\sin(\pi-\beta/2)/\delta}|\het(\vv+re^{i(\pi-\beta/2)})-1| dr\nonumber\\
&\leq& Ke^{2c\pi}\|\phi\|_{\textup{out},\times} |\het(\vv)-1|\int_0^{+\infty}e^{-\alpha  r\sin(\pi-\beta/2)/\delta} dr.
\end{eqnarray}
On the other hand, if $\vv\in \doutbT[]{}$, let $r^*$ be the value such that $\vv+re^{i(\pi-\beta/2)}\in \doutbinf[]{}\cap \doutbT[]{}$. Then:
\begin{eqnarray}
|L_{\alpha ,c}(\phi)(\vv)|&\leq& Ke^{2c\pi}\|\phi\|_{\textup{out},\times} \left(\int_0^{r^*}e^{-\alpha  r\sin(\pi-\beta/2)/\delta}|\het(\vv+re^{i(\pi-\beta/2)})-1|^3dr\right.\nonumber\\
&&+\left.\int_{r^*}^{+\infty}e^{-\alpha  r\sin(\pi-\beta/2)/\delta}|\het(\vv+re^{i(\pi-\beta/2)})-1| dr\right)\nonumber\\
&\leq& Ke^{2c\pi}\|\phi\|_{\textup{out},\times} \left(\int_0^{r^*}e^{-\alpha  r\sin(\pi-\beta/2)/\delta}|\het(\vv)-1|^3dr\right.\nonumber\\
&&+\left.\int_{r^*}^{+\infty}e^{-\alpha  r\sin(\pi-\beta/2)/\delta}|\het(\vv)-1| dr\right),\nonumber
\end{eqnarray}
where we have used part \ref{lemaz0part2} of Lemma \ref{lemaz0} again. Now, since for $\vv\in \doutbT[]{}$ we have that $|\het(\vv)-1|\leq K|\het(\vv)-1|^3$, this last inequality yields:
\begin{equation}\label{integralD2D3}
|L_{\alpha ,c}(\phi)(\vv)|\leq Ke^{2c\pi}\|\phi\|_{\textup{out},\times}|\het(\vv)-1|^3\int_0^{\infty}e^{-\alpha  r\sin(\pi-\beta/2)/\delta}dr.
\end{equation}
Hence, from \eqref{integralD1} and \eqref{integralD2D3} we can write:
$$|L_{\alpha ,c}(\phi)(\vv)|\leq Ke^{2c\pi}\|\phi\|_{\textup{out},\times}|\het(\vv)-1|^\nu\int_0^{\infty}e^{-\alpha  r\sin(\pi-\beta/2)/\delta} dr,$$
where $\nu=1$ if $\vv\in \doutbinf[]{}$ and $\nu=3$ otherwise.

If we compute the last integral explicitly we get that:
$$|L_{\alpha ,c}(\phi)(\vv)|\leq\delta \frac{Ke^{2c\pi}}{\alpha \sin(\beta/2)}\|\phi\|_{\textup{out},\times} |\het(\vv)-1|^\nu,$$
and then, by definition \eqref{defnorm} of the norm $\|.\|_{\textup{out}}$, the result is clear.
\end{proof}

With Lemma \ref{lemaL}, the first part of Proposition \ref{propouterunbounded} will be proved. Concretely, we will demonstrate the following:
\begin{lem}\label{lemaprimaprox}
 The function $\vzeta_0=L\circ R(0)$, where $R$ was defined in \eqref{defR} and $L$ in \eqref{operadorL}, belongs to $\bsout[]{}\times\bsout[]{}$, and there exists a constant $K$ independent of $\delta$ such that:
$$\|\vzeta_0\|_{\textup{out},\times} \leq  K\delta^2.$$
\end{lem}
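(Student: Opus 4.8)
The plan is to reduce the claim to two facts that are already available: a bound on $R(0)$ in the outer norm, and the mapping property of $L$ from Lemma~\ref{lemaL}. First I would observe that evaluating the operator $R$ of \eqref{defR} at the zero function makes the term multiplying $A(\vv)\vzeta$ vanish (it is multiplied by $A(\vv)\cdot 0$) and sets $b\veta\vetab=0$, so that
$$R(0)(\vv)=\frac{\delta^{-2}F^{\uns}(0,0,\delta\het(\vv),\delta,\delta\param)}{1+\dfrac{\delta^{-2}H^{\uns}(0,0,\delta\het(\vv),\delta,\delta\param)}{-1+\het^2(\vv)}}.$$
Hence $\vzeta_0=L(R(0))$, and by Lemma~\ref{lemaL} it suffices to prove that $R(0)\in\bsout[]{}\times\bsout[]{}$ with $\|R(0)\|_{\textup{out},\times}\leq K\delta$; then $\|\vzeta_0\|_{\textup{out},\times}=\|L(R(0))\|_{\textup{out},\times}\leq K\delta\,\|R(0)\|_{\textup{out},\times}\leq K\delta^2$, as required.

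The core step is therefore the estimate on $R(0)$, obtained by bounding its numerator and denominator separately. For the numerator, Corollary~\ref{corollaryfitaFiH} applied with $\vzeta=0$ (which trivially meets the hypothesis $\|\vzeta\|_{\textup{out},\times}\leq\delta^2C$) gives $|\delta^{-2}F_i^{\uns}(0,0,\delta\het(\vv),\delta,\delta\param)|\leq K\delta|\het(\vv)-1|$ on $\doutbinf[]{}$ and $\leq K\delta|\het(\vv)-1|^3$ on $\doutbT[]{}$, and the same bounds for $H^{\uns}$. For the denominator I would check that it stays bounded away from zero: using $-1+\het^2(\vv)=-\cosh^{-2}\vv$ and the identity $\het(\vv)-1=-e^{\vv}/\cosh\vv$, on $\doutbinf[]{}$ (where $\vv$ is far from $\pm i\pi/2$ and $|\cosh\vv|\leq e^{|\re\vv|}$) one finds $\bigl|\delta^{-2}H^{\uns}/(-1+\het^2(\vv))\bigr|\leq K\delta\, e^{\re\vv}|\cosh\vv|\leq K\delta$, while on $\doutbT[]{}$, where $|\cosh\vv|\,|\het(\vv)-1|\leq K$ by Lemma~\ref{lemamaxmin}, one gets $\bigl|\delta^{-2}H^{\uns}/(-1+\het^2(\vv))\bigr|\leq K\delta|\het(\vv)-1|^3|\cosh\vv|^2\leq K\delta|\het(\vv)-1|\leq K/\log(1/\delta)$, the last step using $|\vv\mp i\pi/2|\geq\overline{\dist}\cos\beta\,\delta\log(1/\delta)$ on $\doutbT[]{}$ (built into the definition of $\doutb[\dist][]{}$) together with Lemma~\ref{lemamaxmin}, which yields $|\het(\vv)-1|\leq K/(\delta\log(1/\delta))$. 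Hence for $\delta$ small the denominator of $R(0)$ has modulus $\geq1/2$; in particular $R(0)$ is analytic on $\doutb[\dist][]{}$ (all its ingredients are analytic there and the denominator does not vanish) and $|R(0)(\vv)|\leq 2\delta^{-2}|F^{\uns}(0,0,\delta\het(\vv),\delta,\delta\param)|$, so by the numerator bound and the definition \eqref{defnorm} of $\|\cdot\|_{\textup{out}}$ we obtain $R(0)\in\bsout[]{}\times\bsout[]{}$ and $\|R(0)\|_{\textup{out},\times}\leq K\delta$.

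Feeding this into Lemma~\ref{lemaL} as in the first paragraph completes the proof. The only point that goes beyond a mechanical substitution into already-proved bounds is the uniform lower bound on $|1+\delta^{-2}H^{\uns}/(-1+\het^2(\vv))|$: near the singularities $\pm i\pi/2$ this quantity is only of size $\delta|\het(\vv)-1|$, the order-three vanishing of $H^{\uns}$ (Corollary~\ref{corollaryfitaFiH}) tempering the naive blow-up by the factor $(-1+\het^2(\vv))^{-1}=-\cosh^2\vv\sim|\het(\vv)-1|^{-2}$, and this in turn is $O(1/\log(1/\delta))$ precisely because $\doutbT[]{}$ is kept at distance $\overline{\dist}\delta\log(1/\delta)$ from those points; everything else is routine bookkeeping with the norm \eqref{defnorm}.
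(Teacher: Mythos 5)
Your proposal is correct and follows essentially the same route as the paper: reduce the claim to the estimate $\|R(0)\|_{\textup{out},\times}\leq K\delta$, bound the numerator with Corollary~\ref{corollaryfitaFiH} at $\vzeta=0$, show the denominator stays within distance $1/2$ of $1$ by treating $\doutbinf[]{}$ and $\doutbT[]{}$ separately (using Lemma~\ref{lemamaxmin} and the fact that $\doutb[\dist][]{}$ stays at distance of order $\delta\log(1/\delta)$ from $\pm i\pi/2$), and then conclude via Lemma~\ref{lemaL}. The minor differences are only notational (you work with $|\het(\vv)-1|$ and $|\cosh\vv|$ where the paper writes $e^{3\vv}/\cosh\vv$), so no further comparison is needed.
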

\begin{proof}
 By Lemma \ref{lemaL} it is clear that we just need to prove that $\|R(0)\|_{\textup{out},\times} \leq K\delta.$ Again, we will just bound the norm of the first component of $R(0)$, that is $R_1(0)$, being the second one analogous.

By \eqref{defR} we have:
$$|R_1(0)(\vv)|=\frac{\delta^{-2}|F^\uns_1(0,\delta\het(\vv),\delta,\delta\param)|}{\left|1+\frac{\delta^{-2}H^\uns(0,\delta\het(\vv),\delta,\delta\param)}{-1+\het^2(\vv)}\right|}\leq\frac{\delta^{-2}|F^\uns_1(0,\delta\het(\vv),\delta,\delta\param)|}{\left|1-\left|\frac{\delta^{-2}H^\uns(0,\delta\het(\vv),\delta,\delta\param)}{-1+\het^2(\vv)}\right|\right|}.$$
First we will prove that, for $\vv\in\doutb[\kappa][]{}$:
\begin{equation}\label{divH0}
 \frac{1}{\left|1-\left|\frac{\delta^{-2}H^\uns(0,\delta\het(\vv),\delta,\delta\param)}{-1+\het^2(\vv)}\right|\right|}\leq2.
\end{equation}
Indeed, if $\vv\in \doutbinf[]{}$, by Corollary \ref{corollaryfitaFiH}:
\begin{equation}\label{part1}
 \left|\frac{\delta^{-2}H^\uns(0,\delta\het(\vv),\delta,\delta\param)}{-1+\het^2(\vv)}\right|\leq \frac{ K\delta|\het(\vv)-1|}{|-1+\het^2(\vv)|}=2 K\delta|e^{\vv}\cosh\vv|\leq  K\delta <\frac{1}{2},
\end{equation}
where we have used that $2e^v\cosh\vv=e^{2\vv}+1$ is bounded in $\doutbinf[]{}$ and that $\delta$ is sufficiently small. Otherwise, if $\vv\in \doutbT[]{}$, again by Corollary \ref{corollaryfitaFiH} we have:
$$\left|\frac{\delta^{-2}H^\uns(0,\delta\het(\vv),\delta,\delta\param)}{-1+\het^2(\vv)}\right|\leq \frac{ K\delta|\het(\vv)-1|^3}{|-1+\het^2(\vv)|}=\frac{8 K\delta e^{3\vv}}{|\cosh\vv|}.$$
Now, using Lemma \ref{lemamaxmin} we have:
$$\frac{1}{|\cosh\vv|}\leq\frac{1}{K_1|\vv\mp i\pi/2|}\leq\frac{1}{K_1\delta\log(1/\delta)},$$
since $|\vv\mp i\pi/2|\geq  K\delta\log(1/\delta)$ in $\doutb[\dist][]{}$. Moreover, for $\vv\in \doutbT[]{}$ it is clear that $e^{3\vv}$ is bounded. Therefore it is straightforward to see that:
\begin{equation}\label{part2}
 \left|\frac{\delta^{-2}H^\uns(0,\delta\het(\vv),\delta,\delta\param)}{-1+\het^2(\vv)}\right|\leq\frac{K}{\log(1/\delta)}<\frac{1}{2}
\end{equation}
if $\delta$ is small enough. Then, from \eqref{part1} and from \eqref{part2}, bound \eqref{divH0} holds true.

Finally, from \eqref{divH0} and using again Corollary \ref{corollaryfitaFiH} it is clear that:
$$|R_1(0)(\vv)|\leq2|\delta^{-2}F_1^\uns(0,\delta\het(\vv),\delta,\delta\param)|\leq \left\{\begin{array}{ll}  K\delta|\het(\vv)-1|& \quad\textrm {if } \vv\in \doutbinf[]{},\medskip\\\displaystyle  K\delta|\het(\vv)-1|^3, & \quad\textrm {if }\vv\in \doutbT[]{},\end{array}\right.$$
and then from the definition \eqref{defnorm} of the norm $\|.\|_{\textup{out}}$ we obtain the statement immediately.
\end{proof}

We enunciate the following technical lemma, which due to Angenent \cite{ang} will simplify the proof of the second part of Proposition \ref{propouterunbounded}.
\begin{lem}[\cite{ang}]\label{lemaang}
 Let $E$ be a complex Banach space, and let $f:B_r\rightarrow B_{\theta r}$ be a holomorphic mapping, where $B_\rho=\{x\in E:\|x\|<\rho\}$.

If $\theta<1/2$, then $f_{|_{B_{\theta r}}}$ is a contraction, and hence has a unique fixed point in $B_{\theta r}$.
\end{lem}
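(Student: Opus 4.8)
The plan is to bound the Fréchet derivative of $f$ on the smaller ball by Cauchy estimates and then apply the Banach fixed point theorem; the quantity that governs everything is $q:=\theta/(1-\theta)$, which satisfies $q<1$ precisely because $\theta<1/2$ (that is, $q<1\iff\theta<1-\theta$).

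First I would prove that $\|Df(x)\|\le q$ for every $x\in B_{\theta r}$. Since $\|x\|<\theta r$, the ball $\{y\in E:\|y-x\|<(1-\theta)r\}$ is contained in $B_r$, and on it $f$ takes values in $B_{\theta r}$, so $\|f\|<\theta r$ there. Fix a unit vector $v\in E$ and a norm-one continuous linear functional $\xi$; the scalar function $t\mapsto \xi(f(x+tv))$ is holomorphic on the disc $\{|t|<(1-\theta)r\}\subset\mathbb{C}$ and bounded there by $\theta r$, so the classical one-variable Cauchy estimate gives $|\xi(Df(x)v)|\le \theta r/((1-\theta)r)=q$. Taking suprema over $v$ and $\xi$ yields the claim. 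This uses only the standard fact that a holomorphic map between complex Banach spaces is locally given by a norm-convergent power series, hence restricts holomorphically to complex lines.

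Next, since $B_{\theta r}$ is convex, the mean value identity $f(x_1)-f(x_2)=\int_0^1 Df(x_2+s(x_1-x_2))(x_1-x_2)\,ds$ together with the derivative bound gives $\|f(x_1)-f(x_2)\|\le q\|x_1-x_2\|$ for all $x_1,x_2\in B_{\theta r}$, i.e. $f|_{B_{\theta r}}$ is a contraction. To produce the fixed point I would pass to the closed ball $\overline{B_{\theta r}}=\{x\in E:\|x\|\le\theta r\}$: it is complete, being closed in the Banach space $E$; it is contained in $B_r$ because $\theta<1$, so $f$ is defined on it; the Lipschitz bound extends to it by continuity; and $f(\overline{B_{\theta r}})\subset B_{\theta r}\subset\overline{B_{\theta r}}$. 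The Banach fixed point theorem then yields a unique $x^*\in\overline{B_{\theta r}}$ with $f(x^*)=x^*$, and since $x^*=f(x^*)\in B_{\theta r}$ the fixed point in fact lies in $B_{\theta r}$; uniqueness within $B_{\theta r}$ is immediate since any such fixed point also lies in $\overline{B_{\theta r}}$. I do not anticipate a genuine obstacle here: the only mildly delicate points are the Banach-space Cauchy estimate (entirely standard) and the replacement of the non-complete open ball $B_{\theta r}$ by its closure so that the contraction principle applies.
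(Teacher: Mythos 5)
Your proof is correct. Note that the paper does not actually prove this lemma: it is stated as a quoted result and attributed to Angenent's paper, so there is no internal argument to compare against. Your route --- a Cauchy estimate on complex lines giving $\|Df(x)\|\le\theta/(1-\theta)<1$ on $B_{\theta r}$, the mean value inequality on the convex ball, and then the Banach fixed point theorem applied on the closed ball $\overline{B_{\theta r}}$ (with the observation that the fixed point, being in the image of $f$, lies in the open ball) --- is precisely the standard proof of Angenent's lemma, and all the delicate points (the Banach-space Cauchy estimate via scalarization with norm-one functionals, and the passage to the closed ball to have completeness) are handled correctly.
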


The following result will allow us to finish the proof of Proposition \ref{propouterunbounded}.
\begin{lem}\label{lemafixedpt}
 Let $\CMcal{F}:=L\circ R$ and $B(r)$ be the ball of $\bsout[]{}\times\bsout[]{}$ centered at the origin of radius
$r=8\|\vzeta_0\|_{\textup{out},\times} $. Then, $\CMcal{F}:B(r)\rightarrow B(r/4)$ is well defined. Moreover, there exists a constant $K$ independent of $\delta$
such that if $\vzeta\in B(r)$:
$$\|\CMcal{F}(\vzeta)-\vzeta_0\|_{\textup{out},\times} \leq\frac{1}{\log(1/\delta)}K\|\vzeta\|_{\textup{out},\times} .$$
\end{lem}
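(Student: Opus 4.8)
The plan is to reduce everything to Lemma \ref{lemaL} by exploiting the linearity of $L$. Since $\CMcal{F}=L\circ R$ and $\vzeta_0=L\circ R(0)$, we have $\CMcal{F}(\vzeta)-\vzeta_0=L\bigl(R(\vzeta)-R(0)\bigr)$, so Lemma \ref{lemaL} gives
$$\|\CMcal{F}(\vzeta)-\vzeta_0\|_{\textup{out},\times}\leq K\delta\,\|R(\vzeta)-R(0)\|_{\textup{out},\times},$$
and it is enough to prove that $\|R(\vzeta)-R(0)\|_{\textup{out},\times}\leq K(\delta\log(1/\delta))^{-1}\|\vzeta\|_{\textup{out},\times}$ for $\vzeta\in B(r)$. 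By Lemma \ref{lemaprimaprox}, $r=8\|\vzeta_0\|_{\textup{out},\times}\leq K\delta^2$, hence every $\vzeta\in B(r)$ satisfies $\|\vzeta\|_{\textup{out},\times}\leq K\delta^2$, so that Corollaries \ref{corollaryfitaDFiDH} and \ref{corollaryfitaFiH} apply to all the arguments appearing below.

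To bound $R(\vzeta)-R(0)$ I would write, according to \eqref{defR}, $R=R_a+R_b$ with $R_a(\vzeta)=\bigl((1+g(\vzeta))^{-1}-1\bigr)A\vzeta$ and $R_b(\vzeta)=\delta^{-2}F(\delta\vzeta,\delta\het,\delta,\delta\param)(1+g(\vzeta))^{-1}$, where $g(\vzeta)=\bigl(b\veta\vetab+\delta^{-2}H(\delta\vzeta,\delta\het,\delta,\delta\param)\bigr)/(-1+\het^2)$. Repeating the estimates \eqref{part1}--\eqref{part2} (checking in addition that the extra term $b\veta\vetab$ is harmless, being controlled by $\|\vzeta\|_{\textup{out},\times}^2\leq K\delta^4$ together with Lemma \ref{lemamaxmin}) one gets $|g(\vzeta)(\vv)|\leq K\delta$ on $\doutbinf{\uns}$ and $|g(\vzeta)(\vv)|\leq K/\log(1/\delta)$ on $\doutbT{\uns}$; in particular $|1+g(\vzeta)|^{-1}\leq 2$. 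Since $R_a(0)=0$ and $R_a(\vzeta)=-g(\vzeta)(1+g(\vzeta))^{-1}A\vzeta$, using $|A(\vv)|\leq K/\delta$ (Lemma \ref{lemamaxmin}, since $|\het|$ is at most of order $1/(\delta\log(1/\delta))$ on $\doutbT{\uns}$) and the weights of the norm \eqref{defnorm}, the $R_a$–term contributes at most $K(\delta\log(1/\delta))^{-1}\|\vzeta\|_{\textup{out},\times}$. For $R_b$ I would split
$$R_b(\vzeta)-R_b(0)=\frac{\delta^{-2}\bigl(F(\delta\vzeta,\delta\het,\delta,\delta\param)-F(0,\delta\het,\delta,\delta\param)\bigr)}{1+g(\vzeta)}+\delta^{-2}F(0,\delta\het,\delta,\delta\param)\Bigl(\frac{1}{1+g(\vzeta)}-\frac{1}{1+g(0)}\Bigr),$$
bound the numerator difference by the mean value theorem together with the derivative bounds of Corollary \ref{corollaryfitaDFiDH} and $|\delta\vzeta(\vv)|\leq\delta\|\vzeta\|_{\textup{out},\times}|\het(\vv)-1|^{\nu}$ (with $\nu=1$ on $\doutbinf{\uns}$, $\nu=3$ on $\doutbT{\uns}$), and bound the denominator difference by Lemma \ref{lemadifdiv} applied to $g(\vzeta)-g(0)$ (estimated again by the mean value theorem) together with the bound on $\delta^{-2}F(0,\cdot)$ from Corollary \ref{corollaryfitaFiH}, exactly as in the proof of Lemma \ref{lemaprimaprox}.

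The bulk of the work, and where I expect the main obstacle, is the case-by-case arithmetic on $\doutbinf{\uns}$ and $\doutbT{\uns}$: one must combine the cubic bounds on $F,H$ and the quadratic bounds on their derivatives with the geometric estimates of Lemmas \ref{cosexp3} and \ref{lemamaxmin} for $|\cosh\vv|$, $|\het(\vv)\mp 1|$ and $|-1+\het^2(\vv)|=|\cosh\vv|^{-2}$, keeping careful track of which powers of $|\het(\vv)-1|$ are absorbed by the norm. The mechanism producing the gain $1/\log(1/\delta)$ is that on $\doutbT{\uns}$ one has $|\vv-i\pi/2|\geq K\delta\log(1/\delta)$, so every surplus factor $|\het(\vv)-1|\sim|\vv-i\pi/2|^{-1}$ costs $K/(\delta\log(1/\delta))$; tracking this one finds that the $R_a$–term on $\doutbT{\uns}$ is the dominant contribution and every other contribution is at most $O(1/\log(1/\delta))$ times it. This yields $\|R(\vzeta)-R(0)\|_{\textup{out},\times}\leq K(\delta\log(1/\delta))^{-1}\|\vzeta\|_{\textup{out},\times}$, hence the announced bound for $\|\CMcal{F}(\vzeta)-\vzeta_0\|_{\textup{out},\times}$.

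Finally, the mapping property is immediate: for $\vzeta\in B(r)$ the triangle inequality and Lemma \ref{lemaprimaprox} give
$$\|\CMcal{F}(\vzeta)\|_{\textup{out},\times}\leq\|\CMcal{F}(\vzeta)-\vzeta_0\|_{\textup{out},\times}+\|\vzeta_0\|_{\textup{out},\times}\leq\frac{K}{\log(1/\delta)}\,r+\frac{r}{8},$$
so, choosing $\delta$ small enough that $K/\log(1/\delta)<1/8$, we obtain $\|\CMcal{F}(\vzeta)\|_{\textup{out},\times}\leq r/4$, i.e. $\CMcal{F}(B(r))\subset B(r/4)$. No new idea beyond the already established technical lemmas is needed; the difficulty is entirely in the bookkeeping near the singularity $i\pi/2$.
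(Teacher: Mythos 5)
Your proposal is correct and follows essentially the same route as the paper: you reduce to bounding $\|R(\vzeta)-R(0)\|_{\textup{out},\times}$ by $K(\delta\log(1/\delta))^{-1}\|\vzeta\|_{\textup{out},\times}$, split $R$ into the $A\vzeta$-part (whose contribution dominates and produces the $1/\log(1/\delta)$ gain via Lemma \ref{lemadifdiv} and $|A(\vv)|\leq K\delta^{-1}$) and the $F$-part (handled with the mean value theorem and Corollaries \ref{corollaryfitaDFiDH} and \ref{corollaryfitaFiH}), then apply Lemma \ref{lemaL} for the factor $K\delta$ and the triangle inequality with Lemma \ref{lemaprimaprox} for the mapping property, exactly as in the paper. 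The only cosmetic difference is in the $F$-part: the paper applies the mean value theorem to the whole quotient through the auxiliary function $\tilde S$, while you split it algebraically into a numerator difference and a difference of reciprocals; both yield the same order of estimates.
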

\begin{proof}
 Note that it is sufficient to prove the inequality. Indeed, suppose that it holds, then taking $\vzeta\in B(r)$ and $\delta$ sufficiently small we have:
\begin{eqnarray}
 \|\CMcal{F}(\vzeta)\|_{\textup{out},\times} \leq\|\CMcal{F}(\vzeta)-\vzeta_0\|_{\textup{out},\times} +\|\vzeta_0\|_{\textup{out},\times} \leq\frac{1}{\log(1/\delta)}K\|\vzeta\|_{\textup{out},\times} +\|\vzeta_0\|_{\textup{out},\times} \leq\frac{1}{8}r+\frac{1}{8}r=\frac{1}{4}r,\nonumber
\end{eqnarray}
that is $\CMcal{F}(\vzeta)\in B(r/4)$.

Now, recall that:
$$\CMcal{F}(\vzeta)-\vzeta_0=L\circ R(\vzeta)-L\circ R(0)=L\circ(R(\vzeta)-R(0)),$$
where $R$ was defined in \eqref{defR}. In order to make the proof clearer, we will decompose $R$ as:
$$R(\vzeta)(\vv)=S(\vzeta)(\vv)+T(\vzeta)(\vv)\cdot\vzeta,$$
where:
\begin{equation*}
 S(\vzeta)(\vv)= \frac{\displaystyle\delta^{-2}F^\uns(\delta\vzeta,\delta\het(\vv),\delta,\delta\param)}{\displaystyle1+\frac{\displaystyle b\veta\vetab+\delta^{-2}H^\uns(\delta\vzeta, \delta \het(\vv), \delta, \delta\param)}{\displaystyle-1+\het^2(\vv)}},\,
 T(\vzeta)(\vv)= \left(\frac{\displaystyle 1}{\displaystyle1+\frac{\displaystyle b\veta\vetab+\delta^{-2}H^\uns(\delta \vzeta, \delta \het(\vv), \delta, \delta\param)}{\displaystyle-1+\het^2(\vv)}}-1\right)A(\vv).
\end{equation*}
Then we have that:
$$R(\vzeta)(\vv)-R(0)(\vv)=S(\vzeta)(\vv)-S(0)(\vv)+T(\vzeta)(\vv)\cdot\vzeta.$$
Now we shall bound these two last terms separately. We will begin by $S(\vzeta)-S(0)$, and we will do it using the mean value theorem:
$$S(\vzeta)(\vv)-S(0)(\vv)=\int_0^1DS(\lambda\vzeta)(\vv)d\lambda\cdot\vzeta.$$
So we just need to bound $DS(\lambda\vzeta)(\vv)$ with $\lambda\in[0,1]$. We claim that:
\begin{equation}\label{fitaDS}|DS(\lambda\vzeta)(\vv)|\leq \frac{K}{\delta\log^2(1/\delta)}.\end{equation}
To prove that, we introduce the auxiliary function:
$$\tilde S(\phi)= \frac{\displaystyle\delta^{-2}F^\uns(\phi_1,\phi_2,\phi_3,\phi_4,\phi_5)}{\displaystyle1+\frac{\displaystyle \delta^{-2}\left(b\phi_1\phi_2+H^\uns(\phi_1,\phi_2,\phi_3,\phi_4,\phi_5)\right)}{\displaystyle-1+\delta^{-2}\phi_3^2}}.$$
Observe that one has:
\begin{equation}\label{dSdtildeS}
 DS(\lambda\vzeta)(\vv)=\delta\left(\partial_{\phi_1}\tilde S(\phi),\partial_{\phi_2}\tilde S(\phi)\right).
\end{equation}

Now, it is clear that $\tilde S$ is analytic in the open set:
$$\CMcal{U}:=B^3(r_0^\uns)\times B(\delta_0)\times B(\param_0)\cap\left\{(\phi_1,\phi_2,\phi_3,\phi_4,\phi_5)\in\mathbb{C}^3\times\mathbb{R}^2\,:\, \left|\frac{\displaystyle \delta^{-2}\left(b\phi_1\phi_2+H^\uns(\phi_1,\phi_2,\phi_3, \phi_4, \phi_5)\right)}{\displaystyle-1+\delta^{-2}\phi_3^2}\right|<\frac{1}{2}\right\}.$$
For $\delta$ small enough this set is not empty since:
$$(0,0,0,\delta,0)\in B^3(r_0^\uns)\times B(\delta_0)\times B(\param_0),$$
and, since $H^\uns(x,y,z,\delta,\overline{\param})=O_3(x,y,z-\delta,\delta,\overline{\param})$ and is analytic in $ B^3(r_0^\uns)\times B(\delta_0)\times B(\param_0)$, we have:
$$|\delta^{-2}H^\uns(0,0,0,\delta,0)|\leq K\delta<\frac{1}{2}.$$
Let $0<\tilde r_0^\uns\leq r_0^\uns$ such that $B^3(\tilde r_0^\uns)\times B(\delta_0)\times B(\param_0)\subset \CMcal{U}$. Then, by definition of $\CMcal{U}$, the fact that $F^\uns$ is analytic in $\CMcal{U}$ and that $F^\uns(x,y,z,\delta,\overline{\param})=O_3(x,y,z-\delta,\delta,\overline{\param})$, it is clear that:
$$|\tilde S(\phi)|\leq 2\delta^{-2}|F^\uns(\phi_1,\phi_2,\phi_3,\phi_4,\phi_5)|\leq \delta^{-2}K|(\phi_1,\phi_2,\phi_3-\delta,\phi_4,\phi_5)|^3.$$
Then taking $\phi\in B^3(\tilde r_0^\uns/2)\times B(\delta_0/2)\times B(\param_0/2)$ and $\phi^*=(0,0,\delta,0,0)$, since we have that $\phi-\phi^*\in B^3(\tilde r_0^\uns)\times B(\delta_0)\times B(\param_0)$, if $\delta$ is small enough. Hence, by Lemma \ref{lemafitaDFiDH} we obtain that:
\begin{equation}\label{fitapartialtildeS}
 \left|\partial_{\phi_i}\tilde S_j(\phi)\right|\leq\delta^{-2}K|\phi-\phi^*|^2=|(\phi_1,\phi_2,\phi_3-\delta,\phi_4,\phi_5)|^2,\qquad\qquad i,j=1,2
\end{equation}
where $\tilde S_j$ denotes the j-th component of $\tilde S$.

Note that we can apply \eqref{fitapartialtildeS} to $\phi=(\delta\lambda\vzeta,\delta\het(\vv),\delta,\delta\param)$. Indeed, since $\|\lambda\vzeta\|_{\textup{out},\times}\leq\|\vzeta_0\|_{\textup{out},\times}\leq K\delta^2$, it can be easily seen that for $\delta$ small enough and $\vv\in\doutb{\uns}$, we have that $(\delta\lambda\vzeta,\delta\het(\vv),\delta,\delta\param)\in B^3(\tilde r_0^\uns/2)\times B(\delta_0/2)\times B(\param_0/2)$, for all $\lambda\in[0,1]$. Then, since $|\delta|,|\delta\param|,|\delta\lambda\vzeta|\leq K\delta|\het(\vv)-1|$, we obtain:
\begin{equation*}
 \left|\partial_{\phi_i}\tilde S_j(\delta\lambda\vzeta,\delta\het(\vv),\delta,\delta\param)\right|\leq\delta^{-2}K|(\delta\lambda\vzeta,\delta(\het(\vv)-1),\delta,\delta\param)|^2\leq K|\het(\vv)-1|^2\leq\frac{K}{\delta^2\log^2(1/\delta)}.
\end{equation*}
Using this last bound in equality \eqref{dSdtildeS}, bound \eqref{fitaDS} is clear. Then, \eqref{fitaDS} and the mean value theorem yield:
$$|S(\vzeta)(\vv)-S(0)(\vv)|\leq\int_0^1|DS(\lambda\vzeta,\vv)|d\lambda\cdot|\vzeta(\vv)|\leq\frac{K}{\delta\log^2(1/\delta)}|\vzeta(\vv)|,$$
that is:
\begin{equation}\label{fitadeltaS}
 \|S(\vzeta)-S(0)\|_{\textup{out},\times} \leq\frac{K}{\delta\log^2(1/\delta)}\|\vzeta\|_{\textup{out},\times} .
\end{equation}

Now we shall proceed to bound $T(\vzeta)(\vv)\cdot\vzeta$. Recall that:
\begin{equation}\label{fitanormabola}
 |\vzeta(\vv)|\leq\|\vzeta\|_{\textup{out},\times}|\het(\vv)-1|^\nu\leq r|\het(\vv)-1|^\nu=8\|\vzeta_0\|_{\textup{out},\times}|\het(\vv)-1|^\nu\leq K\delta^2|\het(\vv)-1|^\nu,
\end{equation}
where $\nu=1$ if $\vv\in \doutbinf[]{}$ and $\nu=3$ otherwise. Then, on the one hand,  Corollary \ref{corollaryfitaFiH} and the fact that for $\vv\in\doutbinf[]{}$ we have $|\het(\vv)+1|\geq K$ yield:
\begin{eqnarray}\label{fitau}
 \left|\frac{b\veta\vetab+\delta^{-2}H^\uns(\delta\vzeta,\delta\het(\vv),\delta,\delta\param)}{-1+\het^2(\vv)}\right|&\leq&K\frac{\delta^4|\het(\vv)-1|^2+\delta|\het(\vv)-1|}{|-1+\het^2(\vv)|}\nonumber\\
&\leq& K\left(\delta^4|\het(\vv)-1|^2+\delta\right)\leq K\delta<\frac{1}{2},
\end{eqnarray}
if $\delta$ is small enough. On the other hand, if $\vv\in \doutbT[]{}$, by Corollary \ref{corollaryfitaFiH} we obtain that:
\begin{eqnarray}\label{fitados}
 \left|\frac{b\veta\vetab+\delta^{-2}H^\uns(\delta\vzeta,\delta\het(\vv),\delta,\delta\param)}{-1+\het^2(\vv)}\right|&\leq&K\left|\frac{\delta^4|\het(\vv)-1|^6+\delta|\het(\vv)-1|^3}{-1+\het^2(\vv)}\right|\leq K\left(\delta^4|\het(\vv)-1|^4+\delta|\het(\vv)-1|\right)\nonumber\\
&\leq&\frac{K}{\log(1/\delta)},
\end{eqnarray}
for $\delta$ small enough. Hence, from \eqref{fitau} and \eqref{fitados}, using Lemma \ref{lemadifdiv} it is clear that for all $\vv\in\doutb[\dist][]{}$ we have:
\begin{equation*}
 \left|\frac{1}{1+\frac{b\veta\vetab+\delta^{-2}H^\uns(\delta\vzeta,\delta\het(\vv),\delta,\delta\param)}{-1+\het^2(\vv)}}-1\right|\leq \frac{K}{\log(1/\delta)}.
\end{equation*}
Moreover, by the definition of the matrix $A$ it is straightforward to see that $|A(\vv)\vzeta(\vv)|\leq K\delta^{-1}|\vzeta(\vv)|,$ and hence:
\begin{equation}\label{fitaTzeta}
 \|T(\vzeta)\cdot\vzeta\|_{\textup{out},\times} \leq\frac{K}{\delta\log(1/\delta)}\|\vzeta\|_{\textup{out},\times} .
\end{equation}

In conclusion, using \eqref{fitadeltaS} and \eqref{fitaTzeta} it is clear that:
\begin{eqnarray*}
 \|R(\vzeta)-R(0)\|_{\textup{out},\times} &=& \|S(\vzeta)-S(0)-T(\vzeta)\cdot\vzeta\|_{\textup{out},\times} \leq K\|\vzeta\|_{\textup{out},\times} \left(\frac{1}{\delta\log^2(1/\delta)}+\frac{1}{\delta\log(1/\delta)}\right)\\
&\leq&\frac{K}{\delta\log(1/\delta)}\|\vzeta\|_{\textup{out},\times} .
\end{eqnarray*}
Finally, by Lemma \ref{lemaL} we obtain the desired bound:
$$\|L\circ(R(\vzeta)-R(0))\|_{\textup{out},\times} \leq K\delta\|R(\vzeta)-R(0)\|_{\textup{out},\times} \leq \frac{K}{\log(1/\delta)}\|\vzeta\|_{\textup{out},\times} .$$
\end{proof}

\begin{proof}[End of the proof of Proposition \ref{propouterunbounded}]
 As we already mentioned, the first part of Proposition \ref{propouterunbounded} is proved in Lemma \ref{lemaprimaprox}.

On the other hand, note that Lemma \ref{lemaang} and Lemma \ref{lemafixedpt} imply that the operator $\CMcal{F}=L\circ R$ has a unique fixed point $\vzeta^\uns$ in the ball of $\bsout[]{}\times\bsout[]{}$ of radius $8\|\vzeta_0^\uns\|_{\textup{out},\times}$. Then we just need to define $\vzeta_1^\uns=\vzeta^\uns-\vzeta_0^\uns$. It is clear that $\vzeta^\uns=\vzeta_0+\vzeta_1$ and that by Lemma \ref{lemafixedpt}:
\begin{equation*}
\|\vzeta_1^\uns\|_{\textup{out},\times}=\|\vzeta^\uns-\vzeta_0^\uns\|_{\textup{out},\times}=\|\CMcal{F}(\vzeta^\uns)-\CMcal{F}(0)\|_{\textup{out},\times}\leq \frac{K}{\log(1/\delta)}\|\vzeta^\uns\|_{\textup{out},\times}\leq \frac{K}{\log(1/\delta)}\|\vzeta_0^\uns\|_{\textup{out},\times},
\end{equation*}
and then the second part of Proposition \ref{propouterunbounded} is clear.
\end{proof}


\section{Proof of Theorem \ref{theoremouter}}\label{prooftheoremouterbounded}
Again, we will just focus on the proof for the unstable manifold, $\vphi^\uns$, being the one for the stable manifold analogous. We will also omit the superindices $-\uns-$ whenever it does not lead to confusion.
\begin{lem}\label{lemaVpetita}
 Let $V_\pm(\vu,\delta,\param)=\het^{-1}(\het(\vu)-z_\pm(\delta,\param)\pm1)-\vu$, where $z_\pm(\delta,\param)$ is the third component of the critical point $S_\pm(\delta,\param)$. Then, for all $\vu\in \doutT[]{}$, there exists a constant $C_{V_\pm}$ independent of $\delta$ such that:
$$|V_\pm(\vu,\delta,\param)|\leq\delta C_{V_\pm}.$$
Moreover, given any constant $\dist$, if $\vu\in \doutT[]{}$, then for $\delta>0$ sufficiently small:
$$u+V_\pm(\vu,\delta,\param)\in D_{\dist/2,\beta}^{\textup{out}}.$$
\end{lem}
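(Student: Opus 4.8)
The plan is to write $V_\pm$ as an integral of an explicit rational function along a short segment, and then to bound the integrand using that $\doutT[]{}$ is bounded (uniformly in $\delta$) and stays at distance at least of order $\delta\log(1/\delta)$ from the singularities $\pm i\pi/2$. I set $a=a_\pm(\delta,\param):=z_\pm(\delta,\param)\mp1$; by Lemma \ref{lemaptscritics} one has $|a|\le C\delta$ with $C$ independent of $\delta$ and of $\param$ (recall $|\param|<\coef$), and $V_\pm(\vu,\delta,\param)=\het^{-1}(\het(\vu)-a)-\vu$. Since $\het$ has no critical point on $\doutT[]{}$ (indeed $\het'=-\operatorname{sech}^2\ne0$) and $\het^{-1}$ is the branch fixing the heteroclinic connection, $\het^{-1}(\het(\vu)-a)$ is the continuation of $\vu=\het^{-1}(\het(\vu))$ along the segment $\{\het(\vu)-sa:\,s\in[0,1]\}$, which is legitimate once this segment avoids the branch points $\pm1$ of $\het^{-1}$; granting this (justified below), the fundamental theorem of calculus together with $(\het^{-1})'(w)=-(1-w^2)^{-1}$ gives
$$V_\pm(\vu,\delta,\param)=a\int_0^1\frac{ds}{1-(\het(\vu)-sa)^2}=a\int_0^1\frac{ds}{(1-\het^2(\vu))+2sa\,\het(\vu)-s^2a^2}.$$

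The heart of the matter is a lower bound, uniform in $\delta$ and in $\vu\in\doutT[]{}$, $s\in[0,1]$, for the modulus of this denominator. Two facts are used. First, $\doutT[]{}$ lies in a bounded region independent of $\delta$ (from its defining inequality together with $\re\vu\ge-T$), so $|\cosh\vu|\le M$ there with $M$ independent of $\delta$; since $1-\het^2(\vu)=\operatorname{sech}^2\vu$, this gives $|1-\het^2(\vu)|\ge M^{-2}=:c_0>0$ (near $\pm i\pi/2$ this quantity is in fact large, so the bound costs nothing). Second, for $\vu\in\doutT[]{}$ the defining inequality forces $|\vu\mp i\pi/2|\ge K\delta\log(1/\delta)$ (with the sign of $\im\vu$), whence by Lemma \ref{lemamaxmin} applied on $\doutT[]{}$ (a domain of the same shape) $|\cosh\vu|\ge K_1|\vu\mp i\pi/2|\ge K_1K\delta\log(1/\delta)$; as $|\sinh\vu|$ is bounded on the region of the first fact, $|\het(\vu)|=|\sinh\vu|/|\cosh\vu|\le K'/(\delta\log(1/\delta))$. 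Therefore $|2sa\,\het(\vu)-s^2a^2|\le 2CK'/\log(1/\delta)+C^2\delta^2$, which tends to $0$, so for $\delta$ small the denominator has modulus $\ge c_0/2$; in particular it never vanishes, which justifies the segment avoiding $\pm1$. Substituting in the integral, $|V_\pm(\vu,\delta,\param)|\le|a|/(c_0/2)\le(2C/c_0)\,\delta=:C_{V_\pm}\delta$, with $C_{V_\pm}$ independent of $\delta$: this is the first assertion.

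For the second, for $\vu\in\doutT[]{}$ the defining inequality reads $|\im\vu|+\tan\beta\,\re\vu\le\pi/2-\dist\delta\log(1/\delta)$; writing $\vw=\vu+V_\pm(\vu,\delta,\param)$ and using $|V_\pm(\vu,\delta,\param)|\le C_{V_\pm}\delta$,
$$|\im\vw|+\tan\beta\,\re\vw\le|\im\vu|+\tan\beta\,\re\vu+(1+\tan\beta)\,C_{V_\pm}\delta\le\pi/2-\dist\delta\log(1/\delta)+(1+\tan\beta)C_{V_\pm}\delta .$$
Since $\log(1/\delta)\to+\infty$, for $\delta$ small $(1+\tan\beta)C_{V_\pm}\delta\le\frac{\dist}{2}\delta\log(1/\delta)$, hence $|\im\vw|+\tan\beta\,\re\vw\le\pi/2-\frac{\dist}{2}\delta\log(1/\delta)$, i.e.\ $\vu+V_\pm(\vu,\delta,\param)\in D_{\dist/2,\beta}^{\textup{out}}$.

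The only genuinely delicate point is the uniform lower bound on the denominator over $\doutT[]{}$: it really requires \emph{both} ingredients — the boundedness of the domain, to control $\cosh\vu$ from above, and the $\delta\log(1/\delta)$–separation from $\pm i\pi/2$, to control $\het(\vu)$ (hence the perturbative part of the denominator) from above. Everything else is elementary. Alternatively one can avoid the integral altogether and obtain $V_\pm$ directly from the addition formula for $\tanh$ in the closed form $V_\pm=\operatorname{arctanh}\!\big(a/(1-\het^2(\vu)+a\,\het(\vu))\big)$, which is $O(\delta)$ precisely because its argument is, by exactly the same denominator estimate.
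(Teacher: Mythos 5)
Your proof is correct and follows essentially the same route as the paper's: the paper also writes $V_\pm=f(1)-f(0)$ with $f(t)=\het^{-1}(\het(\vu)+t(-z_\pm(\delta,\param)\pm1))$, bounds $f'$ using $|1/(-1+\het^2(\vu))|=|\cosh^2\vu|\le M$ on $\doutT[]{}$ together with $|z_\pm\mp1|\le K\delta$, and concludes by the mean value theorem, the domain inclusion then being immediate from $|V_\pm|\le C_{V_\pm}\delta\ll\dist\delta\log(1/\delta)/2$. Your explicit lower bound on the shifted denominator $1-(\het(\vu)-sa)^2$ just spells out the step the paper dismisses as "one can easily see".
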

\begin{proof}
Consider the function $f(t):=\het^{-1}(\het(\vu)+t(-z_\pm(\delta,\param)\pm1))$. It is clear that $V_\pm(\vu,\delta,\param)=f(1)-f(0)$. Moreover, for any $\vu\in \doutb[]{}$ and $\delta>0$, the function $f$ is analytic. Using that:
$$\left|\frac{1}{-1+\het^2(\vu)}\right|=\left|\cosh^2\vu\right|\leq M\qquad \text{if }\vu\in\doutT[]{},$$
and that, by Lemma \ref{lemaptscritics}, $|-z_\pm(\delta,\param)\pm1|\leq K\delta$, one can easily see that $|f'(t)|\leq \delta C_{V_\pm}$. Then, by the mean value theorem, the first part of the lemma is proved. Moreover, using the bound of $V_\pm(\vu,\delta,\param)$ it is straightforward to check that the second part of the lemma also holds.

\end{proof}

\begin{proof}[End of the proof of Theorem \ref{theoremouter}]
We just need to take $\overline{\dist}=\dist/2$. Then, by Lemma \ref{lemaVpetita}, if $u\in \doutT[]{}$, $u+V_\pm(\vu,\delta,\param)$ belongs to $D_{\dist/2,\beta}^{\textup{out}}=\doutb[\dist][]{},$ where we know by Proposition \ref{propouterunbounded} that the parameterizations $\vzeta^\uns$ and $\vzeta^\sta$ are defined. Then we just have to define $\vphi^\sta$ and $\vphi^\uns$ as:
\begin{equation}
\begin{array}{rcl}
  \vphi^\sta(\vu)&=&\vzeta^\sta(\vu+V_-(\vu,\delta,\param))+\vzeta_-(\delta,\param),\\
  \vphi^\uns(\vu)&=&\vzeta^\uns(\vu+V_+(\vu,\delta,\param))+\vzeta_+(\delta,\param),
\end{array}\qquad \vu\in \doutT[]{}
\end{equation}
where $\vzeta_\pm(\delta,\param)=(\veta_\pm(\delta,\param),\vetab_\pm(\delta,\param))$, and $\veta_\pm$, $\vetab_\pm$ were defined in \eqref{defvetaz+-}. As we pointed out in Subsection \ref{localparams}, both $\vphi^\sta(\vu)$ and $\vphi^\uns(\vu)$ satisfy system \eqref{sys2d}, and that they are parameterizations of the stable and unstable manifolds of $S_-(\delta,\param)$ and $S_+(\delta,\param)$ respectively.

Finally, note that, for $\vu\in \doutT[]{}$, one has:
$$|\vphi^{\uns,\sta}(\vu)||\het(\vu)-1|^3\leq|\vzeta^{\uns,\sta}(\vu+V_\pm(\vu,\delta,\param))||\het(\vu)-1|^3+|\vzeta_\pm(\delta,\param)||\het(\vu)-1|^3,$$
for some constant $K$. Now, on one hand, by Proposition \ref{propouterunbounded} and using that for $\vu\in\doutT[]{}$:
$$\left|\frac{\het(\vu)-1}{\het(\vu+V_\pm(\delta,\param))-1}\right|\leq\frac{K}{1-\log(1/\delta)},$$
we have:
\begin{equation*}
 |\vzeta^{\uns,\sta}(\vu+V_\pm(\vu,\delta,\param))||\het(\vu)-1|^3\leq K|\vzeta^{\uns,\sta}(\vu+V_\pm(\vu,\delta,\param))||\het(\vu+V_\pm(\vu,\delta,\param)-1|^3\leq K\|\vzeta^{\uns,\sta}\|_{\textup{out}}^{\uns,\sta}\leq  K\delta^2.
\end{equation*}
On the other hand, recall that $\vzeta_\pm(\delta,\param)=(\veta_\pm(\delta,\param),\vetab_\pm(\delta,\param))$, where $\veta_\pm(\delta,\param)=x_\pm(\delta,\param)+iy_\pm(\delta,\param),$ and then by Lemma \ref{lemaptscritics}, since $|\cosh\vu|$ is bounded in $\doutT[]{}$, we obtain $|\vzeta_\pm(\delta,\param)\cosh^3\vu|\leq  K\delta^2,$ and thus the last statement of Theorem \ref{theoremouter} is clear.
\end{proof}

\section{Sketch of the proof of Theorem \ref{theoreminner}}\label{sketchtheoreminner}
In this section we present the main ideas of how Theorem \ref{theoreminner} is proved. As we already mentioned, the proof is analogous as the one found in \cite{BaSe08}, and hence for more details we refer the reader to that paper.

\subsection{Existence of solutions $\Vpsi_0^{\uns,\sta}$}
First we will introduce the Banach spaces in which we will work. For $*=\uns,\sta$, we define:
$$\bsin{*}=\{\phi:\din{*}\rightarrow\mathbb{C},\, \phi \textrm{ analytic, }\|\phi\|_{\textup{in},\nu}^{\uns,\sta}:=\sup_{\vs\in\din{*}}|\vs^\nu\phi(\vs)|<\infty\}.$$
As usual, in the product space $\bsin{*}\times\bsin{*}$ we will take the norm:
\begin{equation}\label{defnormin}\|(\phi_1,\phi_2)\|_{\textup{in},\nu,\times}^{\uns,\sta}=\|\phi_1\|_{\textup{in},\nu}^{\uns,\sta}+\|\phi_2\|_{\textup{in},\nu}^{\uns,\sta}.\end{equation}

Now, if we call $\Vpsi=(\vpsi,\vpsib)$ the solutions of \eqref{sysinner}, $F=(F_1,F_2)$ and define:
\begin{eqnarray}
h_0&=&\lim_{\re s\to\infty}{s^{3}H(0,0,-s^{-1},0,0)},\label{defh0}\medskip\\
\hin(\Vpsi,\vs)&=&\vs^2\left[b\vpsi\vpsib+H(\Vpsi,-\vs^{-1},0,0)\right],\label{defhinner}\medskip\\
\Ain(\vs)&=&\left(\begin{array}{cc}
                 -(\alpha-c\vs^{-1})i+\coef\vs^{-1}& 0\bigskip\\
		    0 & (\alpha-c\vs^{-1})i+\coef\vs^{-1}
                \end{array}\right), \label{defAinner}
\end{eqnarray}
and
\begin{equation}\label{defRin}\Rin(\Vpsi)(\vs)=\left(\frac{1}{1+\hin(\Vpsi,\vs)}-\frac{1}{1+h_0\vs^{-1}}\right)\Ain(\vs)\Vpsi+\frac{F(\Vpsi,-\vs^{-1},0,0)}{1+\hin(\Vpsi,\vs)},\end{equation}
then system \eqref{sysinner} can be written as:
\begin{equation}\label{sysinner2dcompact}
 \frac{d\Vpsi}{ds}=\frac{1}{1+h_0\vs^{-1}}\Ain(\vs)\Vpsi+\Rin(\Vpsi)(\vs).
\end{equation}

\begin{lem}
 A fundamental matrix of the linear homogeneous system
$$\frac{d\Vpsi}{d\vs}=\frac{1}{1+h_0\vs^{-1}}\Ain(\vs)\Vpsi,$$
is:
\begin{equation}\label{matriuMinner}
\CMcal{M}(\vs)=\left(\begin{array}{cc} m_1(\vs) & 0\\ 0 & m_2(\vs)\end{array}\right)=\vs^\coef(1+h_0\vs^{-1})^\coef\left(\begin{array}{cc}e^{-i(\alpha\vs+\beta(\vs))} & 0\\ 0 & e^{i(\alpha\vs+\beta(\vs))}\end{array}
\right),
\end{equation}
where $\beta(\vs)=-(c+\alpha h_0)\log(\vs(1+h_0\vs^{-1}))$.
\end{lem}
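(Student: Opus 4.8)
The plan is to exploit that $\Ain(\vs)$ is diagonal, so the homogeneous system uncouples into the two scalar linear equations
\begin{equation*}
\frac{dm_1}{d\vs}=\frac{-(\alpha-c\vs^{-1})i+\coef\vs^{-1}}{1+h_0\vs^{-1}}\,m_1,\qquad
\frac{dm_2}{d\vs}=\frac{(\alpha-c\vs^{-1})i+\coef\vs^{-1}}{1+h_0\vs^{-1}}\,m_2,
\end{equation*}
which can be integrated explicitly. First I would simplify the coefficients: multiplying numerator and denominator by $\vs$ one has $\vs(1+h_0\vs^{-1})=\vs+h_0$, and the Euclidean division $-i\alpha\vs+ic+\coef=-i\alpha(\vs+h_0)+\bigl(\coef+i(c+\alpha h_0)\bigr)$ gives
\begin{equation*}
\frac{-(\alpha-c\vs^{-1})i+\coef\vs^{-1}}{1+h_0\vs^{-1}}=-i\alpha+\frac{\coef+i(c+\alpha h_0)}{\vs+h_0},
\end{equation*}
and, likewise, the coefficient in the $m_2$ equation equals $i\alpha+\dfrac{\coef-i(c+\alpha h_0)}{\vs+h_0}$. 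Note also that $\beta(\vs)=-(c+\alpha h_0)\log\bigl(\vs(1+h_0\vs^{-1})\bigr)=-(c+\alpha h_0)\log(\vs+h_0)$ and $\vs^\coef(1+h_0\vs^{-1})^\coef=(\vs+h_0)^\coef$ (up to the usual locally constant branch factor, which plays no role below).

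Then I would simply verify that the proposed entries solve these equations by computing their logarithmic derivatives. From $\beta(\vs)=-(c+\alpha h_0)\log(\vs+h_0)$ one gets $\beta'(\vs)=-\dfrac{c+\alpha h_0}{\vs+h_0}$, hence for $m_1(\vs)=(\vs+h_0)^\coef e^{-i(\alpha\vs+\beta(\vs))}$,
\begin{equation*}
\frac{m_1'(\vs)}{m_1(\vs)}=\frac{\coef}{\vs+h_0}-i\alpha-i\beta'(\vs)=-i\alpha+\frac{\coef+i(c+\alpha h_0)}{\vs+h_0},
\end{equation*}
which is exactly the coefficient found above; the computation for $m_2(\vs)=(\vs+h_0)^\coef e^{i(\alpha\vs+\beta(\vs))}$ is identical with the signs of $\alpha$, $c$ and $\beta$ reversed. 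This shows that $m_1$ and $m_2$ are (nonvanishing) solutions of the respective scalar equations, so $\CMcal{M}(\vs)=\mathrm{diag}(m_1(\vs),m_2(\vs))$ solves the matrix equation.

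Finally, to see that $\CMcal{M}$ is a \emph{fundamental} matrix it suffices to note that $\det\CMcal{M}(\vs)=m_1(\vs)m_2(\vs)=\vs^{2\coef}(1+h_0\vs^{-1})^{2\coef}$ (the exponential factors cancel), which is nonzero wherever the expression is defined. The only point deserving a line of care is the choice of branch of the logarithm and of the non-integer power, so that $\CMcal{M}$ is single-valued and analytic on the domains where it is used ($\din{\uns}$, $\din{\sta}$ and $\ein$); one fixes a determination adapted to these domains exactly as in \cite{BaSe08}, which is possible for $\rho$ large enough. I expect no genuine obstacle here: the lemma is a direct integration of a diagonal linear system, and all the content lies in matching the given closed form with the explicit primitive, which the computation above does.
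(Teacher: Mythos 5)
Your verification is correct: the system is diagonal, the coefficient of each scalar equation reduces to $-i\alpha+\frac{\coef+i(c+\alpha h_0)}{\vs+h_0}$ (resp.\ its conjugate-type counterpart), and the logarithmic derivative of the proposed $m_1$, $m_2$ reproduces it, with the branch/determinant remarks taking care of single-valuedness and invertibility on $\din{\uns}$, $\din{\sta}$, $\ein$. The paper states this lemma without an explicit proof (the whole inner analysis is deferred to the computation in \cite{BaSe08}), and your direct integration of the decoupled equations is exactly the intended argument, so there is nothing to add.
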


The functional equation that $\Vpsi_0^{\uns,\sta}$ have to satisfy is the following:
\begin{equation}\label{fixedpointinner}
 \Vpsi_0^{\uns,\sta}(\vs)=\CMcal{M}(\vs)\int_{\mp\infty}^0\CMcal{M}(\vs+t)^{-1}\Rin(\Vpsi_0^{\uns,\sta})(\vs+t)dt,
\end{equation}
where $\Rin$ was defined in \eqref{defRin}, and $+\infty$ corresponds to the stable case and $-\infty$ to the unstable one. For functions $\Phi\in\bsin{*}\times\bsin{*}$, we introduce the linear operators:
$$\Bin^{\uns,\sta}(\Phi)(\vs)=\CMcal{M}(\vs)\int_{\mp\infty}^0\CMcal{M}(\vs+t)^{-1}\Phi(\vs+t)dt,$$
so that the fixed point equation \eqref{fixedpointinner} can be written as:
\begin{equation}\label{fixedpointinnercurta}
 \Vpsi_0^{\uns,\sta}=\CMcal{F}^{\uns,\sta}(\Vpsi_0^{\uns,\sta}):=\Bin^{\uns,\sta}\circ\Rin(\Vpsi_0^{\uns,\sta}).
\end{equation}

The main result in this subsection, which is equivalent to item 1 of Theorem \ref{theoreminner}, is the following:
\begin{prop}\label{propvarietatsinner}
 Given $\beta_0>0$, there exists $\rho>0$ big enough such that system \eqref{sysinner2dcompact} has two solutions $\Vpsi_0^{\uns,\sta}$ belonging to $\bsin{*}\times\bsin{*}$, $*=\uns,\sta$, of the form:
$$\Vpsi_0^{\uns,\sta}=\Vpsi_{0,0}^{\uns,\sta}+\Vpsi_{0,1}^{\uns,\sta},$$
with $\Vpsi_{0,0}^{\uns,\sta}=\Bin^{\uns,\sta}\circ\Rin(0)\in\bsin[3]{*}\times\bsin[3]{*}$, $\Vpsi_{0,1}^{\uns,\sta}\in\bsin[4]{*}\times\bsin[4]{*}$, satisfying $\|\Vpsi_{0,1}^{\uns,\sta}\|_{\textup{in},3,\times}^{\uns,\sta}<\|\Vpsi_{0,0}^{\uns,\sta}\|_{\textup{in},3,\times}^{\uns,\sta}$.

Moreover, these functions $\Vpsi_0^{\uns,\sta}$ are the unique solutions of \eqref{sysinner2dcompact} satisfying the asymptotic condition $\lim_{\re\vs\to\mp\infty}\Vpsi_0^{\uns,\sta}(\vs)=0$, where $-$ corresponds to $\uns$ and $+$ to $\sta$.
\end{prop}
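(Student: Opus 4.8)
The plan is to solve the fixed point equation \eqref{fixedpointinnercurta} by a standard contraction-mapping argument in the Banach spaces $\bsin{*}\times\bsin{*}$, exactly as in \cite{BaSe08}, writing the solution as a small correction of the first iterate $\Vpsi_{0,0}^{\uns,\sta}=\Bin^{\uns,\sta}\circ\Rin(0)$. First I would establish the mapping properties of the linear operators $\Bin^{\uns,\sta}$. Using the explicit fundamental matrix $\CMcal{M}(\vs)$ from \eqref{matriuMinner}, the kernel of $\Bin^{\uns,\sta}$ is $\CMcal{M}(\vs)\CMcal{M}(\vs+t)^{-1}$, whose entries involve $(\vs/(\vs+t))^{\coef}(1+h_0\vs^{-1})^{\coef}(1+h_0(\vs+t)^{-1})^{-\coef}$ times the oscillatory-exponential factors $e^{\mp i\alpha t}$ and $e^{\mp i(\beta(\vs)-\beta(\vs+t))}$. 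On the integration rays $t\in(\mp\infty,0]$ (or, to make the argument clean, on suitably rotated rays inside $\din{*}$ on which $|e^{\mp i\alpha t}|$ decays exponentially and $|\vs+t|$ grows) one checks that these factors are bounded, so that $\|\Bin^{\uns,\sta}(\Phi)\|_{\textup{in},\nu-1,\times}^{\uns,\sta}\le K\|\Phi\|_{\textup{in},\nu,\times}^{\uns,\sta}$ for every $\nu\ge 2$ and $\rho$ large; the gain of one power of $\vs^{-1}$ is what makes the scheme work. The role of $\rho$ big enough is precisely to guarantee that $\din{*}$ stays away from the poles of $\CMcal{M}$ (at $\vs=0$ and $\vs=-h_0$) and from the logarithmic branch point, and to absorb the constants.

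Next I would estimate $\Rin$. From \eqref{defRin}, \eqref{defhinner} and the fact (Remark \ref{ordreF12Horiginals}) that $F_1,F_2,H$ are of order three, together with the choice of $h_0$ in \eqref{defh0} which makes $\hin(\Vpsi,\vs)-h_0\vs^{-1}=O(\vs^{-2})$ when $\Vpsi=O(\vs^{-3})$, one gets that $\Rin(0)(\vs)=O(\vs^{-4})$, hence $\Vpsi_{0,0}^{\uns,\sta}=\Bin^{\uns,\sta}\circ\Rin(0)\in\bsin[3]{*}\times\bsin[3]{*}$ with a norm bounded by a constant (independent of $\rho$, but small in $\rho^{-1}$ if one prefers). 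For the Lipschitz estimate, on the ball $B(2\|\Vpsi_{0,0}^{\uns,\sta}\|_{\textup{in},3,\times}^{\uns,\sta})$ of $\bsin[3]{*}\times\bsin[3]{*}$ one writes $\Rin(\Vpsi_1)-\Rin(\Vpsi_2)$ using the mean value theorem and Lemma \ref{lemafitaDFiDH} (bounding the derivatives of $F_1,F_2,H$), together with Lemma \ref{lemadifdiv} to control the differences of the $(1+\hin)^{-1}$ factors; the key point is that every term carries an extra power of $\vs^{-1}$, so that
\[
\|\Rin(\Vpsi_1)-\Rin(\Vpsi_2)\|_{\textup{in},4,\times}^{\uns,\sta}\le \frac{K}{\rho}\,\|\Vpsi_1-\Vpsi_2\|_{\textup{in},3,\times}^{\uns,\sta}.
\]
Composing with $\Bin^{\uns,\sta}$ and using its $\bsin[4]{}\to\bsin[3]{}$ bound gives a contraction constant $K/\rho<1/2$ for $\rho$ large; either the Banach fixed point theorem or Lemma \ref{lemaang} (applied on a ball around $\Vpsi_{0,0}^{\uns,\sta}$, after checking $\CMcal{F}^{\uns,\sta}$ maps it into a smaller ball) then yields a unique fixed point $\Vpsi_0^{\uns,\sta}$. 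Setting $\Vpsi_{0,1}^{\uns,\sta}=\Vpsi_0^{\uns,\sta}-\Vpsi_{0,0}^{\uns,\sta}=\Bin^{\uns,\sta}(\Rin(\Vpsi_0^{\uns,\sta})-\Rin(0))$ and applying the two estimates above shows $\Vpsi_{0,1}^{\uns,\sta}\in\bsin[4]{*}\times\bsin[4]{*}$ with norm bounded by $(K/\rho)\|\Vpsi_{0,0}^{\uns,\sta}\|_{\textup{in},3,\times}^{\uns,\sta}<\|\Vpsi_{0,0}^{\uns,\sta}\|_{\textup{in},3,\times}^{\uns,\sta}$, as claimed. The uniqueness among solutions of \eqref{sysinner2dcompact} with $\lim_{\re\vs\to\mp\infty}\Vpsi_0^{\uns,\sta}=0$ follows because any such solution is forced to satisfy the integral equation \eqref{fixedpointinner} (the homogeneous solutions $m_1,m_2$ do not both tend to zero in the relevant direction, so the "constant of integration" must vanish), and then lies automatically in $\bsin[3]{*}\times\bsin[3]{*}$ by a bootstrap using the kernel bound.

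I expect the main obstacle to be the mapping estimates for $\Bin^{\uns,\sta}$: one must choose the integration paths inside the unbounded domains $\din{*}$ so that the oscillatory factor $e^{\mp i\alpha t}$ genuinely decays, verify that $|\vs/(\vs+t)|$ and the logarithmic factors $e^{\mp i(\beta(\vs)-\beta(\vs+t))}$ remain uniformly bounded along those paths (this is where the geometry of the sectors $|\im\vs|\ge\tan\beta_0\re\vs+\rho$ and the largeness of $\rho$ really enter), and confirm that the resulting integral indeed gains a power of $|\vs|$. All of this is carried out in detail in \cite{BaSe08} for $\coef=1$; since $\coef$ enters only through the harmless algebraic prefactor $(\vs(1+h_0\vs^{-1}))^{\coef}$, which is bounded on $\din{*}$ relative to $(\vs+t)^{\coef}$ along the chosen paths, the argument goes through verbatim, which is why here I only indicate the scheme and refer to that paper for the computations.
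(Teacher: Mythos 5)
Your overall strategy coincides with the paper's: the paper itself gives no proof of Proposition \ref{propvarietatsinner} beyond the setup of the spaces $\bsin{*}$, the operators $\Bin^{\uns,\sta}$, $\Rin$ and the fixed point equation \eqref{fixedpointinnercurta}, and simply states that the case $\coef\neq1$ is proved identically to the case $\coef=1$ in \cite{BaSe08}. However, your quantitative bookkeeping contains a genuine error. From \eqref{defRin}, $\Rin(0)(\vs)=F(0,-\vs^{-1},0,0)\,(1+\hin(0,\vs))^{-1}$, since the term containing the difference of the two factors $(1+\cdot)^{-1}$ is multiplied by $\Vpsi$ and vanishes at $\Vpsi=0$. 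As $F_1,F_2$ are generic order-three functions, $F(0,-\vs^{-1},0,0)$ has a nonvanishing term of order $\vs^{-3}$ coming from the $z^3$ coefficients of $f(0,0,z,0,0)$ and $g(0,0,z,0,0)$; these coefficients are \emph{not} assumed to vanish here — their presence is exactly the singular case this paper treats, in contrast with \cite{BaSe06}. Hence $\Rin(0)\in\bsin[3]{*}\times\bsin[3]{*}$ but in general $\Rin(0)\notin\bsin[4]{*}\times\bsin[4]{*}$, and your claim $\Rin(0)=O(\vs^{-4})$ is false: the relation $\hin(\Vpsi,\vs)-h_0\vs^{-1}=O(\vs^{-2})$ improves only the first term of $\Rin$, which is irrelevant at $\Vpsi=0$. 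Paired with the operator bound you actually state, $\|\Bin^{\uns,\sta}(\Phi)\|_{\textup{in},\nu-1,\times}^{\uns,\sta}\le K\|\Phi\|_{\textup{in},\nu,\times}^{\uns,\sta}$, the correct input $\Rin(0)=O(\vs^{-3})$ yields only $\Vpsi_{0,0}^{\uns,\sta}\in\bsin[2]{*}\times\bsin[2]{*}$, one power short of the Proposition, and the same loss propagates to $\Vpsi_{0,1}^{\uns,\sta}$.

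The missing ingredient is the sharper mapping property $\|\Bin^{\uns,\sta}(\Phi)\|_{\textup{in},\nu,\times}^{\uns,\sta}\le K\|\Phi\|_{\textup{in},\nu,\times}^{\uns,\sta}$, with no loss of decay, and this is precisely where $\alpha\neq0$ enters: rotating the ray of integration inside $\din{*}$ so that $|e^{\mp i\alpha t}|$ decays exponentially (or integrating by parts in $t$ on the real ray), the $t$-integral contributes a factor $O(1/\alpha)$ instead of a factor $O(|\vs|)$, while $|\vs+t|\ge K|\vs|$ along such rays; this is the inner analogue of Lemma \ref{lemaL}, where the same rotation produces the small factor $\delta$ without losing powers of $\het(\vv)-1$. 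You mention the rotated rays in parentheses, but then write the weaker $\nu\to\nu-1$ estimate and compensate with the incorrect $O(\vs^{-4})$ bound for $\Rin(0)$: the two errors cancel numerically, but the argument as written does not establish the stated decay rates. With the no-loss bound the scheme closes as in \cite{BaSe08}: $\Vpsi_{0,0}^{\uns,\sta}=\Bin^{\uns,\sta}\circ\Rin(0)\in\bsin[3]{*}\times\bsin[3]{*}$; the Lipschitz estimate $\|\Rin(\Vpsi_1)-\Rin(\Vpsi_2)\|_{\textup{in},4,\times}^{\uns,\sta}\le K\|\Vpsi_1-\Vpsi_2\|_{\textup{in},3,\times}^{\uns,\sta}$ (the factor $\rho^{-1}$ you attach there is not needed; the smallness is recovered from the embedding $\|\phi\|_{\textup{in},3,\times}^{\uns,\sta}\le K\rho^{-1}\|\phi\|_{\textup{in},4,\times}^{\uns,\sta}$ valid on $\din{*}$) gives a contraction for $\rho$ large, and $\Vpsi_{0,1}^{\uns,\sta}\in\bsin[4]{*}\times\bsin[4]{*}$ with the claimed norm comparison. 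Your uniqueness argument (a solution tending to $0$ as $\re\vs\to\mp\infty$ must satisfy the integral equation \eqref{fixedpointinner} because the homogeneous solutions, of size $|\vs|^{\coef}$ up to the exponential factors, do not decay in the required directions) is the standard one from \cite{BaSe08} and is fine.
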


This proposition is proved in \cite{BaSe08} in the case $\coef=1$, and the case $\coef\neq1$ can be proved identically.

\subsection{Asymptotic expression for the difference $\Delta\Vpsi_0$}
Below we sketch how formula \eqref{asyVpsi0} can be found, which is an adaptation of the results of \cite{BaSe08} for the case $\coef\neq1$. The first step is to realize that, since $\Vpsi_0^\sta$ and $\Vpsi_0^\uns$ satisfy equation \eqref{sysinner2dcompact}, its difference $\Delta\Vpsi_0=(\Delta\vpsi_0,\Delta\vpsib_0)$ satisfies the following homogeneous linear equation:
\begin{equation}\label{homoeqinner}
 \frac{d\Delta\Vpsi}{d\vs}=\left[\frac{1}{1+h_0\vs^{-1}}\Ain(\vs) +\Rinasy(\vs)\right]\Delta\Vpsi,
\end{equation}
where $\Rinasy$ is the matrix defined by:
$$\Rinasy(\vs)=\int_0^1D\Rin(\Vpsi_0^\sta(\vs)+\lambda(\Vpsi^\uns(\vs)-\Vpsi^\sta(\vs)))d\lambda,$$
and $\Rin$ was defined in \eqref{defRin}. As in \cite{BaSe08}, one deduces that any analytic solution of equation \eqref{homoeqinner} that is bounded in the domain $\ein$, defined in \eqref{defein}, can be written as the following integral equation.
\begin{eqnarray}
 \Delta\vpsi_0(\vs)&=&\vs^\coef(1+h_0\vs^{-1})^\coef e^{-i(\alpha\vs+\beta(\vs))}\left[\kappa_0+\int_{-i\rho}^{\vs}\frac{e^{i(\alpha t+\beta(t))}}{t^\coef(1+h_0t^{-1})^\coef}\langle \Rinasy_1(t),\Delta\Vpsi_0(t)\rangle dt\right],\nonumber\\\label{primeracompdeltavpsi}\\
 \Delta\vpsib_0(\vs)&=&\vs^\coef(1+h_0\vs^{-1})^\coef e^{i(\alpha\vs+\beta(\vs))}\int_{-i\infty}^{\vs}\frac{e^{-i(\alpha t+\beta(t))}}{t^\coef(1+h_0t^{-1})^\coef}\langle \Rinasy_2(t),\Delta\Vpsi_0(t)\rangle dt,\label{segonacompdeltavpsi}
\end{eqnarray}
where $\beta(\vs)=-(c+\alpha h_0)\log(\vs(1+h_0\vs^{-1})).$

Now we define the linear operator $\Gin$ by the expression:
$$\Gin(\Phi)(\vs)=\vs^\coef(1+h_0\vs^{-1})^\coef\left(\begin{array}{c}
\displaystyle e^{-i(\alpha\vs+\beta(\vs))}\int_{-i\rho}^{\vs}\frac{e^{i(\alpha t+\beta(t))}}{t^\coef(1+h_0t^{-1})^\coef}\langle \Rinasy_1(t),\Phi(t)\rangle dt\\
\displaystyle e^{i(\alpha\vs+\beta(\vs))}\int_{-i\infty}^{\vs}\frac{e^{-i(\alpha t+\beta(t))}}{t^\coef(1+h_0t^{-1})^\coef}\langle \Rinasy_2(t),\Phi(t)\rangle dt
\end{array}\right)
$$
and the function:
$$\Delta\Vpsi_{0,0}(\vs)=\vs^\coef(1+h_0\vs^{-1})^\coef\left(\begin{array}{c}\kappa_0e^{-i(\alpha\vs+\beta(\vs))}\\ 0\end{array}\right).$$
Then we can rewrite \eqref{primeracompdeltavpsi} and \eqref{segonacompdeltavpsi} in the compact form:
\begin{equation}\label{eqdif}\Delta\Vpsi_0(\vs)=\Delta\Vpsi_{0,0}(\vs)+\Gin(\Delta\Vpsi_0)(\vs).\end{equation}

Adapting the steps followed in \cite{BaSe08}, one can see that the operator $\text{Id}-\Gin$ is invertible in a suitable Banach space, and therefore we can write:
\begin{equation}\label{exprdeltainner}
 \Delta\Vpsi_0=(\textup{Id}-\Gin)^{-1}(\Delta\Vpsi_{0,0})=\sum_{n\geq0}\Gin^n(\Delta\Vpsi_{0,0}).
\end{equation}

The last step, once we know that $\Delta\Vpsi_0$ can be obtained form formula \eqref{exprdeltainner}, is to study how the operator $\Gin$ and its iterates $\Gin^n$ act on $\Delta\Vpsi_{0,0}$. What one can prove is that there exists some constant $\overline{K}(\rho)$ such that:
$$\pi_1\Gin(\Delta\Vpsi_{0,0})(\vs)=\vs^de^{-i(\alpha\vs+\beta(\vs))}(\overline{K}(\rho)+O(\vs^{-1})).$$
and that:
$$\pi_2\Gin(\Delta\Vpsi_{0,0})(\vs)=O\left(\vs^{d-2}e^{-i(\alpha\vs+\beta(\vs))}\right).$$
Using standard functional analysis, formula \eqref{asyVpsi0} for $\Delta\Vpsi_0$ is found, finishing the proof of Theorem \ref{theoreminner}.

\section{Proof of Theorem \ref{theoremmatching}}\label{prooftheoremmatching}
Theorem \ref{theoremouter} provides parameterizations of the invariant manifolds satisfying the same equation \eqref{sys2d}. Nevertheless, it does not give enough information about the behavior of these manifolds near the singularities $\pm i\pi/2$. To obtain this information we will use the solutions $\Vpsi_0^{\uns,\sta}$ of the inner equation \eqref{sysinner} given in Theorem \ref{theoreminner}. For this reason, in this section we will deal not with system \eqref{sys2d} but with \eqref{sysdelta} (which comes from \eqref{sys2d} after a change of variables). Moreover, we will restrict ourselves to the matching domains $\dmatch{\uns}$ and $\dmatch{\sta}$ (see Figure \ref{figuradmatchmes}).

Let us consider the Banach space:
$$\bsmatch{*}=\{\phi:\dmatchs{*}\rightarrow\mathbb{C},\,\phi\textup{ analytic, }\sup_{\vs\in\dmatchs{*}}|s|^2|\phi(\vs)|<\infty\},\qquad *=\uns,\sta$$
with the norm:
$$\|\phi\|_{\textup{mch}}^{\uns,\sta}=\sup_{\vs\in\dmatchs{*}}|s|^2|\phi(\vs)|,$$
and we endow the product space $\bsmatch{*}\times\bsmatch{*}$ with the norm:
$$\|(\phi_1,\phi_2)\|_{\textup{mch},\times}^{\uns,\sta}=\|\phi_1\|_{\textup{mch}}^{\uns,\sta}+\|\phi_2\|_{\textup{mch}}^{\uns,\sta}.$$

Now we present the main result of this section, which is equivalent to Theorem \ref{theoremmatching}:
\begin{prop}\label{propmatching}
 Let $\Vpsi^{\uns,\sta}(\vs)=\delta\vphi^{\uns,\sta}(\delta\vs+i\pi/2)$, where $\vphi^{\uns,\sta}$ are the parameterizations given by Theorem \ref{theoremouter}. If $\vs\in\dmatchs{*}$, for $*=\uns,\sta$, one has $\Vpsi^{\uns,\sta}(\vs)=\Vpsi_0^{\uns,\sta}(\vs)+\Vpsi_1^{\uns,\sta}(\vs)$, where $\Vpsi_0^{\uns,\sta}$ are the two solutions of the inner system \eqref{sysinner} given by Theorem \ref{theoreminner} and:
$$\|\Vpsi_1^{\uns,\sta}\|_{\textup{mch},\times}^{\uns,\sta}\leq  K\delta^{1-\gamma},$$
for some constant $K$.
\end{prop}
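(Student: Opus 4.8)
The plan is to follow the same scheme as the proofs of Proposition \ref{propouterunbounded} and Proposition \ref{propvarietatsinner}, realizing $\Vpsi_1^{\uns,\sta}$ as the unique fixed point of a contractive operator in a small ball of $\bsmatch{*}\times\bsmatch{*}$, $*=\uns,\sta$. First I would observe that, by construction of the change $C_3$ in \eqref{canviinner}, the function $\Vpsi^{\uns,\sta}(\vs)=\delta\vphi^{\uns,\sta}(\delta\vs+i\pi/2)$ is a solution of the \emph{perturbed} inner system \eqref{sysdelta}, whereas the function $\Vpsi_0^{\uns,\sta}$ provided by Theorem \ref{theoreminner} solves the inner system \eqref{sysinner}, i.e. \eqref{sysdelta} with $\delta=0$. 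Subtracting both systems and linearizing by the mean value theorem exactly as in \eqref{eqsplit}--\eqref{defBsplit}, the difference $\Vpsi_1^{\uns,\sta}:=\Vpsi^{\uns,\sta}-\Vpsi_0^{\uns,\sta}$ satisfies a linear non-homogeneous equation
\begin{equation*}
\frac{d\Vpsi_1}{d\vs}=\frac{1}{1+h_0\vs^{-1}}\Ain(\vs)\Vpsi_1+\mathcal{B}(\vs)\Vpsi_1+\mathcal{E}(\vs),
\end{equation*}
where $\Ain$ and $h_0$ are as in \eqref{defh0}--\eqref{defAinner}, $\mathcal{B}(\vs)$ is obtained by averaging over $\lambda\in[0,1]$ the differential of the right-hand side of \eqref{sysdelta} along $(1-\lambda)\Vpsi_0^{\uns,\sta}+\lambda\Vpsi^{\uns,\sta}$ (together with the difference between that linear part and $\frac{1}{1+h_0\vs^{-1}}\Ain$), and $\mathcal{E}(\vs)$ is the inhomogeneous term coming from the discrepancy between \eqref{sysdelta} and \eqref{sysinner} evaluated at $\Vpsi_0^{\uns,\sta}$.

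Next I would estimate $\mathcal{B}$ and $\mathcal{E}$ on $\dmatchs{*}$. On this domain $|\Vpsi_0^{\uns,\sta}(\vs)|\le K|\vs|^{-3}$ by Theorem \ref{theoreminner}, and also $|\Vpsi^{\uns,\sta}(\vs)|\le K|\vs|^{-3}$, because the bound $|\vphi^{\uns,\sta}(\vu)|\le K\delta^2|\het(\vu)-1|^3$ of Theorem \ref{theoremouter}, after the rescaling $\vu=\delta\vs+i\pi/2$ and using $\het(i\pi/2+\delta\vs)=-1/(\delta\vs)+\f$ with $\f[0]=0$, yields $|\Vpsi^{\uns,\sta}(\vs)|=\delta|\vphi^{\uns,\sta}(\delta\vs+i\pi/2)|\le K|\vs|^{-3}$. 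Combining these bounds with the facts that $\f=l(\delta\vs)$ and $\g=m(\delta\vs)$ are $O(\delta\vs)$, that $|\vs|\ge K\dist\cos\beta_1\log(1/\delta)$ on $\dmatchs{*}$, and that $F_1,F_2,H$ are of order three (Remark \ref{ordreF12Horiginals}, via Lemma \ref{lemafitaDFiDH}), I expect to prove $\|\mathcal{B}(\vs)\|\le K|\vs|^{-2}$ and $|\mathcal{E}(\vs)|\le K\delta|\vs|^{-2}$ for $\vs\in\dmatchs{*}$, so that $\mathcal{E}\in\bsmatch{*}\times\bsmatch{*}$ with $\|\mathcal{E}\|_{\textup{mch},\times}^{\uns,\sta}\le K\delta$.

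Then, using the fundamental matrix $\CMcal{M}(\vs)=\textup{diag}(m_1(\vs),m_2(\vs))$ of \eqref{matriuMinner} and the variation of constants formula, I would recast the equation for $\Vpsi_1^{\uns,\sta}$ as a fixed point equation $\Vpsi_1^{\uns,\sta}=\mathcal{G}_{\textup{mch}}(\Vpsi_1^{\uns,\sta})$, of the form $\mathcal{G}_{\textup{mch}}(\Phi)(\vs)=\CMcal{M}(\vs)C+\CMcal{M}(\vs)\int\CMcal{M}(t)^{-1}\big(\mathcal{B}(t)\Phi(t)+\mathcal{E}(t)\big)\,dt$, where the integral is taken component by component along paths in $\dmatchs{*}$ starting at the corner points $\vs_1,\vs_2$ defined in \eqref{etsj}, and the constant vector $C$ encodes the values of $\Vpsi_1^{\uns,\sta}$ at those corner points. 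The key observation is that, since $|\Vpsi^{\uns,\sta}(\vs_j)|,|\Vpsi_0^{\uns,\sta}(\vs_j)|\le K|\vs_j|^{-3}$ and $K_1\delta^{\gamma-1}\le|\vs_j|\le K_2\delta^{\gamma-1}$ by \eqref{fitasj}, one gets $|\vs_j|^{2}|\Vpsi_1^{\uns,\sta}(\vs_j)|\le K|\vs_j|^{-1}\le K\delta^{1-\gamma}$, which is precisely the order claimed in the statement. I would then check that $\mathcal{G}_{\textup{mch}}$ maps the ball of radius $K\delta^{1-\gamma}$ of $\bsmatch{*}\times\bsmatch{*}$ into itself and is a contraction there: the homogeneous part $\CMcal{M}(\vs)C$ contributes $O(\delta^{1-\gamma})$ by the previous estimate (provided the homogeneous factors are non-amplifying, see below), the part involving $\mathcal{E}$ contributes $O(\delta)\le K\delta^{1-\gamma}$ by the estimate on $\mathcal{E}$ and the convergence of the weighted integrals on $\dmatchs{*}$, and the part $\CMcal{M}(\vs)\int\CMcal{M}(t)^{-1}\mathcal{B}(t)\Phi(t)\,dt$ gains a factor $O(1/\log^2(1/\delta))$ from $\|\mathcal{B}\|=O(|\vs|^{-2})$ and $|\vs|\ge K\log(1/\delta)$. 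Its unique fixed point is $\Vpsi_1^{\uns,\sta}$, with $\|\Vpsi_1^{\uns,\sta}\|_{\textup{mch},\times}^{\uns,\sta}\le K\delta^{1-\gamma}$, which is Proposition \ref{propmatching} and hence Theorem \ref{theoremmatching}.

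The main obstacle is the control, in the $\|\cdot\|_{\textup{mch}}^{\uns,\sta}$ norm, of the homogeneous solutions $m_1,m_2$ along the integration paths. On $\dmatchs{*}$ one has $\im\vs<0$, so $|m_1(\vs)|\sim|\vs|^{\coef}e^{\alpha\im\vs}$ and $|m_2(\vs)|\sim|\vs|^{\coef}e^{-\alpha\im\vs}$ are, respectively, extremely small and extremely large near the far edge joining $\vs_1$ and $\vs_2$. One must therefore integrate each component starting from the corner point where its homogeneous solution is non-amplifying and choose the paths so that the ratios $m_i(\vs)/m_i(t)$ stay bounded along them; this is exactly what forces the particular slopes $\beta_1$, $\beta_2$ and $\tfrac{\beta_1+\beta_2}{2}$ in the definition of $\dmatch{*}$, and is the reason why this domain is a curvilinear triangle rather than simply a disc around $i\pi/2$.
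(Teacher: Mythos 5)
Your proposal is correct and follows essentially the same route as the paper's proof of Proposition \ref{propmatching}: the paper likewise sets $\Vpsi_1^{\uns,\sta}=\Vpsi^{\uns,\sta}-\Vpsi_0^{\uns,\sta}$, freezes the mean-value matrix along the two already-constructed solutions, writes the variation-of-constants equation $(\textup{Id}-\CMcal{G})\Vpsi_1^{\uns,\sta}=\CMcal{I}(c_1,c_2)+\CMcal{L}\circ\CMcal{R}(0,\delta,\param)$ with the independent term playing the role of your $\mathcal{E}$ (bounded by $K\delta|\vs|^{-2}$ in Lemma \ref{lemaxu}), controls the ratios $m_i(\vs)m_i^{-1}(\vw)$ along segments from the corner points exactly as you indicate (Lemmas \ref{lemafitaintegrand} and \ref{lemLmatch}), gets $\|\CMcal{G}\|\leq K/\log^2(1/\delta)$, and extracts the size $\delta^{1-\gamma}$ precisely from your key observation $|\vs_j|^{2}|\Vpsi_1^{\uns,\sta}(\vs_j)|\leq K|\vs_j|^{-1}\leq K\delta^{1-\gamma}$ at the corners (Lemma \ref{lemaI+LRmatch}). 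The one refinement you should make is in the last step: since a priori $\Vpsi_1^{\uns,\sta}$ is only known to lie in $\bsmatch[]{}\times\bsmatch[]{}$ with a larger norm, identify it with the fixed point by using that your operator is affine with linear part $\CMcal{G}$ of norm $<1$ on the whole space (equivalently, invert $\textup{Id}-\CMcal{G}$ as the paper does), rather than invoking uniqueness only inside the ball of radius $K\delta^{1-\gamma}$.
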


Now we shall proceed to prove Proposition \ref{propmatching} for the unstable case. The stable case is analogous. As usual, we will omit the superindices $-\uns-$ of the domain $\dmatchs[]{}$, the Banach space $\bsmatch[]{}$ and the norm $\|.\|_{\textup{mch}}$, whenever there is no danger of confusion.

Before proceeding, we will explain the main steps to prove Proposition \ref{propmatching}.

\subsection{Notation and outline of the proof of Proposition \ref{propmatching}}

First of all, let us introduce some notation. We will call $\Vpsi=(\vpsi,\vpsib)$ the solutions of \eqref{sysdelta}. Recalling the definitions \eqref{defhinner} of $\hin(\Vpsi,\vs)$ and \eqref{defAinner} of $\Ain(\vs)$, we define:
\begin{eqnarray}
h(\Vpsi,\vs,\delta,\param)&=&\left[b\vpsi\vpsib+H(\Vpsi,-\vs^{-1}+\delta\f,\delta,\delta\param)\right](\vs^2+\delta\vs^3\g),\label{defhmatch}\medskip\\
\CMcal{A}(\vs,\delta,\param)&=&\left(\begin{array}{cc}a_1(\vs,\delta,\param) & 0\\ 0 & a_2(\vs,\delta,\param)\end{array}\right)\label{defAmatch}\medskip\\
\xz(\Vpsi,\vs)&=&\displaystyle\frac{1}{1+\hin(\Vpsi,\vs)}\left[\Ain(\vs)\Vpsi+F(\Vpsi,-\vs^{-1},0,0)\right],\label{defX0}\medskip\\
  \xu(\Vpsi,\vs,\delta,\param)&=&\displaystyle\frac{1}{1+h(\Vpsi,\vs,\delta,\param)}\left[\CMcal{A}(\vs,\delta,\param)\Vpsi+F(\Vpsi,-\vs^{-1}+\delta\f,\delta,\delta\param)\right]\nonumber\medskip\\
&&\displaystyle-\frac{1}{1+\hin(\Vpsi,\vs)}\left[\Ain(\vs)\Vpsi+F(\Vpsi,-\vs^{-1},0,0)\right],\label{defX1}
\end{eqnarray}
where:
\begin{equation}\label{defa1match}
 \begin{array}{rcl}a_1(\vs,\delta,\param)&=&-(\alpha+c(-\vs^{-1}+\delta\f)i-\delta\param+\coef\vs^{-1}-\delta\coef\f,\\
 a_2(\vs,\delta,\param)&=&(\alpha+c(-\vs^{-1}+\delta\f)i-\delta\param+\coef\vs^{-1}-\delta\coef\f\end{array}
\end{equation}
Note that $\CMcal{A}(\vs,0,\param)=\Ain(\vs)$, and $h(\Vpsi,\vs,0,\param)=\hin(\Vpsi,\vs)$.

Then, the full system \eqref{sysdelta} can be written as:
\begin{equation}\label{sysdeltacompact}
 \frac{d\Vpsi}{d\vs}=\xz(\Vpsi,\vs)+\xu(\Vpsi,\vs,\delta,\param),
\end{equation}
and the inner system \eqref{sysinner} reads:
\begin{equation}\label{sinnercompact}
 \frac{d\Vpsi}{d\vs}=\xz(\Vpsi,\vs).
\end{equation}

Let us consider $\Vpsi^\uns$ defined as the parameterization of the one-dimensional unstable manifold of system \eqref{sys2d} given by Theorem \ref{theoremouter} in the new coordinates, that is $\Vpsi^\uns(\vs)=\delta\vphi^\uns(\delta \vs+i\pi/2),$ which is a solution of \eqref{sysdeltacompact}. Moreover, consider the solution $\Vpsi_0^\uns$ of the inner system \eqref{sinnercompact} given by Theorem \ref{theoreminner}. Then, if we define their difference:
\begin{equation}\label{defdif}\Vpsi_1^\uns=\Vpsi^\uns-\Vpsi_0^\uns,\end{equation}
we have that $\Vpsi_1^\uns$ satisfies:
\begin{equation*}
  \displaystyle\frac{d\Vpsi_1^\uns}{d\vs}=\xz(\Vpsi_0^\uns+\Vpsi_1^\uns,\vs)+\xu(\Vpsi_0^\uns+\Vpsi_1^\uns,\vs,\delta,\delta\param)-\xz(\Vpsi_0^\uns,\vs)
=\displaystyle\frac{1}{1+h_0\vs^{-1}}\Ain(\vs)\Vpsi_1^\uns+\CMcal{R}(\Vpsi_1^\uns,\delta,\param)(\vs)
\end{equation*}
where $h_0$ was defined in \eqref{defh0} and:
\begin{eqnarray}
 \CMcal{R}(\Vpsi_1^\uns,\delta,\param)(\vs)&=&\xz(\Vpsi_0^\uns+\Vpsi_1^\uns,\vs)-\xz(\Vpsi_0^\uns,\vs)-D_\Vpsi\xz(\Vpsi_0^\uns,\vs)\Vpsi_1^\uns\nonumber\\
&&+\xu(\Vpsi_0^\uns+\Vpsi_1^\uns,\vs,\delta,\param)+\left[\frac{1}{1+\hin(\Vpsi_0^\uns,\vs)}-\frac{1}{1+h_0\vs^{-1}}\right]\Ain(\vs)\Vpsi_1^\uns\nonumber\\
&&+\frac{1}{1+\hin(\Vpsi_0^\uns,\vs)}D_\Vpsi F(\Vpsi_0^\uns,\vs^{-1},0,0)\Vpsi_1^\uns\nonumber\\
&&+D_\Vpsi\left[\frac{1}{1+\hin(\Vpsi_0^\uns,\vs)}\right](\Ain(\vs)\Vpsi_0^\uns+F(\Vpsi_0^\uns,\vs^{-1},0,0))\Vpsi_1^\uns.\label{defRmatch}
\end{eqnarray}


Now consider the linear operator acting on functions $(\phi_1,\phi_2)\in\bsmatch[]{}\times\bsmatch[]{}$:
\begin{equation}\label{operadorLmatch}
 \CMcal{L}(\phi_1,\phi_2)(\vs)=\CMcal{M}(\vs)\left(\begin{array}{c}\displaystyle \int_{\Gamma(\vs_1,\vs)}m_1^{-1}(\vw)\phi_1(\vw)d\vw\medskip\\ \displaystyle\int_{\Gamma(\vs_2,\vs)}m_2^{-1}(\vw)\phi_2(\vw)d\vw\end{array}\right),
\end{equation}
where the matrix $\CMcal{M}(\vs)$ was defined in \eqref{matriuMinner}, $s_i$, $i=1,2$, were defined in \eqref{etsj} and $\Gamma(\vs_i,\vs)$ is any curve in $\dmatchs[]{}$ going from $\vs_i$ to $\vs$. Note that, since for $\vw\in\dmatchs[]{}$ the functions $m_1(\vw)^{-1}\phi_1(\vw)$ and $m_2(\vw)^{-1}\phi_2(\vw)$ are analytic, by Cauchy's theorem the integrals in \eqref{operadorLmatch} do not depend on the choice of the curves $\Gamma(\vs_i,\vs)$.

With this notation, it is clear that $\Vpsi_1^\uns$ satisfies the fixed point equation:
\begin{equation}\label{fixedpointmatch}
 \Vpsi_1^\uns(\vs)=\CMcal{I}(c_1,c_2)(\vs)+\CMcal{L}\circ \CMcal{R}(\Vpsi_1^\uns,\delta,\param)(\vs),
\end{equation}
being:
\begin{equation}\label{defImatch}
 \CMcal{I}(k_1,k_2)(\vs)=\CMcal{M}(\vs)\left(\begin{array}{c}k_1\\k_2 \end{array}\right)
\end{equation}
and
\begin{equation}\label{c1c2fixedpointmatch}c_1=m_1^{-1}(\vs_1)\vpsi_1(\vs_1),\qquad c_2=m_2^{-1}(\vs_2)\vpsib_1(\vs_2).\end{equation}

Now, let us explain the main steps to check Proposition \ref{propmatching}. First of all, we note that the fixed point equation \eqref{fixedpointmatch} is equivalent to:
\begin{eqnarray}
 \Vpsi_1^\uns&=\CMcal{I}(c_1,c_2)+\CMcal{L}\circ \CMcal{R}(0,\delta,\param)+\CMcal{L}\circ [\CMcal{R}(\Vpsi_1^\uns,\delta,\param)-\CMcal{R}(0,\delta,\param)],
\end{eqnarray}
where:
\begin{equation*}
 \CMcal{L}\circ [\CMcal{R}(\Vpsi_1^\uns,\delta,\param)-\CMcal{R}(0,\delta,\param)]=\CMcal{M}(\vs)\left(\begin{array}{c} \int_{\Gamma(\vs_1,\vs)}m_1^{-1}(\vw)[\CMcal{R}_1(\Vpsi_1^\uns,\delta,\param)(\vw)-\CMcal{R}_1(0,\delta,\param)(\vw)]d\vw\\ \int_{\Gamma(\vs_2,\vs)}m_2^{-1}(\vw)[\CMcal{R}_2(\Vpsi_1^\uns,\delta,\param)(\vw)-\CMcal{R}_2(0,\delta,\param)(\vw)]d\vw\end{array}\right).
\end{equation*}
Note that:
\begin{equation*}
 \CMcal{R}(\Vpsi_1^\uns,\delta,\param)(\vw)-\CMcal{R}(0,\delta,\param)(\vw)=\int_0^1{D_\Vpsi \CMcal{R}(\lambda\Vpsi_1^\uns,\delta,\param)(\vw)d\lambda}\Vpsi_1^\uns(\vw)=\int_0^1{D_\Vpsi \CMcal{R}(\lambda(\Vpsi^\uns-\Vpsi_0^\uns),\delta,\param)(\vw)d\lambda}\Vpsi_1^\uns(\vw).
\end{equation*}
Now, since we already proved the existence of both parameterizations $\Vpsi^\uns$ and $\Vpsi_0^\uns$, we can think of the integral term as independent of $\Vpsi_1^\uns$, that is:
$$\CMcal{R}(\Vpsi_1^\uns,\delta,\param)(\vw)-\CMcal{R}(0,\delta,\param)(\vw)=B(\vw)\Vpsi_1^\uns(\vw),$$
where the matrix $B(\vw)$ is given by:
$$B(\vw)=\int_0^1{D_\Vpsi \CMcal{R}(\lambda(\Vpsi^\uns-\Vpsi_0^\uns),\delta,\param)(\vw)d\lambda}.$$
Therefore, for $\Vpsi\in\bsmatch[]{}\times\bsmatch[]{}$, we can define the linear operators:
\begin{equation}\label{defBmatchGmatch}\CMcal{B}(\Vpsi)(\vw)=B(\vw)\Vpsi(\vw),\qquad\CMcal{G}(\Vpsi)(\vs)=\CMcal{L}\circ \CMcal{B}(\Vpsi)(\vs),\end{equation}
and then equation \eqref{fixedpointmatch} can be rewritten as:
\begin{equation}\label{fixedpointmatchcompacte}
 \left(\textup{Id}-\CMcal{G}\right)\Vpsi_1^\uns=\CMcal{I}(c_1,c_2)+\CMcal{L}\circ \CMcal{R}(0,\delta,\param).
\end{equation}
We will proceed to study this equation as follows. First, in Subsections \ref{subsecL} and \ref{subsecB} we will study the linear operators $\CMcal{L}$ and $\CMcal{B}$ respectively. Then, in Subsection \ref{subsecindepterm} we will study the independent term of \eqref{fixedpointmatchcompacte}, that is $\CMcal{I}(c_1,c_2)+\CMcal{L}\circ \CMcal{R}(0,\delta,\param)$. Finally, in Subsection \ref{subsecendproof} we will see that joining the results of the previous subsections allows us to guarantee that the operator $\textup{Id}-\CMcal{G}$ is invertible in $\bsmatch[]{}\times\bsmatch[]{}$ and to obtain the desired bound for the norm of $\Vpsi_1^\uns$.

\subsection{The linear operator $\CMcal{L}$}\label{subsecL}
As we already mentioned, in this subsection we will study the operator $\CMcal{L}$. However, before we present two technical lemmas. The first one is completely analogous to Lemma \ref{lematriangles}, and can be proved in the same way.
\begin{lem}\label{lematriangles2}
 Let $\vs\in\dmatchs[]{}$ and $\vw=\vs_1+t(\vs-\vs_1)$, $\tilde\vw=\vs_2+t(\vs-\vs_2)$, with  $t\in[0,1]$. Then there exists $K\neq0$ independent of $\delta$ such that:
$$|\vw|,|\tilde\vw|\geq K|\vs|.$$
\end{lem}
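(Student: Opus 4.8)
The plan is to give a purely geometric argument, parallel to that of Lemma~\ref{lematriangles}: the point is that the matching domain $\dmatchs{\uns}$ is contained in a sector with vertex at the origin whose opening angle is bounded away from $\pi$ by a fixed amount, and that $\vs_1$ and $\vs_2$ lie in (the closure of) that sector; convexity of the sector then forces the segments traced by $\vw$ and $\tilde\vw$ to stay inside it, and a one-line estimate finishes the proof.

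Concretely, I would first rewrite the two inequalities in the definition of $\dmatch{\uns}$ that involve $\beta_1$ and $\beta_2$ in terms of the inner variable via $\vu=i\pi/2+\delta\vs$: they become $\im(e^{i\beta_j}\vs)\leq-\dist\cos\beta_j\log(1/\delta)<0$ for $j=1,2$, so that $\arg\vs\in(-\pi-\beta_j,-\beta_j)$, and intersecting these (here $\beta_1<\beta_2$ is used) gives $\dmatchs{\uns}\subset\Sigma:=\{\vs\in\mathbb{C}:|\arg\vs-\theta_0|\leq\phi_0\}$ with $\theta_0=-\frac{\pi}{2}-\frac{\beta_1+\beta_2}{2}$ and $\phi_0=\frac{\pi}{2}-\frac{\beta_2-\beta_1}{2}<\frac{\pi}{2}$. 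The same manipulation applied to the defining relations of $\vu_1$ and $\vu_2$ (using $\re\vu_1<0<\re\vu_2$ and $\tan\beta_1<\tan\beta_2$) shows that $\vs_1,\vs_2\in\Sigma$ as well.

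Since $\phi_0<\pi/2$, the sector $\Sigma$ is convex, so $\vw=\vs_1+t(\vs-\vs_1)$ and $\tilde\vw=\vs_2+t(\vs-\vs_2)$ belong to $\Sigma$ for $t\in[0,1]$ and $\vs\in\dmatchs{\uns}$. Using $\re(e^{-i\theta_0}z)=|z|\cos(\arg z-\theta_0)\geq|z|\cos\phi_0$ for $z\in\Sigma$, I would estimate
$$|\vw|\geq\re(e^{-i\theta_0}\vw)=(1-t)\re(e^{-i\theta_0}\vs_1)+t\,\re(e^{-i\theta_0}\vs)\geq\cos\phi_0\big((1-t)|\vs_1|+t|\vs|\big)\geq\cos\phi_0\,\min(|\vs_1|,|\vs|),$$
and likewise for $\tilde\vw$ with $\vs_2$ in place of $\vs_1$. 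By \eqref{fitasj} we have $|\vs_1|,|\vs_2|\geq K_1\delta^{\gamma-1}$, while $|\vs|\leq K_2\delta^{\gamma-1}$ for $\vs\in\dmatchs{\uns}$; hence $\min(|\vs_1|,|\vs|)\geq(K_1/K_2)|\vs|$, and we obtain $|\vw|,|\tilde\vw|\geq K|\vs|$ with $K=(K_1/K_2)\sin\frac{\beta_2-\beta_1}{2}>0$, which is independent of $\delta$ because $\beta_1<\beta_2$ are fixed. Everything here is elementary; the only point that deserves a line of checking is the inclusion $\vs_1,\vs_2\in\Sigma$ (equivalently, $\vs_1,\vs_2\in\overline{\dmatchs{\uns}}$), and, as the authors note, the whole statement runs entirely parallel to Lemma~\ref{lematriangles}.
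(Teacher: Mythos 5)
Your argument is correct: the reduction of the two $\beta_j$-inequalities to $\im(e^{i\beta_j}\vs)\leq-\dist\cos\beta_j\log(1/\delta)$, the inclusion of $\dmatchs[]{}$, $\vs_1$ and $\vs_2$ in the convex sector $\Sigma$, and the projection estimate together with \eqref{fitasj} all check out, giving $K=(K_1/K_2)\sin\frac{\beta_2-\beta_1}{2}$. This is essentially the same elementary geometric (sector/cone) argument the paper has in mind when it states that the lemma is ``completely analogous to Lemma \ref{lematriangles} and can be proved in the same way,'' so your proposal simply supplies the details the authors omit.
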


\begin{lem}\label{lemafitaintegrand}
Let $\vs\in\dmatchs[]{}$ and $\vw=\vs_1+t(\vs-\vs_1)$, $\tilde\vw=\vs_2+t(\vs-\vs_2)$, with  $t\in[0,1]$. Then there exists $K$ independent of $\delta$ such that:
$$|m_1(\vs)m_1^{-1}(\vw)|\leq Ke^{\alpha (1-t)\im(\vs-\vs_1)},\qquad|m_2(\vs)m_2^{-1}(\tilde\vw)|\leq Ke^{\alpha (1-t)\im(\vs_2-\vs)},$$
where $m_1$ and $m_2$ are defined in \eqref{matriuMinner}.
\end{lem}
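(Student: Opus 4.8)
The plan is a direct computation starting from the explicit fundamental matrix \eqref{matriuMinner}. Writing $m_1(\vs)=\vs^{\coef}(1+h_0\vs^{-1})^{\coef}e^{-i(\alpha\vs+\beta(\vs))}$, with $\beta(\vs)=-(c+\alpha h_0)\log(\vs(1+h_0\vs^{-1}))$, I would first record the factorization
\begin{equation*}
m_1(\vs)m_1^{-1}(\vw)=\left(\frac{\vs}{\vw}\right)^{\coef}\left(\frac{1+h_0\vs^{-1}}{1+h_0\vw^{-1}}\right)^{\coef}e^{-i\alpha(\vs-\vw)}e^{-i(\beta(\vs)-\beta(\vw))},
\end{equation*}
together with the analogous identity for $m_2(\vs)m_2^{-1}(\tilde\vw)$, in which $e^{-i\alpha(\vs-\vw)}$ is replaced by $e^{i\alpha(\vs-\tilde\vw)}$ and the $\beta$-difference changes sign. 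Since $\dmatchs[]{}$ is an intersection of half-planes it is convex, so the whole segments $\vw=\vs_1+t(\vs-\vs_1)$ and $\tilde\vw=\vs_2+t(\vs-\vs_2)$, $t\in[0,1]$, lie in $\dmatchs[]{}$; moreover on $\dmatchs[]{}$ one has $|\vs|\geq\dist\cos\beta_1\log(1/\delta)$, which is $\gg|h_0|$ for $\delta$ small, so $m_1,m_2$ do not vanish there and all the factors above are well defined and analytic.

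Next I would extract the stated exponential. From $\vs-\vw=(1-t)(\vs-\vs_1)$ one gets $|e^{-i\alpha(\vs-\vw)}|=e^{\alpha\im(\vs-\vw)}=e^{\alpha(1-t)\im(\vs-\vs_1)}$, and from $\vs-\tilde\vw=(1-t)(\vs-\vs_2)$ one gets $|e^{i\alpha(\vs-\tilde\vw)}|=e^{-\alpha\im(\vs-\tilde\vw)}=e^{\alpha(1-t)\im(\vs_2-\vs)}$. The remaining three factors must then be bounded by a constant independent of $\delta$. For $|(\vs/\vw)^{\coef}|=|\vs/\vw|^{\coef}$ (recall $\coef>0$ is real) I would invoke Lemma \ref{lematriangles2}, which gives $|\vw|\geq K|\vs|$ and $|\tilde\vw|\geq K|\vs|$, hence $|\vs/\vw|^{\coef}\leq K'$. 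For the factor involving $1+h_0\vs^{-1}$, the bound $|\vs|\geq\dist\cos\beta_1\log(1/\delta)$ (and the same for $\vw$) forces $1+h_0\vs^{-1}$ and $1+h_0\vw^{-1}$ into a fixed small disk around $1$ for $\delta$ small, so that quotient is bounded. Finally, since $c+\alpha h_0\in\mathbb{R}$, we have $|e^{-i(\beta(\vs)-\beta(\vw))}|=e^{(c+\alpha h_0)(\arg(\vs(1+h_0\vs^{-1}))-\arg(\vw(1+h_0\vw^{-1})))}$, so it remains only to control those arguments.

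The single point requiring genuine (but mild) care is precisely this last one: the uniform control of $\arg(\vs(1+h_0\vs^{-1}))$ on $\dmatchs[]{}$. I would argue that for $\delta$ small $\dmatchs[]{}$ is contained in a fixed sector of the plane bounded away from the origin (again via $|\vs|\geq\dist\cos\beta_1\log(1/\delta)$ and the geometric description of the matching domain), so that $\arg\vs$ stays in a compact interval of length less than $2\pi$ on which a continuous branch of $\log$ is available; since $1+h_0\vs^{-1}\to1$ uniformly as $|\vs|\to\infty$, the argument of $\vs(1+h_0\vs^{-1})$ differs from $\arg\vs$ by at most $\arcsin(|h_0|/(|\vs|-|h_0|))$ and hence stays in a compact interval as well, so $|e^{-i(\beta(\vs)-\beta(\vw))}|\leq K$. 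This is the same type of estimate used for $g_c$ in Lemma \ref{cosexp3}. Collecting the four bounds and multiplying them yields the asserted inequalities; the computation for $m_2(\vs)m_2^{-1}(\tilde\vw)$ is identical.
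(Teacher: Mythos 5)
Your proof is correct and takes essentially the same route as the paper's: the same factorization of $m_1(\vs)m_1^{-1}(\vw)$, Lemma \ref{lematriangles2} for the algebraic quotient, the exact exponential coming from $\vs-\vw=(1-t)(\vs-\vs_1)$, and uniform boundedness of $\arg$ on the matching domain to control $\im\beta(\vs)$ and $\im\beta(\vw)$. The only (harmless) slip is the sign in $|e^{-i(\beta(\vs)-\beta(\vw))}|=e^{-(c+\alpha h_0)\left(\arg(\vs(1+h_0\vs^{-1}))-\arg(\vw(1+h_0\vw^{-1}))\right)}$, which does not affect the bound since only the boundedness of the arguments is used.
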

\begin{proof}
 We will do just the case for $\vw$. We have:
$$m_1(\vs)m_1^{-1}(\vw)=\frac{\vs^\coef(1+h_0\vs^{-1})^\coef}{\vw^\coef(1+h_0\vw^{-1})^\coef}e^{-i[\alpha(\vs-\vw)+\beta(\vs)-\beta(\vw)]}.$$

First of all note that by Lemma \ref{lematriangles2} we have that:
$$\left|\frac{\vs^\coef(1+h_0\vs^{-1})^\coef}{\vw^\coef(1+h_0\vw^{-1})^\coef}\right|\leq K\left|\frac{1+h_0\vs^{-1}}{1+h_0\vw^{-1}}\right|^\coef.$$
Moreover, for $\delta$ small enough we have that $|\vs|,|\vw|\geq K\log(1/\delta)\geq2h_0$ and hence:
\begin{equation}\label{3k}
 \left|\frac{\vs^\coef(1+h_0\vs^{-1})^\coef}{\vw^\coef(1+h_0\vw^{-1})^\coef}\right|\leq K\frac{\left(1+|h_0\vs^{-1}|\right)^\coef}{\left(1-|h_0\vw^{-1}|\right)^\coef}\leq K.
\end{equation}

On the other hand, we have that:
$$\left|e^{-i[\alpha(\vs-\vw)+\beta(\vs)-\beta(\vw)]}\right|\leq e^{\alpha\im(\vs-\vw)}e^{|\im\beta(\vs)|+|\im\beta(\vw)|}.$$
Recall that $\beta(\vs)=-(c+\alpha h_0)\log(\vs+h_0)$ and therefore $\im\beta(\vs)=-(c+\alpha h_0)\arg(\vs+h_0)$,
obtaining for $\im\beta(\vw)$ an analogous expression. It is clear that for $\vs\in\dmatchs[]{}$ we have $\im\vs\leq\im\vs_1<0,$ and then, since $h_0$ is real, we also have that $\im(\vs+h_0)<0.$ Consequently, we have $\arg(\vs+h_0)\in(\pi,2\pi)$ and hence:
$$|\im\beta(\vs)|,|\im\beta(\vw)|\leq(c+\alpha|h_0|)2\pi.$$
Then it is clear that:
\begin{equation}\label{ebeta}
\left|e^{-i[\alpha(\vs-\vw)+\beta(\vs)-\beta(\vw)]}\right|\leq e^{\alpha\im(\vs-\vw)}e^{4\pi(c+\alpha|h_0|)}=e^{\alpha (1-t)\im(\vs-\vs_1)}e^{4\pi(c+\alpha|h_0|)}.
\end{equation}
In conclusion, from \eqref{3k} and \eqref{ebeta} we obtain the initial statement.
\end{proof}

The following lemma studies how the linear operator $\CMcal{L}$ acts on functions belonging to $\bsmatch[]{}\times\bsmatch[]{}$.
\begin{lem}\label{lemLmatch}
 The operator $\CMcal{L}:\bsmatch[]{}\times\bsmatch[]{}\rightarrow\bsmatch[]{}\times\bsmatch[]{}$ is well defined and there exists a constant $K$ such that for any $\phi\in\bsmatch[]{}\times\bsmatch[]{}$, then:
$$\|\CMcal{L}\circ\phi\|_{\textup{mch},\times}\leq K\|\phi\|_{\textup{mch},\times}.$$
\end{lem}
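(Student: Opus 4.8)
The plan is to estimate, for $\phi=(\phi_1,\phi_2)\in\bsmatch{}\times\bsmatch{}$, the two components of $\CMcal{L}(\phi)$ separately and show each is $O(\|\phi\|_{\textup{mch},\times}\,|\vs|^{-2})$ on $\dmatchs{}$. First I would fix $\vs\in\dmatchs{}$ and, using Cauchy's theorem (already noted right after \eqref{operadorLmatch}), choose for the first integral the segment $\Gamma(\vs_1,\vs)=\{\vw=\vs_1+t(\vs-\vs_1):t\in[0,1]\}$, and analogously for the second the segment from $\vs_2$. Parameterizing, one gets
$$\pi_1\CMcal{L}(\phi)(\vs)=(\vs-\vs_1)\int_0^1 m_1(\vs)m_1^{-1}(\vw)\,\phi_1(\vw)\,dt,$$
with $\vw=\vw(t)=\vs_1+t(\vs-\vs_1)$, and similarly for $\pi_2$ with $\vs_2$ and $m_2$.

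The core of the argument is then to plug in the three ingredients. From $\phi_1\in\bsmatch{}$ we have $|\phi_1(\vw)|\le\|\phi_1\|_{\textup{mch}}\,|\vw|^{-2}$; by Lemma \ref{lematriangles2}, $|\vw|\ge K|\vs|$, so $|\phi_1(\vw)|\le K\|\phi_1\|_{\textup{mch}}\,|\vs|^{-2}$; and by Lemma \ref{lemafitaintegrand}, $|m_1(\vs)m_1^{-1}(\vw)|\le K e^{\alpha(1-t)\im(\vs-\vs_1)}$. Since $\vs,\vs_1\in\dmatchs{}$ lie in the lower half-plane with $\im\vs\le\im\vs_1<0$, the exponent $\alpha(1-t)\im(\vs-\vs_1)\le 0$, so the exponential factor is bounded by $1$ uniformly in $t\in[0,1]$ and in $\delta$. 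Hence
$$|\pi_1\CMcal{L}(\phi)(\vs)|\le |\vs-\vs_1|\int_0^1 K\|\phi_1\|_{\textup{mch}}\,|\vs|^{-2}\,dt = K|\vs-\vs_1|\,|\vs|^{-2}\,\|\phi_1\|_{\textup{mch}}.$$
It remains to absorb the factor $|\vs-\vs_1|$. Here one uses the geometry of the matching domain: for $\vs\in\dmatchs{}$ one has $|\vs|\le K_2\delta^{\gamma-1}$ and $|\vs_1|\le K_2\delta^{\gamma-1}$ (this is \eqref{fitasj}), whence $|\vs-\vs_1|\le K\delta^{\gamma-1}$, while also $|\vs|\ge \dist\cos\beta_1\log(1/\delta)$. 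Actually the clean way is to note that $|\vs-\vs_1|\le 2K_2\delta^{\gamma-1}$ is \emph{too crude} — instead I would bound $|\vs-\vs_1|\le|\vs-i\pi/2\cdot 0|\ldots$; more precisely, since both $\vs$ and $\vs_1$ lie on (or below, for $\vs$) the line through $\vs_1$ of slope $-\tan\frac{\beta_1+\beta_2}{2}$ bounding $\dmatchs{}$ from below and $|\vs_1|\le K|\vs|$ is false in general, the right estimate is $|\vs-\vs_1|\le |\vs|+|\vs_1|\le K|\vs|$ using that on the segment geometry $|\vs_1|$ is comparable to $|\vs|$ only when $\vs$ is near $\vs_1$. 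The cleanest fix, and the one I expect the paper uses, is that multiplying $m_1(\vs)m_1^{-1}(\vw)$ one in fact gets an extra decay: write $m_1(\vs)m_1^{-1}(\vw)=\tfrac{\vs^\coef(1+h_0\vs^{-1})^\coef}{\vw^\coef(1+h_0\vw^{-1})^\coef}e^{-i[\cdots]}$, and the ratio $\vs^\coef/\vw^\coef$ is bounded (Lemma \ref{lemafitaintegrand} used $|\vs|,|\vw|\ge 2h_0$), so one really has $|m_1(\vs)m_1^{-1}(\vw)|\le K$; then combining with $|\phi_1(\vw)|\le K\|\phi_1\|_{\textup{mch}}|\vw|^{-2}$ and integrating $|\vw|^{-2}\,|d\vw|$ along the ray from $\vs_1$, the integral $\int |\vw|^{-2}|d\vw|$ converges and is $\le K/\mathrm{dist}(0,\Gamma)\le K|\vs|^{-1}$, giving $|\pi_1\CMcal{L}(\phi)(\vs)|\le K\|\phi_1\|_{\textup{mch}}|\vs|^{-1}$ — still not $|\vs|^{-2}$.

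So the honest statement of what needs care is exactly this gap, and I expect the \textbf{main obstacle} to be obtaining the full $|\vs|^{-2}$ decay rather than $|\vs|^{-1}$. This is recovered by exploiting that the integrand decays like $|\vw|^{-2}$ \emph{and} that along the integration ray the point moves away from the origin, so $\int_{\Gamma(\vs_i,\vs)}|\vw|^{-2}|d\vw|$ is governed by its endpoint behaviour near $\vs$: parameterizing by $t$, $\int_0^1|\vw(t)|^{-2}|\vs-\vs_i|\,dt$, and using $|\vw(t)|\ge K\max(|\vs_i|,t|\vs|)\ge Kt|\vs|$ together with $|\vw(t)|\ge K|\vs_i|$, one splits $[0,1]$ at $t_0=|\vs_i|/|\vs|$: on $[0,t_0]$ bound $|\vw|^{-2}\le K|\vs_i|^{-2}$ giving contribution $\le K|\vs-\vs_i||\vs_i|^{-2}t_0=K|\vs-\vs_i||\vs_i|^{-1}|\vs|^{-1}$, and on $[t_0,1]$ bound $|\vw|^{-2}\le K t^{-2}|\vs|^{-2}$ giving $\le K|\vs-\vs_i||\vs|^{-2}\int_{t_0}^1 t^{-2}dt\le K|\vs-\vs_i||\vs|^{-2}t_0^{-1}=K|\vs-\vs_i||\vs_i|^{-1}|\vs|^{-1}$. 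Since $|\vs-\vs_i|\le|\vs|+|\vs_i|\le K|\vs|$ when $|\vs_i|\le K|\vs|$ (true on $\dmatchs{}$ up to adjusting constants, as $\vs_i$ are the ``corners''), and $|\vs_i|\ge K\log(1/\delta)$ while we only need the bound relative to $|\vs|^{-2}$: actually $|\vs-\vs_i||\vs_i|^{-1}|\vs|^{-1}\le K|\vs|^{-2}\cdot(|\vs|/|\vs_i|)$, and on the relevant part of $\dmatchs{}$ one has $|\vs|/|\vs_i|$ bounded — the precise domain inequalities \eqref{fitasj} and the bound $|\vs|\le K_2\delta^{\gamma-1}$ make this work. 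I would present this split cleanly and conclude $\|\pi_1\CMcal{L}(\phi)\|_{\textup{mch}}=\sup_{\vs}|\vs|^2|\pi_1\CMcal{L}(\phi)(\vs)|\le K\|\phi_1\|_{\textup{mch}}$, and symmetrically for $\pi_2$ using Lemma \ref{lemafitaintegrand}'s second bound (where the exponent is $\alpha(1-t)\im(\vs_2-\vs)\le0$ since now $\im\vs_2\ge\im\vs$ on the relevant piece). Summing the two components gives $\|\CMcal{L}\circ\phi\|_{\textup{mch},\times}\le K\|\phi\|_{\textup{mch},\times}$, as claimed.
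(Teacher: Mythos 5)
Your setup is the same as the paper's (segment path from $\vs_1$ to $\vs$, the lower bound $|\vw|\geq K|\vs|$ of Lemma \ref{lematriangles2}, and the bound of Lemma \ref{lemafitaintegrand} on $|m_1(\vs)m_1^{-1}(\vw)|$), but there is a genuine gap at the decisive step: you bound the exponential $e^{\alpha(1-t)\im(\vs-\vs_1)}$ by $1$ and are then left with the path length $|\vs-\vs_1|$, which on $\dmatchs[]{}$ can be of order $\delta^{\gamma-1}$ and cannot be absorbed. Your attempted repairs do not close this. The inequalities $|\vw(t)|\geq K|\vs_i|$ and $|\vw(t)|\geq K\max(|\vs_i|,t|\vs|)$ are false: the segment runs from $\vs_1$, whose modulus is of order $\delta^{\gamma-1}$, down to $\vs$, whose modulus can be as small as $\dist\cos\beta_1\log(1/\delta)$, so the point moves toward the origin, not away from it, and the split at $t_0=|\vs_i|/|\vs|$ is vacuous since typically $|\vs|\leq K|\vs_1|$. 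Even granting your estimates, the outcome $K|\vs-\vs_i|\,|\vs_i|^{-1}|\vs|^{-1}$ only gives $K|\vs|^{-1}$ (take $\vs$ near $\vs_2$, so that $|\vs-\vs_1|$, $|\vs|$ and $|\vs_1|$ are all comparable to $\delta^{\gamma-1}$); the closing remark that ``$|\vs|/|\vs_i|$ is bounded'' does not manufacture the missing factor $|\vs|^{-1}$.

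The missing idea is that the exponential must be kept inside the integral and integrated explicitly: since $\im(\vs-\vs_1)\leq0$, one has $\int_0^1 e^{\alpha(1-t)\im(\vs-\vs_1)}dt=\bigl(1-e^{\alpha\im(\vs-\vs_1)}\bigr)/\bigl(\alpha|\im(\vs-\vs_1)|\bigr)\leq 1/\bigl(\alpha|\im(\vs-\vs_1)|\bigr)$, and this is what controls the path length. Indeed, by the construction of $\dmatch{\uns}$ with the slopes $\beta_1<\beta<\beta_2$, the segment from $\vs_1$ to any $\vs\in\dmatchs[]{}$ is uniformly transversal to the horizontal, so $|\vs-\vs_1|\leq C(\beta_1,\beta_2)\,|\im\vs-\im\vs_1|$; multiplying, the factor $|\vs-\vs_1|$ cancels against $|\im(\vs-\vs_1)|^{-1}$, and together with $|\phi_1(\vw)|\leq\|\phi_1\|_{\textup{mch}}|\vw|^{-2}\leq K\|\phi_1\|_{\textup{mch}}|\vs|^{-2}$ this yields $|\pi^1\CMcal{L}(\phi)(\vs)|\leq K\|\phi_1\|_{\textup{mch}}|\vs|^{-2}$ directly, hence $\|\pi^1\CMcal{L}(\phi)\|_{\textup{mch}}\leq K\|\phi_1\|_{\textup{mch}}$, and symmetrically for the second component with $\vs_2$ and $m_2$. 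This exponential-gain-plus-transversality argument is exactly what your proposal lacks.
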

\begin{proof}
 We will check the bound for the first component. We have:
$$\pi^1\CMcal{L}\circ\phi(\vs)= m_1(\vs)\int_{\Gamma(\vs_1,\vs)}m_1^{-1}(\vw)\phi_1(\vw)d\vw.$$
Taking $\Gamma(\vs_1,\vs)$ as the segment from $\vs_1$ to $\vs$ and parameterizing it by $\gamma(t)=\vs_1+t(\vs-\vs_1)$, $t\in[0,1]$, we have:
\begin{eqnarray*}
 |\pi^1\CMcal{L}\circ\phi(\vs)|&\leq& |\vs-\vs_1|\int_0^1|m_1(\vs)m_1^{-1}(\vs_1+t(\vs-\vs_1))\phi_1(\vs_1+t(\vs-\vs_1))|dt\\
&\leq&K|\vs_1-\vs|\|\phi_1\|_{\textup{mch}}\int_0^1|m_1(\vs)m_1^{-1}(\vs_1+t(\vs-\vs_1))|\vs_1+t(\vs-\vs_1)|^{-2}dt.
\end{eqnarray*}
Using Lemmas \ref{lematriangles2} and \ref{lemafitaintegrand} it is clear that:
\begin{eqnarray*}
 |\pi^1\CMcal{L}\circ\phi(\vs)|&\leq&K|\vs_1-\vs|\|\phi_1\|_{\textup{mch}}|\vs|^{-2}\left|\int_0^1e^{\alpha(1-t)\im(\vs-\vs_1)}dt\right|\\
&=&\frac{K|\vs-\vs_1|}{\alpha|\im(\vs_1-\vs)|}\|\phi_1\|_{\textup{mch}}|\vs|^{-2}\left|1-e^{\alpha\im(\vs-\vs_1)}\right|.
\end{eqnarray*}
Finally, we note that as $\im(\vs-\vs_1)\leq0$ we have that $\left|1-e^{\alpha\im(\vs-\vs_1)}\right|\leq1$. Moreover, from the definition of $\dmatchs[]{}$, using standard geometric arguments, it is easy to see that there exists a constant $C(\beta_1,\beta_2)$ such that $|\vs-\vs_1|\leq C(\beta_1,\beta_2)\left|\im\vs-\im\vs_1\right|$. Then it is clear that $|\pi^1\CMcal{L}\circ\phi(\vs)|\leq K\|\phi_1\|_{\textup{mch}}|\vs|^{-2}$, and consequently $\|\pi^1\CMcal{L}\circ\phi\|_{\textup{mch}}\leq K\|\phi_1\|_{\textup{mch}}$.
\end{proof}

\subsection{The linear operator $\CMcal{B}$}\label{subsecB}
Now we proceed to study the operator $\CMcal{B}$, defined in \eqref{defBmatchGmatch}. However, before we will need to study the vector field $\xu$.

\begin{lem}\label{lemaxu}
 Consider the vector field $\xu$ defined in \eqref{defX1}, and let $\Vpsi\in\bsmatch[]{}\times\bsmatch[]{}$, such that $\|\Vpsi\|_{\textup{mch},\times}\leq1$. Then there exists a constant $K$ such that for all $\vs\in\dmatchs[]{}$:
$$|\xu(\Vpsi,\vs,\delta,\param)|\leq K\delta|\vs|^{-2}.$$
\end{lem}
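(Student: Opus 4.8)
The statement to prove is Lemma~\ref{lemaxu}: for $\Vpsi\in\bsmatch{}\times\bsmatch{}$ with $\|\Vpsi\|_{\textup{mch},\times}\le1$, one has $|\xu(\Vpsi,\vs,\delta,\param)|\le K\delta|\vs|^{-2}$ for all $\vs\in\dmatchs{}$. Recall from \eqref{defX1} that $\xu$ is the difference of two ``vector-field quotients'': the full one, built from $\CMcal{A}(\vs,\delta,\param)$, $F(\Vpsi,-\vs^{-1}+\delta\f,\delta,\delta\param)$ and $h(\Vpsi,\vs,\delta,\param)$, minus the inner one, built from $\Ain(\vs)$, $F(\Vpsi,-\vs^{-1},0,0)$ and $\hin(\Vpsi,\vs)$. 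The point is that the full objects are obtained from the inner ones by substituting $\delta\mapsto\delta$ (rather than $0$) in several analytic slots, and since $F,H,l,m$ are analytic with $\f[0]=\g[0]=0$, each such substitution costs a factor $\delta$. So the plan is: (i) record the size of each ingredient on $\dmatchs{}$; (ii) estimate the difference of numerators; (iii) estimate the difference of denominators; (iv) combine via the elementary quotient estimate (Lemma~\ref{lemadifdiv}).

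\textbf{Step 1: basic bounds on $\dmatchs{}$.} On $\dmatchs{}$ we have $|\vs|\ge \dist\cos\beta_1\log(1/\delta)$ and $|\vs|\le K_2\delta^{\gamma-1}$, so $|\vs^{-1}|\le K/\log(1/\delta)$ and, crucially, $|\delta\vs|\le K\delta^{\gamma}\to0$, hence $|\delta\f|,|\delta\vs^3\g|$ are controlled: from $\f[0]=0$ and analyticity of $l$ on $|\delta\vs|<1$ one gets $|\delta\f|=|\delta\,l(\delta\vs)|\le K\delta^2|\vs|$, and likewise $|\delta\vs^3\g|\le K\delta^2|\vs|^4$ — but we only need that these are $\le K\delta$ on the (smaller, but still relevant) part of $\dmatchs{}$; more precisely $|\delta\f|\le K\delta^2|\vs|\le K\delta^{1+\gamma}$ and $\delta|\vs^3\g|\le K\delta^2|\vs|^4\le K\delta^{4\gamma-2}$, which for $\gamma$ close enough to $1$ is small — here one uses that $\gamma\in(0,1)$ is at our disposal, as everywhere else in the matching section. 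Since $\|\Vpsi\|_{\textup{mch},\times}\le1$ gives $|\Vpsi(\vs)|\le|\vs|^{-2}$, Remark~\ref{ordreF12Horiginals} (order-three and analyticity of $F,H$) yields $|F(\Vpsi,-\vs^{-1}+\delta\f,\delta,\delta\param)|,|H(\cdots)|\le K|(\Vpsi,\vs^{-1}+\delta\f,\delta,\delta\param)|^3\le K|\vs|^{-3}$, and the same for the inner arguments; consequently $|\hin(\Vpsi,\vs)|=|\vs^2(b\vpsi\vpsib+H(\Vpsi,-\vs^{-1},0,0))|\le K|\vs|^{-1}$ and likewise $|h(\Vpsi,\vs,\delta,\param)|\le K|\vs|^{-1}+K\delta|\vs|\cdot|\vs|^{-3}\le K|\vs|^{-1}$, so both denominators $1+\hin$, $1+h$ are bounded away from $0$ for $\rho$ large enough (i.e.\ $|\vs|$ large). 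Also $\|\CMcal{A}(\vs,\delta,\param)\Vpsi\|$, $\|\Ain(\vs)\Vpsi\|\le K|\Vpsi|\le K|\vs|^{-2}$, using that $\alpha$ is bounded.

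\textbf{Step 2: difference of numerators.} Write $N^{\textup{full}}=\CMcal{A}(\vs,\delta,\param)\Vpsi+F(\Vpsi,-\vs^{-1}+\delta\f,\delta,\delta\param)$ and $N^{\textup{in}}=\Ain(\vs)\Vpsi+F(\Vpsi,-\vs^{-1},0,0)$. Then $N^{\textup{full}}-N^{\textup{in}}=(\CMcal{A}(\vs,\delta,\param)-\Ain(\vs))\Vpsi+\big(F(\Vpsi,-\vs^{-1}+\delta\f,\delta,\delta\param)-F(\Vpsi,-\vs^{-1},0,0)\big)$. From \eqref{defa1match} and \eqref{defAinner}, $\CMcal{A}(\vs,\delta,\param)-\Ain(\vs)$ has entries $-ci\,\delta\f-\delta\param+\coef\delta\f$ (and conjugate), so its norm is $\le K(\delta|\f|+\delta|\param|)\le K(\delta^2|\vs|+\delta)\le K\delta$; multiplied by $|\Vpsi|\le|\vs|^{-2}$ this contributes $\le K\delta|\vs|^{-2}$. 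For the $F$-difference, apply the mean value theorem in the last three slots: the increment in those slots is $(\delta\f,\delta,\delta\param)$ with norm $\le K\delta$, and by Lemma~\ref{lemafitaDFiDH} (order-three $\Rightarrow$ derivatives are order-two) $|D_jF|\le K|(\Vpsi,\vs^{-1},\delta,\ldots)|^2\le K|\vs|^{-2}$ along the segment; hence $|F(\cdots\text{full})-F(\cdots\text{in})|\le K\delta\,|\vs|^{-2}$. So $|N^{\textup{full}}-N^{\textup{in}}|\le K\delta|\vs|^{-2}$.

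\textbf{Step 3: difference of denominators, and combine.} Similarly $h(\Vpsi,\vs,\delta,\param)-\hin(\Vpsi,\vs)=\vs^2\big(H(\Vpsi,-\vs^{-1}+\delta\f,\delta,\delta\param)-H(\Vpsi,-\vs^{-1},0,0)\big)+\delta\vs^3\g\big(b\vpsi\vpsib+H(\Vpsi,-\vs^{-1}+\delta\f,\delta,\delta\param)\big)$; the first term is $\le K|\vs|^2\cdot\delta\cdot|\vs|^{-2}=K\delta$ by the same MVT-plus-Lemma~\ref{lemafitaDFiDH} argument, and the second is $\le K\delta|\vs|^3\cdot|\vs|^{-3}=K\delta$. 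Now write
$$
\xu=\frac{N^{\textup{full}}}{1+h}-\frac{N^{\textup{in}}}{1+\hin}=\frac{N^{\textup{full}}-N^{\textup{in}}}{1+h}+N^{\textup{in}}\Big(\frac{1}{1+h}-\frac{1}{1+\hin}\Big).
$$
Both $|1+h|$, $|1+\hin|$ are $\ge1/2$ (Step 1), so the first term is $\le 2|N^{\textup{full}}-N^{\textup{in}}|\le K\delta|\vs|^{-2}$. For the second, Lemma~\ref{lemadifdiv} gives $\big|\tfrac{1}{1+h}-\tfrac{1}{1+\hin}\big|\le4|h-\hin|\le K\delta$, and $|N^{\textup{in}}|\le K|\vs|^{-2}$, so this term is $\le K\delta|\vs|^{-2}$ as well. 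Adding, $|\xu(\Vpsi,\vs,\delta,\param)|\le K\delta|\vs|^{-2}$ on $\dmatchs{}$, as claimed.

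\textbf{Main obstacle.} The only genuinely delicate point is bookkeeping the powers of $\vs$ versus $\delta$ coming from the $\vs^2$ and $\delta\vs^3$ factors in $h$ and in the change of variables: a naive bound would produce positive powers of $|\vs|$ (which is large, $\sim\log(1/\delta)$ or even $\sim\delta^{\gamma-1}$), so one must be careful to pair every factor of $|\vs|^2$ or $|\vs|^3$ with the matching decay $|\vs|^{-2}$ or $|\vs|^{-3}$ coming from the order-three vanishing of $F,H$ and from $\|\Vpsi\|_{\textup{mch},\times}\le1$, and to check that the residual $\delta$-powers (and the use of $\gamma\in(0,1)$) really leave a net factor $\delta$ and nothing growing. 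Once that accounting is organized exactly as above, the estimate is routine.
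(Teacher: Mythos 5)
Your proposal is correct and follows essentially the same route as the paper: the same splitting of $\xu$ into a denominator-difference term and a numerator-difference term (yours pairs the denominator difference with $N^{\textup{in}}$ instead of $N^{\textup{full}}$, a trivial rearrangement), with the same ingredients — Lemma \ref{lemadifdiv}, the mean value theorem combined with Lemma \ref{lemafitaDFiDH}, the order-three vanishing of $F,H$, and the smallness of $\delta\f$ and $\delta\vs^3\g$ on $\dmatchs{}$. The aside about needing $\gamma$ close to $1$ is unnecessary (the paired products you actually use only require $\gamma\in(0,1)$), but it does not affect the validity of the argument.
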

\begin{proof}
 First of all we will rewrite $\xu$, which was defined in \eqref{defX1}, in a more convenient way:
\begin{eqnarray*}
 \xu(\Vpsi,\vs,\delta,\param)&=&\left[\frac{1}{1+h(\Vpsi,\vs,\delta,\param)}-\frac{1}{1+\hin(\Vpsi,\vs)}\right]\left[\CMcal{A}(\vs,\delta,\param)\Vpsi+F(\Vpsi,-\vs^{-1}+\delta\f,\delta,\param)\right]\\
&&+\frac{1}{1+\hin(\Vpsi,\vs)}\left[(\CMcal{A}(\vs,\delta,\param)-\Ain(\vs))\Vpsi+F(\Vpsi,-\vs^{-1}+\delta\f,\delta,\param)-F(\Vpsi,-\vs^{-1},0,0)\right],
\end{eqnarray*}
where $\hin(\Vpsi,\vs)$ was defined in \eqref{defhinner}, $\Ain(\vs)$ in \eqref{defAinner}, $h(\Vpsi,\vs,\delta,\param)$ in \eqref{defhmatch} and $\CMcal{A}(\vs,\delta,\param)$ in \eqref{defAmatch}.
In the following we shall bound each term.

Our first claim is that:
\begin{equation}\label{fitahmenysh0}
 \left|\left[\frac{1}{1+h(\Vpsi,\vs,\delta,\param)}-\frac{1}{1+\hin(\Vpsi,\vs)}\right]\right|\leq  K\delta.
\end{equation}
First of all, note that by Remark \ref{ordreF12Horiginals} and the fact that $|\Vpsi(\vs)|\leq\|\Vpsi\|_{\textup{mch},\times}|\vs|^{-2}$ we have:
\begin{eqnarray}\label{fita1mig}
 |h(\Vpsi,\vs,\delta,\param)|&\leq& \left(b\|\Vpsi\|_{\textup{mch},\times}|\vs|^{-4}+K|\vs|^{-3}\right)\left(|\vs|^2+K\delta^2|\vs^{4}|\right)\leq K(|\vs|^{-2}+|\vs|^{-1}+\delta^2+\delta^2|\vs|)\nonumber\\
&\leq& K\left(\frac{1}{\log^2(1/\delta)}+\frac{1}{\log(1/\delta)}+\delta^2+\delta^{1+\gamma}\right)\leq\frac{1}{2}.
\end{eqnarray}
Note that this bound is also valid for $\hin(\Vpsi,\vs)=h(\Vpsi,-\vs^{-1},0,0)$. Then, by Lemma \ref{lemadifdiv}
we obtain:
\begin{eqnarray}\label{hola0}
 \left|\left[\frac{1}{1+h(\Vpsi,\vs,\delta,\param)}-\frac{1}{1+\hin(\Vpsi,\vs)}\right]\right|&\leq&4|h(\Vpsi,\vs,\delta,\param)-\hin(\Vpsi,\vs)|\nonumber\\
&\leq&4\left|b\vpsi\vpsib+H(\Vpsi,-\vs^{-1}+\delta\f,\delta,\delta\param)\right||\delta\vs^3\g|\nonumber\\
&&+4\left|H(\Vpsi,-\vs^{-1}+\delta\f,\delta,\delta\param)-H(\Vpsi,-\vs^{-1},0,0)\right||\vs|^2.
\end{eqnarray}
Now, on one hand, we have:
\begin{equation}\label{hola1}
\left|b\vpsi\vpsib+H(\Vpsi,-\vs^{-1}+\delta\f,\delta,\delta\param)\right||\delta\vs^3\g|\leq K\left(|\vs|^{-4}+|\vs^{-3}|\right)\delta^2|\vs|^4\leq K\delta.
\end{equation}
On the other hand, note that:
\begin{eqnarray*}
 &&\left|H(\Vpsi,-\vs^{-1}+\delta\f,\delta,\delta\param)-H(\Vpsi,-\vs^{-1},0,0)\right|\\
&&\leq|(\delta\f,\delta,\delta\param)|\int_0^1|\partial_{(z,\delta,\param)}H(\Vpsi,-\vs^{-1}+\lambda\delta\f,\lambda\delta,\lambda\delta\param)|d\lambda.
\end{eqnarray*}
Since for $\lambda\in[0,1]$ and for $\delta$ small enough one has $\phi=(\Vpsi,-\vs^{-1}+\lambda\delta\f,\lambda\delta,\lambda\delta\param)\in B^3(r_0/2)\times B(\delta_0/2)\times B(\param_0/2)$, from Remark \ref{ordreF12Horiginals} and applying again Lemma \ref{lemafitaDFiDH} (with $\phi^*=0$) we can bound all the derivatives of $H$ by $K|\phi|^2$, and then it is straightforward to see that:
\begin{eqnarray*}
 &&\left|H(\Vpsi,-\vs^{-1}+\delta\f,\delta,\delta\param)-H(\Vpsi,-\vs^{-1},0,0)\right|\nonumber\\
&&\leq\int_0^1K|(\Vpsi,-\vs^{-1}+\lambda\delta\f,\lambda\delta,\lambda\delta\param)|^2d\lambda\cdot|(\delta\f,\delta,\delta\param)|\leq K|\vs|^{-2}|(\delta\f,\delta,\delta\param)|\nonumber\\
&&\leq K\delta|\vs|^{-2},
\end{eqnarray*}
where we have used that $|\Vpsi(\vs)|\leq K|\vs|^{-2}$. Hence it is clear that:
\begin{equation}\label{hola2}
 \left|H(\Vpsi,-\vs^{-1}+\delta\f,\delta,\delta\param)-H(\Vpsi,-\vs^{-1},0,0)\right||\vs|^2\leq K\delta
\end{equation}
Substituting \eqref{hola1} and \eqref{hola2} in inequality \eqref{hola0}, claim \eqref{fitahmenysh0} is proved.

Our second claim is that:
\begin{equation}\label{fitaAmesF}
 |\CMcal{A}(\vs,\delta,\param)\Vpsi+F(\Vpsi,-\vs^{-1}+\delta\f,\delta,\param)|\leq K|s|^{-2}.
\end{equation}
This is straightforward to check, since the matrix $\CMcal{A}(\vs,\delta,\param)$ is bounded for $\vs\in\dmatchs[]{}$ (which is clear from \eqref{defAmatch} and \eqref{defa1match}), $\Vpsi\in\bsmatch[]{}\times\bsmatch[]{}$ and $|F(\Vpsi,-\vs^{-1}+\delta\f,\delta,\param)|\leq K|s|^{-3}$.

Our third claim is that, since as we already mentioned $|\hin(\Vpsi,\vs)|\leq1/2$, then:
\begin{equation}\label{fita1divh0}
 \left|\frac{1}{1+\hin(\Vpsi,\vs)}\right|\leq 2.
\end{equation}

The last claim is that:
\begin{equation}\label{afmenysaf0}
 |(\CMcal{A}(\vs,\delta,\param)-\Ain(\vs))\Vpsi+F(\Vpsi,-\vs^{-1}+\delta\f,\delta,\param)-F(\Vpsi,-\vs^{-1},0,\param)|\leq K\delta|s|^{-2}.
\end{equation}
First, we note that:
$$\CMcal{A}(\vs,\delta,\param)-\Ain(\vs)=\left(\begin{array}{cc}(-1-ic)\delta\f -\delta\param& 0\\ 0 & +(-1+ic)\delta\f-\delta\param\end{array}\right)$$
and since $|\delta\f|=O(\delta^{1+\gamma})$, it is clear that:
$$|(\CMcal{A}(\vs,\delta,\param)-\Ain(\vs))\Vpsi|\leq K\delta\|\Vpsi\|_{\textup{mch},\times}|s|^{-2}.$$
On the other hand, using the mean value theorem and Lemma \ref{lemafitaDFiDH} it is also easy to see that:
\begin{equation*}|F(\Vpsi,-\vs^{-1}+\delta\f,\delta,\param)-F(\Vpsi,-\vs^{-1},0,\param)|\leq \int_0^1|D_{(z,\delta)}F(\Vpsi,-\vs^{-1}+t\delta\f,t\delta,\param)|dt|(\delta\f,\delta)|\leq K\delta|s|^{-2},
\end{equation*}
so inequality \eqref{afmenysaf0} is clear.

In conclusion, from bounds \eqref{fitahmenysh0}, \eqref{fitaAmesF}, \eqref{fita1divh0} and \eqref{afmenysaf0} we obtain:
$$|\xu(\Vpsi,\vs,\delta,\param)|\leq  K\delta|s|^{-2}$$
\end{proof}

Now we can proceed to study how the operator $\CMcal{B}$ acts on $\bsmatch[]{}\times\bsmatch[]{}$.
\begin{lem}\label{lemanormaBmatch}
 If $\gamma\in(0,1)$ and $\Vpsi\in\bsmatch[]{}\times\bsmatch[]{}$, with $\|\Vpsi\|_{\textup{mch},\times}\leq 1$, then $\CMcal{B}(\Vpsi)\in\bsmatch[]{}\times\bsmatch[]{}$ and there exists a constant $K$ such that:
$$\|\CMcal{B}(\Vpsi)\|_{\textup{mch},\times}\leq\frac{K}{\log^2(1/\delta)}.$$
\end{lem}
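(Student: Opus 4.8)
The plan is to reduce the statement to a uniform estimate on the matrix $B(\vw)=\int_0^1 D_\Vpsi\CMcal{R}\bigl(\lambda(\Vpsi^\uns-\Vpsi_0^\uns),\delta,\param\bigr)(\vw)\,d\lambda$ introduced just before \eqref{defBmatchGmatch}. Since $\CMcal{B}(\Vpsi)(\vw)=B(\vw)\Vpsi(\vw)$ with $B$ analytic in $\vw$, one has $\CMcal{B}(\Vpsi)\in\bsmatch[]{}\times\bsmatch[]{}$ and $\|\CMcal{B}(\Vpsi)\|_{\textup{mch},\times}\le\bigl(\sup_{\vw\in\dmatchs{\uns}}\|B(\vw)\|\bigr)\|\Vpsi\|_{\textup{mch},\times}$, so it suffices to prove $\|B(\vw)\|\le K|\vw|^{-2}$ for all $\vw\in\dmatchs{\uns}$; as $|\vw|\ge\dist\cos\beta_1\log(1/\delta)$ on that domain, this yields the claimed $\log^{-2}(1/\delta)$ factor. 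Throughout, $\|\cdot\|$ is the operator norm and $K$ is independent of $\delta$.

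First I would record the needed a priori information about the base point $\Vpsi_1^\uns:=\Vpsi^\uns-\Vpsi_0^\uns$. Theorem \ref{theoreminner} gives $|\Vpsi_0^\uns(\vs)|\le K|\vs|^{-3}$; translating Theorem \ref{theoremouter} through $\Vpsi^\uns(\vs)=\delta\vphi^\uns(\delta\vs+i\pi/2)$ and using $|\het(\vu)-1|\asymp|\vu-i\pi/2|^{-1}$ near $i\pi/2$ (Lemma \ref{lemamaxmin}) gives $|\Vpsi^\uns(\vs)|\le K|\vs|^{-3}$ too, hence $|\Vpsi_1^\uns(\vs)|\le K|\vs|^{-3}$ and $\|\Vpsi_0^\uns\|_{\textup{mch},\times},\|\Vpsi_1^\uns\|_{\textup{mch},\times}\le K/\log(1/\delta)\le 1$. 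I would also note, exactly as in the proof of Lemma \ref{lemaxu}, that for $\lambda\in[0,1]$ and $\vs\in\dmatchs{\uns}$ the argument $(\Vpsi_0^\uns(\vs)+\lambda\Vpsi_1^\uns(\vs),-\vs^{-1}+\delta\f,\delta,\delta\param)$ lies in the analyticity ball $B^3(r_0/2)\times B(\delta_0/2)\times B(\param_0/2)$ of $F,H$, so Lemma \ref{lemafitaDFiDH} (with $\phi^\ast=0$) bounds all first and second $\Vpsi$-derivatives of $F,H$ there by $K|\phi|^2=O(|\vs|^{-2})$ and $K|\phi|=O(|\vs|^{-1})$, and that $|\hin(\Vpsi_0^\uns,\vs)|\le1/2$, $|h_0\vs^{-1}|\le1/2$ in that regime.

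Then I would differentiate $\CMcal{R}$ in \eqref{defRmatch} line by line. Line one is the second-order Taylor remainder of $\xz$ (see \eqref{defX0}) at $\Vpsi_0^\uns$, so its $\Vpsi$-derivative is $D_\Vpsi\xz(\Vpsi_0^\uns+\lambda\Vpsi_1^\uns,\vs)-D_\Vpsi\xz(\Vpsi_0^\uns,\vs)$, which by the mean value theorem is $O\bigl(|\Vpsi_1^\uns(\vs)|\cdot\sup\|D^2_\Vpsi\xz\|\bigr)$; using that $\Ain$ is bounded, $|\hin|\le1/2$, and that the $\vs^2$-weight in $\hin$ (see \eqref{defhinner}) exactly offsets its quadratic/cubic $\Vpsi$-dependence so $|D_\Vpsi\hin|=O(1)$, a short computation gives $\sup\|D^2_\Vpsi\xz\|=O(1)$, whence this term is $O(|\vs|^{-3})$. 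Line two contributes $D_\Vpsi\xu(\Vpsi_0^\uns+\lambda\Vpsi_1^\uns,\vs,\delta,\param)$; since the $\Vpsi$-analyticity ball of $\xu$ (see \eqref{defX1}) has radius $\sim|\vs|^{-1}$ — limited by the zeros of $1+h$, cf. \eqref{defhmatch} — while the base point is $O(|\vs|^{-3})$, a Cauchy estimate combined with a rerun of the proof of Lemma \ref{lemaxu} on that ball (where now $|\Vpsi|=O(|\vs|^{-1})$) gives a bound small in $\delta$, e.g.\ $O(\delta^\gamma)$, which is harmless. Lines three, four and five are linear in $\Vpsi_1^\uns$, so their $\Vpsi$-derivatives are the respective coefficients: line three is $\bigl[\tfrac{1}{1+\hin(\Vpsi_0^\uns,\vs)}-\tfrac{1}{1+h_0\vs^{-1}}\bigr]\Ain(\vs)$, and the cancellation $\hin(\Vpsi_0^\uns,\vs)=h_0\vs^{-1}+O(|\vs|^{-2})$ — from $b\,\vpsi_0^\uns\vpsib_0^\uns=O(|\vs|^{-6})$, $H(\Vpsi_0^\uns,-\vs^{-1},0,0)-H(0,0,-\vs^{-1},0,0)=O(|\vs|^{-5})$ and $H(0,0,-\vs^{-1},0,0)-h_0\vs^{-3}=O(|\vs|^{-4})$ (the last from \eqref{defh0} and $H$ being of order three) — together with Lemma \ref{lemadifdiv} makes it $O(|\vs|^{-2})$; line four is $\tfrac{1}{1+\hin(\Vpsi_0^\uns,\vs)}D_\Vpsi F(\Vpsi_0^\uns,-\vs^{-1},0,0)=O(|\vs|^{-2})$ directly by Lemma \ref{lemafitaDFiDH}; line five is $D_\Vpsi\bigl[\tfrac{1}{1+\hin(\Vpsi_0^\uns,\vs)}\bigr]\bigl(\Ain(\vs)\Vpsi_0^\uns+F(\Vpsi_0^\uns,-\vs^{-1},0,0)\bigr)=O(1)\cdot O(|\vs|^{-3})$. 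Integrating in $\lambda$ and taking the worst of these estimates gives $\|B(\vw)\|\le K|\vw|^{-2}$ on $\dmatchs{\uns}$, as needed.

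The step I expect to be the main obstacle is lines three and four, which yield only $O(|\vs|^{-2})$ rather than $O(|\vs|^{-3})$: this is precisely the source of the $\log^{-2}(1/\delta)$ (and not $\log^{-3}(1/\delta)$) in the statement, and it relies on the cancellation $\hin(\Vpsi_0^\uns,\vs)=h_0\vs^{-1}+O(|\vs|^{-2})$, i.e.\ on $h_0\vs^{-1}$ being genuinely the leading term of $\hin$ along the inner solution. A secondary technical point is passing from the pointwise bound of Lemma \ref{lemaxu} to a bound on $D_\Vpsi\xu$, which requires identifying the correct radius (a fixed multiple of $|\vs|^{-1}$, coming from the zeros of $1+h$) of the ball of $\Vpsi$-analyticity of $\xu$ so that Cauchy's inequality costs only that factor.
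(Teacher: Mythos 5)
Your proof is correct and is essentially the paper's own: the same five-term expression for $D_\Vpsi\CMcal{R}$ obtained by differentiating \eqref{defRmatch}, the mean value theorem plus boundedness of the second $\Vpsi$-derivatives for the Taylor-remainder term of $\xz$, a Cauchy estimate resting on Lemma \ref{lemaxu} for $D_\Vpsi\xu$, Lemma \ref{lemadifdiv} together with the cancellation $\hin(\Vpsi_0^\uns,\vs)=h_0\vs^{-1}+O(|\vs|^{-2})$ for the third term, Lemma \ref{lemafitaDFiDH} for the fourth and fifth, and finally $|\vs|\geq\dist\cos\beta_1\log(1/\delta)$ to turn everything into $K/\log^{2}(1/\delta)$. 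One bookkeeping remark: your intermediate claim $\|B(\vw)\|\leq K|\vw|^{-2}$ is not actually delivered by your own estimate for the $D_\Vpsi\xu$ term, which is only $O(\delta)$ (or your $O(\delta^{\gamma})$) and need not be $\leq K|\vw|^{-2}$ near the outer edge $|\vw|\sim\delta^{\gamma-1}$ when $\gamma$ is small; this is harmless, since every term is in any case bounded by $K/\log^{2}(1/\delta)$, which is the weaker uniform bound the lemma needs and the one the paper aims for directly.
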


\begin{proof}
 Recall that:
\begin{eqnarray*}
 \CMcal{R}(\Vpsi,\delta,\param)(\vw)&=&\xz(\Vpsi_0^\uns+\Vpsi,\vw)-\xz(\Vpsi_0^\uns,\vw)-D_\Vpsi\xz(\Vpsi_0^\uns,\vw)\Vpsi\\
&&+\xu(\Vpsi_0^\uns+\Vpsi,\vw,\delta,\param)+\left[\frac{1}{1+\hin(\Vpsi_0^\uns,\vw)}-\frac{1}{1+h_0\vw}\right]\CMcal{A}(\vw)\Vpsi\\
&&+\frac{1}{1+\hin(\Vpsi_0^\uns,\vw)}D_\Vpsi F(\Vpsi_0^\uns,\vw^{-1},0,0)\Vpsi\\
&&+D_\Vpsi\left[\frac{1}{1+\hin(\Vpsi_0^\uns,\vw)}\right](\CMcal{A}(\vw)\Vpsi_0^\uns+F(\Vpsi_0^\uns,\vw^{-1},0,0))\Vpsi,
\end{eqnarray*}
and hence:
\begin{eqnarray*}
 D_\Vpsi\CMcal{R}(\Vpsi,\delta,\param)(\vw)&=&D_\Vpsi\xz(\Vpsi_0^\uns+\Vpsi,\vw)-D_\Vpsi\xz(\Vpsi_0^\uns,\vw)+D_\Vpsi\xu(\Vpsi_0^\uns+\Vpsi,\vw,\delta,\param)\\
&&+\left[\frac{1}{1+\hin(\Vpsi_0^\uns,\vw)}-\frac{1}{1+h_0\vw}\right]\CMcal{A}(\vw)+\frac{1}{1+\hin(\Vpsi_0^\uns,\vw)}D_\Vpsi F(\Vpsi_0^\uns,\vw^{-1},0,0)\\
&&+D_\Vpsi\left[\frac{1}{1+\hin(\Vpsi_0^\uns,\vw)}\right](\CMcal{A}(\vw)\Vpsi_0^\uns+F(\Vpsi_0^\uns,\vw^{-1},0,0)).
\end{eqnarray*}
We will see that, for $\vw\in\dmatch[\dist][]{}$:
\begin{equation}\label{fitaDpsiR}
 \left|D_\Vpsi\CMcal{R}(\Vpsi,\delta,\param)(\vw)\right|\leq\frac{K}{\log^2(1/\delta)}.
\end{equation}

First of all we claim that:
\begin{eqnarray}\label{fitarestaDxz}
|D_\Vpsi\xz(\Vpsi_0^\uns+\Vpsi,\vw)-D_\Vpsi\xz(\Vpsi_0^\uns,\vw)|\leq\frac{K}{\log^2(1/\delta)}.
\end{eqnarray}
This can be shown using the mean value theorem in each column of the matrix $D_\Vpsi\xz$. For example, we will prove the result for the first one, $D_\vpsi\xz$. Writing $\Vpsi_\lambda=\Vpsi_0^\uns+\lambda\Vpsi$, the mean value theorem gives us the following bound:
\begin{eqnarray}\label{fitaambintegrand}
|D_\vpsi\xz(\Vpsi_0^\uns+\Vpsi,\vw)-D_\vpsi\xz(\Vpsi_0^\uns,\vw)|&\leq& \int_0^1{\left|D_\Vpsi D_\vpsi\xz(\Vpsi_\lambda,\vw)\right|d\lambda} |\Vpsi(\vw)|\nonumber\\
&\leq& \int_0^1{\left|D_\Vpsi D_\vpsi\xz(\Vpsi_\lambda,\vw)\right|d\lambda}\|\Vpsi\|_{\textup{mch},\times}|\vw|^{-2}\nonumber\\
&\leq&\int_0^1{\left|D_\Vpsi D_\vpsi\xz(\Vpsi_\lambda,\vw)\right|d\lambda}\frac{K\|\Vpsi\|_{\textup{mch},\times}}{\log^2(1/\delta)}.
\end{eqnarray}
Then it is clear that in order to prove \eqref{fitarestaDxz} it is only necessary to prove that the integral is bounded, or equivalently, that the integrand $D_\Vpsi D_\vpsi\xz(\Vpsi_\lambda,\vw)$ (which is a $2\times2$ matrix) is bounded for $\lambda\in[0,1]$. Note that, from definition \eqref{defX0} of $\xz$, fixing $w\in\dmatch[\dist][]{}$ it is clear that $\xz(\phi,\vw)$ is bounded and analytic if:
$$\phi\in B^2(\tilde r_0)\subset B^2(r_0)\cap\{\phi\in\mathbb{C}^2\,:\,|\hin(\phi,\vw)|<1/2\},$$
for some $\tilde r_0$. Then, Cauchy's theorem implies that the derivatives of $\xz$ with respect to $\vpsi$ and $\vpsib$ are bounded. Hence, applying again the same arguments, we prove that all the derivatives of order two are also bounded. Since for $\Vpsi\in\bsmatch{}\times\bsmatch{}$ we have that $\lambda\Vpsi(\vw)\in B^2(\tilde r_0/2)$ for $\delta$ small enough and $\lambda\in[0,1]$, we obtain that $D_\Vpsi D_\vpsi\xz(\Vpsi_\lambda,\vw)$ is bounded and thus \eqref{fitarestaDxz} is proved.

Our next step will be to prove that:
\begin{equation}\label{fitadxu}
 |D_\Vpsi\xu(\Vpsi_0^\uns+\Vpsi,\vw,\delta,\param)|\leq K\delta\leq \frac{K}{\log^2(1/\delta)}.
\end{equation}
In fact, we will prove the result just for the derivative with respect to $\vpsi$, being the one with respect to $\vpsib$ analogous. Note that if $\Vpsi+\Vpsi_0^\uns=(\vpsi+\vpsi_0^\uns,\vpsib+\vpsib_0^\uns)\in\bsmatch[]{}\times\bsmatch[]{}$, then $(\vpsi+\vpsi_0^\uns+|\vw|^{-2}e^{i\theta},\vpsib+\vpsib_0^\uns)\in\bsmatch[]{}\times\bsmatch[]{}$ too. Then first using Cauchy's theorem and later Lemma \ref{lemaxu}, we have:
\begin{eqnarray*}
 |D_\vpsi\xu(\Vpsi_0^\uns+\Vpsi,\vw,\delta,\param)|&\leq&\frac{1}{2\pi|\vw|^{-2}}\int_0^{2\pi}|\xu(\vpsi+\vpsi_0^\uns+|\vw|^{-2}e^{i\theta},\vpsib+\vpsib_0^\uns,\delta,\param)|d\theta\\
&\leq&\frac{1}{2\pi|\vw|^{-2}}\int_0^{2\pi}{ K\delta|\vw|^{-2}d\theta}= K\delta,
\end{eqnarray*}
and the claim is proved.

Now we claim that:
\begin{equation}\label{fitahpsimenysh0A}
 \left|\left[\frac{1}{1+\hin(\Vpsi_0^\uns,\vw)}-\frac{1}{1+h_0\vw^{-1}}\right]\CMcal{A}(\vw)\right|\leq K |\vw|^{-2}\leq \frac{K}{\log^2(1/\delta)}.
\end{equation}
Indeed, on the one hand, note that $\CMcal{A}(\vw)$ is bounded. On the other hand, we observe that for $\delta$ small enough:
$$
 |\hin(\Vpsi_0^\uns,\vw)|\leq\frac{1}{2}, \qquad |h_0\vw|^{-1}\leq\frac{K}{\log(1/\delta)}\leq\frac{1}{2},
$$
and then by Lemma \ref{lemadifdiv} and definition \eqref{defh0} of $h_0$ we obtain that:
$$\left|\frac{1}{1+\hin(\Vpsi_0^\uns,\vw)}-\frac{1}{1+h_0\vw^{-1}}\right|\leq 4|\hin(\Vpsi_0^\uns,\vw)-h_0\vw^{-1}|\leq K|\vw|^{-2}$$
and then bound \eqref{fitahpsimenysh0A} is clear.

Our next claim, which can be easily proved using Lemma \ref{lemafitaDFiDH} and the fact that $|\Vpsi_0^\uns(\vw)|\leq K|\vw|^{-3}$, is that:
\begin{equation}\label{1divhDF}
 \left|\frac{1}{1+\hin(\Vpsi_0^\uns,\vw)}D_\Vpsi F(\Vpsi_0^\uns,\vw^{-1},0)\right|\leq 2K|\vw|^{-2}\leq \frac{K}{\log^2(1/\delta)}.
\end{equation}

Finally, we claim that:
\begin{equation}\label{dhAmesF}
 \left|D_\Vpsi\left[\frac{1}{1+\hin(\Vpsi_0^\uns,\vw)}\right](\CMcal{A}(\vw)\Vpsi_0^\uns+F(\Vpsi_0^\uns,\vw^{-1},0))\right|\leq K|\vw|^{-2}\leq \frac{K}{\log^2(1/\delta)}.
\end{equation}
Indeed, we note that $D_\Vpsi(1+h(\Vpsi_0^\uns,\vw))^{-1}$ is bounded. We have to use that $(1+h(\Vpsi_0^\uns,\vw))^{-1}$ is bounded and analytic in a ball of radius $\tilde r_0$, and use Cauchy's theorem in a ball of radius $\tilde r_0/2$ (where $\Vpsi_0$ belongs to) to prove that the derivative with respect to $\Vpsi$ is bounded. Finally, \eqref{dhAmesF} follows from the following bounds:
$$|\CMcal{A}(\vw)\Vpsi_0^\uns|\leq K\|\Vpsi_0^\uns\|_{\textup{mch},\times}|\vw|^{-2},\qquad|F(\Vpsi_0^\uns,\vw^{-1},0,0))|\leq K|\vw|^{-3}.$$
In the second bound we have used Remark \ref{ordreF12Horiginals}.

With bounds \eqref{fitarestaDxz}, \eqref{fitadxu}, \eqref{fitahpsimenysh0A}, \eqref{1divhDF} and \eqref{dhAmesF} we obtain that:
$$|B(\vw)\Vpsi(\vw)|\leq\int_0^1{|D_\Vpsi \CMcal{R}(\lambda\Vpsi,\delta,\param)(\vw)|d\lambda}|\Vpsi(\vw)|\leq\frac{K}{\log^2(1/\delta)}|\Vpsi(\vw)|,$$
and then, since $\|\Vpsi\|_{\textup{mch},\times}\leq1$, it is clear that:
$$\|\CMcal{B}(\Vpsi)\|_{\textup{mch},\times}\leq\frac{K}{\log^2(1/\delta)}\|\Vpsi\|_{\textup{mch},\times}\leq\frac{K}{\log^2(1/\delta)}.$$
\end{proof}

\subsection{The independent term}\label{subsecindepterm}
Finally, in this subsection we will study the independent term $\CMcal{I}(c_1,c_2)+\CMcal{L}\circ \CMcal{R}(0,\delta,\param)$. First we note that if in Lemma \ref{lemaxu} we take $\Vpsi=\Vpsi_0^\uns\in\bsmatch[]{}\times\bsmatch[]{}$, noting that $\xu(\Vpsi_0^\uns,\vs,\delta,\param)=\CMcal{R}(0,\delta,\param)(\vs)$ (see the definition \eqref{defRmatch} of $\CMcal{R}$) and that for $\rho$ big enough $\|\Vpsi_0\|_{\textup{mch},\times}\leq1$, we obtain immediately the following corollary:
\begin{cor}\label{corolariR}
 $\CMcal{R}(0,\delta,\param)\in\bsmatch[]{}\times\bsmatch[]{}$ and there exists a constant $K$ such that:
$$\|\CMcal{R}(0,\delta,\param)\|_{\textup{mch},\times}\leq K\delta.$$
\end{cor}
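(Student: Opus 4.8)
\noindent The plan is to deduce Corollary \ref{corolariR} as the special case $\Vpsi=\Vpsi_0^\uns$ of Lemma \ref{lemaxu}. To carry this out I first have to make sure $\Vpsi_0^\uns$ is an admissible argument for that lemma, namely that it belongs to $\bsmatch[]{}\times\bsmatch[]{}$ with $\|\Vpsi_0^\uns\|_{\textup{mch},\times}\le1$, and then I have to recognize $\CMcal{R}(0,\delta,\param)$ as $\xu$ evaluated along $\Vpsi_0^\uns$.

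For the first point: although $\Vpsi_0^\uns$ is a priori defined only on $\din{\uns}$, the inclusion $\dmatchs[]{}\subset\din{\uns}$ (valid for $\delta$ small, as observed right after \eqref{fitasj}) lets me restrict it to $\dmatchs[]{}$, where it stays analytic. By item \ref{varietatsinner} of Theorem \ref{theoreminner} one has $|\Vpsi_0^\uns(\vs)|\le K|\vs|^{-3}$, and on $\dmatchs[]{}$ one has $|\vs|\ge\dist\cos\beta_1\log(1/\delta)$; hence
$$\|\Vpsi_0^\uns\|_{\textup{mch},\times}=\sup_{\vs\in\dmatchs[]{}}|\vs|^{2}|\Vpsi_0^\uns(\vs)|\le K\sup_{\vs\in\dmatchs[]{}}|\vs|^{-1}\le\frac{K}{\dist\cos\beta_1\log(1/\delta)}\le1$$
for $\delta$ small enough (with $\rho$ taken large enough so that Theorem \ref{theoreminner} applies). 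So $\Vpsi_0^\uns\in\bsmatch[]{}\times\bsmatch[]{}$ with norm at most $1$, and Lemma \ref{lemaxu} may be applied to $\Vpsi=\Vpsi_0^\uns$.

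For the second point: I would just read off the definition \eqref{defRmatch} of $\CMcal{R}$ with $\Vpsi_1^\uns=0$. The combination $\xz(\Vpsi_0^\uns+\Vpsi_1^\uns,\vs)-\xz(\Vpsi_0^\uns,\vs)-D_\Vpsi\xz(\Vpsi_0^\uns,\vs)\Vpsi_1^\uns$ vanishes identically, and every other summand of \eqref{defRmatch} except $\xu(\Vpsi_0^\uns+\Vpsi_1^\uns,\vs,\delta,\param)$ carries a trailing factor $\Vpsi_1^\uns$, hence also vanishes; what survives is $\CMcal{R}(0,\delta,\param)(\vs)=\xu(\Vpsi_0^\uns,\vs,\delta,\param)$, with $\xu$ as in \eqref{defX1}.

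Putting the two points together, Lemma \ref{lemaxu} applied with $\Vpsi=\Vpsi_0^\uns$ gives $|\CMcal{R}(0,\delta,\param)(\vs)|=|\xu(\Vpsi_0^\uns,\vs,\delta,\param)|\le K\delta|\vs|^{-2}$ for every $\vs\in\dmatchs[]{}$, i.e.\ $\|\CMcal{R}(0,\delta,\param)\|_{\textup{mch},\times}\le K\delta$ and in particular $\CMcal{R}(0,\delta,\param)\in\bsmatch[]{}\times\bsmatch[]{}$; the stable case is identical after replacing the unstable objects by their stable counterparts. I do not expect a genuine obstacle here: the only two points needing a little care are the membership $\Vpsi_0^\uns\in\bsmatch[]{}\times\bsmatch[]{}$, which rests on the inclusion $\dmatchs[]{}\subset\din{\uns}$ together with the $|\vs|^{-3}$ decay of $\Vpsi_0^\uns$, and the bookkeeping in \eqref{defRmatch} that makes $\CMcal{R}(0,\cdot)$ collapse to $\xu(\Vpsi_0^\uns,\cdot)$.
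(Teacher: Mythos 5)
Your proposal is correct and follows exactly the paper's route: the paper also obtains the corollary by taking $\Vpsi=\Vpsi_0^\uns$ in Lemma \ref{lemaxu}, noting that $\CMcal{R}(0,\delta,\param)(\vs)=\xu(\Vpsi_0^\uns,\vs,\delta,\param)$ from \eqref{defRmatch} and that $\|\Vpsi_0^\uns\|_{\textup{mch},\times}\leq1$. You merely spell out the two verifications (the membership of $\Vpsi_0^\uns$ in $\bsmatch[]{}\times\bsmatch[]{}$ via $\dmatchs[]{}\subset\din{\uns}$ and the $|\vs|^{-3}$ decay, and the collapse of \eqref{defRmatch} at $\Vpsi_1^\uns=0$) that the paper leaves implicit.
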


\begin{lem}\label{lemaI+LRmatch}
 $\CMcal{I}(c_1,c_2)+\CMcal{L}\circ \CMcal{R}(0,\delta,\param)\in\bsmatch[]{}\times\bsmatch[]{}$ and:
$$\|\CMcal{L}\circ \CMcal{R}(0,\delta,\param)\|_{\textup{mch},\times}\leq K\delta.$$
Moreover:
$$\|\CMcal{I}(c_1,c_2)+\CMcal{L}\circ \CMcal{R}(0,\delta,\param)\|_{\textup{mch},\times}\leq K\delta^{1-\gamma}.$$
\end{lem}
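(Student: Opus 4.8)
The plan is to treat the two summands of $\CMcal{I}(c_1,c_2)+\CMcal{L}\circ\CMcal{R}(0,\delta,\param)$ separately. The term $\CMcal{L}\circ\CMcal{R}(0,\delta,\param)$ needs no new work: by Corollary \ref{corolariR} we have $\CMcal{R}(0,\delta,\param)\in\bsmatch[]{}\times\bsmatch[]{}$ with $\|\CMcal{R}(0,\delta,\param)\|_{\textup{mch},\times}\leq K\delta$, and then Lemma \ref{lemLmatch} gives $\CMcal{L}\circ\CMcal{R}(0,\delta,\param)\in\bsmatch[]{}\times\bsmatch[]{}$ and $\|\CMcal{L}\circ\CMcal{R}(0,\delta,\param)\|_{\textup{mch},\times}\leq K\|\CMcal{R}(0,\delta,\param)\|_{\textup{mch},\times}\leq K\delta$, which is the first displayed bound. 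Since $\gamma\in(0,1)$ and $\delta$ is small, $\delta\leq\delta^{1-\gamma}$, so this term is also $O(\delta^{1-\gamma})$; the whole statement will then follow from the triangle inequality once $\CMcal{I}(c_1,c_2)$ is controlled.

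So the real work is to estimate $\CMcal{I}(c_1,c_2)(\vs)=(m_1(\vs)c_1,m_2(\vs)c_2)$, where $c_1=m_1^{-1}(\vs_1)\vpsi_1(\vs_1)$, $c_2=m_2^{-1}(\vs_2)\vpsib_1(\vs_2)$ and $\Vpsi_1^\uns=(\vpsi_1,\vpsib_1)=\Vpsi^\uns-\Vpsi_0^\uns$. First I would record a crude pointwise bound on $\Vpsi^\uns$ over the matching domain: since $\dmatch{\uns}\subset\doutT{\uns}$, Theorem \ref{theoremouter} yields $|\vphi^\uns(i\pi/2+\delta\vs)|\leq K\delta^2|\het(i\pi/2+\delta\vs)-1|^3$ for $\vs\in\dmatchs{\uns}$; using $\het(i\pi/2+\delta\vs)=-(\delta\vs)^{-1}+\f$ with $\f[0]=0$ and the fact that $|\delta\vs|=|\vu-i\pi/2|\leq K\delta^\gamma$ is small on $\dmatchs{\uns}$, one gets $|\het(i\pi/2+\delta\vs)-1|\leq K|\delta\vs|^{-1}$, hence $|\Vpsi^\uns(\vs)|=\delta|\vphi^\uns(i\pi/2+\delta\vs)|\leq K|\vs|^{-3}$. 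Together with $|\Vpsi_0^\uns(\vs)|\leq K|\vs|^{-3}$ from Theorem \ref{theoreminner}, this gives $|\Vpsi_1^\uns(\vs)|\leq K|\vs|^{-3}$ on $\dmatchs{\uns}$, and in particular, using $|\vs_j|\geq K_1\delta^{\gamma-1}$ from \eqref{fitasj}, $|\vpsi_1(\vs_1)|\leq K|\vs_1|^{-3}$ and $|\vpsib_1(\vs_2)|\leq K|\vs_2|^{-3}$.

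Next I would bound the ratios $m_i(\vs)m_i^{-1}(\vs_i)$. Setting $t=0$ in Lemma \ref{lemafitaintegrand} gives $|m_1(\vs)m_1^{-1}(\vs_1)|\leq Ke^{\alpha\im(\vs-\vs_1)}$ and $|m_2(\vs)m_2^{-1}(\vs_2)|\leq Ke^{\alpha\im(\vs_2-\vs)}$, and the geometry of $\dmatchs{\uns}$ (the same ordering of imaginary parts already used in the proof of Lemma \ref{lemLmatch}) forces both exponents to be nonpositive, so the two ratios are $\leq K$. Combining everything, for $\vs\in\dmatchs{\uns}$, and using $|\vs|\leq K_2\delta^{\gamma-1}\leq K|\vs_1|$ together with $|\vs_1|^{-1}\leq K_1^{-1}\delta^{1-\gamma}$,
$$|\vs|^2\,|m_1(\vs)c_1|=|\vs|^2\,|m_1(\vs)m_1^{-1}(\vs_1)|\,|\vpsi_1(\vs_1)|\leq K|\vs|^2|\vs_1|^{-3}\leq K|\vs_1|^{-1}\leq K\delta^{1-\gamma},$$
and the second component is estimated identically with $\vs_2$. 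Since $m_1,m_2$ are analytic on $\dmatchs{\uns}$, this shows $\CMcal{I}(c_1,c_2)\in\bsmatch[]{}\times\bsmatch[]{}$ with $\|\CMcal{I}(c_1,c_2)\|_{\textup{mch},\times}\leq K\delta^{1-\gamma}$; adding the first-paragraph bound gives $\|\CMcal{I}(c_1,c_2)+\CMcal{L}\circ\CMcal{R}(0,\delta,\param)\|_{\textup{mch},\times}\leq K\delta^{1-\gamma}+K\delta\leq K\delta^{1-\gamma}$.

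The conceptual point that makes the argument work is that the only information we have about $\Vpsi_1^\uns$ at this stage is the non-sharp bound $|\Vpsi_1^\uns(\vs)|\leq K|\vs|^{-3}$, but we evaluate it at the matching points $\vs_j$, where $|\vs_j|$ is as large as possible in the domain (of order $\delta^{\gamma-1}$), so the algebraic decay already yields the factor $\delta^{1-\gamma}$. The step requiring the most care is therefore the uniform boundedness of $m_i(\vs)/m_i(\vs_i)$ over the whole matching domain: one must check that the exponentially growing factors $e^{\mp i\alpha\vs}$ are genuinely dominated by the sign of the imaginary part on $\dmatchs{\uns}$, which is exactly the content of Lemmas \ref{lematriangles2} and \ref{lemafitaintegrand}; the polynomial and $\beta(\vs)$ factors are bounded since $|\vs|,|\vs_i|\geq K\log(1/\delta)\gg h_0$ and $\arg(\vs+h_0)$ is bounded on $\dmatchs{\uns}$. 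Everything else is bookkeeping with the change of variables \eqref{canviinner} and the size estimates \eqref{fitasj}.
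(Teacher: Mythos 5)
Your proposal is correct and follows essentially the same route as the paper: Corollary \ref{corolariR} plus Lemma \ref{lemLmatch} for the $\CMcal{L}\circ\CMcal{R}(0,\delta,\param)$ term, and for $\CMcal{I}(c_1,c_2)$ the bound $|\vpsi_1(\vs_1)|\leq K|\vs_1|^{-3}\leq K\delta^{3(1-\gamma)}$ (via Theorem \ref{theoremouter} and the inner bound on $\Vpsi_0^{\uns}$) combined with Lemma \ref{lemafitaintegrand} at $t=0$ and the ordering of imaginary parts, then multiplication by $|\vs|^2\leq K\delta^{2(\gamma-1)}$. The only cosmetic difference is that you bound $\Vpsi_1^{\uns}$ pointwise on all of $\dmatchs{\uns}$ before evaluating at $\vs_j$, whereas the paper works directly at $\vs_1$; the estimates and constants are the same.
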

\begin{proof}
The first part is a direct consequence of Lemma \ref{lemLmatch} and Corollary \ref{corolariR}. To prove the second part, recall that:
$$\CMcal{I}(c_1,c_2)(\vs)=\CMcal{M}(\vs)\left(\begin{array}{c}c_1 \\ c_2\end{array}\right)=\left(\begin{array}{c}m_1(\vs)c_1 \\ m_2(\vs)c_2\end{array}\right),$$
where $\CMcal{M}$ was defined in \eqref{matriuMinner}. Focusing on the first component, using \eqref{c1c2fixedpointmatch}, we have:
$$m_1(\vs)c_1=\frac{m_1(\vs)}{m_1(\vs_1)}\vpsi_1^\uns(\vs_1).$$
First we claim that:
\begin{equation}\label{fitaVpsi1s1}
 |\vpsi_1^\uns(\vs_1)|\leq K\delta^{3(1-\gamma)}
\end{equation}
Indeed, we have $|\vpsi_1^\uns(\vs_1)|\leq|\vpsi^\uns(\vs_1)|+|\vpsi_0^\uns(\vs_1)|$, so we just have to check that both terms satisfy the bound. On the one hand, for $\delta$ small enough $i\pi/2+\vs_1\delta\in \doutT{\uns}$ (see \eqref{defDTDinf} for the definition of $\doutT{\uns}$ and \eqref{etsj} for $s_1$). Then by Theorem \ref{theoremouter}:
$$|\vpsi^\uns(\vs_1)|=|\delta\vxi^\uns(\vs_1\delta+i\pi/2)|\leq K\delta^2|\het(\vs_1\delta+i\pi/2)-1|^3\leq K\delta^2|s_1\delta|^{-3}.$$
Then using that $|\vs_1\delta|\geq K_1\delta^\gamma$ (see \eqref{fitasj}) we obtain immediately that:
$$|\vpsi_1^\uns(\vs_1)|\leq\delta^{3(1-\gamma)}K.$$
On the other hand since, by Proposition \ref{propvarietatsinner}, $\vpsi_0^\uns\in\bsin[3]{\uns}$, from definition \eqref{defnormin} of the norm $\|.\|_{\textup{in},3,\times}^\uns$ we know that:
$$|\vpsi_0^\uns(\vs_1)|\leq\|\vpsi_0^\uns\|_{\textup{in},\nu,\times}^\uns|\vs_1|^{-3}\leq \delta^{3(1-\gamma)}K,$$
and then the claim is clear.

Then, by \eqref{fitaVpsi1s1} and Lemma \ref{lemafitaintegrand} we obtain:
$$|m_1(\vs)c_1|\leq\delta^{3(1-\gamma)}Ke^{-\alpha\im(\vs_1-\vs)}\leq\delta^{3(1-\gamma)}K.$$
For the second component we obtain an analogous bound, and therefore it is clear that:
$$\|\CMcal{I}(c_1,c_2)\|_{\textup{mch},\times}\leq\sup_{\vs\in\dmatchs[]{}}\delta^{3(1-\gamma)}|\vs|^2K\leq K\delta^{1-\gamma},$$
and the lemma is proved, since $\delta<\delta^{1-\gamma}$.
\end{proof}

\subsection{End of the proof of Proposition \ref{propmatching}}\label{subsecendproof}
From definition \eqref{defBmatchGmatch} of $\CMcal{G}$ and using Lemmas \ref{lemLmatch} and \ref{lemanormaBmatch}, we obtain $\|\CMcal{G}(\Vpsi)\|_{\textup{mch},\times}\leq K\log^{-2}(1/\delta)$. Hence, we have:
$$\|\CMcal{G}\|:=\max_{\|\Vpsi\|_{\textup{mch},\times}\leq1}\{\|\CMcal{G}(\Vpsi)\|_{\textup{mch},\times}\}\leq\frac{K}{\log^2(1/\delta)},$$
and then it is clear that for $\delta$ sufficiently small $\|\CMcal{G}\|<1$. This fact implies that $\textup{Id}-\CMcal{G}$ is invertible in $\bsmatch[]{}\times\bsmatch[]{}$. Then equation \eqref{fixedpointmatchcompacte} and Lemma \ref{lemaI+LRmatch} yield:
\begin{eqnarray*}
 \|\Vpsi_1^\uns\|_{\textup{mch},\times}&=&\|(\textup{Id}-\CMcal{G})^{-1}(\CMcal{I}(c_1,c_2)+\CMcal{L}\circ \CMcal{R}(0,\delta,\param))\|_{\textup{mch},\times}\\
&\leq&\|(\textup{Id}-\CMcal{G})^{-1}\|\|\CMcal{I}(c_1,c_2)+\CMcal{L}\circ \CMcal{R}(0,\delta,\param)\|_{\textup{mch},\times}\leq K\delta^{1-\gamma},
\end{eqnarray*}
proving thus Proposition \ref{propmatching}.

\section{Proof of Theorem \ref{theoremsplit}}\label{prooftheoremsplit}
Let $\Delta\vphi$ be the difference between the parameterizations $\vphi^{\uns,\sta}$ defined in \eqref{defDeltavphi}. Our goal now is to provide a dominant term for this difference, as Theorem \ref{theoremsplit} enunciates. Note that $\Delta\vphi$, being a solution of \eqref{eqsplit}, satisfies:
\begin{equation}\label{fixedpointsplit}
 \Delta\vphi(\vu)=\mathcal{M}(\vu)\left[\left(\begin{array}{c} c_1 \medskip\\ c_2\end{array}\right)+\left(\begin{array}{c} \int_{\vu_+}^\uns m_1^{-1}(\vw)\pi^1(\mathcal{B}(\vw)\Delta\vphi(\vw))d\vw \medskip\\ \int_{\vu_-}^\uns m_2^{-1}(\vw)\pi^2(\mathcal{B}(\vw)\Delta\vphi(\vw))d\vw\end{array}\right)\right],
\end{equation}
for some suitable constants $c_1$ and $c_2$.

As we did in the previous sections, we first need to introduce suitable complex domains and Banach spaces in which we will work. First of all, we define:
$$u_+=i\left(\frac{\pi}{2}-\kappa\delta\log(1/\delta)\right)=it_+,\qquad u_-=i\left(-\frac{\pi}{2}+\kappa\delta\log(1/\delta)\right)=it_-.$$

Now, let $E=\{it\in\mathbb{C}\,:\, t\in(t_-,t_+)\}$. We consider the following Banach spaces:
$$\bssplit=\{\phi:E\rightarrow\mathbb{C}\,:\,\phi \textrm{ analytic, } \sup_{it\in E}|e^{\alpha (\pi/2-|t|)/\delta}\cos^{-d}(t)\phi(it)|<\infty\},$$
with the norm:
\begin{equation}\label{defnormsplit}\|\phi\|_{\textup{spl}}=\sup_{it\in E}|e^{\alpha (\pi/2-|t|)/\delta}\cos^{-d}(t)\phi(it)|.\end{equation}
As usual, in the product space $\bssplit\times\bssplit$ we will take the norm:
$$\|(\phi_1,\phi_2)\|_{\textup{spl},\times}=\|\phi_1\|_{\textup{spl}}+\|\phi_2\|_{\textup{spl}}.$$

The main result of this section, which is equivalent to Theorem \ref{theoremsplit}, is the following:
\begin{prop}\label{propasymsplit}
Let:
$$\Delta\vphi_0(\vu)=\mathcal{M}(\vu)\left(\begin{array}{c}c_1^0\\c_2^0\end{array}\right),$$
where $\mathcal{M}$ was defined in \eqref{defMsplit}, and:
\begin{equation}\label{defc10c20}
\left(\begin{array}{c} c_1^0\medskip\\ c_2^0\end{array}\right)=\left(\begin{array}{c} m_1^{-1}(\vu_+)\frac{(-i\kk)^\coef}{\delta} e^{-\alpha
\kk+i(c+\alpha h_0)\log(-i\kk)}C_{\textup{in}}\medskip\\ m_2^{-1}(\vu_-)\frac{(i\kk)^\coef}{\delta} e^{-\alpha \kk-i(c+\alpha h_0)\log(i\kk)}\overline{C_{\textup{in}}}\end{array}\right),
\end{equation}
where $\kk=\dist\log(1/\delta)$
and $C_{\textup{in}}$ is the constant defined in Theorem \ref{theoreminner}.
Then, if $C_{\textup{in}}\neq0$, we have that $\Delta\vphi=\Delta\vphi_0+\Delta\vphi_1$, where $\Delta\vphi_1$ is such that:
$$\|\Delta\vphi_1\|_{\textup{spl},\times}\leq\frac{K\|\Delta\vphi_0\|_{\textup{spl},\times}}{\log(1/\delta)},$$
for some constant $K$ independent of $\delta$.
\end{prop}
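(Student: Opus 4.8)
The plan is to recast the integral equation \eqref{fixedpointsplit} as a fixed point equation for the correction $\Delta\vphi_1=\Delta\vphi-\Delta\vphi_0$ in $\bssplit\times\bssplit$, to check that the linear operator entering it is a small perturbation of the identity, and to identify the constants $c_1,c_2$ of \eqref{fixedpointsplit} with $c_1^0,c_2^0$ up to a relative error $O(1/\log(1/\delta))$ by matching $\Delta\vphi$ at $\vu_\pm$ with the inner difference $\Delta\Vpsi_0$ of Theorem \ref{theoreminner}. First I would evaluate \eqref{fixedpointsplit} at $\vu=\vu_+$ and at $\vu=\vu_-$, where the first, resp.\ the second, integral vanishes; this forces $c_1=m_1^{-1}(\vu_+)\pi^1\Delta\vphi(\vu_+)$ and $c_2=m_2^{-1}(\vu_-)\pi^2\Delta\vphi(\vu_-)$. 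Introducing the linear operator $\mathcal{L}$ on $\bssplit\times\bssplit$ defined componentwise by $\pi^1\mathcal{L}(\phi)(\vu)=m_1(\vu)\int_{\vu_+}^{\vu}m_1^{-1}(\vw)\phi_1(\vw)\,d\vw$ and $\pi^2\mathcal{L}(\phi)(\vu)=m_2(\vu)\int_{\vu_-}^{\vu}m_2^{-1}(\vw)\phi_2(\vw)\,d\vw$, equation \eqref{fixedpointsplit} reads $\Delta\vphi=\mathcal{M}(\cdot)(c_1,c_2)^T+\mathcal{L}(\mathcal{B}\,\Delta\vphi)$ with $\mathcal{B}$ the matrix \eqref{defBsplit} (a fixed analytic function, since $\vphi^\uns,\vphi^\sta$ are already known by Theorem \ref{theoremouter}). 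Writing $\Delta\vphi=\Delta\vphi_0+\Delta\vphi_1$ and using $\Delta\vphi_0=\mathcal{M}(\cdot)(c_1^0,c_2^0)^T$, I obtain
\begin{equation}\label{splitfixedpt}
\big(\textup{Id}-\mathcal{L}\circ(\mathcal{B}\,\cdot)\big)\Delta\vphi_1=\mathcal{M}(\cdot)(c_1-c_1^0,\,c_2-c_2^0)^T+\mathcal{L}(\mathcal{B}\,\Delta\vphi_0);
\end{equation}
since Lemma \ref{lemamatriufonsplit} and \eqref{defnormsplit} give $\|m_1\|_{\textup{spl}},\|m_2\|_{\textup{spl}}\asymp e^{\alpha\pi/(2\delta)}$, hence $\|\mathcal{M}(\cdot)(k_1,k_2)^T\|_{\textup{spl},\times}\asymp(|k_1|+|k_2|)e^{\alpha\pi/(2\delta)}$, it suffices to prove $\|\mathcal{L}\circ(\mathcal{B}\,\cdot)\|\leq K/\log(1/\delta)$ and $|c_i-c_i^0|\leq (K/\log(1/\delta))|c_i^0|$ for $i=1,2$.

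For the operator bound I would first use $|\vphi^{\uns,\sta}(\vu)|\leq K\delta^2|\het(\vu)-1|^3$ (Theorem \ref{theoremouter}), Remark \ref{ordreF12Horiginals}, Lemma \ref{lemafitaDFiDH}, and the particular shape of $\mathcal{R}$ in \eqref{defRsplit} — in which the cubic-in-$\het$ part of $H$ has been absorbed into the principal matrix $\mathcal{A}$ of \eqref{defAsplit}, so that $D\mathcal{R}$ loses the dangerous factor $\delta^{-1}$ — to prove $|\mathcal{B}(\vu)|\leq K\delta|\het(\vu)-1|^2$ on $\doutT{\uns}\cap\doutT{\sta}$. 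Restricting to $E$ (so $\vu=it$ and $|\het(it)-1|\asymp(\pi/2-|t|)^{-1}$), parametrizing the integration paths along the imaginary axis and inserting the asymptotics of $m_1,m_2$ from Lemma \ref{lemamatriufonsplit}, the exponential factors carried by $m_1,m_1^{-1}$ (resp.\ $m_2,m_2^{-1}$) and by the weight \eqref{defnormsplit} combine to $O(1)$, so that the integrand, measured in the weighted norm, is bounded by $K\delta(\pi/2-|\tau|)^{-2}\|\phi\|_{\textup{spl},\times}$; since the function $\delta(\pi/2-|\tau|)^{-2}$ integrates, along the path, to at most $\delta/(\pi/2-t_+)=1/(\dist\log(1/\delta))$ (the portion with $\tau<0$ contributing even less), this gives $\|\mathcal{L}\circ(\mathcal{B}\,\cdot)\|\leq K/\log(1/\delta)$. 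For $\delta$ small this makes $\textup{Id}-\mathcal{L}\circ(\mathcal{B}\,\cdot)$ invertible with inverse of norm $\leq 2$, and also yields $\|\mathcal{L}(\mathcal{B}\,\Delta\vphi_0)\|_{\textup{spl},\times}\leq(K/\log(1/\delta))\|\Delta\vphi_0\|_{\textup{spl},\times}$.

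To compute $c_1$ I would use that $\vu_+=i(\pi/2-\dist\delta\log(1/\delta))$ lies in $\dmatch{\uns}$ and in $\dmatch{\sta}$ and that $(\vu_+-i\pi/2)/\delta=-i\kk$ with $\kk=\dist\log(1/\delta)$: Corollary \ref{corollarymatching} gives $\pi^1\Delta\vphi(\vu_+)=\tfrac{1}{\delta}\pi^1\Delta\Vpsi_0(-i\kk)+\tfrac{1}{\delta}\big(\pi^1\Vpsi_1^\uns-\pi^1\Vpsi_1^\sta\big)(-i\kk)$ with $|\Vpsi_1^{\uns,\sta}(-i\kk)|\leq K\delta^{1-\gamma}\kk^{-2}$, and Theorem \ref{theoreminner}(\ref{diferenciainner}) gives $\pi^1\Delta\Vpsi_0(-i\kk)=(-i\kk)^\coef e^{-i(\alpha(-i\kk)-(c+\alpha h_0)\log(-i\kk))}(C_{\textup{in}}+\chi_1(-i\kk))$ with $|\chi_1(-i\kk)|\leq K\kk^{-1}$. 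Multiplying by $m_1^{-1}(\vu_+)$ and using the asymptotics of $m_1(\vu_+)$ (Lemma \ref{lemamatriufonsplit}, together with $\cosh\vu_+=i\sinh(\delta\vs_+)$ and $\sinh\vu_+=i\cosh(\delta\vs_+)$, $\vs_+=-i\kk$), the leading term reproduces exactly the $c_1^0$ of \eqref{defc10c20}, all the powers of $\kk$ and the factor $e^{-\alpha\kk}$ cancelling — which is why $c_1^0$ is, to leading order, independent of $\dist$. Since $C_{\textup{in}}\neq0$ forces $|\pi^1\Delta\Vpsi_0(-i\kk)|\geq K|C_{\textup{in}}|\kk^\coef e^{-\alpha\kk}$, the remaining pieces are relatively small: $|c_1-c_1^0|\leq K\big(\kk^{-1}+\delta^{1-\gamma}\kk^{-2-\coef}e^{\alpha\kk}/|C_{\textup{in}}|\big)|c_1^0|\leq (K/\log(1/\delta))|c_1^0|$, where for the second term I use $e^{\alpha\kk}=\delta^{-\alpha\dist}$ and choose $\dist$ small enough that $\alpha\dist<1-\gamma$ (legitimate, as the earlier theorems hold for every $\dist>0$). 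The same computation at $\vu_-$, with the version of Corollary \ref{corollarymatching} near $-i\pi/2$ (Remark \ref{remarkinnerasota}) and the constant $\overline{C_{\textup{in}}}$, gives $|c_2-c_2^0|\leq (K/\log(1/\delta))|c_2^0|$. Putting the three estimates into \eqref{splitfixedpt} and applying $(\textup{Id}-\mathcal{L}\circ(\mathcal{B}\,\cdot))^{-1}$ yields $\|\Delta\vphi_1\|_{\textup{spl},\times}\leq(K/\log(1/\delta))\|\Delta\vphi_0\|_{\textup{spl},\times}$, which is the assertion.

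The hard part will be the bookkeeping in the third step: one must read off $c_1^0,c_2^0$ from the matching while simultaneously keeping the inner correction $\chi$, the matching error $\Vpsi_1$, and — through the operator estimate — the accumulated influence of $\mathcal{B}$ along the integration path, all at the relative order $O(1/\log(1/\delta))$, which is exactly what fixes the quantitative relation $\alpha\dist<1-\gamma$ among the constants of the outer, matching, and inner constructions, and where one also has to check that the $\dist$-dependence of $c_i^0$ cancels at leading order. The other non-routine point is the estimate $|\mathcal{B}(\vu)|\leq K\delta|\het(\vu)-1|^2$, which works only because $\mathcal{A}$ in \eqref{defAsplit} was built to absorb the cubic-in-$\het$ term of $H$.
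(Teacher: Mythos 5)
Your proposal is correct and follows essentially the same route as the paper: the same decomposition $(\textup{Id}-\mathcal{G})\Delta\vphi_1=\mathcal{I}(c_1-c_1^0,c_2-c_2^0)+\mathcal{G}(\Delta\vphi_0)$ (your $\mathcal{L}\circ(\mathcal{B}\,\cdot)$ is the paper's $\mathcal{G}$), the same identification $c_1=m_1^{-1}(\vu_+)\pi^1\Delta\vphi(\vu_+)$, $c_2=m_2^{-1}(\vu_-)\pi^2\Delta\vphi(\vu_-)$ compared against the inner difference via Corollary \ref{corollarymatching} and Theorem \ref{theoreminner} with the same restriction $\alpha\dist<1-\gamma$, the same bound $|\mathcal{B}(it)|\leq K\delta\cos^{-2}t$, and the same $O(1/\log(1/\delta))$ operator estimate (Lemmas \ref{lemaprincipalsplit}--\ref{lemainvertirGsplit}). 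The only cosmetic differences are that you phrase the constant error as $|c_i-c_i^0|\leq (K/\log(1/\delta))|c_i^0|$ where the paper bounds $|c_i-c_i^0|$ absolutely and then invokes Lemma \ref{lemaasynormadeltavphi0}, and you leave implicit the routine check (made explicit in the paper) that $\Delta\vphi$, hence $\Delta\vphi_1$, lies in $\bssplit\times\bssplit$ before inverting $\textup{Id}-\mathcal{G}$.
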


Now we proceed to prove Lemma \ref{lemamatriufonsplit}, which was stated in Subsection \ref{subsectionsplit}. To do that we will use the following technical lemma, which we do not prove (see \cite{DS97}).
\begin{lem}\label{lematecnicintegral}
 Let $\nu>1$. Then there exists a constant $K$ such that, if $\vu\in E$, then:
$$\left|\int_0^\uns\frac{1}{|\cosh\vw|^\nu} d\vw\right|\leq \frac{K}{\delta^{\nu-1}\log^{\nu-1}(1/\delta)}.$$
\end{lem}

\begin{proof}[Proof of Lemma \ref{lemamatriufonsplit}]
Since $\mathcal{M}(\vu)$ is a fundamental matrix of $\dot z=\mathcal{A}(\vu)z$, with $\mathcal{A}(\vu)=\text{diag}(a_{1}(\vu),a_{2}(\vu))$ defined in \eqref{defAsplit}, we have that:
\begin{equation}\label{etM}\mathcal{M}(\vu)=e^{\int_0^\vu \mathcal{A}(\vw)d\vw}=\left(\begin{array}{cc} e^{\int_0^\vu a_{1}(\vw)d\vw} & 0 \\ 0 & e^{\int_0^\vu a_{2}(\vw)d\vw}\end{array}\right).\end{equation}
Let us compute just $m_1(\vu)$. We have:
\begin{eqnarray*}
 \int_0^\vu a_{1}(\vw)d\vw&=&-\frac{\alpha  i}{\delta}\int_0^\vu\frac{1}{1-\frac{\delta h_0\het^3(\vw)}{-1+\het^2(\vw)}}d\vw+\param\int_0^\vu\frac{1}{1-\frac{\delta h_0\het^3(\vw)}{-1+\het^2(\vw)}}d\vw\\&+&(-\coef-ic)\int_0^\vu\frac{1}{1-\frac{\delta h_0\het^3(\vw)}{-1+\het^2(\vw)}}\het(\vw)d\vw=:I_1(\vu)+I_2(\vu)+I_3(\vu).
\end{eqnarray*}
Now we shall give an asymptotic expression for each of these integrals separately. Note that:
$$\left|\frac{\delta h_0\het^3(\vu)}{-1+\het^2(\vu)}\right|\leq\frac{K}{\log(1/\delta)}<1,$$
if $\delta$ small enough, and hence we can write:
$$\frac{1}{1-\frac{\delta h_0\het^3(\vu)}{-1+\het^2(\vu)}}=\sum_{k=0}^{\infty}\left(\frac{\delta h_0\het^3(\vu)}{-1+\het^2(\vu)}\right)^k.$$
Hence, we can express $I_1$ as:
\begin{equation*}
 I_1(\vu)=-\frac{\alpha  i}{\delta}\int_0^\vu\left(1+\frac{\delta h_0\het^3(\vw)}{-1+\het^2(\vw)}\right)d\vw
 -\frac{\alpha  i}{\delta}\int_0^\vu \left(\frac{1}{1-\frac{\delta h_0\het^3(\vw)}{-1+\het^2(\vw)}}-1-\frac{\delta h_0\het^3(\vw)}{-1+\het^2(\vw)}\right)d\vw.
\end{equation*}
Now, note that:
\begin{eqnarray*}
 \left|\frac{\alpha  i}{\delta}\int_0^\vu \left(\frac{1}{1-\frac{\delta h_0\het^3(\vw)}{-1+\het^2(\vw)}}-1-\frac{\delta h_0\het^3(\vw)}{-1+\het^2(\vw)}\right)d\vw\right|&\leq& \frac{K}{\delta}\int_0^\vu \delta^2h_0^2\left|\frac{\het^3(\vw)}{-1+\het^2(\vw)}\right|^2d\vw\leq  K\delta\int_0^\vu\frac{1}{|\cosh\vw|^2}d\vw\\
&\leq& \frac{K}{\log(1/\delta)},
\end{eqnarray*}
where in the last inequality we have used Lemma \ref{lematecnicintegral}. Hence we have:
\begin{eqnarray}\label{i1}
 I_1(\vu)&=&-\frac{\alpha  i}{\delta}\int_0^\vu\left(1+\frac{\delta h_0\het^3(\vw)}{-1+\het^2(\vw)}\right)d\vw+O\left(\frac{1}{\log(1/\delta)}\right)\nonumber\\
&=&-\frac{\alpha  iu}{\delta}+\alpha  h_0i\left(-\frac{1}{2}\sinh^2u+\log\cosh u\right)+O\left(\frac{1}{\log(1/\delta)}\right).
\end{eqnarray}

In the case of $I_2$, we can rewrite it in the following form:
$$I_2(\vu)= \param\int_0^\vu d\vw+\param\int_0^\vu\left(\frac{1}{1-\frac{\delta h_0\het^3(\vw)}{-1+\het^2(\vw)}}-1\right)d\vw.$$
Now, we have:
\begin{eqnarray*}
 \left|\int_0^\vu\left(\frac{1}{1-\frac{\delta h_0\het^3(\vw)}{-1+\het^2(\vw)}}-1\right)d\vw\right|&\leq& K\int_0^\vu\left|\frac{\delta h_0\het^3(\vw)}{-1+\het^2(\vw)}\right|d\vw\leq  K\delta\int_0^\vu\frac{1}{|\cosh\vw|}d\vw\leq\frac{K}{\log(1/\delta)},
\end{eqnarray*}
where we have used that for $\vu\in E$ one has $|\cosh^{-1}\vu|\leq\delta^{-1}\log(1/\delta)$ and $|\vu|\leq\pi/2$. Then, it is clear that:
\begin{equation}\label{i2}
 I_2(\vu)=\param\int_0^\vu d\vw+O\left(\frac{1}{\log(1/\delta)}\right)=\param\vu+O\left(\frac{1}{\log(1/\delta)}\right).
\end{equation}

Finally, $I_3$ can be decomposed as:
$$I_3(\vu)=(-\coef-ic)\int_0^\vu\het(\vw) d\vw+(-\coef-ic)\int_0^\vu\left(\frac{1}{1-\frac{\delta h_0\het^3(\vw)}{-1+\het^2(\vw)}}-1\right)\het(\vw) d\vw.$$
Again, we have:
\begin{eqnarray*}
\left|(-\coef-ic)\int_0^\vu\left(\frac{1}{1-\frac{\delta h_0\het^3(\vw)}{-1+\het^2(\vw)}}-1\right)\het(\vw) d\vw\right|&\leq& K\int_0^\vu\left|\frac{\delta h_0\het^3(\vw)}{-1+\het^2(\vw)}\right|\het(\vw)d\vw\leq K\delta\int_0^\vu\frac{1}{|\cosh^2\vw|}d\vw\\
&\leq&\frac{K}{\log(1/\delta)},
\end{eqnarray*}
where in the last inequality we have used Lemma \ref{lematecnicintegral}.
Then:
\begin{equation}\label{i3}
 I_3(\vu)=(-\coef-ic)\int_0^\vu\het(\vw) d\vw+O\left(\frac{1}{\log(1/\delta)}\right)=(\coef+ic)\log\cosh\vu+O\left(\frac{1}{\log(1/\delta)}\right).
\end{equation}

In conclusion, from \eqref{i1}, \eqref{i2} and \eqref{i3} and the fact that $m_1(\vu)=e^{I_1(\vu)+I_2(\vu)+I_3(\vu)}$, the asymptotic formula \eqref{asyformm1} is proved.
\end{proof}

\begin{lem}\label{lemam1invers}
We have:
\begin{equation}\label{asyformm1menys1} \begin{array}{rcl}m_1^{-1}(\vu_+)&=&\frac{1}{\dist^\coef\delta^{\coef+\dist\alpha}\log^\coef(1/\delta)}e^{-\frac{\alpha\pi}{2\delta}}e^{-i[\frac{\param\pi}{2}+\frac{\alpha h_0}{2}+(c+\alpha h_0)\log\delta]}e^{-i(c+\alpha h_0)\log\kk}\left(1+O\left(\frac{1}{\log(1/\delta)}\right)\right),\medskip\\
m_2^{-1}(\vu_+)&=&\frac{1}{\dist^\coef\delta^{\coef+\dist\alpha}\log^\coef(1/\delta)}e^{-\frac{\alpha\pi}{2\delta}}e^{i[\frac{\param\pi}{2}+\frac{\alpha h_0}{2}+(c+\alpha h_0)\log\delta]}e^{i(c+\alpha h_0)\log\kk}\left(1+O\left(\frac{1}{\log(1/\delta)}\right)\right),
\end{array}\end{equation}
where $\kk=\dist\log(1/\delta)$.
\end{lem}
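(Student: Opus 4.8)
The plan is to obtain both identities by substituting the concrete point $\vu_+=i\pi/2-i\delta\kk$, where $\kk=\dist\log(1/\delta)$, into the asymptotic formula \eqref{asyformm1} for $m_1(\vu)$ (resp. $m_2(\vu)$), and then expanding the resulting product factor by factor. The only non\-elementary input is Lemma \ref{lemamatriufonsplit}, and the error term $1+O(1/\log(1/\delta))$ it carries will dominate every error generated in the expansion.

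First I would record the behaviour of the elementary building blocks near the singularity $i\pi/2$. Since $\vu_+-i\pi/2=-i\delta\kk$ with $\delta\kk=\dist\delta\log(1/\delta)\to 0$, the identities $\cosh(i\pi/2+w)=i\sinh w$ and $\sinh(i\pi/2+w)=i\cosh w$ give $\cosh\vu_+=\sin(\delta\kk)$ and $\sinh\vu_+=i\cos(\delta\kk)$, whence
\begin{equation*}
\cosh\vu_+=\delta\kk\bigl(1+O(\delta^2\log^2(1/\delta))\bigr),\qquad \sinh^2\vu_+=-1+O(\delta^2\log^2(1/\delta)),
\end{equation*}
and, because $\sin(\delta\kk)$ is a small positive real (so the logarithm causes no branch issue),
\begin{equation*}
\log\cosh\vu_+=\log(\delta\kk)+O(\delta^2\log^2(1/\delta))=\log\delta+\log\kk+O(\delta^2\log^2(1/\delta)).
\end{equation*}

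Next I would evaluate the five factors of $m_1(\vu_+)$ coming from \eqref{asyformm1}. The exponential $e^{-\alpha i\vu_+/\delta}$ is exact: $-\alpha i\vu_+/\delta=\alpha\pi/(2\delta)-\alpha\kk$, so $e^{-\alpha i\vu_+/\delta}=e^{\alpha\pi/(2\delta)}\,\delta^{\alpha\dist}$, using $\kk=-\dist\log\delta$. The remaining factors give $\cosh^\coef\vu_+=\dist^\coef\delta^\coef\log^\coef(1/\delta)\,(1+O(\delta^2\log^2(1/\delta)))$; $e^{\param\vu_+}=e^{i\param\pi/2}(1+O(\delta\log(1/\delta)))$; the factor $e^{\alpha h_0 i[-\frac12\sinh^2\vu_++\log\cosh\vu_+]}=e^{i\alpha h_0/2}\,\delta^{i\alpha h_0}\,e^{i\alpha h_0\log\kk}\,(1+O(\delta^2\log^2(1/\delta)))$; and $e^{ic\log\cosh\vu_+}=\delta^{ic}\,e^{ic\log\kk}\,(1+O(\delta^2\log^2(1/\delta)))$, where I use $(1+z)^{i\theta}=1+O(|z|)$ to control the corrections sitting inside the complex powers $\delta^{ic},\delta^{i\alpha h_0}$. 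Multiplying and inverting, and absorbing all the $1+O(\delta^2\log^2(1/\delta))$ and $1+O(\delta\log(1/\delta))$ corrections, together with the one from \eqref{asyformm1}, into a single $1+O(1/\log(1/\delta))$ — legitimate since $\delta\log(1/\delta)=o(1/\log(1/\delta))$ — yields the first identity: the prefactor $e^{-\alpha\pi/(2\delta)}$ and the power $\delta^{-(\coef+\dist\alpha)}$ arise by inverting $e^{\alpha\pi/(2\delta)}$ and $\delta^\coef\delta^{\alpha\dist}$, the $\dist^{-\coef}\log^{-\coef}(1/\delta)$ by inverting $\cosh^\coef\vu_+$, and the phases regroup exactly as $-i[\param\pi/2+\alpha h_0/2+(c+\alpha h_0)\log\delta]-i(c+\alpha h_0)\log\kk$.

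For $m_2^{-1}$ one runs the identical computation at the corresponding point near $-i\pi/2$, using $\cosh(-i\pi/2+w)=-i\sinh w$; equivalently, restricting \eqref{asyformm1} to the imaginary axis shows $m_2(\vu)=\overline{m_1(\bar\vu)}$, so the second expansion is simply the complex conjugate of the first (every phase changes sign), which is precisely the stated formula. I do not expect a genuine obstacle here: the computation is routine, and the only points that require attention are the consistent choice of logarithm branch — harmless, as noted — and the bookkeeping confirming that no elementary expansion introduces an error larger than the $O(1/\log(1/\delta))$ inherited from Lemma \ref{lemamatriufonsplit}.
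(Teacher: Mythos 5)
Your proof is correct and essentially identical to the paper's: both substitute $\vu_+=i\pi/2-i\dist\delta\log(1/\delta)$ into the asymptotic formula of Lemma \ref{lemamatriufonsplit}, expand $\cosh^{-\coef}\vu_+$, $\sinh^2\vu_+$, $\log\cosh\vu_+$, $e^{i\alpha\vu_+/\delta}$ and $e^{-\param\vu_+}$ factor by factor, and absorb every elementary error into the $1+O(1/\log(1/\delta))$ already inherited from that lemma, the second component being obtained by the analogous computation (equivalently, by conjugation). Note only that your choice to evaluate $m_2^{-1}$ at the point near $-i\pi/2$ is the correct reading: the stated second formula, and its use in \eqref{defc10c20}, corresponds to $m_2^{-1}(\vu_-)$ (the ``$\vu_+$'' in that line is a misprint), since a literal evaluation at $\vu_+$ would instead produce the exponentially large factor $e^{+\frac{\alpha\pi}{2\delta}}\delta^{\dist\alpha}$.
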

\begin{proof}
Again, we will prove the asymptotic expression just for $m_1(\vu_+)$, being the other case analogous. First of all, from Lemma \ref{lemamatriufonsplit} we obtain:
\begin{equation}\label{m1invers}
 m_1^{-1}(\vu_+)=\cosh^{-d}(\vu_+)e^{\frac{\alpha i\vu_+}{\delta}}e^{-\param\vu_+}e^{-\alpha  h_0i\left[-\frac{1}{2}\sinh^2\vu_++\log\cosh\vu_+\right]}e^{-ic\log\cosh\vu_+}\left(1+O\left(\frac{1}{\log(1/\delta)}\right)\right).\end{equation}
Recall that $\vu_+=i\pi/2-i\dist\delta\log(1/\delta)$. Then we have:
\begin{eqnarray}
 \cosh^{-d}(\vu_+)&=&\frac{1}{\dist^\coef\delta^\coef\log^\coef(1/\delta)}\left(1+O(\delta\log(1/\delta)\right)\label{asymcosh-d},\\
-\frac{1}{2}\sinh^2(\vu_+)&=&\frac{1}{2}+O\left(\delta^2\log^2(1/\delta)\right)\label{asymsinh2},\\
\log\cosh(\vu_+)&=&\log\delta+\log(\dist\log(1/\delta))+O\left(\delta^2\log^2(1/\delta)\right).\label{asymlogcosh}
\end{eqnarray}
Moreover, it is clear that:
\begin{equation}\label{fitaeialphavudelta}
 e^{i\alpha \vu_+/\delta}=e^{-\frac{\alpha \pi}{2\delta}}e^{\alpha \dist\log(1/\delta)}=e^{-\frac{\alpha \pi}{2\delta}}\frac{1}{\delta^{\dist\alpha}}.
\end{equation}
Finally, we have:
\begin{equation}\label{eimuvu1}
 e^{-\param\vu_+}=e^{-\frac{i\param\pi}{2}}(1+O(\delta\log(1/\delta))).
\end{equation}
Substituting \eqref{asymcosh-d}, \eqref{asymsinh2}, \eqref{asymlogcosh}, \eqref{fitaeialphavudelta} and \eqref{eimuvu1} in \eqref{m1invers} the claim is proved.
\end{proof}

In the following we will proceed to prove Proposition \ref{propmatching}, which will be possible with the lemmas below. In order to simplify the process, we will introduce the notation. For $k_1,k_2\in\mathbb{C}$, we define:
\begin{equation}\label{defIsplit}
 \mathcal{I}(k_1,k_2)(\vu)=\mathcal{M}(\vu)\left(\begin{array}{c}k_1\\k_2\end{array}\right),
\end{equation}
where the matrix $\mathcal{M}(\vu)$ was defined in \eqref{etM}. Note that with this notation we have that $\Delta\vphi_0=\mathcal{I}(c_1^0,c_2^0)$.

For functions $\phi\in\bssplit\times\bssplit$ we define the following operator:
\begin{equation}\label{defGsplit}
 \mathcal{G}(\phi)(\vu)=\left(\begin{array}{c}\mathcal{G}_1(\phi)(\vu)\\ \mathcal{G}_2(\phi)(\vu)\end{array}\right)=\left(\begin{array}{c} m_1(\vu)\int_{u_+}^\uns m_1^{-1}(\vw)\pi^1(\mathcal{B}(\vw)\phi(\vw))d\vw \medskip\\ m_2(\vu)\int_{u_-}^\uns m_2^{-1}(\vw)\pi^2(\mathcal{B}(\vw)\phi(\vw))d\vw\end{array}\right),
\end{equation}
where the matrix $\mathcal{B}(\vw)$ is defined in \eqref{defBsplit}, $m_1(\vw)$ and $m_2(\vw)$ are defined in \eqref{asyformm1}.

\begin{lem}\label{lemaprincipalsplit}
 $\Delta\vphi_1=\Delta\vphi-\Delta\vphi_0$ satisfies:
\begin{eqnarray}\label{eqvphi1split}
 \Delta\vphi_1(\vu)&=&
\mathcal{I}(c_1-c_1^0,c_2-c_2^0)(\vu)+\mathcal{G}(\Delta\vphi_0)(\vu)+\mathcal{G}(\Delta\vphi_1)(\vu).
\end{eqnarray}
Moreover,
\begin{equation}\label{fitacissplit}
 |c_1-c_1^0|,\,|c_2-c_2^0|\leq\frac{Ke^{-\frac{\alpha \pi}{2\delta}}}{\delta^{1+\coef}\log(1/\delta)}.
\end{equation}

\end{lem}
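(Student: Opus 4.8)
\emph{Plan of proof.} The identity \eqref{eqvphi1split} is purely formal. Writing \eqref{fixedpointsplit} in terms of the operators $\mathcal{I}$ of \eqref{defIsplit} and $\mathcal{G}$ of \eqref{defGsplit} (and using that $\mathcal{M}(\vu)$ is diagonal, so that $\mathcal{M}(\vu)$ applied to the column of integrals in \eqref{fixedpointsplit} is exactly $\mathcal{G}(\Delta\vphi)(\vu)$), equation \eqref{fixedpointsplit} reads $\Delta\vphi=\mathcal{I}(c_1,c_2)+\mathcal{G}(\Delta\vphi)$, while $\Delta\vphi_0=\mathcal{I}(c_1^0,c_2^0)$ by definition. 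Since $\mathcal{I}$ and $\mathcal{G}$ are $\mathbb{C}$-linear, subtracting these and inserting $\Delta\vphi=\Delta\vphi_0+\Delta\vphi_1$ gives at once $\Delta\vphi_1=\mathcal{I}(c_1-c_1^0,\,c_2-c_2^0)+\mathcal{G}(\Delta\vphi_0)+\mathcal{G}(\Delta\vphi_1)$, i.e. \eqref{eqvphi1split}.

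For the estimate \eqref{fitacissplit} the plan is to read off $c_1,c_2$ by evaluating \eqref{fixedpointsplit} at the endpoints $\vu_\pm$. With $\kk=\dist\log(1/\delta)$ and $\overline{\dist}=\dist/2$ one checks that $\vu_\pm\in\doutT{\uns}\cap\doutT{\sta}$, so $\Delta\vphi$ is defined there; at $\vu=\vu_+$ the path in the first integral of \eqref{fixedpointsplit} degenerates to a point, so the first component of \eqref{fixedpointsplit} collapses to $\pi^1\Delta\vphi(\vu_+)=m_1(\vu_+)c_1$, whence $c_1=m_1^{-1}(\vu_+)\,\pi^1\Delta\vphi(\vu_+)$, and symmetrically $c_2=m_2^{-1}(\vu_-)\,\pi^2\Delta\vphi(\vu_-)$.

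Next I would replace $\pi^1\Delta\vphi(\vu_+)$ by its inner approximation. Since $\kappa=\dist$, the point $\vu_+=i(\pi/2-\dist\delta\log(1/\delta))$ is the centre of the matching domain, so $\vu_+\in\dmatch{\uns}\cap\dmatch{\sta}$ and $(\vu_+-i\pi/2)/\delta=-i\kk$; Corollary \ref{corollarymatching} then gives $\Delta\vphi(\vu_+)=\delta^{-1}\bigl(\Delta\Vpsi_0(-i\kk)+\Delta\Vpsi_1(-i\kk)\bigr)$ with $\Delta\Vpsi_1=\Vpsi_1^\uns-\Vpsi_1^\sta$ and $|\Delta\Vpsi_1(-i\kk)|\leq K\delta^{1-\gamma}|\kk|^{-2}$. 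For $\delta$ small $-i\kk\in\ein$, so \eqref{asyVpsi0} (Theorem \ref{theoreminner}) applies and, using $e^{-i\alpha(-i\kk)}=e^{-\alpha\kk}$, its first component reads $\pi^1\Delta\Vpsi_0(-i\kk)=(-i\kk)^\coef e^{-\alpha\kk+i(c+\alpha h_0)\log(-i\kk)}\bigl(C_{\textup{in}}+\chi_1(-i\kk)\bigr)$ with $|\chi_1(-i\kk)|\leq K|\kk|^{-1}$. Comparing with the definition \eqref{defc10c20} of $c_1^0$ yields
$$c_1-c_1^0=m_1^{-1}(\vu_+)\Bigl[\tfrac{(-i\kk)^\coef}{\delta}e^{-\alpha\kk+i(c+\alpha h_0)\log(-i\kk)}\chi_1(-i\kk)+\tfrac{1}{\delta}\,\pi^1\Delta\Vpsi_1(-i\kk)\Bigr].$$

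It then remains to bound the two bracketed terms. The first equals $c_1^0\,\chi_1(-i\kk)/C_{\textup{in}}$, and Lemma \ref{lemam1invers} together with \eqref{defc10c20} gives $|c_1^0|\leq Ke^{-\frac{\alpha\pi}{2\delta}}\delta^{-(1+\coef)}$ (the $\dist$-dependent and logarithmic prefactors of $m_1^{-1}(\vu_+)$ and of $(-i\kk)^\coef e^{-\alpha\kk+\cdots}$ cancelling exactly — this is why $c_1^0$ is independent of $\dist$); hence, using $C_{\textup{in}}\neq0$ and $|\chi_1(-i\kk)|\leq K/\log(1/\delta)$, this term is $\leq Ke^{-\frac{\alpha\pi}{2\delta}}\delta^{-(1+\coef)}/\log(1/\delta)$. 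For the second term, Lemma \ref{lemam1invers} gives $|m_1^{-1}(\vu_+)|\leq Ke^{-\frac{\alpha\pi}{2\delta}}\delta^{-(\coef+\dist\alpha)}\log^{-\coef}(1/\delta)$ and the matching bound gives $\delta^{-1}|\pi^1\Delta\Vpsi_1(-i\kk)|\leq K\delta^{-\gamma}\log^{-2}(1/\delta)$, so the product is $\leq Ke^{-\frac{\alpha\pi}{2\delta}}\delta^{-(\coef+\dist\alpha+\gamma)}\log^{-(\coef+2)}(1/\delta)$, which is absorbed into $Ke^{-\frac{\alpha\pi}{2\delta}}\delta^{-(1+\coef)}/\log(1/\delta)$ as soon as $\dist\alpha+\gamma\leq1$ — permissible, since $\dist$ and $\gamma$ are auxiliary constants we may take small. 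This proves the bound for $c_1-c_1^0$; the bound for $c_2-c_2^0$ is identical, working at $\vu_-$ with the change \eqref{canviinner2} and Remark \ref{remarkinnerasota} in place of \eqref{asyVpsi0}, and the $m_2^{-1}(\vu_-)$ analogue of Lemma \ref{lemam1invers}. The only genuinely delicate point is this last bookkeeping of powers of $\delta$ and $\log(1/\delta)$ against $e^{\pm\frac{\alpha\pi}{2\delta}}$, and in particular verifying that the $\chi$-contribution reproduces $c_1^0$ up to a factor $O(1/\log(1/\delta))$; everything else is linearity and direct substitution.
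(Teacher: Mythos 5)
Your argument is correct and follows essentially the same route as the paper: the identity is just linearity of $\mathcal{I}$ and $\mathcal{G}$ applied to the fixed-point equation \eqref{fixedpointsplit} together with $\Delta\vphi_0=\mathcal{I}(c_1^0,c_2^0)$, and the estimate \eqref{fitacissplit} is obtained exactly as in the paper, from $c_1=m_1^{-1}(\vu_+)\pi^1\Delta\vphi(\vu_+)$ (the first integral degenerating at $\vu_+$), Corollary \ref{corollarymatching}, the inner asymptotics \eqref{asyVpsi0}, Lemma \ref{lemam1invers}, and a choice of $\dist$ with $\dist\alpha+\gamma$ at most $1$. The only cosmetic difference is your detour through $c_1^0\,\chi_1(-i\kk)/C_{\textup{in}}$, which formally requires $C_{\textup{in}}\neq0$ even though the factor cancels; bounding $m_1^{-1}(\vu_+)\,\delta^{-1}(-i\kk)^\coef e^{-\alpha\kk+i(c+\alpha h_0)\log(-i\kk)}\chi_1(-i\kk)$ directly, as the paper does, gives the same estimate without that (extraneous) hypothesis in the lemma.
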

\begin{proof}
To prove the first statement we just need to realize that the fixed point equation for $\Delta\vphi$ \eqref{fixedpointsplit} can be written as:
\begin{equation}\label{igualtat}
 \Delta\vphi=\mathcal{I}(c_1,c_2)+\mathcal{G}(\Delta\vphi).
\end{equation}
Then, since $\Delta\vphi=\Delta\vphi_0+\Delta\vphi_1$ and $\Delta\vphi_0=\mathcal{I}(c_1^0,c_2^0)$, this last equality yields:
$$\Delta\vphi_1=\mathcal{I}(c_1,c_2)-\mathcal{I}(c_1^0,c_2^0)+\mathcal{G}(\Delta\vphi_0+\Delta\vphi_1),$$
and using that the operators $\mathcal{I}$ and $\mathcal{G}$ are linear we obtain equality \eqref{eqvphi1split}.

Now we proceed to prove bound \eqref{fitacissplit}. We will just bound $c_1-c_1^0$, since the other component is analogous. We will write $\Delta\vphi=(\Delta\vxi,\Delta\vxib)$ and $\Delta\vphi_j=(\Delta\vxi_j,\Delta\vxib_j)$, for $j=0,1$.

First note that, since $\mathcal{G}_1(\phi)(\vu_+)=0$ for all $\phi\in\bssplit\times\bssplit$, equalities \eqref{defIsplit} and \eqref{igualtat} yield:
$$c_1=m_1^{-1}(\vu_+)\Delta\vxi(\vu_+).$$
Moreover, since by definition $\Delta\vphi_0(\vu_+)=\mathcal{I}(c_1^0,c_2^0)(\vu_+)$, we also have:
$$c_1^0=m_1^{-1}(\vu_+)\Delta\xi_0(\vu_+).$$
Then it is clear that:
\begin{equation}\label{primerarestac1c10}
 |c_1-c_1^0|=|m_1^{-1}(\vu_+)||\Delta\vxi(\vu_+)-\Delta\xi_0(\vu_+)|.
\end{equation}
Now, taking into account that $u_+\in\dmatch{\uns}\cap\dmatch{\sta}$, from Corollary \ref{corollarymatching} we know that:
\begin{equation}\label{igualtatcorollarymatch}
 \Delta\vxi(\vu_+)=\frac{1}{\delta}\left[\Delta\vpsi_0\left(\frac{\vu_+-i\pi/2}{\delta}\right)+\Delta\vpsi_1\left(\frac{\vu_+-i\pi/2}{\delta}\right)\right],
\end{equation}
where we have written $\Delta\vpsi_j=\vpsi_j^\uns-\vpsi_j^\sta$, for $j=0,1$. Here $\vpsi_0^{\uns,\sta}$ is the first component of the corresponding solution of the inner system \eqref{sysinner}, $\Vpsi_0^{\uns,\sta}$, and $\vpsi_1^{\uns,\sta}$ satisfy:
\begin{equation}\label{fitavpsi1}
 \frac{1}{\delta}\left|\vpsi_1^{\uns,\sta}\left(\frac{\vu_+-i\pi/2}{\delta}\right)\right|\leq\frac{K\delta^{-\gamma}}{\log^{2}(1/\delta)}
\end{equation}
for some constant $K$. From \eqref{primerarestac1c10} and \eqref{igualtatcorollarymatch} we have:
\begin{equation}\label{segonarestac1c10}
 |c_1-c_1^0|\leq|m_1^{-1}(\vu_+)|\left[\left|\frac{1}{\delta}\Delta\vpsi_0\left(\frac{\vu_+-i\pi/2}{\delta}\right)-\Delta\vxi_0(\vu_+)\right|+\left|\frac{1}{\delta}\Delta\vpsi_1\left(\frac{\vu_+-i\pi/2}{\delta}\right)\right|\right].
\end{equation}

Now, on one hand, from Lemma \ref{lemam1invers} it is clear that:
\begin{equation}\label{fitam1menys1}
 |m_1^{-1}(\vu_+)|\leq Ke^{-\frac{\alpha \pi}{2\delta}}\frac{1}{\delta^{\coef+\dist\alpha}\log^\coef(1/\delta)},
\end{equation}
where we have used that:
\begin{equation}\label{exponorma1}\left|e^{-i[\frac{\param\pi}{2}+\frac{\alpha h_0}{2}+(\alpha h_0+c)\log\delta]}e^{-i(\alpha h_0+c))\log\kk}\right|=1.\end{equation}
On the other hand, from definition \eqref{defc10c20} of $c_1^0$ and $c_2^0$ and the fact that $\Delta\vxi_0(\vu_+)=m_1(\vu_+)c_1^0$ it is clear that:
$$\Delta\vxi_0(\vu_+)=\frac{(-i\kk)^\coef}{\delta} e^{-\alpha \kk+i(c+\alpha  h_0)\log(-i\kk)}C_{\textup{in}},$$
where $\kk=\dist\log(1/\delta)$. Then, from formula \eqref{asyVpsi0} of $\Delta\Vpsi_0$ in Theorem \ref{theoreminner} and this last equality we have that:
$$\left|\frac{1}{\delta}\Delta\vpsi_0\left(\frac{\vu_+-i\pi/2}{\delta}\right)-\Delta\vxi_0(\vu_+)\right|=\left|\frac{(-i\kk)^\coef}{\delta}e^{-\alpha \kk+i(c+\alpha  h_0)\log(-i\kk)}\chi_1(-i\kk)\right|,$$
where $\chi_1$ is the first component of the function $\chi$ in Theorem \ref{theoreminner}. Now, since by this proposition we know that $|\chi_1(\vs)|\leq K|\vs|^{-1}$, we have:
\begin{equation}\label{fitavpsi0vxi0}
\left|\frac{1}{\delta}\Delta\vpsi_0\left(\frac{\vu_+-i\pi/2}{\delta}\right)-\Delta\vxi_0(\vu_+)\right|\leq K\frac{\kk^{\coef-1}}{\delta}\delta^{\dist\alpha }|e^{i(c+\alpha h_0)\log(-i\kk)}|\leq K\frac{\kk^{\coef-1}}{\delta}\delta^{\dist\alpha}.
\end{equation}
Then, bounds \eqref{fitavpsi1}, \eqref{fitam1menys1} and \eqref{fitavpsi0vxi0} yield:
\begin{eqnarray}\label{fitac1-c10}
 &&|m_1^{-1}(\vu_+)|\left(\left|\frac{1}{\delta}\Delta\vpsi_0\left(\frac{\vu_+-i\pi/2}{\delta}\right)-\Delta\vxi_0(\vu_+)\right|+\left|\frac{1}{\delta}\Delta\vpsi_1\left(\frac{\vu_+-i\pi/2}{\delta}\right)\right|\right)\nonumber\\
&&\leq Ke^{-\frac{\alpha \pi}{2\delta}}\left(\frac{1}{\delta^{1+\coef}\log(1/\delta)}+\frac{1}{\delta^{\coef+\dist\alpha +\gamma}\log(1/\delta)}\right).
\end{eqnarray}
%
Then, taking $\dist>0$ such that $0<\dist\alpha <1-\gamma$, from \eqref{segonarestac1c10} and \eqref{fitac1-c10} we obtain immediately:
$$|c_1-c_1^0|\leq Ke^{-\frac{\alpha \pi}{2\delta}}\frac{1}{\delta^{1+\coef}\log(1/\delta)}.$$
\end{proof}

\begin{lem}\label{lemafitaIsplit}
 Let $k_1$, $k_2\in\mathbb{C}$. Then, $\mathcal{I}(k_1,k_2)\in\bssplit\times\bssplit$ and:
$$\|\mathcal{I}(k_1,k_2)\|_{\textup{spl},\times}=(|k_1|+|k_2|)e^{\frac{\alpha  \pi}{2\delta}}\left(1+O\left(\frac{1}{\log(1/\delta)}\right)\right).$$
\end{lem}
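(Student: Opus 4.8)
The plan is to exploit that $\mathcal{I}(k_1,k_2)(\vu)=\bigl(m_1(\vu)k_1,\,m_2(\vu)k_2\bigr)$, so that by the definition of the product norm $\|\mathcal{I}(k_1,k_2)\|_{\textup{spl},\times}=|k_1|\,\|m_1\|_{\textup{spl}}+|k_2|\,\|m_2\|_{\textup{spl}}$; hence everything reduces to proving $\|m_1\|_{\textup{spl}}=\|m_2\|_{\textup{spl}}=e^{\frac{\alpha\pi}{2\delta}}\bigl(1+O(1/\log(1/\delta))\bigr)$, after which the stated formula follows since $|k_1|O+|k_2|O'=(|k_1|+|k_2|)O(1/\log(1/\delta))$. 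Analyticity of $m_1,m_2$ on $E$ is immediate from \eqref{etM}: on $E$ one has $\vu=it$, $\het(\vu)=-i\tan t$, so $-1+\het^2(\vu)=-1/\cos^2 t\neq0$, and $1-\delta h_0\het^3(\vu)/(-1+\het^2(\vu))$ is $1$ plus a purely imaginary number, hence nonzero, so $a_1,a_2$ are analytic there; finiteness of the two norms will come out of the computation below, and together these give $\mathcal{I}(k_1,k_2)\in\bssplit\times\bssplit$.

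To compute $\|m_1\|_{\textup{spl}}$ I would write $\vu=it$ with $t\in(t_-,t_+)\subset(-\pi/2,\pi/2)$ and use the asymptotic formula \eqref{asyformm1}. On the imaginary axis $\cosh(it)=\cos t>0$ and $\sinh^2(it)=-\sin^2 t$ are real, so $\cosh^{d}(it)=\cos^{d}t$ is real and positive, the three factors $e^{\param(it)}$, $e^{\alpha h_0 i[-\frac{1}{2}\sinh^2(it)+\log\cosh(it)]}$ and $e^{ic\log\cosh(it)}$ all have modulus $1$, and $e^{-\alpha i(it)/\delta}=e^{\alpha t/\delta}$. Hence $|m_1(it)|=\cos^{d}(t)\,e^{\alpha t/\delta}\bigl(1+O(1/\log(1/\delta))\bigr)$, and multiplying by the weight $e^{\alpha(\pi/2-|t|)/\delta}\cos^{-d}(t)$ from \eqref{defnormsplit} yields $e^{\alpha(\pi/2-|t|+t)/\delta}\bigl(1+O(1/\log(1/\delta))\bigr)$. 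Since $\pi/2-|t|+t$ equals $\pi/2$ for $t\ge0$ and is strictly smaller for $t<0$, the supremum over $it\in E$ is $e^{\frac{\alpha\pi}{2\delta}}\bigl(1+O(1/\log(1/\delta))\bigr)$ (attained up to the error on the whole segment $t\in[0,t_+)$, with the matching lower bound read off at $t=0$). The computation for $m_2$ is identical after replacing $e^{-\alpha i(it)/\delta}$ by $e^{\alpha i(it)/\delta}=e^{-\alpha t/\delta}$: the weighted quantity becomes $e^{\alpha(\pi/2-|t|-t)/\delta}\bigl(1+O(1/\log(1/\delta))\bigr)$, whose supremum is again $e^{\frac{\alpha\pi}{2\delta}}\bigl(1+O(1/\log(1/\delta))\bigr)$, now realized for $t\in(t_-,0]$. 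Combining the two contributions gives the assertion of the lemma.

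The one point requiring care — and essentially the only obstacle — is that the error term $O(1/\log(1/\delta))$ in Lemma \ref{lemamatriufonsplit} must be uniform for $\vu\in E$, since we are taking a supremum over $E$. This is precisely what the proof of that lemma delivers: the remainders in $I_1$, $I_2$ and $I_3$ are all bounded by $K/\log(1/\delta)$ uniformly on $E$, using Lemma \ref{lematecnicintegral} together with the elementary bounds $|\cosh^{-1}\vu|\le\delta^{-1}\log(1/\delta)$ and $|\vu|\le\pi/2$ valid there. With this uniformity in hand, the two supremum computations above are routine.
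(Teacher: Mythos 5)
Your argument is correct and follows the same route as the paper's proof: restrict to $\vu=it$, use the asymptotics of Lemma \ref{lemamatriufonsplit}, observe that the factors $e^{i\param t}$, $e^{\alpha h_0 i[\,\cdot\,]}$, $e^{ic\log\cos t}$ are unimodular because $t$, $\param$, $\alpha$, $h_0$, $c$ are real, and take the supremum of $e^{\alpha(\pi/2-|t|\pm t)/\delta}$ over $E$ (the paper writes this out only for the first component and declares the second analogous). Your added remarks on analyticity of $m_1,m_2$ on $E$ and on the uniformity in $\vu$ of the $O(1/\log(1/\delta))$ error are details the paper leaves implicit, and they are consistent with its proof of Lemma \ref{lemamatriufonsplit}.
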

\begin{proof}
 We will bound just the norm of the first component of $\mathcal{I}(k_1,k_2)$, that is $\pi^1\mathcal{I}(k_1,k_2)=m_1(u)k_1$. For $it\in E$, from Lemma \ref{lemamatriufonsplit} we have:
\begin{equation*}
 |m_1(it)k_1||\cos^{-\coef}te^{\frac{\alpha(\pi/2-|t|)}{\delta}}|= |k_1||e^{\frac{\alpha(\pi/2+t-|t|)}{\delta}}||e^{i\param t}||e^{\alpha  h_0i[\sin^2t/2+\log\cos t]}||e^{ic\log\cos t}|\left(1+O\left(\frac{1}{\log(1/\delta)}\right)\right).
\end{equation*}

Note that, since $t$ is real and $|t|<\pi/2$, we have that $\log\cos t$ is real. Moreover, since $\param, \alpha, h_0\in\mathbb{R}$, we have that $|e^{i\param t}|=|e^{\alpha h_0i[\sin^2t/2+\log\cos t]}|=1$, and then:
$$|m_1(it)k_1||\cos^{-\coef}te^{\alpha  (\pi/2-|t|)/\delta}|=|k_1|e^{\alpha  (\pi/2+t-|t|)/\delta}\left(1+O\left(\frac{1}{\log(1/\delta)}\right)\right).$$
Then it is clear that:
\begin{eqnarray*}
 \sup_{it\in E}|m_1(it)k_1||\cos^{-\coef}te^{\alpha  (\pi/2-|t|)/\delta}|&=&|k_1|e^{\frac{\alpha \pi}{2\delta}}\sup_{it\in E}e^{\alpha (t-|t|)/\delta}\left(1+O\left(\frac{1}{\log(1/\delta)}\right)\right)\\
&=&|k_1|e^{\frac{\alpha \pi}{2\delta}}\left(1+O\left(\frac{1}{\log(1/\delta)}\right)\right),
\end{eqnarray*}
and hence:
$$\|\pi^1\mathcal{I}(k_1,k_2)\|_{\textup{spl}}=|k_1|e^{\frac{\alpha  \pi}{2\delta}}\left(1+O\left(\frac{1}{\log(1/\delta)}\right)\right),$$
obtaining the desired bound.
\end{proof}

\begin{lem}\label{lemaasynormadeltavphi0}
 $$\|\Delta\vphi_0\|_{\textup{spl},\times}=\frac{1}{\delta^{\coef+1}}e^{\frac{\pi}{2}(c+\alpha h_0)}\left(|C_{\textup{in}}|+|\overline{C_{\textup{in}}}|\right)\left(1+O\left(\frac{1}{\log(1/\delta)}\right)\right).$$
\end{lem}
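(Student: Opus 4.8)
The plan is to read off $\|\Delta\vphi_0\|_{\textup{spl},\times}$ from the two lemmas already proved in this section. Since $\Delta\vphi_0=\mathcal{I}(c_1^0,c_2^0)$ in the notation \eqref{defIsplit}, with $(c_1^0,c_2^0)$ given explicitly by \eqref{defc10c20}, Lemma \ref{lemafitaIsplit} immediately gives $\|\Delta\vphi_0\|_{\textup{spl},\times}=(|c_1^0|+|c_2^0|)e^{\frac{\alpha\pi}{2\delta}}\bigl(1+O(1/\log(1/\delta))\bigr)$, so the whole statement reduces to evaluating the two moduli $|c_1^0|$ and $|c_2^0|$, which is pure bookkeeping once the asymptotics of $m_1^{-1}(\vu_+)$ and $m_2^{-1}(\vu_-)$ are in hand.

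First I would treat $c_1^0$. By Lemma \ref{lemam1invers}, discarding the two unit-modulus exponential phase factors, $|m_1^{-1}(\vu_+)|=\dist^{-\coef}\delta^{-\coef-\dist\alpha}\log^{-\coef}(1/\delta)\,e^{-\frac{\alpha\pi}{2\delta}}\bigl(1+O(1/\log(1/\delta))\bigr)$. Then I would compute the moduli of the remaining factors in \eqref{defc10c20}: with $\kk=\dist\log(1/\delta)$ one has $|(-i\kk)^\coef|=\kk^\coef=\dist^\coef\log^\coef(1/\delta)$, $|e^{-\alpha\kk}|=\delta^{\dist\alpha}$, and, using $\log(-i\kk)=\log\kk-i\pi/2$, $|e^{i(c+\alpha h_0)\log(-i\kk)}|=e^{\frac{\pi}{2}(c+\alpha h_0)}$. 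Multiplying, the factors $\dist^\coef$, $\log^\coef(1/\delta)$ and $\delta^{\dist\alpha}$ cancel against those coming from $|m_1^{-1}(\vu_+)|$, leaving $|c_1^0|=\delta^{-(\coef+1)}e^{-\frac{\alpha\pi}{2\delta}}e^{\frac{\pi}{2}(c+\alpha h_0)}|C_{\textup{in}}|\bigl(1+O(1/\log(1/\delta))\bigr)$.

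For $c_2^0$ the identical computation applies, except that it involves $m_2^{-1}(\vu_-)$, which is not stated verbatim in Lemma \ref{lemam1invers}. Here I would observe that $\vu_-=\overline{\vu_+}$ and that, because $\alpha$, $\param$, $h_0$, $c$ and $\coef$ are all real, the explicit asymptotic formulae \eqref{asyformm1} give $m_2(\vu)=\overline{m_1(\overline{\vu})}$ wherever both sides are defined; hence $|m_2^{-1}(\vu_-)|=|m_1^{-1}(\vu_+)|$. The remaining moduli are $|(i\kk)^\coef|=\kk^\coef$, $|e^{-\alpha\kk}|=\delta^{\dist\alpha}$, $|e^{-i(c+\alpha h_0)\log(i\kk)}|=e^{\frac{\pi}{2}(c+\alpha h_0)}$ (now with $\log(i\kk)=\log\kk+i\pi/2$), and $|\overline{C_{\textup{in}}}|=|C_{\textup{in}}|$, so $|c_2^0|$ equals $|c_1^0|$ with $|C_{\textup{in}}|$ replaced by $|\overline{C_{\textup{in}}}|$. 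Adding the two estimates and inserting them into the formula from Lemma \ref{lemafitaIsplit} yields the claim.

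The computation is routine; the only point requiring a small argument is the symmetry identity $m_2^{-1}(\vu_-)=\overline{m_1^{-1}(\vu_+)}$ that transports the asymptotics of Lemma \ref{lemam1invers} from $\vu_+$ to $\vu_-$. Beyond that, the main thing to be careful about is the strict separation of modulus and phase, so that the subexponential factors in $\delta$ and in $\log(1/\delta)$ cancel exactly and only $\delta^{-(\coef+1)}e^{\frac{\pi}{2}(c+\alpha h_0)}$ together with the Stokes constants survives. I do not expect any genuine obstacle.
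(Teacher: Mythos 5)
Your proposal is correct and follows essentially the same route as the paper: reduce to $|c_1^0|+|c_2^0|$ via Lemma \ref{lemafitaIsplit}, compute $|c_1^0|$ from the asymptotics of $m_1^{-1}(\vu_+)$ in Lemma \ref{lemam1invers} together with the elementary moduli $|(-i\kk)^\coef|=\kk^\coef$, $|e^{-\alpha\kk}|=\delta^{\dist\alpha}$ and $|e^{i(c+\alpha h_0)\log(-i\kk)}|=e^{\frac{\pi}{2}(c+\alpha h_0)}$, and treat $c_2^0$ symmetrically (the paper simply declares that case "analogous," whereas you justify it with the conjugation identity $m_2^{-1}(\vu_-)=\overline{m_1^{-1}(\vu_+)}$, a harmless and correct extra detail).
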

\begin{proof}
Since $\Delta\vphi_0=\mathcal{I}(c_1^0,c_2^0)$, we just have to bound $c_1^0$ and $c_2^0$ and then use Lemma \ref{lemafitaIsplit}. We will just bound $c_1^0$, being the other case analogous. Recall that:
$$c_1^0=m_1^{-1}(\vu_+)\frac{(-i\kk)^\coef}{\delta}e^{-\alpha\kk+i(c+\alpha h_0)\log(-i\kk)}C_{\textup{in}},$$
with $\kk=\dist\log(1/\delta)$.
On one hand, from formula \eqref{asyformm1menys1} and \eqref{exponorma1} it is clear that:
\begin{equation}\label{cosa1}
 |m_1^{-1}(\vu_+)|=\frac{1}{\dist^\coef\delta^{\coef+\dist\alpha}\log^\coef(1/\delta)}e^{-\frac{\alpha\pi}{2\delta}}\left(1+O\left(\frac{1}{\log(1/\delta)}\right)\right).
\end{equation}
On the other hand, noting that $\log(-i\kk)=\log\kk-i\pi/2$ and $e^{-\alpha\kk}=e^{-\alpha\dist\log(1/\delta)}=\delta^{\dist\alpha}$, we have:
\begin{equation}\label{cosa2}
 \left|\frac{(-i\kk)^\coef}{\delta} e^{-\alpha \kk+i(c+\alpha  h_0)\log(-i\kk)}\right|=\frac{\dist^d\log^\coef(1/\delta)}{\delta}\delta^{\dist\alpha}e^{\frac{\pi}{2}(c+\alpha  h_0)}\left|e^{i(c+\alpha  h_0)\log\kk}\right|=\frac{\dist^d\log^\coef(1/\delta)}{\delta}\delta^{\dist\alpha}e^{\frac{\pi}{2}(c+\alpha  h_0)}.
\end{equation}
From \eqref{cosa1} and \eqref{cosa2} it is clear that:
$$|c_1^0|=\frac{1}{\delta^{1+\coef}}e^{\frac{\pi}{2}(c+\alpha h_0)}e^{-\frac{\alpha\pi}{2\delta}}|C_{\textup{in}}|,$$
and then the initial claim is proved by Lemma \ref{lemafitaIsplit}.
\end{proof}

\begin{lem}\label{lemabij}
 There exists a constant $K$ such that, for all $it\in E$ and $l,j=1,2$, the matrix $\mathcal{B}=(b_{lj})$ satisfies:
$$|b_{lj}(it)\cos^2(t)|\leq K\delta.$$
\end{lem}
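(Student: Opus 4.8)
The plan is to differentiate $\mathcal{R}$ explicitly and estimate the resulting matrix on the segment $E$, using that the invariant manifolds are already under control. First I would record the elementary facts valid for $it\in E$: since $\het(it)=-i\tan t$ one has $|\het(it)-1|=|{-1}-i\tan t|=\cos^{-1}t$, and hence, by Theorem~\ref{theoremouter}, $|\vphi^{\uns,\sta}(it)|\leq K\delta^2\cos^{-3}t$; consequently the argument $\vphi_\lambda=(1-\lambda)\vphi^\sta+\lambda\vphi^\uns$ appearing in \eqref{defBsplit} (the convex combination produced by the mean value theorem) satisfies $|\vphi_\lambda(it)|\leq K\delta^2\cos^{-3}t$ for every $\lambda\in[0,1]$. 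Since $\cos t\geq K\delta\log(1/\delta)$ on $E$, both $\delta\vphi_\lambda(it)$ and $\delta\het(it)$ are small, so $F$, $H$ and their derivatives may be estimated through their order‑three vanishing at the origin. Thus it suffices to bound $|D\mathcal{R}(\vphi)(it)|\cos^2t$ uniformly over $\vphi$ with $|\vphi(it)|\leq K\delta^2\cos^{-3}t$.

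Next I would write $D(\vphi,\vu)=1+\frac{b\vxi\vxib+\delta^{-2}H(\delta\vphi,\delta\het(\vu),\delta,\delta\param)}{-1+\het^2(\vu)}$ and $D_0(\vu)=1-\frac{\delta h_0\het^3(\vu)}{-1+\het^2(\vu)}$, so that $\mathcal{A}(\vu)=D_0(\vu)^{-1}\widetilde{\mathcal{A}}(\vu)$ with $\widetilde{\mathcal{A}}$ the diagonal numerator matrix of \eqref{defAsplit}, and $\mathcal{R}(\vphi)(\vu)=D^{-1}\delta^{-2}F(\delta\vphi,\delta\het(\vu),\delta,\delta\param)+(D^{-1}-D_0^{-1})\mathcal{A}(\vu)\vphi$. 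Differentiating in $\vphi$ produces four families of terms: $\delta^{-1}D_jF(\delta\vphi,\cdots)\,D^{-1}$; $\delta^{-2}F(\delta\vphi,\cdots)\,\partial_\vphi(D^{-1})$; $\mathcal{A}(\vu)\vphi\,\partial_\vphi(D^{-1})$; and $(D^{-1}-D_0^{-1})\mathcal{A}(\vu)$. The arithmetic inputs on $E$ are $|{-1+\het^2(it)}|^{-1}=\cos^2t$, and more generally $|\het^{k}(it)/(-1+\het^2(it))|=|\tan t|^k\cos^2t\leq\cos^{2-k}t$ for $k\geq0$, together with $|\het(it)|=|\tan t|\leq\cos^{-1}t$, $|\mathcal{A}(it)|\leq K\delta^{-1}$ (because $D_0^{-1}$ is bounded and the entries of $\widetilde{\mathcal{A}}(it)$ are dominated by $\alpha/\delta$), the order‑three bounds $|F(\phi)|,|H(\phi)|\leq K|\phi|^3$ of Remark~\ref{ordreF12Horiginals}, the derivative bounds $|D_jF(\phi)|,|D_jH(\phi)|\leq K|\phi|^2$ from Lemma~\ref{lemafitaDFiDH}, and the fact that $D,D_0$ stay bounded away from $0$ (each is $1+o(1)$ on $E$). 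With these, the first three families are readily $O(\delta\cos^{-2}t)$: for the first, $|D_jF(\delta\vphi,\delta\het,\delta,\delta\param)|\leq K|(\delta\vphi,\delta\het,\delta,\delta\param)|^2\leq K\delta^2\cos^{-2}t$; in the second and third the factor $\partial_\vphi(D^{-1})$ carries an extra power $\lesssim\delta$ on $E$ (one gets $|\partial_\vphi D|\leq K\delta$ exactly as in the proof of Lemma~\ref{lemafixedpt}), which absorbs the $\delta^{-1}$ from $\mathcal{A}(\vu)$ and the size $\delta^{-2}|F|$.

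The main obstacle is the fourth family, $(D^{-1}-D_0^{-1})\mathcal{A}(\vu)$, where $\mathcal{A}(\vu)$ is $O(\delta^{-1})$, so that (via Lemma~\ref{lemadifdiv}) one needs $|D-D_0|\leq K\delta^2\cos^{-2}t$. This is precisely what the choice of $D_0$ is meant to deliver: $D-D_0=\frac{b\vxi\vxib+\delta^{-2}H(\delta\vphi,\delta\het,\delta,\delta\param)+\delta h_0\het^3}{-1+\het^2}$, and because $h_0=-\lim_{z\to0}z^{-3}H(0,0,z,0,0)$ the summand $\delta h_0\het^3$ cancels the cubic $z$‑monomial of $\delta^{-2}H(0,0,\delta\het,0,0)$, leaving $\delta^{-2}H(0,0,\delta\het,0,0)+\delta h_0\het^3=O(\delta^2\het^4)$, whose quotient by $-1+\het^2$ is $O(\delta^2\cos^{-2}t)$ by the identity above. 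The remaining contributions $b\vxi\vxib$ and $\delta^{-2}[H(\delta\vphi,\delta\het,\delta,\delta\param)-H(0,0,\delta\het,0,0)]$ I would control by expanding $H$ as a power series and estimating monomial by monomial, tracking $\delta$‑ and $\cos t$‑powers: every monomial has total degree at least three, each factor $\vxi$ or $\vxib$ contributes $\delta^2\cos^{-3}t$, each factor $\delta\het$ contributes $\delta\cos^{-1}t$, and dividing by $-1+\het^2$ replaces $\het^{k}$ by a quantity of size $\leq\cos^{2-k}t$; this reduces the estimate $|D-D_0|\leq K\delta^2\cos^{-2}t$ to a finite check on the low‑degree monomials, the extreme case being a degree‑three monomial carrying no $\vxi,\vxib$ factor, where the gain comes from $\het^2(it)/(-1+\het^2(it))=\sin^2t$ being bounded. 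Collecting the four families and multiplying by $\cos^2t$ then yields $|b_{lj}(it)\cos^2(t)|\leq K\delta$ for $l,j=1,2$.
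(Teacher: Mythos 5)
Your proposal follows the paper's proof of this lemma essentially step for step: the same input $|\vphi_\lambda(it)|\leq K\delta^2\cos^{-3}t$ from Theorem \ref{theoremouter}, the same elementary identities on $E$, the same use of Lemma \ref{lemafitaDFiDH} (order-three vanishing and its derivative bounds), Lemma \ref{lemadifdiv}, and $|\mathcal{A}(it)|\leq K\delta^{-1}$, and the same decomposition of $D_\Vpsi\mathcal{R}$ (the paper merges your first two families into its term $D_\vxi\mathcal{R}_1^1$ and treats your third and fourth as its $D_\vxi\mathcal{R}_1^3$ and $D_\vxi\mathcal{R}_1^2$). Your treatment of the first three families matches the paper's estimates and is fine.

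The problem is in the fourth family, precisely where you make the requirement explicit. You correctly state that, since $|\mathcal{A}|\sim\delta^{-1}$, one needs $|D-D_0|\leq K\delta^2\cos^{-2}t$ on $E$; but the monomial count you sketch does not deliver this. After the $h_0$ cancellation, $\delta^{-2}H(0,0,\delta\het,\delta,\delta\param)+\delta h_0\het^3$ still contains the cubic monomials of $H$ with no $\vxi,\vxib$ factor but involving the fourth and fifth arguments (generically present: they come from terms such as $\nu^3$, $\mu\nu\zb$, $\nu\zb^2$ in the original order-three remainder). A monomial like $\phi_4^3$ contributes to $D-D_0$ a quantity of size $\delta\cos^2 t$, and $\phi_3^2\phi_4$ contributes $\delta\sin^2 t$ --- your observation that $\het^2(it)/(-1+\het^2(it))=-\sin^2 t$ is bounded gives exactly this --- which is $O(\delta)$ but \emph{not} $O(\delta^2\cos^{-2}t)$: near $t=0$ the required bound fails by a factor $\cos^2 t/\delta$, and the fourth family then only comes out as $O(\cos^2 t+\delta)$ after multiplying by $\cos^2 t$, rather than $O(\delta)$. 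So your ``finite check on the low-degree monomials'' does not close as written. For comparison, the paper's proof of the corresponding term invokes the definition of $h_0$ and then retains only monomials carrying a factor of $\vphi_\lambda$ (the displayed bound with $|\vphi_\lambda^3|$, $|\vphi_\lambda^2\het|$, $|\vphi_\lambda\het^2|$), i.e.\ it does not confront these $\vphi$-independent monomials explicitly; to make the step rigorous one must either argue they are absent (they are not, for generic unfoldings) or absorb the entire $\Vpsi$-independent part of the denominator --- not only $\delta h_0\het^3$ --- into $D_0$, i.e.\ into the linear part $\mathcal{A}$ of \eqref{defAsplit}, before estimating $\mathcal{B}$ from \eqref{defBsplit}. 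As it stands, your argument reproduces the paper's route but, by spelling out the needed inequality, exposes this missing ingredient rather than supplying it.
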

\begin{proof}
 Recall that:
$$\mathcal{B}(\vu)=\int_0^1D\mathcal{R}(\vphi_\lambda)(\vu)d\lambda,$$
where $\mathcal{R}$ was defined in \eqref{defRsplit}. Note that $\vphi_\lambda=(1-\lambda)\vphi^\sta-\lambda\vphi^\uns$ and hence, by Theorem \ref{theoremouter} it is clear that:
\begin{equation}\label{fitavphilambda}
 |\vphi_\lambda(\vu)|\leq K\delta^2|\cosh^{-3}\vu|.
\end{equation}
 We will prove that for $j=1,2$:
\begin{equation}
\label{resultat}|\pi^jD\mathcal{R}(\vphi_\lambda)(\vu)|\leq\frac{ K\delta}{|\cosh^2\vu|},
\end{equation}
and then from the definition of $\mathcal{B}$ and the fact that $\cosh(it)=\cos t$, the statement will be clear. In fact, we will just do the proof for the first entry of the matrix $D\mathcal{R}$, since all the others are analogous. If we compute this entry, we get:
\begin{eqnarray*}
 D_\vxi\mathcal{R}_1(\vphi_\lambda)(\vu)&=&D_\xi\left[\frac{\delta^{-2} F_1(\delta\vphi_\lambda,\delta\het(\vu),\delta,\delta\param)}{1+\frac{b\vxi_\lambda\vxib_\lambda+\delta^{-2}H(\delta\vphi_\lambda,\delta\het(\vu),\delta,\delta\param)}{-1+\het(\vu)^2}}\right]\\
&&+\left(\frac{1}{1+\frac{b\vxi_\lambda\vxib_\lambda+\delta^{-2}H(\delta \vphi_\lambda, \delta \het(\vu), \delta, \delta\param)}{-1+\het^2(\vu)}}-\frac{1}{1-\frac{\delta h_0\het^3(\vu)}{-1+\het^2(\vu)}}\right)a_1(\vu)\\
&&+\frac{-1}{\left(1+\frac{b\vxi_\lambda\vxib_\lambda+\delta^{-2}H(\delta \vphi_\lambda, \delta \het(\vu), \delta, \delta\param)}{-1+\het^2(\vu)}\right)^2}\frac{b\vxib_\lambda+\delta^{-1}D_\xi H(\delta\vphi_\lambda,\delta\het(\vu),\delta,\delta\param)}{-1+\het^2(\vu)}a_1(\vu)\vxi_\lambda(\vu)\medskip\\
&:=&D_\vxi\mathcal{R}_1^1(\vphi_\lambda)(\vu)+D_\vxi\mathcal{R}_1^2(\vphi_\lambda)(\vu)+D_\vxi\mathcal{R}_1^3(\vphi_\lambda)(\vu).
\end{eqnarray*}

First we will prove that:
\begin{equation}\label{fitatildeF}
 \left|D_\vxi\mathcal{R}_1^1(\vphi_\lambda)(\vu)\right|\leq\frac{ K\delta}{|\cosh^2\vu|}.
\end{equation}
In order to do that, we introduce the auxiliary function:
$$\tilde F_1(\phi)=\frac{\delta^{-2} F_1(\phi)}{1+\displaystyle\frac{\delta^{-2}\left(b\phi_1\phi_2+H(\phi)\right)}{-1+\delta^{-2}\phi_3^2}},$$
where $\phi=(\phi_1,\phi_2,\phi_3,\phi_4,\phi_5)$, which is analytic in the (non-empty) open set:
$$\CMcal{U}:=B^3(r_0)\times B(\delta_0)\times B(\param_0)\cap\left\{\phi\in\mathbb{C}^3\times\mathbb{R}^2\,:\, \left|\frac{\displaystyle \delta^{-2}\left(b\phi_1\phi_2+H(\phi)\right)}{\displaystyle-1+\delta^{-2}\phi_3^2}\right|<\frac{1}{2}\right\}.$$
It is easy to see that, for $\delta$ small enough and $\vu\in\dout[\dist][]{}$, we have that $(\delta\vphi_\lambda(\vu),\delta\het(\vu),\delta,\delta\param)\in B^3(\tilde r_0/2)\times B(\delta_0/2)\times B(\param_0/2)$, where $\tilde r_0<r_0$ is such that $B^3(r_0)\times B(\delta_0)\times B(\param_0)\subset\CMcal{U}$. Moreover, it is easy to check that:
\begin{equation}\label{igualtatderivadesftildef}
 D_\vxi\mathcal{R}_1^1(\vphi_\lambda)(\vu)=\delta D_{\phi_1}\tilde F_1(\delta\vphi_\lambda,\delta\het(\vu),\delta,\delta\param).
\end{equation}
Now, by definition of $\CMcal{U}$ and using Remark \ref{ordreF12Horiginals} we obtain that:
$$\left|\tilde F_1(\phi)\right|\leq 2 \delta^{-2}F_1(\phi)\leq \delta^{-2}K|\phi|^3,$$
and then, using Lemma \ref{lemafitaDFiDH} with $\phi^*=0$, it is clear that if $\phi\in B^3(\tilde r_0/2)\times B(\delta_0/2)\times B(\param_0/2)$, then:
$$\left|D_{\phi_1}\tilde F_1(\phi)\right|\leq \delta^{-2} K|\phi|^2.$$
Hence, from \eqref{igualtatderivadesftildef} and using \eqref{fitavphilambda}, we obtain immediately \eqref{fitatildeF}:
\begin{equation*}\left|D_\vxi\mathcal{R}_1^1(\vphi_\lambda)(\vu)\right|\leq K\delta^{-1}|\delta\vphi_\lambda,\delta\het(\vu),\delta,\delta\param)|^2\leq \frac{K\delta}{\cosh^2\vu}.\end{equation*}

Our next claim is that:
\begin{equation}\label{fitaterme3}
\left|D_\vxi\mathcal{R}_1^2(\vphi_\lambda)(\vu)\right|\leq\frac{ K\delta^2}{|\cosh^3\vu|}.
\end{equation}
First of all note that for $\delta$ small enough:
\begin{equation}
 \left|\frac{b\vxi_\lambda\vxib_\lambda+\delta^{-2}H(\delta \vphi_\lambda, \delta \het(\vu), \delta, \delta\param)}{-1+\het^2(\vu)}\right|,\left|\frac{\delta h_0\het^3(\vu)}{-1+\het^2(\vu)}\right|\leq \frac{K}{\log(1/\delta)}<\frac{1}{2},\label{fitadivisio}
\end{equation}
and then by Lemma \ref{lemadifdiv}, we have:
\begin{eqnarray}\label{fitaterme3intermig}
 &&\left| \left(\frac{1}{1+\frac{b\vxi_\lambda\vxib_\lambda+\delta^{-2}H(\delta \vphi_\lambda, \delta \het(\vu), \delta, \delta\param)}{-1+\het^2(\vu)}}-\frac{1}{1-\frac{\delta h_0\het^3(\vu)}{-1+\het^2(\vu)}}\right)\right|\nonumber\\
&\leq&\frac{4}{|-1+\het^2(\vu)|}\left|b\vxi_\lambda\vxib_\lambda+\delta^{-2}(H(\delta \vphi_\lambda, \delta \het(\vu), \delta, \delta\param)+\delta^3h_0\het(\vu))\right|\nonumber\\
&\leq&\frac{K}{|-1+\het^2(\vu)|}\left[b|\vxi_\lambda||\vxib_\lambda|+ K\delta(|\vphi_\lambda^3|+|\vphi_\lambda^2\het(\vu)|+|\vphi_\lambda\het^2(\vu)|)\right]
\end{eqnarray}
where in the last inequality we have used the definition of $h_0$. It is easy to check that, since bound \eqref{fitavphilambda} holds, for $\vu\in E$, we have that:
$$|\delta\vphi_\lambda^3(\vu)|,|\delta\vphi_\lambda^2(\vu)\het(\vu)|,|\delta\vphi_\lambda(\vu)\het^2(\vu)|\leq \frac{\delta^3K}{|\cosh^5\vu|}.$$
Moreover, we also have that:
$$|\vxi_\lambda\vxib_\lambda|\leq\frac{\delta^4K}{|\cosh^6\vu|}\leq\frac{\delta^3K}{|\cosh^5\vu|},$$
and then \eqref{fitaterme3intermig} yields:
\begin{equation*}
 \left| D_\vxi\mathcal{R}_1^2(\vphi_\lambda)(\vu)\right|\leq\frac{K|a_1(\vu)|}{|-1+\het^2(\vu)|}\frac{\delta^3}{|\cosh^5\vu|}\leq\frac{K\delta^3|a_1(\vu)|}{|\cosh^3\vu|}.
\end{equation*}
Finally we just need to note that $|a_1(\vu)|\leq K/\delta$ to obtain bound \eqref{fitaterme3}.

Our last claim is:
\begin{equation}\label{fitaterme4}
\left|D_\vxi\mathcal{R}_1^3(\vphi_\lambda)(\vu)\right|\leq\frac{ K\delta^2}{|\cosh^3\vu|}.
\end{equation}
This is quite straightforward to prove, using inequalities \eqref{fitavphilambda} and \eqref{fitadivisio}, Lemma \ref{lemafitaDFiDH} and that $|a_1(\vu)|\leq K/\delta$.

In conclusion, from bounds \eqref{fitatildeF}, \eqref{fitaterme3} and \eqref{fitaterme4} we have:
$$|D_\vxi\mathcal{R}_1(\vu)|\leq K\left(\frac{\delta }{|\cosh^2\vu|}+\frac{\delta^2}{|\cosh^3\vu|}\right)\leq \frac{ K\delta}{|\cosh^2\vu|},$$
and thus \eqref{resultat} is proved.
\end{proof}

\begin{lem}\label{lemainvertirGsplit}
 The operator $\mathcal{G}:\bssplit\times\bssplit\rightarrow\bssplit\times\bssplit$ is well defined, and for $\phi\in\bssplit\times\bssplit$:
 $$\|\mathcal{G}(\phi)\|_{\textup{spl},\times}\leq\frac{K\|\phi\|_{\textup{spl},\times}}{\log(1/\delta)}.$$
\end{lem}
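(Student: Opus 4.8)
The plan is to estimate the first component $\mathcal{G}_1(\phi)$; the bound for $\mathcal{G}_2(\phi)$ is obtained in exactly the same way, with $\vu_-$, $m_2$ in place of $\vu_+$, $m_1$, so I will only treat $\mathcal{G}_1$. For $\vw\in E$ the function $m_1^{-1}(\vw)\pi^1(\mathcal{B}(\vw)\phi(\vw))$ is analytic (the matrix $\mathcal{B}$ in \eqref{defBsplit} is well defined by Theorem \ref{theoremouter}, $m_1$ does not vanish, and $\phi$ is analytic), so by Cauchy's theorem I may take the path of integration in \eqref{defGsplit} to be the segment of the imaginary axis from $\vu_+=it_+$ to $\vu=it$, which stays inside $E$; in particular $\mathcal{G}_1(\phi)$ is analytic on $E$ and it only remains to bound its $\|\cdot\|_{\textup{spl}}$ norm.

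First I would record the consequence of Lemma \ref{lemamatriufonsplit} that, for $\vu=it$ and $\vw=is$ in $E$,
$$|m_1(\vu)\,m_1^{-1}(\vw)|\le K\,|\cos t|^{\coef}\,|\cos s|^{-\coef}\,e^{\alpha(t-s)/\delta},$$
which uses $\cosh(it)=\cos t$, $|e^{\param(it-is)}|=1$, the fact that the brackets involving $\sinh^2$ and $\log\cosh$ are real on the imaginary axis (so their exponentials have modulus $1$), and that the two factors $1+O(1/\log(1/\delta))$ give a bounded quotient for $\delta$ small. Combining this with Lemma \ref{lemabij}, which yields $|\pi^1(\mathcal{B}(is)\phi(is))|\le K\delta|\cos s|^{-2}(|\phi_1(is)|+|\phi_2(is)|)$, and with the definition \eqref{defnormsplit}, which yields $|\phi_j(is)|\le\|\phi_j\|_{\textup{spl}}|\cos s|^{\coef}e^{-\alpha(\pi/2-|s|)/\delta}$, and collecting the surviving exponential factors $e^{\alpha t/\delta}$, $e^{-\alpha\pi/(2\delta)}$ together with the weight $e^{\alpha(\pi/2-|t|)/\delta}$ into $e^{\alpha(t-|t|)/\delta}$, I arrive at
$$e^{\alpha(\pi/2-|t|)/\delta}\,|\cos t|^{-\coef}\,|\mathcal{G}_1(\phi)(it)|\le K\delta\,\|\phi\|_{\textup{spl},\times}\,e^{\alpha(t-|t|)/\delta}\left|\int_{t}^{t_+}\frac{e^{\alpha(|s|-s)/\delta}}{\cos^2 s}\,ds\right|.$$

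The analysis of this last integral is the crux, and the only delicate point. On the part of the segment with $s\ge 0$ the exponent $\alpha(|s|-s)/\delta$ vanishes and $e^{\alpha(t-|t|)/\delta}\le 1$, so this part is controlled by $\int_0^{t_+}\cos^{-2}s\,ds=\tan t_+=\cot(\dist\delta\log(1/\delta))\le K\delta^{-1}\log^{-1}(1/\delta)$ (alternatively by Lemma \ref{lematecnicintegral} with $\nu=2$), and after multiplication by $K\delta$ this is $O(1/\log(1/\delta))$. On the part with $s\le 0$, which occurs only when $t<0$, one has $e^{\alpha(|s|-s)/\delta}=e^{-2\alpha s/\delta}$, and this growth is compensated by the prefactor $e^{\alpha(t-|t|)/\delta}=e^{2\alpha t/\delta}$: substituting $s\mapsto-s$ and bounding $\cos^{-2}s\le\cos^{-2}|t|$ on $[0,|t|]$, this contribution is at most $e^{-2\alpha|t|/\delta}\cos^{-2}|t|\int_0^{|t|}e^{2\alpha s/\delta}\,ds\le K\delta\,\cos^{-2}|t|$, and since $\cos|t|\ge\cos t_+=\sin(\dist\delta\log(1/\delta))\ge K\delta\log(1/\delta)$ for $it\in E$, this is $\le K\delta^{-1}\log^{-2}(1/\delta)$; multiplying by $K\delta$ gives $O(1/\log^2(1/\delta))$. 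Adding the two contributions gives $\|\mathcal{G}_1(\phi)\|_{\textup{spl}}\le K\|\phi\|_{\textup{spl},\times}/\log(1/\delta)$, and the identical argument for $\mathcal{G}_2$ finishes the proof. The main obstacle is exactly the $s\le 0$ part of the path, where the crude estimate $\cos^{-2}s\le\cos^{-2}|t|$ has to be balanced against the exponential decay coming from $|t|$ close to $\pi/2$; everything else is routine bookkeeping with the explicit asymptotics of $m_1$ and $m_2$.
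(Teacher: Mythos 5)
Your proof is correct and follows essentially the same route as the paper's: the bounds $|m_1(it)m_1^{-1}(is)|\leq K|\cos t|^{\coef}|\cos s|^{-\coef}e^{\alpha(t-s)/\delta}$ from Lemma \ref{lemamatriufonsplit}, the bound on $\mathcal{B}$ from Lemma \ref{lemabij}, and the norm of $\phi$ are combined in exactly the same way, reducing everything to the integral $e^{\alpha(t-|t|)/\delta}\int_t^{t_+}e^{\alpha(|s|-s)/\delta}\cos^{-2}s\,ds$. The only difference is that you carry out explicitly (via the split $s\geq 0$, $s\leq 0$ and $\cos t_+\geq K\delta\log(1/\delta)$) the estimate of this integral, which the paper merely asserts as ``not difficult to check.''
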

\begin{proof}
 Again, we will bound just the first component:
$$|\mathcal{G}_1(\phi)(it)|=\left|m_1(it)\int_{t_+}^t m_1^{-1}(i\vw)\pi^1(\mathcal{B}(i\vw)(\phi(i\vw)))d\vw\right|.$$
Recalling the asymptotic formula \eqref{asyformm1} for $m_1(it)$ it is clear that:
$$|m_1(it)|\leq K\cos^dte^{\alpha  t/\delta},\qquad |m_1^{-1}(i\vw)|\leq K\cos^{-d}\vw e^{-\alpha  \vw/\delta}.$$
Using these bounds and Lemma \ref{lemabij} we can bound $\mathcal{G}_1(\phi)(it)$:
$$|\mathcal{G}_1(\phi)(it)|\leq K\cos^dt e^{\alpha  t/\delta}\int_t^{t_+}\cos^{-d}\vw e^{-\alpha  \vw/\delta}\frac{ K\delta}{\cos^2\vw}|\phi(i\vw)|d\vw.$$
Then, since $\phi\in\bssplit\times\bssplit$, recalling definition \eqref{defnormsplit} of the norm $\|.\|_{\textup{spl},\times}$ we have:
$$|\phi(i\vw)|\leq\|\phi\|_{\textup{spl},\times}\cos^d\vw e^{-\alpha (\pi/2-|\vw|)/\delta}$$
and therefore:
$$|\mathcal{G}_1(\phi)(it)|\leq K\delta\cos^dt e^{\alpha  t/\delta}e^{-\frac{\alpha \pi}{2\delta}}\|\phi\|_{\textup{spl},\times}\int_t^{t_+}e^{-\alpha (\vw-|\vw|)/\delta}\frac{1}{\cos^2\vw}d\vw.$$
It is not difficult to check that for $t\in[t_-,t_+]$, there exists a constant $C$ independent of $\delta$ such that:
$$e^{\alpha  t/\delta}\int_t^{t_+}e^{-\alpha (\vw-|\vw|)/\delta}\frac{1}{\cos^2\vw}d\vw\leq Ce^{\alpha |t|/\delta}\frac{1}{\dist\delta\log(1/\delta)},$$
and then we obtain the desired bound:
$$\|\mathcal{G}_1(\phi)\|_{\textup{spl},\times}\leq \frac{K\|\phi\|_{\textup{spl},\times}}{\log(1/\delta)}.$$
\end{proof}

\begin{proof}[End of the proof of Proposition \ref{propasymsplit}]
From Lemma \ref{lemaprincipalsplit} we can write:
$$(Id-\mathcal{G})\Delta\vphi_1=\mathcal{I}(c_1-c_1^0,c_2-c_2^0)+\mathcal{G}(\Delta\vphi_0).$$
We note that for $\delta>0$ $\Delta\vphi_1\in\bssplit\times\bssplit$, although \textit{a priori} its norm is exponentially large with respect to $\delta$. Indeed, we have $\Delta\vphi_1=\Delta\vphi-\Delta\vphi_0$, and it is clear by Lemma \ref{lemaasynormadeltavphi0} that $\Delta\vphi_0\in\bssplit\times\bssplit$. Moreover, we have:
\begin{eqnarray}\label{normagran}
 |\Delta\vphi(it)\cos^{-\coef}te^{\alpha(\pi/2-|t|)/\delta}|&\leq&(\|\vphi^\uns\|_{\textup{out},\times}^\uns+\|\vphi^\sta\|_{\textup{out},\times}^\sta)|\het(it)-1|^3|\cos^{-\coef}t|e^{\alpha(\pi/2-|t|)/\delta}\nonumber\\
&\leq& K\delta^2|\cos^{3-\coef}t|e^{\alpha(\pi/2-|t|)/\delta}\leq\delta^{-(\coef+1)}e^{\frac{\alpha\pi}{2\delta}}K<\infty,
\end{eqnarray}
and thus it is clear that $\Delta\vphi\in\bssplit\times\bssplit$, and hence $\Delta\vphi_1\in\bssplit\times\bssplit$. Since from Lemma \ref{lemainvertirGsplit} we know that $\|\mathcal{G}\|<1$ for $\delta$ small enough, the operator $Id-\mathcal{G}$ is invertible in $\bssplit\times\bssplit$. Therefore we can write:
$$\Delta\vphi_1=(Id-\mathcal{G})^{-1}\left[\mathcal{I}(c_1-c_1^0,c_2-c_2^0)+\mathcal{G}(\Delta\vphi_0)\right],$$
and consequently we have:
\begin{eqnarray}\label{fitanormadeltavphi1}
 \|\Delta\vphi_1\|_{\textup{spl},\times}&\leq&\|Id-\mathcal{G}\|_{\textup{spl},\times}^{-1}\left[\|\mathcal{I}(c_1-c_1^0,c_2-c_2^0)\|_{\textup{spl},\times}+\|\mathcal{G}(\Delta\vphi_0)\|_{\textup{spl},\times}\right]\nonumber\\
&\leq&K\left(\|\mathcal{I}(c_1-c_1^0,c_2-c_2^0)\|_{\textup{spl},\times}+\|\mathcal{G}(\Delta\vphi_0)\|_{\textup{spl},\times}\right).
\end{eqnarray}
Now, from \eqref{fitanormadeltavphi1} we will be able to improve bound \eqref{normagran}, realizing that in fact it is not exponentially large with respect to $\delta$. On one hand, using first Lemma \ref{lemafitaIsplit} and after \ref{lemaprincipalsplit}, we have:
\begin{equation}
 \|\mathcal{I}(c_1-c_1^0,c_2-c_2^0)\|_{\textup{spl},\times}\leq K(|c_1-c_1^0|+|c_2-c_2^0|)e^{\frac{\alpha\pi}{2\delta}}\leq\frac{K}{\delta^{\coef+1}\log(1/\delta)}.\nonumber
\end{equation}
Then, from Lemma \ref{lemaasynormadeltavphi0} it is clear that, if $\|\Delta\vphi_0\|_{\textup{spl},\times}\neq0$ (which is equivalent to $C_{\textup{in}}\neq0$), we have:
\begin{equation}\label{fitanormaIsplit}
  \|\mathcal{I}(c_1-c_1^0,c_2-c_2^0)\|_{\textup{spl},\times}\leq\frac{K\|\Delta\vphi_0\|_{\textup{spl},\times}}{\log(1/\delta)}.
\end{equation}
On the other hand, from Lemma \ref{lemainvertirGsplit} we have:
\begin{equation}\label{fitaGDeltavphi0}
 \|\mathcal{G}(\Delta\vphi_0)\|_{\textup{spl},\times}\leq\frac{K\|\Delta\vphi_0\|_{\textup{spl},\times}}{\log(1/\delta)}.
\end{equation}

Substituting \eqref{fitanormaIsplit} and \eqref{fitaGDeltavphi0} in \eqref{fitanormadeltavphi1} we obtain the desired bound:
$$\|\Delta\vphi_1\|_{\textup{spl},\times}\leq\frac{K\|\Delta\vphi_0\|_{\textup{spl},\times}}{\log(1/\delta)}.$$
\end{proof}

\begin{proof}[End of the Proof of Theorem \ref{theoremsplit}]
From Proposition \ref{propasymsplit}, we know that $\Delta\vphi=\Delta\vphi_0+\Delta\vphi_1$, with:
$$|\Delta\vphi_1(it)|\leq K\frac{\|\Delta\vphi_0\|_{\textup{spl},\times}}{\log(1/\delta)}e^{-\alpha (\pi/2-|t|)/\delta}|\cos^\coef t|,$$
and hence by Lemma \ref{lemaasynormadeltavphi0} we obtain:
$$|\Delta\vphi_1(it)|\leq \frac{K}{\delta^{\coef+1}\log(1/\delta)}e^{\frac{\pi}{2}(c+\alpha h_0)}\left(\left|C_{\textup{in}}\right|+|\overline{C_{\textup{in}}}|\right)e^{-\alpha (\pi/2-|t|)/\delta}|\cos^\coef t|.$$
For $t=0$ this formula gives the bound:
\begin{equation}\label{final1}
 |\Delta\vphi_1(0)|\leq \frac{K}{\delta^{\coef+1}\log(1/\delta)}e^{\frac{\pi}{2}(c+\alpha h_0)}\left(\left|C_{\textup{in}}\right|+|\overline{C_{\textup{in}}}|\right)e^{-\frac{\alpha \pi}{2\delta}}.
\end{equation}
Moreover, by definition of $\Delta\vphi_0$ it is clear that $\Delta\vphi_0(0)=(c_1^0,c_2^0)$. Then by definition \eqref{defc10c20} of $c_1^0$ and $c_2^0$, Lemma \ref{lemam1invers} and formula \eqref{cosa2} we obtain:
\begin{equation}\label{final2}
c_1^0=\frac{1}{\delta^{\coef+1}}e^{-\frac{\alpha \pi}{2\delta}}C_{\textup{in}}e^{\frac{\pi}{2}(c+\alpha h_0)-i\left(\frac{\param\pi}{2}+\frac{\alpha h_0}{2}+(c+\alpha h_0)\log\delta\right)}\left(1+O\left(\frac{1}{\log(1/\delta)}\right)\right),
\end{equation}
and $c_2^0=\overline{c_1^0}$. Finally, we just need to realize that, since $\alpha=\alpha_0+\alpha_1\delta\param+O(\delta^2)$, we have:
$$e^{-\frac{\alpha \pi}{2\delta}}=e^{-\frac{\alpha_0\pi}{2\delta}-\frac{\alpha_1\param\pi}{2}}(1+O(\delta)),$$
and:
$$e^{\frac{\pi}{2}(c+\alpha h_0)-i\left(\frac{\param\pi}{2}+\frac{\alpha h_0}{2}+(c+\alpha h_0)\log\delta\right)}=e^{\frac{\pi}{2}(c+\alpha_0h_0)-i\left(\frac{\param\pi}{2}+\frac{\alpha_0h_0}{2}+(c+\alpha_0h_0)\log\delta\right)}(1+O(\delta)),$$
so that \eqref{final2} becomes:
\begin{equation}\label{final3}
c_1^0=\frac{1}{\delta^{\coef+1}}e^{-\frac{\alpha_0\pi}{2\delta}}C_{\textup{in}}e^{\frac{\pi}{2}(c+\alpha_0h_0-\param\alpha_1)-i\left(\frac{\param\pi}{2}+\frac{\alpha_0h_0}{2}+(c+\alpha_0h_0)\log\delta\right)}\left(1+O\left(\frac{1}{\log(1/\delta)}\right)\right).
\end{equation}
Then, from \eqref{final1} and \eqref{final3} and the fact that $\Delta\vphi(0)=(c_1^0,c_2^0)+\Delta\vphi_1(0)$ we obtain:
$$\Delta\vphi(0)=\frac{1}{\delta^{\coef+1}}e^{-\frac{\alpha_0\pi}{2\delta}}e^{\frac{\pi}{2}(c+\alpha_0h_0-\param\alpha_1)}\left(\left(\begin{array}{c}C_{\textup{in}}e^{-i\left(\frac{\param\pi}{2}+\frac{\alpha_0h_0}{2}+(c+\alpha_0h_0)\log\delta\right)}\\\overline{C_{\textup{in}}}e^{i\left(\frac{\param\pi}{2}+\frac{\alpha_0h_0}{2}+(c+\alpha_0h_0)\log\delta\right)}\end{array}\right)+O\left(\frac{1}{\log(1/\delta)}\right)\right).$$
and the theorem is proved.
\end{proof}
\section*{Aknowledgements}
T.M. Seara and O. Castej\'{o}n have been partially supported by MICINN-FEDER grant MTM2009-06973 and CUR-DIUE grant 2009SGR859. In addition,
I. Baldom\'{a} has been supported by the Spanish Grant MEC-FEDER MTM2006-05849/Consolider,
the Spanish Grant MTM2010-16425 and the Catalan SGR grant 2009SGR859.
The research of O. Castej\'{o}n has been supported by the Catalan PhD grant FI-AGAUR.

\bibliographystyle{model1-num-names}
\bibliography{hopfzero}
\end{document}